\newcommand{\EDIT}[1]{{\color{black}#1}}
\newcommand{\EEDIT}[1]{{\color{black}#1}}
\newif\ifcomment
\newcommand{\comment}[1]{\ifcomment#1\fi}
\newlength{\currentparindent}
\newcommand{\@minipagerestore}{\setlength{\parindent}{\currentparindent}}
\newcommand{\nospacepunct}[1]{\makebox[0pt][l]{#1}}
\DeclareSymbolFontAlphabet{\mathbb}{AMSb}
\DeclareSymbolFontAlphabet{\mathbbl}{bbold}
\newcommand{\Prism}{{\mathlarger{\mathbbl{\Delta}}}}
\newcommand{\prism}{\mathbbl{\Delta}}
\DeclareMathOperator{\Ker}{Ker}
\DeclareMathOperator{\id}{id}
\DeclareMathOperator{\Aut}{Aut}
\DeclareMathOperator{\Hom}{Hom}
\DeclareMathOperator{\Ext}{Ext}
\DeclareMathOperator{\Spf}{Spf}
\DeclareMathOperator{\Spec}{Spec}
\DeclareMathOperator{\Spa}{Spa}
\DeclareMathOperator{\Pic}{Pic}
\DeclareMathOperator{\Cone}{Cone}
\DeclareMathOperator{\Mod}{Mod}
\DeclareMathOperator{\Perf}{Perf}
\DeclareMathOperator{\gr}{gr}
\DeclareMathOperator{\Fil}{Fil}
\DeclareMathOperator{\Rep}{Rep}
\DeclareMathOperator{\Fun}{Fun}
\DeclareMathOperator{\MF}{MF}
\DeclareMathOperator{\Vect}{Vect}
\DeclareMathOperator{\Coh}{Coh}
\DeclareMathOperator{\Loc}{Loc}
\DeclareMathOperator{\fib}{fib}
\DeclareMathOperator{\cofib}{cofib}
\DeclareMathOperator{\RHom}{RHom}
\DeclareMathOperator{\Map}{Map}
\DeclareMathOperator{\Rees}{Rees}
\let\Gauge\gauge
\DeclareMathOperator{\FGauge}{F-Gauge}
\DeclareMathOperator{\Kos}{Kos}
\DeclareMathOperator{\Stab}{Stab}
\DeclareMathOperator{\Ind}{Ind}
\DeclareMathOperator{\Sym}{Sym}
\DeclareMathOperator{\Tot}{Tot}
\DeclareMathOperator{\Crys}{Crys}
\DeclareMathOperator{\Isoc}{Isoc}
\newcommand{\colim}{\operatornamewithlimits{colim}}
\newcommand{\sHom}{\underline{\mathrm{Hom}}}
\newcommand{\sRHom}{\underline{\mathrm{RHom}}}
\newcommand{\sExt}{\underline{\mathrm{Ext}}}
\newcommand{\relSpec}{\underline{\mathrm{Spec}}}
\newcommand{\sEnd}{\underline{\mathrm{End}}}
\newcommand{\Der}{\mathcal{D}\kern -.5pt er}
\newcommand{\tensorL}{\otimes^\mathbb{L}}
\newcommand\A{\mathbb{A}}
\newcommand\F{\mathbb{F}}
\newcommand\G{\mathbb{G}}
\newcommand\Q{\mathbb{Q}}
\newcommand\V{\mathbb{V}}
\newcommand\Z{\mathbb{Z}}
\newcommand\T{\mathbb{T}}
\newcommand\D{\mathcal{D}}
\newcommand\DF{\mathcal{DF}}
\newcommand\C{\mathcal{C}}
\let\O\cO
\let\H\calH
\let\T\calT
\newcommand\dHod{\slashed{D}}
\newcommand\crys{\mathrm{crys}}
\newcommand\dR{\mathrm{dR}}
\newcommand\N{\mathrm{N}}
\newcommand\Syn{\mathrm{Syn}}
\newcommand\et{\mathrm{\acute{e}t}}
\newcommand\proet{\mathrm{pro\acute{e}t}}
\newcommand\refl{\mathrm{refl}}
\newcommand\adm{\mathrm{adm}}
\newcommand\op{\mathrm{op}}
\let\inf\Ainf
\newcommand\HT{\mathrm{HT}}
\newcommand\lisse{\mathrm{lisse}}
\newcommand\FM{\mathrm{FM}}
\newcommand\red{\mathrm{red}}
\newcommand\Hod{\mathrm{Hod}}
\newcommand\can{\mathrm{can}}
\newcommand\typ{\mathrm{typ}}
\newcommand\fl{\mathrm{fl}}
\newcommand\nilp{\mathrm{nilp}}
\let\d\derivative
\newcommand\incl{\mathrm{incl}}
\newcommand\conj{\mathrm{conj}}
\newcommand\an{\mathrm{an}}
\let\epsilon\varepsilon
\let\phi\varphi
\let\ol\overline
\let\ul\underline
\let\tensor\otimes
\let\cal\mathcal
\newtheorem{thm}{Theorem}[subsection]
\newtheorem{prop}[thm]{Proposition}
\newtheorem{lem}[thm]{Lemma}
\newtheorem{cor}[thm]{Corollary}
\theoremstyle{definition}
\newtheorem{defi}[thm]{Definition}
\newtheorem{rem}[thm]{Remark}
\newenvironment{ex}
  {\pushQED{\qed}\exx}
  {\popQED\endexx}
\numberwithin{equation}{subsection}
\title{A stacky approach to $p$-adic Hodge theory}
\author{Maximilian Hauck}
\date{6th September 2024}
\begin{document}

\maketitle

\begin{abstract}
We use the stacky approach to $p$-adic cohomology theories recently developed by Drinfeld and Bhatt--Lurie to generalise known comparison theorems in $p$-adic Hodge theory so as to accommodate coefficients. More precisely, we establish a comparison between the rational crystalline cohomology of the special fibre and the rational $p$-adic étale cohomology of the arithmetic generic fibre of any proper $p$-adic formal scheme $X$ which allows for coefficients in any crystalline local system on the generic fibre of $X$; moreover, we also prove a comparison between the Nygaard filtration and the Hodge filtration for coefficients in an arbitrary gauge in the sense of Bhatt--Lurie. In the process, we develop a stacky approach to diffracted Hodge cohomology as introduced by Bhatt--Lurie, establish a version of the Beilinson fibre square of Antieau--Mathew--Morrow--Nikolaus with coefficients in the proper case and prove a comparison between syntomic cohomology and $p$-adic étale cohomology with coefficients in an arbitrary $F$-gauge. This work is the author's master's thesis at the University of Bonn.
\end{abstract}

\tableofcontents

\newpage

\section{Introduction}

Broadly speaking, the field of $p$-adic Hodge theory is concerned with studying cohomology theories for schemes over the \EEDIT{ring $\Z_p$ of $p$-adic integers}, where $p$ denotes a fixed prime throughout. For instance, one of its starting points is the following theorem comparing $p$-adic étale cohomology and \EDIT{crystalline cohomology}, conjectured by Fontaine in \cite{FontainePAdicHodgeTheory} and first proven by Faltings (see \cite{Faltings}):

\begin{thm}
\label{thm:intro-ccrys}
\EDIT{Let $X$ be a smooth and proper $\Z_p$-scheme.} For any $i\geq 0$, there is a natural isomorphism
\begin{equation*}
H^i_\et(X_{\ol{\Q}_p}, \Q_p)\tensor_{\Q_p} B_\crys\cong H^i_\crys(X_{\F_p})\tensor_{\Z_p} B_\crys
\end{equation*}
compatible with the filtrations, \EDIT{Frobenius endomorphisms} and Galois actions on either side. Here, $B_\crys$ denotes Fontaine's \EDIT{crystalline period ring} (see Section \ref{subsect:beilfibsq-crys} for a definition).
\end{thm}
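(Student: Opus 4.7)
The plan is to deduce this classical theorem from the theory of absolute prismatic cohomology, viewed through the stacky lens of Drinfeld and Bhatt--Lurie that the paper will develop. The idea is to realise both the étale and the crystalline side as specialisations of a single richer object, namely $R\Gamma_\Prism(X_{\O_C})$, where $C := \widehat{\ol{\Q}_p}$ denotes the completion of an algebraic closure of $\Q_p$ and $\O_C$ its ring of integers. This prismatic cohomology is a commutative algebra in $(\phi, \Gal(\ol{\Q}_p/\Q_p))$-equivariant $A_\inf$-modules for $A_\inf := W(\O_C^\flat)$, or equivalently the coherent cohomology of the structure sheaf on the stack $X_{\O_C}^\Prism$. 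The base ring $A_\inf$ admits a map to $W(C^\flat)$ (giving the étale realisation) and a map to the crystalline period ring $A_\crys$ (giving the crystalline realisation), and these unify after inverting $p$ and Fontaine's period $t$ to a common map $A_\inf \to B_\crys$.

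Concretely, I would proceed in three steps. First, invoke the étale comparison of Bhatt--Morrow--Scholze, which yields a $\phi$- and Galois-equivariant isomorphism
\[
R\Gamma_\Prism(X_{\O_C}) \tensor^L_{A_\inf} W(C^\flat)[1/p] \cong R\Gamma_\et(X_C, \Z_p) \tensor_{\Z_p} W(C^\flat)[1/p].
\]
Second, invoke the crystalline comparison for prismatic cohomology, which---since $X$ is smooth over $\Z_p$---gives
\[
R\Gamma_\Prism(X_{\O_C}) \tensor^L_{A_\inf} A_\crys \cong R\Gamma_\crys(X_{\F_p}) \tensor^L_{W(\F_p)} A_\crys.
\]
Third, both identifications factor through the further base change to $B_\crys = A_\crys[1/t]$, inside which $W(C^\flat)[1/p]$ embeds; composing the two then yields the desired comparison after passing to the $i$-th cohomology group.

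The main obstacle is matching the filtrations: the left-hand side carries Fontaine's filtration from $B_\crys$, while the right-hand side carries the convolution of this with the Hodge filtration on $H^i_\crys(X_{\F_p}) \tensor_{\Z_p} \Q_p \cong H^i_\dR(X_{\Q_p})$, and the above isomorphism must be shown to be filtered. This is most cleanly handled by upgrading the argument to the Nygaard-filtered prismatization $X^\N$, whose coherent cohomology computes the Nygaard-filtered prismatic cohomology; both Fontaine's filtration on $B_\crys$ and the Hodge filtration on $H^i_\dR$ then arise as specialisations of the Nygaard filtration along the appropriate maps to the corresponding loci in the base stack. Compatibility with Frobenius and Galois actions, by contrast, is automatic, since both are already present on $R\Gamma_\Prism(X_{\O_C})$ and are preserved tautologically by the base-change functors.
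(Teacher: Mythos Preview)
The paper does not actually prove this theorem. It is stated in the introduction as historical background—Fontaine's conjecture, first proven by Faltings—and cited to \cite{Faltings}. There is no proof of it anywhere in the body of the paper; the paper's own contributions concern comparisons over the \emph{arithmetic} generic fibre $X_\eta$ (Theorems~\ref{thm:intro-cryset}, \ref{thm:intro-syntomicetale}, \ref{thm:intro-beilfibsq}), not the geometric comparison of \cref{thm:intro-ccrys}. So there is nothing to compare your proposal against.

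That said, your sketch is a legitimate modern route to the result, essentially the Bhatt--Morrow--Scholze approach via $A_\inf$-cohomology (or equivalently prismatic cohomology over the perfect prism $(A_\inf,\ker\theta)$). A couple of details would need tightening: the claim that $W(C^\flat)[1/p]$ embeds in $B_\crys$ is not correct as stated—the two rings are not directly comparable that way—but what you need is rather that $\mu$ becomes invertible in $B_\crys$, so the étale comparison, which lives over $A_\inf[1/\mu]$, can be base-changed to $B_\crys$. The filtration compatibility is indeed the delicate point; one can handle it through the Nygaard filtration as you suggest, or more classically via the de Rham specialisation and the comparison $B_\dR^+ \cong \widehat{A_\crys[1/p]}_{(\xi)}$, but either way this requires real work that your last paragraph only gestures at.
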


In recent years, the field of $p$-adic Hodge theory has been given a new impetus by the groundbreaking discovery of \emph{prismatic cohomology} by Bhatt--Scholze, \EDIT{precursors of which can already be found in the papers \cite{BMS} and \cite{THHandPAdicHodgeTheory} of Bhatt--Morrow--Scholze}: In \cite{Prisms}, \EDIT{the authors} construct a new cohomology theory attached to $p$-adic formal schemes which specialises to many previously known $p$-adic cohomology theories such as étale cohomology, de Rham cohomology and crystalline cohomology as well as more recent ones such as $A_\inf$-cohomology as introduced in \cite{BMS} or $q$-de Rham cohomology as described in \cite{qDeformations}.

While the idea that one may compute the value of a cohomology theory attached to a scheme $X$ by instead computing the coherent cohomology of a suitably defined stack attached to $X$ goes back, in the case of de Rham cohomology, to work of Simpson in the 1990s, see \cite{Simpson} and \cite{Simpson2}, this approach has only been starting to be fully exploited in the course of the last few years with the formulation of prismatic cohomology in terms of stacks independently developed by Bhatt--Lurie and Drinfeld in \cite{APC}, \cite{PFS} and \cite{Prismatization}. Roughly speaking, \EDIT{similarly to how one can attach to any $p$-adic formal scheme $X$ its \emph{de Rham stack} $X^\dR$, which has the property that coherent cohomology of the structure sheaf $\O_{X^\dR}$ agrees with the ($p$-completed) de Rham cohomology of $X$ and whose construction we will recall in Section \ref{subsect:derham},} they functorially attach a stack $X^\prism$ to any \EDIT{such} $X$ with the feature that coherent cohomology of the structure sheaf $\O_{X^\prism}$ agrees with the (absolute) prismatic cohomology of $X$ \EEDIT{if $X$ is $p$-quasisyntomic (i.e.\ locally has the form $\Spf R$ for $p$-quasisyntomic rings $R$ in the sense of \cite[Def. C.6]{APC})}; correspondingly, the stack $X^\prism$ is called the \emph{prismatisation} of $X$. The upshot of this picture is that various statements about prismatic cohomology and related cohomology theories now admit a ``geometric'' formulation; \EDIT{for example}, the comparison between prismatic cohomology and de Rham cohomology from \cite[Thm. 5.4.2]{APC} can be reinterpreted as saying that, for any smooth $p$-adic formal scheme $X$, there is a functorial isomorphism
\begin{equation*}
(X_{p=0})^\prism\cong X^\dR\;.
\end{equation*}

\EDIT{In the case of de Rham cohomology, one can define a filtered refinement $X^{\dR, +}$ of the stack $X^\dR$ which also takes into account the Hodge filtration, see Section \ref{subsect:fildrstack} for details. In a similar spirit, as} prismatic cohomology is also naturally equipped with a filtration called the \emph{Nygaard filtration}, Bhatt--Lurie and Drinfeld also consider a filtered refinement $X^\N$ of the prismatisation of a $p$-adic formal scheme called the \emph{Nygaard-filtered prismatisation} of $X$. \EDIT{As it turns out, the stack $X^\N$ contains two open substacks
\begin{equation*}
j_\dR: X^\prism\hookrightarrow X^\N\;, \hspace{0.5cm} j_\HT: X^\prism\hookrightarrow X^\N
\end{equation*}
isomorphic to $X^\prism$.} Gluing them together, Bhatt--Lurie and Drinfeld obtain a third stack $X^\Syn$ attached to $X$, which is the so-called \emph{syntomification} of $X$. 

As the naming suggests, \EDIT{the stack $X^\Syn$ is connected to the \emph{syntomic cohomology} of $X$, the idea of which goes back to work of Fontaine--Messing in the 1980s, see \cite{FontaineMessing}; however, their definition only works well for rational coefficients and the first definition of syntomic cohomology also suited for integral coefficients was given by Bhatt--Morrow--Scholze in \cite{THHandPAdicHodgeTheory} via topological Hochschild homology. More precisely, for any \EEDIT{$n\in\Z$}, one can define \EEDIT{line} bundles \EEDIT{$\O\{n\}$} on $X^\Syn$, see \cref{ex:syntomification-bktwists}, and, if $X$ is $p$-quasisyntomic, they have the property that
\begin{equation*}
R\Gamma(X^\Syn, \O\{n\})\cong R\Gamma_\Syn(X, \Z_p(n))\;,
\end{equation*}
where the right-hand side denotes syntomic cohomology in weight $n$ in the sense of \EEDIT{Bhatt--Morrow--Scholze}; this will be proven in upcoming work of Bhatt--Lurie. In practice, syntomic cohomology is often useful as a link between $p$-adic étale cohomology and de Rham cohomology and we will see this connection play out in our proof of \cref{thm:intro-cryset}.}

Perhaps most importantly for us, the formulation of cohomology theories in terms of stacks also furnishes natural categories of coefficients for these cohomology theories: \EDIT{for instance,} we may define the prismatic cohomology of a $p$-adic formal scheme $X$ with coefficients in any \EEDIT{quasi-coherent complex} $E$ on $X^\prism$ simply as $R\Gamma(X^\prism, E)$. The most important such category of coefficients will be the category of \emph{$F$-gauges} on $X$ defined by
\begin{equation*}
\FGauge_\prism(X)\coloneqq \D(X^\Syn)\;.
\end{equation*}
The fact that (perfect) $F$-gauges play a central role in the theory is due to the fact that they seem to be some sort of ``universal coefficients'' for $p$-adic cohomology theories, \EDIT{i.e.\ they appear to be good candidates for a theory of $p$-adic motives}: namely, they admit various realisation functors towards more classical notions of coefficients for $p$-adic cohomology theories, e.g.\ a \emph{de Rham realisation}
\begin{equation*}
T_\dR: \Vect(X^\Syn)\rightarrow \Vect(X^\dR)\cong \Vect^{\nabla}(X)
\end{equation*}
towards the category of vector bundles on $X$ equipped with a flat connection, \EDIT{which arises via pullback from a naturally defined map
\begin{equation*}
i_\dR: X^\dR\rightarrow X^\N\;;
\end{equation*}
a filtered refinement
\begin{equation*}
T_{\dR, +}: \Vect(X^\Syn)\rightarrow \Vect(X^{\dR, +})\;,
\end{equation*}
where the right-hand side may roughly be described as filtered vector bundles on $X$ equipped with a flat connection satisfying Griffiths transversality, \EEDIT{see \cref{rem:fildrstack-vect}}, which again arises from a naturally defined map}
\begin{equation*}
\EDIT{
i_{\dR, +}: X^{\dR, +}\rightarrow X^\N\;;
}
\end{equation*}
an \emph{étale realisation} 
\begin{equation*}
T_\et: \Vect(X^\Syn)\rightarrow \Loc_{\Z_p}(X_\eta)
\end{equation*}
towards the category of pro-étale $\Z_p$-local systems on the generic fibre $X_\eta$ of $X$; \EDIT{and a \emph{crystalline realisation}
\begin{equation*}
T_\crys: \Vect(X^\Syn)\rightarrow \Vect((X_{p=0})^\Syn)\;,
\end{equation*}
which may be postcomposed with a natural functor
\begin{equation*}
\Vect((X_{p=0})^\Syn)\rightarrow \Isoc^\phi(X_{p=0}/\Z_p)
\end{equation*}
towards the category of \EDIT{$F$-isocrystals} on the special fibre $X_{p=0}$ of $X$.} The details of this picture are discussed in Section \ref{subsect:syntomification}.

It is the aim of this thesis to make use of these features of the stacky approach to $p$-adic cohomology theories and, in particular, syntomic cohomology to generalise several known comparison theorems between $p$-adic cohomology theories \EEDIT{so as} to accommodate coefficients. The main result of this type we prove is the following one, \EDIT{which compares} the crystalline cohomology of the special fibre of a smooth proper $p$-adic formal scheme $X$ with the étale cohomology of its generic fibre. \EDIT{Note that, compared to \cref{thm:intro-ccrys}, it is concerned with the \emph{arithmetic} generic fibre $X_\eta$ while \cref{thm:intro-ccrys} uses the \emph{geometric} generic fibre, i.e.\ the base change of $X_\eta$ to an algebraic closure of $\Q_p$. For the relevant definitions about crystalline local systems, we refer to Section \ref{subsect:review-cryslocsys}.}

\begin{thm}[\cref{thm:cryset-main}]
\label{thm:intro-cryset}
Let $X$ be a $p$-adic formal scheme \EEDIT{which is smooth and proper over $\Spf\Z_p$}. For any crystalline local system $L$ on $X_\eta$ with Hodge--Tate weights all at most $-i-1$ for some $i\geq 0$, let $\cal{E}$ be its associated $F$-isocrystal. Then there is a natural morphism
\begin{equation*}
R\Gamma_\crys(X_{p=0}, \cal{E})[\tfrac{1}{p}][-1]\rightarrow R\Gamma_\proet(X_\eta, L)[\tfrac{1}{p}]\;,
\end{equation*}
which induces an isomorphism
\begin{equation*}
\tau^{\leq i} R\Gamma_\crys(X_{p=0}, \cal{E})[\tfrac{1}{p}][-1]\cong\tau^{\leq i} R\Gamma_\proet(X_\eta, L)[\tfrac{1}{p}]
\end{equation*}
and an injection on $H^{i+1}$.
\end{thm}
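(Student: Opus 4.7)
The approach is to produce the comparison map via the syntomic cohomology of a suitable $F$-gauge on $X$, using the étale and crystalline realisations to connect the two sides. The first step is to attach to $L$ a perfect $F$-gauge $\cal{F}\in\Vect(X^\Syn)$ such that $T_\et(\cal{F})\cong L$ and whose crystalline realisation, post-composed with the functor $\Vect((X_{p=0})^\Syn)\rightarrow\Isoc^\phi(X_{p=0}/\Z_p)$ from the introduction, recovers $\cal{E}$. This should follow from the correspondence between crystalline local systems and an appropriate full subcategory of $F$-gauges developed in Section \ref{subsect:review-cryslocsys}. The Hodge--Tate weight hypothesis that all weights of $L$ are at most $-i-1$ should then translate into an ``effectivity after twist'' property: one expects $\cal{F}\cong\cal{F}_0\tensor\cO\{i+1\}$ for some perfect $F$-gauge $\cal{F}_0$ whose Nygaard (equivalently, Hodge) filtration is concentrated in nonnegative degrees.

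Next I would combine the two main comparison theorems established earlier in the thesis. The syntomic--étale comparison with $F$-gauge coefficients produces a natural map
\begin{equation*}
R\Gamma(X^\Syn, \cal{F})\rightarrow R\Gamma_\proet(X_\eta, L)
\end{equation*}
which, by the effectivity property, induces an isomorphism on $\tau^{\leq i}$ and an injection on $H^{i+1}$. The Beilinson fibre square with coefficients identifies the rationalisation $R\Gamma(X^\Syn, \cal{F})[\tfrac{1}{p}]$ with the fibre of a natural comparison map
\begin{equation*}
R\Gamma_\crys(X_{p=0}, \cal{E})^{\phi=1}[\tfrac{1}{p}]\rightarrow R\Gamma_\dR(X, \cal{E})[\tfrac{1}{p}]/\Fil^0\;,
\end{equation*}
and the effectivity property ensures that the target has no cohomology in degrees $\leq i$, since under the weight hypothesis the relevant Hodge filtration pieces of the de Rham realisation sit in degrees $\geq i+1$. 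In this range, $R\Gamma(X^\Syn, \cal{F})[\tfrac{1}{p}]$ therefore agrees with the $\phi=1$-fixed part of the rational crystalline cohomology of $\cal{E}$.

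The shift by $[-1]$ in the statement then emerges from the ``fibre of a fibre is loop'' principle: by definition, $R\Gamma_\crys(X_{p=0}, \cal{E})^{\phi=1}[\tfrac{1}{p}]$ is the fibre of $\phi-1$ on $R\Gamma_\crys(X_{p=0}, \cal{E})[\tfrac{1}{p}]$, so the previous step identifies $R\Gamma(X^\Syn, \cal{F})[\tfrac{1}{p}]$, in degrees $\leq i$, with the fibre of the canonical inclusion $R\Gamma_\crys(X_{p=0}, \cal{E})^{\phi=1}[\tfrac{1}{p}]\hookrightarrow R\Gamma_\crys(X_{p=0}, \cal{E})[\tfrac{1}{p}]$. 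Since the composition of two consecutive arrows in a fibre sequence is nullhomotopic, this fibre is naturally equivalent to $R\Gamma_\crys(X_{p=0}, \cal{E})[\tfrac{1}{p}][-1]$, yielding the desired comparison. Combining with the syntomic--étale map produces the morphism in the theorem and establishes the claimed isomorphism and injectivity ranges.

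The principal obstacle will be the first step: constructing $\cal{F}$ from the given crystalline local system $L$ and verifying that its étale and crystalline realisations indeed reproduce $L$ and $\cal{E}$, together with the translation of the Hodge--Tate weight hypothesis into the precise effectivity property that drives all subsequent degree-range arguments; once this is in place, the remaining steps should follow essentially formally from the syntomic--étale comparison and the Beilinson fibre square with coefficients.
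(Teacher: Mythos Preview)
Your overall strategy is correct in spirit—pass through an $F$-gauge and combine the syntomic–étale comparison with the Beilinson square—but the execution contains a genuine error in the Beilinson-square step, and a secondary gap in the construction of the $F$-gauge.

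\textbf{The main error: the wrong term vanishes.} You write that under the Hodge--Tate weight hypothesis the target $R\Gamma_\dR(X,\cal{E})[\tfrac{1}{p}]/\Fil^0$ has no cohomology in degrees $\leq i$. This is false: what actually vanishes in degrees $\leq i$ is $\Fil^0$ itself, not the quotient. For instance, take $L=\Z_p(n)$ with $n\geq i+1$; then $R\Gamma_\dR/\Fil^0\cong R\Gamma_\dR(X)/\Fil^n_{\Hod}$, which has $H^0\neq 0$. Consequently the fibre sequence $R\Gamma_\Syn\to R\Gamma_\crys^{\phi=1}\to R\Gamma_\dR/\Fil^0$ does not collapse as you claim, and you cannot conclude $R\Gamma_\Syn\cong R\Gamma_\crys^{\phi=1}$ in low degrees. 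The subsequent ``fibre of a fibre is loop'' step is then a non sequitur: even if you had $R\Gamma_\Syn\cong R\Gamma_\crys^{\phi=1}$, this is not the same as $R\Gamma_\crys[-1]$; the identification $\fib(R\Gamma_\crys^{\phi=1}\hookrightarrow R\Gamma_\crys)\cong R\Gamma_\crys[-1]$ is correct, but you have no way to identify $R\Gamma_\Syn$ with that fibre. The paper avoids this by using the Fontaine--Messing form of the Beilinson square (\cref{thm:beilfibsq-fmrat}), which gives directly the fibre sequence
\[
R\Gamma_\dR(X,T_\dR(E))[\tfrac{1}{p}][-1]\longrightarrow R\Gamma_\Syn(X,E)[\tfrac{1}{p}]\longrightarrow \Fil^0_\Hod R\Gamma_\dR(X,T_{\dR,+}(E))[\tfrac{1}{p}]\;.
\]
Now the correct vanishing of $\Fil^0$ in degrees $\leq i$ (via \cref{prop:syntomicetale-cohomologyhod}) makes the first map an isomorphism in the desired range, and the shift $[-1]$ appears for free.

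\textbf{The secondary gap: the $F$-gauge is not a vector bundle.} You assume $\cal{F}\in\Vect(X^\Syn)$, but the construction (via analytic prismatic $F$-crystals and Guo--Li's functor) only produces a reflexive $F$-gauge in $\Perf(X^\Syn)$. Since \cref{thm:syntomicetale-mainfine} is stated for vector bundles, and its proof uses that the pullback to $(X^\Hod)_{p=0}$ sits in degree $0$, one needs an additional argument—this is \cref{lem:cryset-degreezero} in the paper—showing that reflexive $F$-gauges also have this property, so that the vector-bundle proofs carry over verbatim.
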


This generalises a previous result of Colmez--Niziol who obtained essentially the same statement in the special case of $L=\Z_p(n)$ being a Tate twist, see \cite[Cor. 1.4]{ColmezNiziol}. Our strategy to prove this will be to first construct an $F$-gauge corresponding to any crystalline \EDIT{local system} building on the works \cite{GuoReinecke} of Guo--Reinecke and \cite{GuoLi} of Guo--Li; more precisely, we prove that:

\begin{thm}[\cref{thm:cryset-locsysfgauges}]
\label{thm:intro-locsysfgauges}
Let $X$ be a smooth $p$-adic formal scheme. Then there is a fully faithful embedding
\begin{equation*}
\Loc^\crys_{\Z_p}(X_\eta)\hookrightarrow \Perf(X^\Syn)
\end{equation*}
which is a weak right inverse to the étale realisation. The image of a crystalline local system $L$ under this embedding is called the \emph{associated $F$-gauge} $E$ of $L$ and has the property that $T_\dR(E)$ identifies with the associated $F$-isocrystal of $L$.
\end{thm}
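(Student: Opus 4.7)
The strategy is to construct the embedding $\Phi: \Loc^\crys_{\Z_p}(X_\eta) \hookrightarrow \Perf(X^\Syn)$ in two stages, exploiting the presentation of $X^\Syn$ as a gluing of two copies of $X^\prism$ along $X^\N$ via $j_\dR$ and $j_\HT$. In the first stage, given a crystalline local system $L$ on $X_\eta$, the work \cite{GuoReinecke} of Guo--Reinecke produces a prismatic $F$-crystal $\cal{E}^\prism$ on $X^\prism$ whose étale realisation recovers $L$ and whose crystalline realisation is the associated $F$-isocrystal $\cal{E}$. In the second stage, the work \cite{GuoLi} of Guo--Li upgrades this to a \emph{filtered} prismatic $F$-crystal, i.e.\ an object over the Nygaard-filtered prismatisation $X^\N$ whose restrictions along $j_\dR$ and $j_\HT$ recover $\cal{E}^\prism$ and its Frobenius twist. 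Assembling these data yields an object of $\D(X^\Syn) = \FGauge_\prism(X)$, and boundedness of the Hodge--Tate weights of $L$ should guarantee that the resulting gauge lies in $\Perf(X^\Syn)$.

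The weak right inverse property is essentially built into the construction. The étale realisation $T_\et$ is defined by restriction to the Hodge--Tate étale locus of $X^\Syn$, and by the Guo--Reinecke construction this recovers $L$, so that $T_\et\circ\Phi\simeq\id$. Similarly, $T_\dR\circ\Phi(L)$ computes the associated $F$-isocrystal because the map $i_\dR: X^\dR\to X^\N$ isolates the de Rham component, and the Guo--Reinecke construction is designed precisely so that the resulting flat bundle on $X$ agrees with $\cal{E}$. For full faithfulness, I would compute $\RHom_{X^\Syn}(\Phi(L_1), \Phi(L_2))$ using the fibre-square decomposition of $X^\Syn$, which expresses this as an equaliser involving $\RHom$'s on $X^\N$ and $X^\prism$, and match the result with $\RHom_{\Loc^\crys_{\Z_p}(X_\eta)}(L_1, L_2)$ via the full faithfulness of the underlying Guo--Reinecke and Guo--Li embeddings. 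Alternatively, the identity $T_\et\circ\Phi\simeq\id$ together with faithfulness of $T_\et$ on crystalline local systems immediately gives that $\Phi$ is faithful, and fullness can then be reduced to matching the Frobenius-equivariant Hodge filtration data on both sides.

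The main obstacle I anticipate is the second stage: while Guo--Reinecke supplies the unfiltered prismatic $F$-crystal, promoting this to a genuine $F$-gauge requires translating the Guo--Li construction from the site-theoretic formalism of filtered prismatic $F$-crystals into the stacky language of quasi-coherent complexes on $X^\N$ developed by Bhatt--Lurie, and verifying that the Nygaard filtration glues correctly with the Frobenius twist across $j_\dR$ and $j_\HT$. A related technical point is ensuring uniform bounds on the filtration so that the output lies in $\Perf(X^\Syn)$ rather than merely in $\D(X^\Syn)$, and checking functoriality of the assignment $L \mapsto \Phi(L)$ at the level of $\infty$-categories rather than merely object-wise, which may necessitate a universal description of $\Phi$ via pullback from a classifying stack of $F$-gauges.
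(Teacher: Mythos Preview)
Your proposal is correct and follows essentially the same route as the paper: the embedding is the composite of the Guo--Reinecke equivalence $\Loc^\crys_{\Z_p}(X_\eta)\cong\Vect^{\an,\phi}(X_\prism)$, the fully faithful pushforward $\Vect^{\an,\phi}(X_\prism)\hookrightarrow\Perf^\phi(X_\prism)$ along the open immersions $\Spec(A)\setminus V(p,I)\to\Spec A$, and the Guo--Li embedding $\Perf^\phi_{I\text{-tf}}(X_\prism)\hookrightarrow\Perf(X^\Syn)$ via saturated Nygaardian filtrations. Note that full faithfulness then comes for free as a composite of fully faithful functors---no separate $\RHom$ computation is needed---and the obstacle you anticipate dissolves because Guo--Li's result is already stated as landing in $\Perf(X^\Syn)$ in the stacky language.
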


Then the proof is divided into two parts: On the one hand, we show that, in the range which is relevant to us, étale cohomology with coefficients in a crystalline local system agrees with syntomic cohomology with coefficients in the associated $F$-gauge. More precisely, we obtain the following theorem:

\begin{thm}[\cref{thm:syntomicetale-mainfine}]
\label{thm:intro-syntomicetale}
Let $X$ be a smooth qcqs $p$-adic formal scheme. For any vector bundle $F$-gauge $E\in\Vect(X^\Syn)$ with Hodge--Tate weights all at most $-i-1$ for some $i\geq 0$, the natural morphism
\begin{equation*}
R\Gamma(X^\Syn, E)\rightarrow R\Gamma_\proet(X_\eta, T_\et(E))
\end{equation*}
induces an isomorphism
\begin{equation*}
\tau^{\leq i} R\Gamma(X^\Syn, E)\cong\tau^{\leq i} R\Gamma_\proet(X_\eta, T_\et(E))
\end{equation*}
and an injection on $H^{i+1}$.
\end{thm}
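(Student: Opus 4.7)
The natural map in the statement is induced by the étale realization functor $T_\et: \Vect(X^\Syn)\to\Loc_{\Z_p}(X_\eta)$ discussed in the introduction. My plan is to reduce the global statement over a qcqs smooth $X$ to a local assertion via a Zariski cover by small smooth affine opens, exploiting that both sides satisfy Zariski descent along such a cover.

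The starting point in the local case is the fibre sequence
\begin{equation*}
R\Gamma(X^\Syn, E)\to R\Gamma(X^\N, E)\xrightarrow{j_\dR^*-j_\HT^*} R\Gamma(X^\prism, j_\dR^* E)
\end{equation*}
arising from the description of the syntomification $X^\Syn$ as the coequalizer of the two open immersions $j_\dR, j_\HT: X^\prism\hookrightarrow X^\N$; here the second map is the difference of the two restrictions composed with the canonical identification $j_\HT^* E\cong j_\dR^* E$. A parallel description of $R\Gamma_\proet(X_\eta, T_\et(E))$ in terms of the same Nygaard-filtered prismatic data ought to exist, presenting it as the fibre of an analogous Frobenius-type map after a suitable inversion, in the spirit of the classical Fontaine--Messing comparison. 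Making this precise with coefficients in a vector bundle $F$-gauge is essentially the content of a coefficient version of the Beilinson fibre square of Antieau--Mathew--Morrow--Nikolaus in the proper case, the relevant version of which is promised as one of the auxiliary results of this thesis.

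The Hodge--Tate weight hypothesis then controls the cofibre of the comparison map between these two fibre sequences. Concretely, since $E$ has Hodge--Tate weights all $\leq -i-1$, the associated graded pieces of the Nygaard filtration on $E$ along the Hodge--Tate locus are concentrated in filtration degrees high enough that the discrepancy between the ``syntomic'' and the ``étale'' Frobenius fibres is $(i+1)$-connective. The long exact sequence of the resulting fibre sequence then yields the desired isomorphism on $\tau^{\leq i}$ and injection on $H^{i+1}$.

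The principal obstacle I anticipate lies in pinning down the coefficient version of the Beilinson fibre square and tracking the quantitative connectivity bound in terms of the Hodge--Tate weights of a general vector bundle $F$-gauge $E$, rather than just the classical Tate twist case $E=\O\{n\}$. A natural strategy to manage this is a dévissage along the Nygaard filtration of $E$, locally on $X$, reducing the claim to the basic line bundles $\O\{n\}$ where the standard syntomic-versus-étale comparison of Bhatt--Morrow--Scholze and Colmez--Niziol can be invoked.
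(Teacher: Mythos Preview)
Your proposal has a genuine gap in its overall strategy. You plan to compare the coequaliser fibre sequence for $R\Gamma(X^\Syn,E)$ with a ``parallel description'' of $R\Gamma_\proet(X_\eta,T_\et(E))$, invoking the coefficient version of the Beilinson fibre square. But that square, as established in this paper, requires $X$ proper and holds only after inverting $p$; the theorem you are proving is integral and for arbitrary smooth qcqs $X$. Your Zariski reduction to small affines destroys properness, so the auxiliary result you want to appeal to is not available in the situation where you need it. Moreover, the dévissage you sketch---reducing a vector bundle $F$-gauge to Breuil--Kisin twists $\O\{n\}$ via the Nygaard filtration---is not clearly valid: vector bundle $F$-gauges do not in general admit filtrations with graded pieces of the form $\O\{n\}$.

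The paper's actual argument is entirely different and does not touch the Beilinson fibre square. The key input is the section $v_1\in H^0(\Z_p^\Syn,\O\{p-1\}/p)$, which is topologically nilpotent. One first reduces mod $p$, then uses the complete \emph{syntomic filtration} given by multiplication by $v_{1,X}$: its colimit computes $R\Gamma_\proet(X_\eta,T_\et(E))$, and its graded pieces are cohomology on the \emph{reduced locus} $X^\Syn_\red$ (the vanishing locus of $(p,v_1)$). The reduced locus admits an explicit gluing description with components $X^\N_{\dR,+}$, $X^\N_{\HT,c}$, $X^\N_\Hod$, $X^\N_\dR$, and cohomology on each is controlled by concrete linear-algebraic data (filtered complexes with a Sen/Higgs operator). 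The Hodge--Tate weight bound then translates, via a Higgs-bundle description of the Hodge component, into the connectivity estimate $\gr^\Syn_k\in\D^{\geq i+1}$ for $k\geq 1$, which gives the result. None of your outline touches this $v_1$/reduced-locus machinery, which is the actual engine of the proof.
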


\comment{
Moreover, we also show that the above comparison between syntomic cohomology and étale cohomology holds in an even larger range of Hodge--Tate weights if the formal scheme $X$ admits a map to $\Spf\O_C$, see \cref{thm:syntomicetale-mainfineoc}.
}

On the other hand, in order to make the connection between syntomic cohomology and crystalline cohomology, we prove the following version of the Beilinson fibre square from \cite{BeilFibSq} with coefficients, which recovers the previously known result \cite[Thm. 6.17]{BeilFibSq} of Antieau--Mathew--Morrow--Nikolaus in the case of smooth and proper $p$-adic formal schemes upon plugging in a Breuil-Kisin twist $\O\{n\}$ for the $F$-gauge $E$:

\begin{thm}[\cref{cor:beilfibsq-coeffs}]
\label{thm:intro-beilfibsq}
Let $X$ be a $p$-adic formal scheme which is smooth and proper over $\Spf\Z_p$. For any perfect $F$-gauge $E\in\Perf(X^\Syn)$, there is a natural pullback square
\begin{equation*}
\begin{tikzcd}
R\Gamma(X^\Syn, E)[\frac{1}{p}]\ar[r]\ar[d] & R\Gamma((X_{p=0})^\Syn, T_\crys(E))[\frac{1}{p}]\ar[d] \\
R\Gamma(X^{\dR, +}, T_{\dR, +}(E))[\frac{1}{p}]\ar[r] & R\Gamma(X^\dR, T_\dR(E))[\frac{1}{p}]
\end{tikzcd}
\end{equation*}
\EDIT{in the category $\D(\Q_p)$.}
\end{thm}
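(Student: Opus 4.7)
The square is constructed by applying $R\Gamma(-,-)$ to the natural commutative diagram of stacks
\[
\begin{tikzcd}
X^\dR \ar[r] \ar[d] & (X_{p=0})^\Syn \ar[d] \\
X^{\dR,+} \ar[r] & X^\Syn
\end{tikzcd}
\]
together with identifications of $T_\dR(E)$, $T_{\dR,+}(E)$ and $T_\crys(E)$ as pullbacks of $E$ along the corresponding maps. Here the bottom arrow is $i_{\dR,+}$ composed with the gluing $X^\N\to X^\Syn$, the right arrow is functoriality of syntomification under the closed immersion $X_{p=0}\hookrightarrow X$, the top arrow uses the identification $(X_{p=0})^\prism\cong X^\dR$ (arising from \cite[Thm.~5.4.2]{APC}) followed by an embedding $(X_{p=0})^\prism\hookrightarrow (X_{p=0})^\N\to (X_{p=0})^\Syn$, and the left arrow is the forgetful map $X^\dR\to X^{\dR,+}$. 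Commutativity is then a direct verification from the definitions of these maps.

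To verify the pullback property, I would aim to upgrade the assertion to a pushout of stacks after inverting $p$: that is, to show that the displayed square becomes a pushout in the appropriate $\infty$-category of rationalised stacks. Such a stronger statement would immediately yield the desired pullback of $R\Gamma(-, E)[\tfrac1p]$ for every perfect $F$-gauge $E$, and would in particular bypass the need to express arbitrary $E$ in terms of Breuil-Kisin twists (which do not generate $\Perf(X^\Syn)$ in general). In the special case $E=\O\{n\}$ the claim recovers \cite[Thm.~6.17]{BeilFibSq} of Antieau--Mathew--Morrow--Nikolaus, which I would use as both a sanity check and a guide for the general argument.

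To prove the pushout statement, I would work locally on $X$, reduce to $X=\Spf R$ with $R$ smooth over $\Z_p$, and use the Cartier--Witt/Breuil--Kisin descriptions of the relevant stacks from \cite{APC} and \cite{Prismatization}. The crucial input is that inverting $p$ renders Frobenius invertible on the prismatic side, which ought to allow an explicit identification of $X^\Syn[\tfrac1p]$ as the gluing of $X^{\dR,+}[\tfrac1p]$ and $(X_{p=0})^\Syn[\tfrac1p]$ along $X^\dR[\tfrac1p]$. The main obstacle lies exactly here: the interplay between the Nygaard filtration, the Frobenius structure and the two distinct embeddings $j_\dR, j_\HT: X^\prism\hookrightarrow X^\N$ is delicate, and one must check that after inverting $p$ the picture collapses to the claimed gluing. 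Should this stack-level route prove too rigid, a fallback is to identify the fibres of the two vertical arrows in the square directly, showing that both rationally compute the same ``Frobenius-fixed rigid'' cohomology object attached to $X_{p=0}$ with coefficients; one could then reduce to AMMN by a fibre comparison. Properness of $X$ enters crucially to ensure that $R\Gamma$ interacts well with $[\tfrac1p]$ throughout.
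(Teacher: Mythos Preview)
Your construction of the commutative square of stacks is essentially correct and matches the paper's \cref{prop:beilfibsq-square}. The problem lies entirely in your strategy for proving that the induced square on cohomology is cartesian.

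Your plan to ``work locally on $X$, reduce to $X=\Spf R$ with $R$ smooth over $\Z_p$'' is the wrong move: once you localise you lose properness, and the statement is simply not known to follow from a local argument in this framework (indeed, as the paper remarks in the introduction, removing the properness hypothesis is an open problem for this approach). Relatedly, your hoped-for ``pushout of stacks after inverting $p$'' has no clear meaning for $p$-adic formal stacks: there is no rationalisation functor on such stacks that would turn the diagram into a literal pushout, and the paper only ever makes the phrase ``almost pushout up to $p$-isogeny'' precise at the level of cohomology of perfect complexes.

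The paper's route is orthogonal to yours. It first proves the case $X=\Spf\Z_p$ (\cref{thm:beilfibsq-main}) and then deduces the general proper case by pushing $E$ forward along $\pi_{X^\Syn}\colon X^\Syn\to\Z_p^\Syn$; properness enters exactly here, via \cref{prop:finiteness-main}, to guarantee that $\pi_{X^\Syn,*}E$ is again perfect. The heart of the argument is therefore the $\Z_p$ case, and this is \emph{not} attacked geometrically: using the $t$-structure on $\Perf(\Z_p^\Syn)$ one reduces to coherent $E$, invokes \cref{prop:syntomification-cohcrystalline} to identify $R\Gamma(\Z_p^\Syn,E)[\tfrac1p]$ with $\RHom_{\MF^\phi_\adm(\Q_p)}(\Q_p,D_\crys(T_\et(E)[\tfrac1p]))$, and then matches the other three corners with the corresponding $\RHom$'s in $\Mod^\phi(\Q_p)$, $\MF(\Q_p)$, and $\Mod(\Q_p)$ (\cref{lem:beilfibsq-rationalcohfpsyn}, \cref{lem:beilfibsq-etalephimod}, \cref{lem:beilfibsq-rationalcoha1gm}, \cref{lem:beilfibsq-etalefiltered}). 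The pullback property then reduces to a purely representation-theoretic fibre square for admissible filtered $\phi$-modules (\cref{prop:beilfibsq-crys}). None of this is visible from the stack-level or local picture you describe.
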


We note here that this really also gives a new proof of the special case $E=\O\{n\}$ treated in \cite[Thm. 6.17]{BeilFibSq}: Indeed, while our argument uses results from \cite{GuoReinecke}, which in turn are proved using the Beilinson fibre square in the form from \cite[Thm. 6.17]{BeilFibSq}, Du--Liu--Moon--Shimizu have shown in \cite{DuLiuMoonShimizu} that one can also obtain the same results via a different route which does not make use of the Beilinson fibre square, see also \cite[Rem. 1.9]{GuoReinecke}. In particular, in contrast to the proof of Antieau--Mathew--Morrow--Nikolaus, our proof does not make use of topological Hochschild homology and only relies on methods from the theory of prismatic cohomology. Let us also remark here that, \EDIT{during the preparation of this thesis, we learnt that,} in currently still unpublished work, Lurie has recently found yet another proof of the Beilinson fibre square, which also makes use of the stacky formulation of syntomic cohomology, but still significantly differs from our proof.

Moreover, we also show that the pullback square from \cref{thm:intro-beilfibsq} actually comes from a commutative square of stacks 
\begin{equation}
\label{eq:intro-beilfibsq}
\begin{tikzcd}
\Z_p^\dR\ar[r]\ar[d] & \Z_p^{\dR, +}\ar[d] \\
\F_p^\Syn\ar[r] & \Z_p^\Syn\nospacepunct{\;,}
\end{tikzcd}
\end{equation}
which gives a new perspective on the Beilinson fibre square as a statement about the geometry of the stack $\Z_p^\Syn$: Indeed, one may reinterpret \cref{thm:intro-beilfibsq} as saying that, at least from the perspective of cohomology after inverting $p$, the stack $\Z_p^\Syn$ \EDIT{behaves like it} is glued together from the stacks $\F_p^\Syn$ and $\Z_p^{\dR, +}$. Indeed, we also prove that this interpretation is reflected on the level of derived categories:

\begin{thm}[\cref{thm:beilfibsq-categorical}]
\label{thm:intro-beilfibsqcat}
The functor
\begin{equation}
\label{eq:intro-beilfibsqcat}
\Perf(\Z_p^\Syn)[\tfrac{1}{p}]\rightarrow \DF(\Q_p)\times_{\D(\Q_p)} \D(\Mod^\phi(\Q_p))
\end{equation}
induced by the diagram (\ref{eq:intro-beilfibsq}) is a fully faithful embedding. Here, $\DF(\Q_p)$ denotes the filtered derived category of $\Q_p$-vector spaces and $\Mod^\phi(\Q_p)$ is the category of $\phi$-modules over $\Q_p$.
\end{thm}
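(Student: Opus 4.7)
The plan is to deduce full faithfulness from the Beilinson fibre square (\cref{thm:intro-beilfibsq}) applied to internal mapping objects, together with the universal property of $\infty$-categorical pullbacks.

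First, for any $E, F \in \Perf(\Z_p^\Syn)$, I would observe that the internal Hom $\sRHom(E, F)$ is again a perfect $F$-gauge, so \cref{thm:intro-beilfibsq} applies to it on the formal scheme $\Spf\Z_p$. The resulting pullback square has top-left corner $\RHom_{\Z_p^\Syn}(E, F)[\tfrac{1}{p}]$.

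Second, each realisation functor $T_\dR$, $T_{\dR, +}$, $T_\crys$ is defined by pullback along a map of stacks, hence is symmetric monoidal and in particular commutes with internal Hom of dualisable objects. Consequently, the other three corners of the fibre square become mapping spectra of the realisations; for instance,
\[
R\Gamma(\Z_p^{\dR, +}, T_{\dR, +}\sRHom(E, F))[\tfrac{1}{p}] \simeq \RHom_{\Z_p^{\dR, +}}(T_{\dR, +}(E), T_{\dR, +}(F))[\tfrac{1}{p}],
\]
and analogously for $T_\dR$ and $T_\crys$. To match the target of (\ref{eq:intro-beilfibsqcat}), I would then identify these three mapping spectra with Hom groups in $\DF(\Q_p)$, $\D(\Q_p)$ and $\D(\Mod^\phi(\Q_p))$ via natural realisation functors from perfect complexes on the three auxiliary stacks into these abstract target categories.

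Finally, morphism spectra in the $\infty$-categorical pullback $\DF(\Q_p) \times_{\D(\Q_p)} \D(\Mod^\phi(\Q_p))$ are computed, by its universal property, as the pullback of the morphism spectra of the two factors over the morphism spectrum in $\D(\Q_p)$. Comparing this pullback description with the rewritten Beilinson fibre square then shows that (\ref{eq:intro-beilfibsqcat}) induces an equivalence on all morphism spectra, which is precisely full faithfulness. The main obstacle I anticipate is the third step: rigorously identifying $\D(\Z_p^{\dR, +})[\tfrac{1}{p}]$ and $\D(\F_p^\Syn)[\tfrac{1}{p}]$ with (full subcategories of) $\DF(\Q_p)$ and $\D(\Mod^\phi(\Q_p))$ in a manner compatible with the remaining structure of the diagram (\ref{eq:intro-beilfibsq}). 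This will require using the explicit geometry of these stacks over $\Spf\Z_p$---in particular the $\G_m$-action encoding the filtration on $\Z_p^{\dR, +}$, and the interpretation of quasi-coherent sheaves on $\F_p^\Syn$ as Frobenius-equivariant data after inverting $p$.
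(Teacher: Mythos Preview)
Your approach is correct and is essentially the same as the paper's: both reduce to applying the Beilinson fibre square to the internal Hom $\sRHom(E_1,E_2)$ and then identifying the resulting corners with mapping spectra in the target categories. The paper organises things slightly differently --- it first records the intermediate equivalence $\Perf(\Z_p^\Syn)[\tfrac{1}{p}]\simeq\D^b(\Rep^\crys_{\Q_p}(G_{\Q_p}))$ (\cref{prop:beilfibsq-perfzpsyncrys}), translates everything into the language of admissible filtered $\phi$-modules via $D_\crys$, and then invokes the fibre square for that category (\cref{prop:beilfibsq-crys}) --- but this detour is not logically necessary for full faithfulness, and your direct route via \cref{thm:intro-beilfibsq} works just as well. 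The obstacle you flag in your third step is exactly what the paper handles in \cref{lem:beilfibsq-rationalcohfpsyn} and \cref{lem:beilfibsq-rationalcoha1gm}: the functors $\Perf(\F_p^\Syn)\to\D(\Mod^\phi(\Q_p))$ and $\Perf(\Z_p^{\dR,+})\to\DF(\Q_p)$ obtained by inverting $p$ are compatible with $\RHom$ because the first comes from the explicit description of $F$-gauges on $\F_p$ (\cref{ex:syntomification-fgaugesperfd}) and the second from the Rees equivalence; both identifications are short and elementary.
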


\EDIT{In particular, note that the above is a categorical generalisation of \cref{thm:intro-beilfibsq}: indeed, for any $E\in\Perf(\Z_p^\Syn)$, one recovers the statement of \cref{thm:intro-beilfibsq} by computing $\RHom(\O, E)[\tfrac{1}{p}]$ via the right-hand side of (\ref{eq:intro-beilfibsqcat}).}

Our second main result is similar in spirit to our version of the Beilinson fibre square: Namely, we prove the existence of a natural pullback square relating the Nygaard filtration on prismatic cohomology with the Hodge filtration on de Rham cohomology.

\begin{thm}[\cref{cor:nygaardhodge-coeffs}]
\label{thm:intro-nygaardhodge}
Let $X$ be a $p$-adic formal scheme which \EDIT{is smooth and proper} over $\Spf\Z_p$. For any perfect gauge $E\in\Perf(X^\N)$, there is a natural pullback square
\begin{equation*}
\begin{tikzcd}
R\Gamma(X^\N, E)[\frac{1}{p}]\ar[r]\ar[d] & R\Gamma(X^\prism, j_\dR^*E)[\frac{1}{p}]\ar[d] \\
R\Gamma(X^{\dR, +}, i_{\dR, +}^*E)[\frac{1}{p}]\ar[r] & R\Gamma(X^\dR, i_\dR^*E)[\frac{1}{p}]\nospacepunct{\;.}
\end{tikzcd}
\end{equation*}
If the Hodge--Tate weights of $E$ are all at least \EEDIT{$-p$}, see \cref{defi:nygaardhodge-htweights}, then the statement already holds integrally.
\end{thm}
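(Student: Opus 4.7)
My plan is to deduce the Nygaard--Hodge square by a strategy parallel to (but independent of) the proof of the Beilinson fibre square (\cref{thm:intro-beilfibsq}). The first step is to upgrade the square of cohomologies to a commutative diagram of stacks
\[
\begin{tikzcd}
X^\dR \ar[r] \ar[d, hook] & X^\prism \ar[d, "j_\dR", hook] \\
X^{\dR,+} \ar[r, "i_{\dR,+}"] & X^\N
\end{tikzcd}
\]
in which the left vertical is the canonical open immersion (obtained by inverting the Rees/filtration variable) and the top horizontal is obtained by factoring $i_\dR$ through the open substack $j_\dR : X^\prism \hookrightarrow X^\N$. I would verify directly from the constructions of these stacks that this square is Cartesian: the preimage of $j_\dR(X^\prism)$ under $i_{\dR,+}$ should be exactly the open locus $X^\dR \subset X^{\dR,+}$.

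For the rational statement, I would reduce by base change along the smooth proper structure map $X \to \Spf \Z_p$ to the universal case $X = \Spf \Z_p$, and there aim to show that the above Cartesian square provides a Mayer--Vietoris cover of $\Z_p^\N[\tfrac{1}{p}]$: concretely, that the images of $j_\dR$ and $i_{\dR,+}$ together cover $\Z_p^\N[\tfrac{1}{p}]$ with overlap $\Z_p^\dR[\tfrac{1}{p}]$. The decisive input is the rational identification of the Nygaard filtration on absolute prismatic cohomology with the Hodge filtration on de Rham cohomology, which in stacky language should identify the formal complement of $j_\dR$ inside $\Z_p^\N[\tfrac{1}{p}]$ with the image of $\Z_p^{\dR,+}[\tfrac{1}{p}]$. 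Given such a cover, the pullback square for $R\Gamma(-, E)$ follows for any $E \in \Perf(X^\N)$ after inverting $p$ by standard Mayer--Vietoris.

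For the integral statement under the assumption that the Hodge--Tate weights of $E$ all lie in $[-p, \infty)$, I would proceed by d\'evissage along the Hodge--Tate/Nygaard filtration of $E$, reducing to the case where $E$ is concentrated in a single weight $n \geq -p$ and, after twisting by a Breuil--Kisin line bundle, to checking the integral square for a reference sheaf on the appropriately shifted stack. In this range the discrepancy between the integral Nygaard and Hodge filtrations---controlled by the interaction of the Nygaard filtration with divided powers---becomes harmless, and the pullback square remains exact integrally.

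The main obstacle is verifying the rational Mayer--Vietoris cover of $\Z_p^\N[\tfrac{1}{p}]$ at the universal level. While this is closely analogous to the corresponding step in the Beilinson fibre square---and can indeed be viewed as a Nygaard-filtered predecessor of that result---it must be carried out without appealing to the Frobenius identifications that define the syntomification, and so requires a direct analysis of the Rees-style presentation of $\Z_p^\N$ together with its natural open/closed decompositions. Packaging this analysis into a clean categorical assertion in the spirit of \cref{thm:intro-beilfibsqcat} would also streamline the deduction of the cohomological pullback square from the underlying geometry.
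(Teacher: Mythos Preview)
Your reduction to the universal case $X=\Spf\Z_p$ via pushforward along $\pi_{X^\N}$ is correct and is exactly what the paper does (this is \cref{prop:finiteness-main}, using properness, together with the observation from \cref{prop:syntomicetale-cohomologyhod} that pushforward does not decrease the smallest Hodge--Tate weight). The Cartesian claim for the square of stacks is also correct: both $X^\dR\hookrightarrow X^{\dR,+}$ and $j_\dR:X^\prism\hookrightarrow X^\N$ arise as preimages of $\G_m/\G_m\subset\A^1/\G_m$ under compatible Rees maps.

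The genuine gap is your Mayer--Vietoris step. The map $i_{\dR,+}:\A^1/\G_m\to\Z_p^\N$ is \emph{not} an immersion of any kind---the paper explicitly warns about this after \cref{ex:filteredprism-drmap}---so there is no sense in which $(j_\dR,i_{\dR,+})$ forms an open cover of $\Z_p^\N[\tfrac{1}{p}]$, and the formal complement of $j_\dR$ (the locus $t=0$, i.e.\ $\Z_p^{\HT,c}$) is not identified with the image of $\Z_p^{\dR,+}$ even rationally. Your appeal to ``the rational identification of the Nygaard and Hodge filtrations'' is also circular: that identification is precisely \cref{thm:nygaardhodge-motivation}, which is the special case $E=\O\{i\}$ of the statement you are trying to prove, and it gives no automatic leverage for general perfect $E$.

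What the paper actually does at the universal level is quite different. Since $i_{\dR,+}$ is a section of the Rees map, one reduces (by a filtration argument in the Rees direction) to proving the analogous comparison over the closed locus $t=0$: that the restriction map
\[
R\Gamma(\Z_p^{\HT,c},E|_{\Z_p^{\HT,c}})\longrightarrow R\Gamma(\Z_p^\Hod,(i_{\dR,+}^*E)|_{\Z_p^\Hod})
\]
is a $p$-isogeny for any $E\in\Perf(\Z_p^\N)$, and an isomorphism under the weight bound (\cref{thm:nygaardhodge-graded}). This is where the real content lies, and it is proved by the explicit linear-algebraic description
\[
\D(\Z_p^{\HT,c})\cong\widehat{\D}_{\gr,D\text{-nilp}}(\Z_p\{x,D\}/(Dx-xD-1))
\]
developed in Section~\ref{sect:conjdhod}: writing $E|_{\Z_p^{\HT,c}}$ as a filtered complex $\Fil_\bullet V$ with operator $D$, one computes the fibre of the comparison map as $\fib(\Fil_{-1}V\xrightarrow{xD+1}\Fil_{-1}V)$, and then observes that $xD-i$ acts by $-i$ on $\gr_i V$. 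This gives invertibility for $-(p-1)\le i\le -1$ and a $p$-isogeny in general, yielding both the rational and integral statements with the correct weight bound. Your d\'evissage sketch for the integral range is in the right spirit, but without this Weyl-algebra computation there is no mechanism explaining why the bound is exactly $-p$.
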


We note here that putting $E=\O\{n\}$ above recovers the previous result \cite[Prop. 5.5.12]{APC}, albeit only in the case where $X$ is proper. Again, the pullback square above actually comes from a commutative diagram
\begin{equation*}
\begin{tikzcd}
\Z_p^\dR\ar[r]\ar[d] & \Z_p^{\dR, +}\ar[d] \\
\Z_p^\prism\ar[r] & \Z_p^\N
\end{tikzcd}
\end{equation*} 
of the corresponding stacks. 

To prove \cref{thm:intro-nygaardhodge}, we reduce to an assertion on the level of the associated graded and then make use of the fact that we can explicitly describe the derived category of a certain closed substack $\Z_p^{\HT, c}\subseteq \Z_p^\N$ relevant for the corresponding statement on the associated graded level. To establish this description, we more generally develop a stacky formulation of the theory of \emph{diffracted Hodge cohomology} that was introduced in \cite[§4.7]{APC}: \EDIT{For any bounded $p$-adic formal scheme $X$, its diffracted Hodge cohomology $R\Gamma_\dHod(X)$ is a derived $p$-complete complex of $\Z_p$-modules, which can be regarded as a deformation of the de Rham cohomology of $X$. Indeed, if $X$ is smooth, after modding out $p$, the two agree up to Frobenius twists, see \cite[Rem. 4.7.18]{APC}; moreover, the complex $R\Gamma_\dHod(X)$ is naturally equipped with an ascending filtration $\Fil^\conj_\bullet R\Gamma_\dHod(X)$ called the \emph{conjugate filtration} with the property that, for smooth $X$, the associated graded pieces of the conjugate filtration on the diffracted Hodge cohomology of $X$ agree with the ones of the Hodge filtration on the de Rham cohomology of $X$ -- more precisely, they both identify with \EEDIT{the} Hodge cohomology \EEDIT{of $X$}, i.e., for any $n\in\Z$, we have
\begin{equation*}
\gr^\conj_n R\Gamma_\dHod(X)\cong R\Gamma(X, \widehat{\Omega}_{X/\Z_p}^n)[-n]\cong \gr^n_\Hod R\Gamma_\dR(X)\;,
\end{equation*}
where $\widehat{\Omega}_{X/\Z_p}$ denotes the $p$-completed cotangent sheaf of $X$ over $\Spf\Z_p$, as usual. Finally, the complex $R\Gamma_\dHod(X)$ is also equipped with an endomorphism $\Theta$ called the \emph{Sen operator}, which can be used to prove an integral refinement of the Deligne--Illusie theorem for smooth $X$ of dimension less than $p$, see \cite[Ex. 4.7.17, Rem. 4.7.18]{APC}. A brief review of the theory of diffracted Hodge cohomology is given in Section \ref{subsect:review-dhod}.} 

For any bounded $p$-adic formal scheme $X$, we functorially construct stacks $X^\dHod, X^{\dHod, c}$ and $X^{\HT, c}$ which geometrise \EDIT{the diffracted Hodge cohomology $R\Gamma_\dHod(X)$ of $X$ together with its conjugate filtration and the Sen operator.} More precisely, we show:

\begin{thm}[\cref{thm:fildhod-comparison}, \cref{thm:fildhod-comparisonfiltered}, \cref{thm:fildhod-comparisonsen}]
\label{thm:intro-fildhod}
Let $X$ be a bounded $p$-adic formal scheme and assume that $X$ is $p$-quasisyntomic and qcqs.
Then the following are true:
\begin{enumerate}[label=(\roman*)]
\item The pushforward of $\O_{X^\dHod}$ along the map $X^\dHod\rightarrow \Z_p^\dHod$ identifies with $R\Gamma_\dHod(X)$.
\item The pushforward of $\O_{X^{\dHod, c}}$ along the map $X^{\dHod, c}\rightarrow\Z_p^{\dHod, c}$ identifies with $\Fil_\bullet^\conj R\Gamma_\dHod(X)$ under the Rees equivalence \EDIT{(\cref{prop:rees-main})}.
\item Under the equivalence
\begin{equation*}
\D(\Z_p^{\HT, c})\cong \widehat{\D}_{\gr, D-\nilp}(\Z_p\{x, D\}/(Dx-xD-1))
\end{equation*}
\EDIT{from \cref{prop:fildhod-zpntzero}}, the underlying graded \EEDIT{$\Z_p[x]$-complex} of the pushforward of $\O_{X^{\HT, c}}$ along $X^{\HT, c}\rightarrow\Z_p^{\HT, c}$ identifies with $\Fil_\bullet^\conj R\Gamma_\dHod(X)$ under the Rees equivalence \EEDIT{and, under this identification, the operator 
\begin{equation*}
xD-i: \Fil_i^\conj R\Gamma_\dHod(X)\rightarrow \Fil_i^\conj R\Gamma_\dHod(X)
\end{equation*}
identifies with the Sen operator on $\Fil_i^\conj R\Gamma_\dHod(X)$ for all $i\in\Z$.} 
\end{enumerate}
\end{thm}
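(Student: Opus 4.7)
The plan is to prove all three statements by reducing via quasisyntomic descent to an explicit local calculation. Since $X$ is qcqs and $p$-quasisyntomic, it admits a cover by $\Spf R$ with $R$ quasi-regular semiperfectoid; one then verifies that both sides of each comparison in (i)--(iii) satisfy hyperdescent along the associated Čech nerve. For the left-hand sides this follows from flat descent applied to the structure maps $X^\dHod\to\Z_p^\dHod$, $X^{\dHod,c}\to\Z_p^{\dHod,c}$ and $X^{\HT,c}\to\Z_p^{\HT,c}$ (which by construction should be sufficiently well-behaved at the level of quasi-coherent sheaves); for the right-hand sides, descent is built into the definitions of the diffracted Hodge complex, its conjugate filtration and its Sen operator via quasisyntomic sheafification, as in \cite[§4.7]{APC}. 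Thus one reduces to the case $X=\Spf R$ with $R$ quasi-regular semiperfectoid.

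In this local setting, part (i) amounts to identifying the pushforward of $\O_{X^\dHod}$ with the explicit complex $R\Gamma_\dHod(R)$; this should follow by unwinding the construction of $X^\dHod$ relative to $\Z_p^\dHod$ and matching it with the description of $R\Gamma_\dHod(R)$ recalled in Section \ref{subsect:review-dhod}. Part (ii) then follows from (i) after remembering the additional $\G_m$-equivariance encoded by the conjugate-filtered stacks: the pushforward along $X^{\dHod,c}\to\Z_p^{\dHod,c}$ produces a graded $\Z_p[t]$-complex which, under the Rees equivalence \cref{prop:rees-main}, corresponds to a filtered complex, and by construction of $X^{\dHod,c}$ this filtered complex must coincide with $\Fil^\conj_\bullet R\Gamma_\dHod(R)$.

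For part (iii), the statement about the underlying graded complex is immediate from (ii) by base change along $\Z_p^{\HT,c}\to\Z_p^{\dHod,c}$. The new content, and what I expect to be the main obstacle of the proof, is the identification of the operator $xD-i$ on $\Fil^\conj_i R\Gamma_\dHod(R)$ with the Sen operator. My plan is to reduce further to a universal computation: using \cref{prop:fildhod-zpntzero}, the action of $D$ is made fully explicit in terms of the presentation $\Z_p\{x,D\}/(Dx-xD-1)$, while the Sen operator can be pinned down on a canonical generating class, for instance the Breuil--Kisin twist $\O\{1\}$ on $\Z_p^\dHod$. Once the two operators are shown to coincide on this universal generator, naturality in $X$ transports the identification to an arbitrary quasi-regular semiperfectoid $R$, and hence, via the descent step above, to the original $X$.
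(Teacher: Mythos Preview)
Your overall descent strategy is reasonable for the final step of (ii), but the proposal misses the key technical content and in places inverts the logical order of the paper.

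For (i), the paper does \emph{not} descend to quasiregular semiperfectoid rings. Instead it reduces to affine $X=\Spf R$ by Zariski localisation and then uses base change for the cartesian square defining $R^\dHod$ over $R^\HT$: since $\pi_{R^\HT,*}\O_{R^\HT}=\H_{\ol{\prism}}(R)$ is already known (from the prismatic comparison restricted to the Hodge--Tate locus), pulling back along $\Spf\Z_p\to\Z_p^\HT\cong B\G_m^\sharp$ immediately gives $R\Gamma_\dHod(R)$. Your plan to ``unwind the construction'' for qrsp $R$ is circular in the paper's logic: the explicit identification $R^\dHod\cong\Spf\widehat{\Omega}^\dHod_R$ for qrsp $R$ (\cref{prop:fildhod-dhodqrsp}) is \emph{deduced from} (i), not used to prove it.

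For (ii), the phrase ``by construction of $X^{\dHod,c}$ this filtered complex must coincide with $\Fil^\conj_\bullet R\Gamma_\dHod(R)$'' hides the entire difficulty. The paper's route is: first prove the smooth case (\cref{lem:fildhod-comparisonfilteredsmooth}) by showing that $\pi_{X^{\dHod,c},*}\O$ is $u$-complete with $n$-th graded piece in degree $n$, hence equals the \emph{canonical} filtration on its underlying object (which is the conjugate filtration for smooth $X$). The $u$-completeness is the real work, proved via a gerbe argument: $X^{\dHod,c}\to X\times\A^1/\G_m$ is a $\V(\T_{X/\Z_p}(1))^\sharp$-gerbe, and one computes on the trivial gerbe. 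Only then does the paper handle qrsp $R$ (\cref{thm:fildhod-dhodconjqrsp}) by left Kan extending the natural map from the smooth case and checking it is an isomorphism on the stratification $\G_m/\G_m\sqcup B\G_m$ of $\A^1/\G_m$; finally, quasisyntomic descent gives the general case. Your proposal skips the smooth step entirely, and without it there is no mechanism to identify the filtration.

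For (iii), your direction of base change is reversed (the cover is $\A^1/\G_m\to\Z_p^{\HT,c}$, not the other way), though the conclusion that the underlying graded object comes from (ii) is correct. More importantly, the Sen operator identification in the paper is much simpler than your ``universal generator'' plan: since the conjugate filtration is complete, it suffices to check on each $\gr_i$, where $xD-i$ acts by $-i$ (as $xD$ factors through $\Fil_{i-1}$) and the Sen operator also acts by $-i$ by the Hodge--Tate comparison. No universal computation on $\O\{1\}$ is needed.
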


While the diffracted Hodge stack $X^\dHod$ has already been introduced in \cite[Constr. 3.8]{PFS}, albeit without an explicit mention of the comparison result (i) from \cref{thm:intro-fildhod}, to our current knowledge, neither the filtered refinement $X^{\dHod, c}$ nor the further refinement $X^{\HT, c}$ \EEDIT{appear} anywhere in the literature yet.

Finally, let us briefly discuss some open questions and possible directions of further research: First, note that \cref{thm:intro-beilfibsq} and consequently also \cref{thm:intro-cryset} only \EEDIT{apply} to smooth formal schemes $X$ which are proper over $\Spf\Z_p$. However, in reality, the properness assumption should not be necessary: Indeed, the previous \EDIT{results \cite[Cor. 1.4]{ColmezNiziol} of Colmez--Niziol and \cite[Thm. 6.17]{BeilFibSq} of Antieau--Mathew--Morrow--Nikolaus do} not need this additional assumption and the same is true for Lurie's new proof of the \EDIT{Beilinson fibre square} using the syntomification. However, at this time we are not sure if there is a way to remove this assumption also in our approach: It seems like one should be able to use a noetherian approximation argument to do this, \EDIT{i.e.\ write a quasi-coherent sheaf $E$ on $\Z_p^\Syn$ as the filtered colimit of its coherent subsheaves and then use the fact that filtered colimits commute with cohomology on $\Z_p^\Syn$ up to $p$-completion under favourable conditions, see \cite[Lem. 6.5.17]{FGauges}} -- however, to make this work, one needs to control the $p$-torsion measuring the failure of the statement of \cref{thm:intro-beilfibsq} to hold integrally, \EDIT{i.e., for any coherent sheaf $E$ on $\Z_p^\Syn$, one needs to bound the power of $p$ which annihilates the cofibre of the natural map
\begin{equation*}
\begin{tikzcd}[ampersand replacement=\&]
\cofib(R\Gamma(X^\Syn, E)\rightarrow R\Gamma((X_{p=0})^\Syn, T_\crys(E)))\ar[d] \\ \cofib(R\Gamma(X^{\dR, +}, T_{\dR, +}(E))\rightarrow R\Gamma(X^\dR, T_\dR(E)))
\end{tikzcd}
\end{equation*}
induced by the square (\ref{eq:intro-beilfibsq}), but} we currently do not know how to do \EDIT{this} \EEDIT{using our approach}. Analogous remarks also apply to the result from \cref{thm:intro-nygaardhodge}.

Second, even if one is not able to remove the properness hypothesis from \cref{thm:intro-beilfibsq} using our approach, one might still hope to be able to prove an analogous statement in the geometric case, i.e.\ for formal schemes $X$ which are \EEDIT{smooth and} proper over $\Spf\O_C$. This is interesting because it would most likely allow for a stacky proof of \EEDIT{the generalisation of the crystalline comparison theorem (\cref{thm:intro-ccrys}) to the case of coefficients in any crystalline local system on $X_\eta$ given in} \cite[Cor. C]{GuoReinecke} using the strategy from the proofs of \cite[Cor. 5.15, Prop. 5.20]{ColmezNiziol}.

\bigskip

\textbf{Outline of the thesis.} The thesis will be structured as follows: In Section \ref{sect:stacks}, we first review the basic definitions on stacks we are going to use and then move on to first discuss the construction of the de Rham stack and its Hodge-filtered variant in Section \ref{subsect:derham} and then introduce the prismatisation along with its Nygaard-filtered refinement as well as the syntomification and present their most important properties in Section \ref{subsect:prismsyn}. Subsequently, Section \ref{sect:conjdhod} will be devoted to developing the theory of the diffracted Hodge stack and its refinements, which we will then use in Section \ref{sect:nygaardhodge} to prove \cref{thm:intro-nygaardhodge}. In Section \ref{sect:beilfibsq}, we will then turn our attention to the Beilinson fibre square and first construct the commutative square (\ref{eq:intro-beilfibsq}) in Section \ref{subsect:beilfibsq-construction}, then use the following sections to prepare for our proof of \cref{thm:intro-beilfibsq}, which we will then carry out in Section \ref{subsect:beilfibsq-main}; finally, we will use \cref{thm:intro-beilfibsq} to relate our notion of syntomic cohomology to Fontaine--Messing syntomic cohomology and then prove \cref{thm:intro-beilfibsqcat}. After proving \cref{thm:intro-syntomicetale} in Section \ref{sect:syntomicetale} using our description of the reduced locus of the syntomification, which we introduce and discuss in sections \ref{subsect:syntomicetale-red} and \ref{subsect:syntomicetale-compred}, we use Section \ref{sect:cryset} to first \EDIT{review the main definitions surrounding crystalline local systems and subsequently move on to proving} \cref{thm:intro-locsysfgauges} and then \cref{thm:intro-cryset}. Finally, we devote Appendix \ref{sect:basechange} to the proof of some base change statements for various cartesian diagrams of stacks we will need throughout the thesis. In Appendix \ref{sect:finiteness}, we show that, for a $p$-adic formal scheme $X$ \EEDIT{which is smooth and proper over $\Spf\Z_p$}, pushforward along $X^\Syn\rightarrow\Z_p^\Syn$ preserves perfect $F$-gauges.

\bigskip

\textbf{Notations and conventions.} \EDIT{We assume knowledge of prismatic cohomology as introduced in \cite{Prisms} and of various other more standard $p$-adic cohomology theories such as de Rham cohomology, crystalline cohomology and $p$-adic (pro-)étale cohomology. Moreover, we freely make use of the theory of derived algebraic geometry as laid out in \cite{DAG} and of the theory of adic and, in particular, perfectoid spaces, see \cite{BerkeleyLectures} for an introduction; \EEDIT{however,} the reader will probably also get by without detailed knowledge of these two subjects. Furthermore, we use the language of $\infty$-categories in the style of Lurie throughout, see \cite{HTT}.

Unless explicitly stated otherwise, all our pullbacks and pushforwards are in the derived sense; i.e., when we write $f_*$ for a map $f: \cal{X}\rightarrow\cal{Y}$ of schemes/formal schemes/stacks, we really mean the derived pushforward $Rf_*: \cal{D}(\cal{X})\rightarrow\cal{D}(\cal{Y})$. Finally, when we speak of ``completeness'' of a module or complex with respect to some ideal, we will usually mean derived completeness as defined, for example, in \cite[Tag 091N]{Stacks} and distinguish any other usage of the term ``complete'' by speaking about ``classical completeness''. \EEDIT{For a ring $A$ and an ideal $I$,} we will use $\widehat{\D}(A)$ to denote the category of derived $I$-complete complexes of $A$-modules; \EEDIT{here, we omit the ideal $I$ from the notation as it will generally be clear from the context.} \EEDIT{Moreover, in the aforementioned situation, we denote the derived $I$-completion of a complex $M$ of $A$-modules by $M_I^\wedge$.}

Throughout, $p$ will denote a fixed prime and we fix an algebraic closure $\ol{\Q}_p$ of $\Q_p$, \EEDIT{whose completion we will denote by $C$.}
}

\bigskip

\textbf{Acknowledgements.} \EDIT{I heartily want to thank my advisor Guido Bosco for introducing me to the world of $p$-adic Hodge theory and suggesting this fascinating topic. I am very grateful for his continued support throughout my work on this thesis, for many long and fruitful discussions, his constant willingness to answer all of my questions and lots of helpful comments on an earlier version of this thesis. I also thank Bhargav Bhatt for suggesting that one may prove \cref{prop:beilfibsq-perfzpsyncrys} via a cohomology computation. \comment{Finally, I wish to express my immense gratitude to my parents who have always had my back and supported me in whichever way they could. I also want to deeply thank my girlfriend for always being there for me.} 
}

\newpage

\section{Stacks and $p$-adic cohomology theories}
\label{sect:stacks}

We present various stacks attached to a smooth $p$-adic formal scheme computing different cohomology theories. Throughout, we work with the fpqc topology on the category of commutative rings. In general, the \EDIT{idea} will be that, given a cohomology theory $R\Gamma_\typ(-)$ defined on a certain full subcategory $\C$ of $p$-adic formal schemes, we will functorially attach to each $X\in\C$ a stack $X^\typ$ with the property that, in good cases, coherent cohomology of the structure sheaf on the stack $X^\typ$ computes $R\Gamma_\typ(X)$, i.e.\
\begin{equation*}
R\Gamma(X^\typ, \O_{X^\typ})\cong R\Gamma_\typ(X)\;.
\end{equation*}
In this picture, comparison theorems and natural maps between cohomology theories \EDIT{should} translate into isomorphisms and maps between the corresponding stacks.

\subsection{Preliminaries on stacks}

\EDIT{Throughout, we will work with stacks in groupoids since the additional generality of stacks in $\infty$-groupoids is not really necessary for us. However, we remark that this choice does not affect any of the constructions or results; in particular, the outcome of the gluing procedure used to define the syntomification in \cref{defi:syntomification-def} does not depend on whether one works in the framework of stacks in groupoids or stacks in $\infty$-groupoids.}

\EDIT{
Most of the time, we will actually consider \emph{($p$-adic) formal stacks}, i.e.\ stacks $\cal{X}$ such that $\cal{X}(S)$ is empty whenever $S$ is not $p$-nilpotent. In general, any ring $R$ equipped with the $I$-adic topology for an ideal $I\subseteq R$ containing $p$ defines a formal stack $\Spf R$ via
\begin{equation*}
\Spf R\coloneqq \colim_n \Spec R/I^n\;.
\end{equation*}
Note that, if $R$ is classically $I$-complete, i.e.\ if $R\cong\lim_n R/I^n$, then the groupoid $(\Spf R)(S)$ identifies with the groupoid of continuous morphisms from $R$ to the discrete ring $S$, i.e.\ morphisms $R\rightarrow S$ killing some power of $I$.

Clearly, one can straightforwardly turn any stack $\cal{X}$ into a formal stack by passing to the stack $\cal{X}\times\Spf\Z_p$, whose fibre over a test ring $S$ is given by $\cal{X}(S)$ if $S$ is $p$-nilpotent and is empty otherwise. As we will soon work exclusively with formal stacks, however, we will often suppress the product with $\Spf\Z_p$ from the notation and restrict to the category of $p$-nilpotent rings $S$ as our test objects.
}

We \EDIT{now} recall the usual definition for properties of morphisms of stacks.

\begin{defi}
Let $\cal{P}$ be a property of morphisms of schemes which is stable under base change and fpqc-local on the target. Then we say that a morphism $f: \cal{X}\rightarrow\cal{Y}$ of stacks has property $\cal{P}$ if, for any morphism $\Spec S\rightarrow \cal{Y}$ from an affine scheme, the fibre product $\Spec S\times_{\cal{Y}} \cal{X}$ is a scheme and the base-changed morphism $\Spec S\times_{\cal{Y}} \cal{X}\rightarrow\Spec S$ has property $\cal{P}$.
\end{defi}

The corresponding definition for properties of stacks has to be modified slightly \EDIT{to accommodate the case of formal stacks:}

\begin{defi}
Let $\cal{P}$ be a property of \EDIT{formal} schemes which is fpqc-local. Then we say that a \EDIT{formal} stack $\cal{X}$ has property $\cal{P}$ if there is an fpqc cover from a formal scheme $\Spf R\rightarrow\cal{X}$ such that \EDIT{$\Spf R$} has property $\cal{P}$.
\end{defi}

Our conventions regarding quasi-coherent complexes on stacks will largely follow \cite{DAG}. Most importantly, we adopt the following definition:

\begin{defi}
Let $\cal{X}$ be a stack. For a ring $S$, denote by $\D(S)$ the derived $\infty$-category of $S$-modules and let $\Perf(S)$ be the full subcategory of perfect complexes; \EEDIT{moreover, let $\Vect(S)$ be the category of finite projective $S$-modules.} Then the \emph{$\infty$-category of quasi-coherent complexes} on $\cal{X}$ is defined by
\begin{equation*}
\D(\cal{X})\coloneqq\lim_{\Spec S\rightarrow\cal{X}} \D(S)\;,
\end{equation*}
\EDIT{where the limit is taken in the $\infty$-category of $\infty$-categories.} Similarly, the \emph{$\infty$-category of perfect complexes} on $\cal{X}$ is defined by
\begin{equation*}
\Perf(\cal{X})\coloneqq\lim_{\Spec S\rightarrow\cal{X}} \Perf(S)
\end{equation*}
and the \emph{$\infty$-category of vector bundles} on $\cal{X}$ is given by
\begin{equation*}
\Vect(\cal{X})\coloneqq\lim_{\Spec S\rightarrow\cal{X}} \Vect(S)\;.
\end{equation*}
\end{defi}

\EDIT{Note that these definitions make sense since the functor $R\mapsto \D(R)$ satisfies fpqc descent by \cite[Thm. 1.1.1]{DAGVIII} and being a perfect complex or a vector bundle are fpqc-local properties, respectively. In particular, observe that,} if $\cal{X}=\Spec R$ is an affine scheme, we have $\D(\cal{X})=\D(R)$. Perhaps most importantly for what follows, for a ring $R$ endowed with the $p$-adic topology having bounded $p^\infty$-torsion, the category $\D(\Spf R)$ identifies with the category $\widehat{\D}(R)$ of derived $p$-complete complexes of $R$-modules, see \cite[Prop. A.11]{DeltaRings}. 

The definition above also immediately furnishes for any morphism $f: \cal{X}\rightarrow\cal{Y}$ of stacks a pullback functor $f^*: \D(\cal{Y})\rightarrow\D(\cal{X})$, which recovers the usual derived pullback in the case of affine schemes. As this functor commutes with all colimits (and $\D(\cal{X}), \D(\cal{Y})$ are presentable), it admits a right-adjoint $f_*$ \EDIT{by the adjoint functor theorem}.

\begin{defi}
Let $\cal{X}$ be a stack. An object $F\in\D(\cal{X})$ is called \emph{connective} if $f^*F\in\D(S)$ is connective with respect to the standard $t$-structure for all $f: \Spec S\rightarrow\cal{X}$. The full subcategory of $\D(\cal{X})$ spanned by connective objects is denoted $\D^{\leq 0}(\cal{X})$.
\end{defi}

\EEDIT{Using the commutativity of pullbacks with colimits, one sees that the inclusion $\cal{D}^{\leq 0}(\cal{X})\rightarrow\cal{D}(\cal{X})$ commutes with colimits and hence admits a right-adjoint by the adjoint functor theorem; thus, by \cite[§1]{Aisles}, we conclude} that $\cal{D}^{\leq 0}(\cal{X})$ defines the connective part of a $t$-structure $(\D^{\leq 0}(\cal{X}), \D^{\geq 0}(\cal{X}))$ on the category $\D(\cal{X})$ for a stack $\cal{X}$. Note that, if $\cal{X}=\Spec R$ is an affine scheme, this agrees with the usual $t$-structure on $\D(R)$. Moreover, if $R$ is endowed with the $p$-adic topology and has bounded $p^\infty$-torsion, then we also recover the usual $t$-structure on $\widehat{\D}(R)$ by \cite[Ex. A.18]{DeltaRings}.

\subsection{De Rham stacks}
\label{subsect:derham}

The idea of the de Rham stack and its filtered refinement goes back to Simpson, see \cite[§5]{Simpson}. Here, we roughly follow the treatment of Bhatt in \cite[Ch. 2]{FGauges}. 

\subsubsection{Some preparations}

To begin explaining the construction, we denote by $\G_a$ the ring scheme whose functor of points is given by sending a ring $S$ to itself and which is represented by $\Spec\Z[t]$.

\begin{defi}
Let $\G_a^\sharp$ be the PD-hull of $\G_a$ at the origin, i.e.\ $\G_a^\sharp=\Spec\Z[t, \frac{t^2}{2!}, \frac{t^3}{3!}, \dots]$. It becomes a group scheme by virtue of the coalgebra structure on $\Z[t, \frac{t^2}{2!}, \frac{t^3}{3!}, \dots]$ given by
\begin{equation*}
\EDIT{
\begin{split}
\Z[t, \tfrac{t^2}{2!}, \tfrac{t^3}{3!}, \dots]&\rightarrow \Z[x, \tfrac{x^2}{2!}, \tfrac{x^3}{3!}, \dots]\tensor\Z[y, \tfrac{y^2}{2!}, \tfrac{y^3}{3!}, \dots] \\
\frac{t^n}{n!}&\mapsto \frac{(x+y)^n}{n!}=\sum_{i+j=n} \frac{x^i}{i!}\cdot \frac{y^j}{j!}\;.
\end{split}
}
\end{equation*}
\end{defi}

Thus, for a ring $S$, an $S$-valued point of $\G_a^\sharp$ is an element of $S$ together with a compatible system of divided powers, i.e.\ elements $s_1, s_2, \ldots\in S$ such that
\begin{equation}
\label{eq:div-powers}
s_ms_n=\binom{m+n}{n}s_{m+n}
\end{equation}
for all $m, n\geq 1$. Note that the canonical map $\Z[t]\hookrightarrow \Z[t, \frac{t^2}{2!}, \frac{t^3}{3!}, \dots]$ induces a morphism $\G_a^\sharp\rightarrow\G_a$. Moreover, there is a natural multiplicative action of $\G_a$ on $\G_a^\sharp$ induced by the coaction
\begin{equation*}
\EDIT{
\begin{split}
\Z[t, \tfrac{t^2}{2!}, \tfrac{t^3}{3!}, \dots]&\rightarrow \Z[t, \tfrac{t^2}{2!}, \tfrac{t^3}{3!}, \dots]\tensor \Z[x] \\
\frac{t^n}{n!}&\mapsto \frac{(tx)^n}{n!}=\frac{t^n}{n!}\cdot x^n\;.
\end{split}
}
\end{equation*}

For later use, we record a technical lemma about rings admitting no nontrivial $\G_a^\sharp$-torsors.

\begin{lem}
\label{lem:gasharp-acyclic}
There is a basis of the fpqc topology consisting of $\G_a^\sharp$-acyclic rings (i.e.\ rings for which all $\G_a^\sharp$-torsors are trivial).
\end{lem}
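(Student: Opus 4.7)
The plan is to prove the stronger statement that every ring $R$ admits a faithfully flat extension $R \to R^\dagger$ with $H^1_{\mathrm{fpqc}}(\Spec R^\dagger, \G_a^\sharp) = 0$, which immediately yields the claimed basis. The key observation is that $\G_a^\sharp = \Spec \Z\langle t\rangle$ is affine and faithfully flat over $\Spec \Z$ (since the divided power algebra $\Z\langle t\rangle$ is a free $\Z$-module of countable rank), so every $\G_a^\sharp$-torsor over $\Spec R$ is itself affine and faithfully flat over $\Spec R$, hence an fpqc cover trivialising itself.

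To trivialise all torsors simultaneously, I would iterate this observation. For a given $R$, the set $I_R := H^1_{\mathrm{fpqc}}(\Spec R, \G_a^\sharp)$ is small, so choosing representatives $\{T_\alpha = \Spec A_\alpha\}_{\alpha \in I_R}$, the $R$-algebra
\begin{equation*}
R_1 := \bigotimes\nolimits_{\alpha \in I_R}^R A_\alpha
\end{equation*}
is faithfully flat over $R$ and kills every class in $I_R$: indeed, the projection onto the $\alpha$-th factor furnishes a section of $T_\alpha \times_R \Spec R_1 \to \Spec R_1$. Iterating, possibly transfinitely with colimits at limit ordinals, produces a tower $R = R_0 \to R_1 \to R_2 \to \cdots$ of faithfully flat extensions, and a cardinality argument should show that the process stabilises at some ordinal, yielding the desired $R^\dagger$.

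The main obstacle lies in verifying the termination step, i.e.\ that no essentially new $\G_a^\sharp$-torsors appear at sufficiently large stages. A convenient reformulation is to exploit that $\G_a^\sharp = \lim_n \G_a^{\sharp, \leq n}$, where $\G_a^{\sharp, \leq n} := \Spec \Z[t, t^2/2!, \ldots, t^n/n!]$ is of finite type over $\Z$ and thus well-behaved under filtered colimits of rings; compatible trivialisations at each finite level can then be assembled, via a Mittag--Leffler type argument, into a trivialisation of the original $\G_a^\sharp$-torsor. Checking the Mittag--Leffler condition and the requisite cardinality bound for the transfinite iteration is the principal technical difficulty, but neither poses a conceptual obstruction once one remembers that the coaction data defining a $\G_a^\sharp$-torsor over any base is itself bounded in cardinality by that base.
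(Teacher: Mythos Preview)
Your overall strategy---build a faithfully flat extension by transfinitely tensoring in torsors---is the same as the paper's. The paper tensors in \emph{all} countably presented faithfully flat algebras rather than just torsors, but this difference is inessential.

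Where your proposal diverges is the termination argument, and here there is a genuine gap. The process does not literally ``stabilise''; rather, one shows that at a specific ordinal, namely $\omega_1$, every $\G_a^\sharp$-torsor is already trivial. The argument the paper uses is the one you allude to only vaguely at the end: since $\G_a^\sharp$ is \emph{countably} presented, any $\G_a^\sharp$-torsor over $R_{\omega_1} = \colim_{\alpha < \omega_1} R_\alpha$ is itself a countably presented faithfully flat $R_{\omega_1}$-algebra with countably presented coaction data, and therefore descends (as a torsor) to some $R_\alpha$ with $\alpha$ countable---this is where the uncountable cofinality of $\omega_1$ is used. By construction of $R_{\alpha+1}$, it is then split over $R_{\alpha+1}$ and hence over $R_{\omega_1}$.

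Your proposed route via the finite-type truncations $\G_a^{\sharp,\leq n}$ and a Mittag--Leffler argument is a detour that, as written, does not close: your tower $R_\alpha$ only kills $\G_a^\sharp$-torsors, not $\G_a^{\sharp,\leq n}$-torsors, so there is no reason the pushforward of a given torsor to the $n$-th truncation should be trivial over $R^\dagger$. Even if one patched this by also tensoring in all $\G_a^{\sharp,\leq n}$-torsors, assembling compatible sections across $n$ into a single section of the original $\G_a^\sharp$-torsor is delicate and unnecessary. The countable-presentation descent argument above replaces all of this cleanly.
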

\begin{proof}
As $\G_a^\sharp$ is countably presented, this follows from a variant of the proof of \cite[Lem. 5.3.1]{FlatPurity}, which we briefly \EDIT{outline here}: Namely, for any ring $S$, fix a set $\cal{S}$ of representatives \EEDIT{of} isomorphism classes of countably presented faithfully flat $S$-algebras and put 
\begin{equation*}
\EEDIT{S_1\coloneqq \bigotimes_{T\in\cal{S}} T\coloneqq \colim_{\cal{S}'\subseteq\cal{S}\text{ finite}} \bigotimes_{T\in\cal{S}'} T\;.}
\end{equation*}
Iterating this procedure, we obtain a tower of \EEDIT{faithfully flat} $S$-algebras $S_1\rightarrow S_2\rightarrow\dots$ and we put $S_\omega\coloneqq\colim_n S_n$. Continuing in this manner, we obtain $S$-algebras $S_\alpha$ for each countable ordinal $\alpha$ by transfinite induction and finally put $S_{\omega_1}\coloneqq\colim_\alpha S_\alpha$. 

\EEDIT{Now observe that the map $S\rightarrow S_{\omega_1}$ is a flat cover: Indeed, each $S\rightarrow \bigotimes_{T\in\cal{S}'} T$ for $\cal{S}'\subseteq\cal{S}$ is a flat cover since flat covers are stable under base change and composition; this then implies that $S_1$ is flat over $S$ as flatness is stable under filtered colimits. Moreover, note that any transition map in the colimit defining $S_1$ is also faithfully flat; in particular, all transition maps are injective. This means that, for any nonzero $S$-module $M$, we have 
\begin{equation*}
M\tensor_S S_1=\colim_{\cal{S}'\subseteq\cal{S}\text{ finite}} M\tensor_S \bigotimes_{T\in\cal{S}'} T\;,
\end{equation*}
where each term of the colimit is nonzero and each transition map is injective, hence $M\tensor_S S_1$ is nonzero itself and, consequently, $S_1$ is a faithfully flat $S$-algebra. Using similar arguments, one can now indeed deduce that $S\rightarrow S_{\omega_1}$ is a flat cover by transfinite induction.}

\EEDIT{Now note that} \EDIT{the ring $S_{\omega_1}$ admits no nonsplit countably presented flat cover $S_{\omega_1}\rightarrow R$: indeed, as $R$ is a countably presented $S_{\omega_1}$-algebra, it descends to a countably presented $S_\alpha$-algebra $\widetilde{R}$ for some countable ordinal $\alpha$ and $S_\alpha\rightarrow\widetilde{R}$ is faithfully flat since this can be checked after base change to $S_{\omega_1}$. However, by construction of $S_{\alpha+1}$, this implies that there is a morphism $\widetilde{R}\rightarrow S_{\alpha+1}\rightarrow S_{\omega_1}$ of $S_\alpha$-algebras, which yields the desired splitting of $S_{\omega_1}\rightarrow R$. In particular, we conclude that any $\G_a^\sharp$-torsor over $S_{\omega_1}$ is split, i.e.\ trivial.}
\end{proof}

\begin{defi}
For a stack $\cal{X}$ and a vector bundle $E\in\Vect(\cal{X})$, we define a stack $\V(E)$ whose functor of points is given by
\begin{equation*}
\EDIT{
\V(E)(S)=\{\text{pairs $(f: \Spec S\rightarrow\cal{X}, f^*E^\vee\rightarrow\O_{\Spec S})$}\}^{\cong}\;.
}
\end{equation*}
\end{defi}

Note that, for $\cal{X}$ a scheme, this recovers the usual construction of the vector bundle associated to a locally free sheaf \EDIT{of finite rank}. Moreover, similarly to the construction of $\G_a^\sharp$ from $\G_a$, we may construct the PD-hull $\V(E)^\sharp$ of the zero section of any vector bundle $\V(E)$ \EDIT{as the substack of $\V(E)$ given on points by
\begin{equation*}
\begin{split}
\V(E)^\sharp(S)=\{&\text{pairs $(f: \Spec S\rightarrow\cal{X}, f^*E^\vee\rightarrow\O_{\Spec S})$} \\
&\hspace{0.5cm}\text{such that $f^*E^\vee\rightarrow\O_{\Spec S}$ admits divided powers}\}^{\cong}\;.
\end{split}
\end{equation*}
Here, by divided powers for a map $s_1: f^*E^\vee\rightarrow\O_{\Spec S}$, we mean maps $s_n: f^*E^{\tensor n\vee}\rightarrow\O_{\Spec S}$ for $n\geq 2$ satisfying the relation (\ref{eq:div-powers}) for all $m, n\geq 1$.}

Before we can move on, we need to make a short technical digression on quasi-ideals as described in \cite[§1.3]{Prismatization}. For proofs and details, we refer the reader to \cite{RingGroupoid}. Namely, first consider a morphism $d: A\rightarrow B$ of abelian groups. Then $A$ acts on $B$ via $a.b\coloneqq b+d(a)$ and we denote the corresponding action groupoid, \EDIT{i.e.\ the category whose objects are the elements of $B$ and which has a morphism from $b_1$ to $b_2$ for every $a\in A$ with $a.b_1=b_2$}, by $\Cone(d)$. Then $\Cone(d)$ will inherit a symmetric monoidal structure from the group structures of $A$ and $B$. If $d$ is injective, the groupoid $\Cone(d)$ is actually a set and identifies with $B/A$, where the quotient is formed via $d$. Note that, under the identification of $\infty$-groupoids and anima, this agrees with the usual cofibre of the map $d$; \EDIT{albeit in slightly different language, this is shown in \cite[Exposé XVIII, §1.4]{SGA4}.}

\begin{defi}
Let $d: A\rightarrow B$ be a morphism of sheaves of abelian groups \EDIT{on the category of commutative rings}. We denote by $\Cone(d: A\rightarrow B)=\Cone(d)$ the stack obtained by sheafifying the assignment
\begin{equation*}
S\mapsto \Cone(A(S)\xrightarrow{d(S)} B(S))\;.
\end{equation*}
Note that $\Cone(d)$ is equipped with the structure of a 1-truncated animated abelian group stack.
\end{defi}

\begin{ex}
For a commutative group scheme $G$, we have $BG=\Cone(G\rightarrow 0)$.
\end{ex}

In certain situations, the stack $\Cone(d)$ from the above definition has even more structure. This is captured by the notion of a quasi-ideal:

\begin{defi}
Let $C$ be a commutative ring scheme. A morphism $d: I\rightarrow C$ of $C$-module schemes is called a \emph{quasi-ideal} if, for any test ring $S$ and any $x, y\in I(S)$, we have $d(x)\cdot y=d(y)\cdot x$. \EDIT{Equivalently, this means that the following diagram commutes:}
\begin{equation*}
\EDIT{
\begin{tikzcd}[ampersand replacement=\&]
 \& I\times C\ar[rd, "\mathrm{act}"] \& \\
I\times I\ar[ru, "\id\times d"] \ar[rd, "d\times\id", swap] \& \& I \\
\& C\times I\ar[ru, "\mathrm{act}", swap] \& 
\end{tikzcd}
}
\end{equation*}
\end{defi}

\begin{ex}
The morphism $\G_a^\sharp\rightarrow\G_a$ from above is a quasi-ideal: \EDIT{To see this, observe that, in our case, the top and bottom map from the above definition are induced by the maps
\begin{equation*}
\begin{split}
\Z[t, \tfrac{t^2}{2!}, \tfrac{t^3}{3!}, \dots]&\rightarrow \Z[x, \tfrac{x^2}{2!}, \tfrac{x^3}{3!}, \dots]\tensor\Z[y, \tfrac{y^2}{2!}, \tfrac{y^3}{3!}, \dots] \\
\frac{t^n}{n!}&\mapsto \frac{x^n}{n!}\cdot y^n
\end{split}
\end{equation*}
and 
\begin{equation*}
\begin{split}
\Z[t, \tfrac{t^2}{2!}, \tfrac{t^3}{3!}, \dots]&\rightarrow \Z[x, \tfrac{x^2}{2!}, \tfrac{x^3}{3!}, \dots]\tensor\Z[y, \tfrac{y^2}{2!}, \tfrac{y^3}{3!}, \dots] \\
\frac{t^n}{n!}&\mapsto x^n\cdot\frac{y^n}{n!}\;,
\end{split}
\end{equation*}
respectively, which are evidently the same.}
\end{ex}

Given a quasi-ideal $d: I\rightarrow C$, the stack $\Cone(d)$ acquires the structure of a 1-truncated animated ring stack, see \cite[Ch. 3]{RingGroupoid}.

\subsubsection{The de Rham stack}

Now we are in the position to define the de Rham stack. From now on, let $X$ be a bounded $p$-adic formal scheme; here, $X$ being bounded means that it locally has the form $\Spf R$ for $R$ having bounded $p^\infty$-torsion. We work with \EDIT{$p$-adic formal stacks}.

\begin{defi}
Consider the stack
\begin{equation*}
\G_a^\dR\coloneqq\Cone(\G_a^\sharp\xrightarrow{\can}\G_a)\;.
\end{equation*}
The \emph{de Rham stack} $X^\dR$ of $X$ is the stack defined by
\begin{equation*}
X^\dR(S)\coloneqq \Map(\Spec\G_a^\dR(S), X)\;,
\end{equation*}
where the mapping space is computed in derived algebraic geometry. If $X=\Spf R$ is affine, we also write $R^\dR$ in place of $X^\dR$.
\end{defi}

Note that the notation $\Spec\G_a^\dR(S)$ actually makes sense since $\G_a^\dR$ has the structure of a 1-truncated animated $\G_a$-algebra stack by our digression on quasi-ideals above. \EDIT{In other words,} $\G_a^\dR(S)$ is a 1-truncated animated $R$-algebra. 

\begin{ex}
\label{ex:drstack-perfd}
Let $R$ be a perfectoid ring and denote by $(A_\inf(R), I)=(\Prism_R, I)$ the initial object of the absolute prismatic site of $R$; i.e.\ $A_\inf(R)=W(R^\flat)$, where, as usual,
\begin{equation*}
R^\flat\coloneqq\lim_{x\mapsto x^p} R\cong\lim_{x\mapsto x^p} R/p
\end{equation*}
and $I$ is the kernel of the Fontaine map
\begin{equation*}
\theta: A_\inf(R)\rightarrow R\;, \hspace{0.5cm} \sum_k p^k[x_k]\mapsto \sum_k p^kx_k^\sharp\;;
\end{equation*}
here, for $x\in R^\flat=\lim_{x\mapsto x^p} R$, we denote by $x^\sharp$ the image of $x$ under the projection onto the first coordinate. \EDIT{As $I$ is principal, we may pick a generator $\xi\in A_\inf(R)$} and let
\begin{equation*}
A_\crys(R)\coloneqq A_\inf(R)[\tfrac{\xi^n}{n!}: n\geq 0]_{(p)}^\wedge
\end{equation*}
be the $p$-completed divided power envelope of $A_\inf(R)$ at $\xi$; more canonically, one may describe $A_\crys(R)$ as the $p$-completed divided power envelope of $A_\inf(R)$ at $I$. Let us that we are using the free divided power envelope here, i.e. $A_\inf(R)[\tfrac{\xi^n}{n!}: n\geq 0]$ denotes the quotient of the polynomial ring $A_\inf(R)[s_1, s_2, \dots]$ by the relations $s_1=\xi$ and (\ref{eq:div-powers}); in particular, this is not the same as the subring of $A_\inf(R)[\tfrac{1}{p}]$ generated by the elements $\tfrac{\xi^n}{n!}$ for $n\geq 0$.

We claim that
\begin{equation}
\label{eq:drstack-perfd}
R^\dR\cong\Spf A_\crys(R)\;,
\end{equation}
\EDIT{where $A_\crys(R)$ is endowed with the $(p, \xi)$-adic topology (note that this agrees with the $p$-adic topology due to the existence of divided powers for $\xi$).} Indeed, we may describe this isomorphism on $S$-valued points for $\G_a^\sharp$-acyclic $p$-nilpotent rings $S$ by \cref{lem:gasharp-acyclic}. Then $\G_a^\dR(S)=\Cone(\G_a^\sharp(S)\rightarrow S)$ and an $S$-valued point of \EDIT{$R^\dR$} is the datum of a morphism $R\rightarrow\Cone(\G_a^\sharp(S)\rightarrow S)$. We will shortly show that this yields a contractibly unique lift
\begin{equation*}
\begin{tikzcd}
A_\inf(R)\ar[r, dotted]\ar[d] & S\ar[d] \\
R=A_\inf(R)/I\ar[r] & \Cone(\G_a^\sharp(S)\rightarrow S)\nospacepunct{\;.}
\end{tikzcd}
\end{equation*}
The commutativity of the diagram then provides a system of divided powers for the image of $I$ in $S$ and hence induces a unique map $A_\crys(R)\rightarrow S$; i.e.\ we obtain an $S$-valued point of $\Spf A_\crys(R)$. \EDIT{Finally, one easily sees that the construction is reversible: an $S$-valued point of $\Spf A_\crys(R)$ furnishes a composite map $A_\inf(R)\rightarrow A_\crys(R)\rightarrow S$ together with a system of divided powers for the image of $I$ in $S$ and hence a factorisation of the composite $A_\inf(R)\rightarrow S\rightarrow\Cone(\G_a^\sharp(S)\rightarrow S)$ through $R=A_\inf(R)/I$, as desired. Thus, (\ref{eq:drstack-perfd}) is proved.}

To perform the missing deformation-theoretic argument, first observe that the image of $\G_a^\sharp(S)\rightarrow S$ is locally nilpotent (indeed, if $n$ is large enough so that $p^n=0$ in $S$, then $x^{p^n}=(p^n)!\cdot\frac{x^{p^n}}{(p^n)!}=0$ for any $x\in S$ admitting divided powers), hence $S\rightarrow\Cone(\G_a^\sharp(S)\rightarrow S)$ is surjective on $\pi_0$ with locally nilpotent kernel. As the cotangent complex $L\Omega_{R^\flat/\F_p}$ vanishes by virtue of $R^\flat$ being perfect (see e.g.\ \cite[Lem. 3.5]{PrismaticBhatt}), derived deformation theory\footnote{More precisely, we are using the following assertion: Let $A$ be an animated ring and $B$ an $A$-algebra such that $L\Omega_{B/A}$ vanishes. For any map of animated $A$-algebras $C'\rightarrow C$ which is surjective on $\pi_0$ with locally nilpotent kernel, any $A$-algebra map $B\rightarrow C$ lifts uniquely to an $A$-algebra map $B\rightarrow C'$. This roughly follows by combining a variant of the proof of \cite[Prop. 11.2.1.2]{SAG} with a noetherian approximation argument as in the proof of \cite[Cor. 5.2.10]{FlatPurity}.} shows that the composition 
\begin{equation*}
R^\flat=\lim_{x\mapsto x^p} R/p\rightarrow R/p\rightarrow \Cone(\G_a^\sharp(S)\rightarrow S)/^\mathbb{L} p\;,
\end{equation*}
where the first map is projection onto the first coordinate, lifts uniquely to a map $R^\flat\rightarrow S/^\mathbb{L} p$ and composition with projection onto $\pi_0$ yields a map $R^\flat\rightarrow S/p$ (conversely, $R^\flat\rightarrow S/^\mathbb{L} p$ can uniquely be recovered from this map as $S/^\mathbb{L} p\rightarrow S/p$ is an isomorphism on $\pi_0$). Since also $L\Omega_{W(R^\flat)/p^n/\Z/p^n}$ vanishes for any $n\geq 1$ (indeed, this may be checked after reduction mod $p$ \EDIT{by derived Nakayama}, but $L\Omega_{W(R^\flat)/p^n/\Z/p^n}\tensorL_{\Z/p^n} \F_p\cong L\Omega_{R^\flat/\F_p}=0$), the same argument shows that this map lifts uniquely to maps $A_\inf(R)=W(R^\flat)\rightarrow S/p^n$. As $S$ is $p$-nilpotent, taking $n$ sufficiently large, we obtain the desired map $A_\inf(R)\rightarrow S/p^n=S$.
\end{ex}

\begin{ex}
\label{ex:drstack-perf}
In the previous example, let $R=k$ be a perfect field of characteristic $p$. Then some elementary manipulations show that
\begin{equation*}
A_\crys(k)\cong (W(k)[x_1, x_2, \dots]/(px_1-p^p, px_2-x_1^p, \dots))^\wedge_{(p)}
\end{equation*}
and hence
\begin{equation*}
k^\dR\cong\Spf (W(k)[x_1, x_2, \dots]/(px_1-p^p, px_2-x_1^p, \dots))^\wedge_{(p)}\;. \qedhere
\end{equation*}
\end{ex}

In good cases, coherent cohomology on the de Rham stack of $X$ actually computes \EDIT{the} ($p$-completed) de Rham cohomology of $X$:

\begin{thm}
\label{thm:drstack-comparison}
Let $X$ be a \EEDIT{smooth qcqs} $p$-adic formal scheme and write $\pi_{X^\dR}: X^\dR\rightarrow \Spf\Z_p$. Then $\H_\dR(X)\coloneqq \pi_{X^\dR, *}\O_{X^\dR}$ identifies with $R\Gamma_\dR(X)$.
\end{thm}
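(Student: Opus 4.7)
The plan is to prove the theorem by first reducing to the affine case and then computing via a perfectoid cover, exploiting the explicit description $R^\dR \cong \Spf A_\crys(R)$ for perfectoid $R$ established in \cref{ex:drstack-perfd}.

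For the reduction step, I would verify that both sides of the claimed isomorphism satisfy Zariski descent as functors of the smooth $p$-adic formal scheme $X$. The right-hand side $R\Gamma_\dR$ satisfies Zariski descent classically. For the left-hand side, the key observation is that for any étale (and in particular any Zariski open) map $U \to X$, one obtains a pullback square of stacks
\begin{equation*}
\begin{tikzcd}
U^\dR \ar[r] \ar[d] & X^\dR \ar[d] \\
U \ar[r] & X
\end{tikzcd}
\end{equation*}
since étale morphisms of $p$-adic formal schemes are invariant under PD-thickenings, so lifting any map $\Spec \G_a^\dR(S) \to X$ along $U \to X$ is unobstructed and contractibly unique once a lift on the reduction is chosen. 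Combined with the qcqs hypothesis on $X$, this reduces the problem to the case $X = \Spf R$ with $R$ smooth over $\Z_p$.

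In the affine case, I would choose a $p$-completely faithfully flat map $R \to S$ with $S$ perfectoid; for instance, if $\Z_p\langle t_1,\dots,t_n\rangle \to R$ is a $p$-completely étale framing, one sets $S = R\,\widehat{\otimes}_{\Z_p\langle t_i\rangle}\,\Z_p\langle t_i^{1/p^\infty}\rangle$. The $p$-completed Čech nerve $S^{\bullet}$ of $\Spf S \to \Spf R = X$, combined with the computation of \cref{ex:drstack-perfd} (whose deformation-theoretic argument only used the vanishing of the cotangent complex of the perfect tilt), furnishes an fpqc hypercover of $X^\dR$ by the formal spectra $\Spf A_\crys(S^{n})$. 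Unwinding the definition of pushforward via fpqc descent, this yields
\begin{equation*}
R\Gamma(X^\dR, \O_{X^\dR}) \simeq \Tot\bigl(A_\crys(S^{\bullet})\bigr)\,.
\end{equation*}
This totalization is, by construction, the absolute crystalline cohomology of $R$ computed via the perfectoid cover $R \to S$, and by the classical Berthelot--Ogus crystalline--de Rham comparison it identifies with $R\Gamma_\dR(R) = R\Gamma_\dR(X)$ for smooth $R$.

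The main obstacle will be carefully justifying the identification of $\Tot(A_\crys(S^{\bullet}))$ with absolute crystalline cohomology of $R$: the derived tensor powers $S^{\otimes_R^{\mathbb{L}} n}$ appearing in the Čech nerve are only derived perfectoid, not literally perfectoid discrete rings, so either the deformation-theoretic argument of \cref{ex:drstack-perfd} has to be extended to such animated rings (using that vanishing of the relevant cotangent complexes persists), or one has to pass through the quasisyntomic descent framework to directly identify the cosimplicial object $A_\crys(S^{\bullet})$ with a Čech--Alexander-type complex computing crystalline cohomology. An alternative, more streamlined route would be to exploit the comparison $(X_{p=0})^\prism \cong X^\dR$ for smooth $X$ alluded to in the introduction and deduce the statement from the prismatic--de Rham comparison; however, in a self-contained presentation one presumably wants to establish the de Rham case first and only later bootstrap to prismatic cohomology.
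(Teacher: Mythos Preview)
The paper's own proof is a one-line citation: it invokes \cite[Thm.~2.5.6]{FGauges}, which is the Hodge-filtered statement (the same reference used for \cref{thm:fildrstack-comparison}), and then implicitly restricts to the open locus $\G_m/\G_m\subset\A^1/\G_m$ to recover the unfiltered de Rham comparison. So the paper does not actually argue directly with perfectoid covers at this point.

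Your approach is therefore genuinely different: you attempt a self-contained proof via quasisyntomic descent and \cref{ex:drstack-perfd}. This is a reasonable strategy, and your Zariski reduction is fine. One small correction: since $R\to S$ is $p$-completely faithfully flat, the higher \v{C}ech terms $S^{\widehat\otimes_R n}$ are in fact discrete; they are not ``derived perfectoid'' but rather quasiregular semiperfectoid. The real obstacle you flag is still correct in spirit, though: \cref{ex:drstack-perfd} only computes $R^\dR$ for \emph{perfectoid} $R$, and extending it to the quasiregular semiperfectoid terms of the \v{C}ech nerve (and then identifying the resulting cosimplicial object with derived de Rham or crystalline cohomology) is precisely the content one would have to import from \cite{FGauges} anyway. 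In that sense your direct route, while honest and instructive, ends up needing roughly the same input that the paper simply cites. Your alternative suggestion via $(X_{p=0})^\prism\cong X^\dR$ would be circular here, since the paper establishes that identification only after setting up both stacks.
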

\begin{proof}
This is easily deduced from \cite[Thm. 2.5.6]{FGauges}.
\end{proof}

Motivated by this result, we make the following definition:

\begin{defi}
Let $X$ be a \EEDIT{smooth qcqs} $p$-adic formal scheme. For a quasi-coherent complex $E\in\D(X^\dR)$, we define the \emph{de Rham cohomology} of $X$ with coefficients in $E$ as
\begin{equation*}
\EDIT{
R\Gamma_\dR(X, E)\coloneqq R\Gamma(X^\dR, E)=\pi_{X^\dR, *}(E)\;.
}
\end{equation*}
\end{defi}

\begin{rem}
\label{rem:drstack-vect}
In fact, the above notion of coefficients for de Rham cohomology recovers a more classical such notion: Namely, vector bundles on $X^\dR$ are the same as vector bundles on $X$ equipped with a flat connection \EDIT{having locally nilpotent $p$-curvature after reduction mod $p$ (see \cite[§5]{Katz} for a definition of the notion of $p$-curvature)} -- this can be proved using \cite[Rem. 2.5.7]{FGauges} and a similar calculation as in \cref{lem:fildhod-complexeszpntzero}.
\end{rem}

\subsubsection{The Rees equivalence}

We now want to bring the Hodge filtration into the picture. For this, we need a stack whose category of quasi-coherent complexes consists of filtered complexes over a chosen base ring $R$. More precisely, we want a stack whose category of quasi-coherent complexes is equivalent to the following category:

\begin{defi}
For a ring $R$, the \emph{filtered derived category} $\DF(R)$ is defined as the functor category $\Fun((\Z, \geq), \D(R))$, where we identify the poset $(\Z, \geq)$ with the corresponding category (i.e.\ the category with objects $n\in\Z$ and exactly one arrow $m\rightarrow n$ whenever $m\geq n$). For an object $F\in\DF(R)$, we write $\ul{F}\coloneqq\colim_i F(i)$ and $\Fil^i\ul{F}\coloneqq F(i)$.
\end{defi}

Our exposition will mostly follow \cite[Sect. 2.2.1]{FGauges}. Recall that $\DF(R)$ comes with a symmetric monoidal structure given by
\begin{equation*}
(F\tensor G)(n)=\colim_{i+j\geq n} F(i)\tensor G(j)
\end{equation*}
and that it admits an exact, symmetric monoidal and colimit-preserving functor $\gr_{\Fil}^\bullet$ given by 
\begin{equation*}
F\mapsto \gr_{\Fil}^\bullet\ul{F}\coloneqq\bigoplus_{i\in\Z} \gr_{\Fil}^i \ul{F}\coloneqq\bigoplus_{i\in\Z} \cofib(\Fil^{i+1}\ul{F}\rightarrow \Fil^i\ul{F})
\end{equation*}
to the $\infty$-category $\D_{\gr}(R)\coloneqq\Fun(\Z, \D(R))$ of graded objects in $\D(R)$; here, we identify the set $\Z$ with the corresponding discrete category. We call $\gr_{\Fil}^\bullet\ul{F}$ the \emph{associated graded} of the filtered object $F$. Moreover, the stable $\infty$-category $\DF(R)$ admits a standard $t$-structure whose (co-)connective objects are exactly those $F$ such that $F(i)$ is (co-)connective for each $i$. Finally, an object $F\in\DF(R)$ is called \emph{complete} if $\lim_i F(i)=0$.

To approach our goal of finding a stack whose category of quasi-coherent complexes is $\DF(R)$, we first consider the classifying stack $B\G_m$ of the group scheme $\G_m=\Spec R[t, t^{-1}]$ whose functor of points sends a ring $S$ to the group of units $S^\times$. As a $\G_m$-torsor $T$ on a scheme $S$ is uniquely of the form $T=\relSpec_S(\bigoplus_{i\in\Z} L^{\tensor i})$ for a line bundle $L\in\Pic(S)$, the stack $B\G_m$ classifies line bundles and comes equipped with a universal line bundle $\O_{B\G_m}(1)\in\Pic(B\G_m)$, whose $n$-th powers we denote by $\O_{B\G_m}(n)$ \EDIT{for $n\geq 0$ while $\O_{B\G_m}(-n)$ refers to the dual of $\O_{B\G_m}(n)$, as usual}. As a $\G_m$-equivariant structure on a complex $M\in\D(R)$ amounts to a grading on $M$, we furthermore see that there is an equivalence of categories
\begin{equation}
\label{eq:rees-dbgm}
\D(B\G_m)\cong \D_{\gr}(R)\;,
\end{equation}
which we normalise in such a way that $\O_{B\G_m}(1)$ corresponds to the $R$-module $R$ placed in degree $-1$; see \cite{Moulinos} for details.

Finally, the stack we seek is the quotient $\A^1/\G_m$; \EEDIT{here, the $\G_m$-action on $\A^1=\Spec R[t]$ is given by placing $t$ in grading degree \EEDIT{$-1$} (recall that giving a $\G_m$-action on an affine scheme is equivalent to specifying a grading on its global sections); note that this differs from the sign convention used in \cite{FGauges}, but our convention has the advantage of removing a change of sign which is otherwise necessary in part (3) of \cref{prop:rees-main}.} To describe the functor of points of $\A^1/\G_m$, we observe that for a $\G_m$-torsor $T=\relSpec_S(\bigoplus_{i\in\Z} L^{\tensor i})$ over $S$, where $L\in\Pic(S)$, the required $\G_m$-equivariant morphism $T\rightarrow \A^1$ is equivalent to the datum of a global section of \EEDIT{$L^{-1}$}, i.e.\ of a linear map \EEDIT{$L\rightarrow S$}. A linear map \EDIT{$L\rightarrow S$} for some $L\in\Pic(S)$ is called a \emph{generalised Cartier divisor} and thus $\A^1/\G_m$ is the moduli stack of generalised Cartier divisors. As such, it comes equipped with a universal generalised Cartier divisor
\begin{equation*}
\EEDIT{
t: \O_{\A^1/\G_m}(1)\rightarrow \O_{\A^1/\G_m}
}
\end{equation*}
and we write $\O_{\A^1/\G_m}(n)$ for the $n$-th power of $\O_{\A^1/\G_m}(1)$; \EEDIT{we warn the reader that, due to our chosen sign convention, the line bundle we denote $\O_{\A^1/\G_m}(1)$ actually identifies with the line bundle which is denoted $\O_{\A^1/\G_m}(-1)$ in \cite{FGauges}.} Note that there is a closed immersion $B\G_m=\{0\}/\G_m\subseteq\A^1/\G_m$ and, under this map, we have \EEDIT{$\O_{\A^1/\G_m}(1)|_{B\G_m}=\O_{B\G_m}(1)$}. Finally, observe that there is an equivalence of categories
\begin{equation*}
\D(\A^1/\G_m)\cong\Mod_{R[t]}(\D_{\gr}(R))\;,
\end{equation*}
where we view $R[t]$ as an object of $\D_{\gr}(R)$ via the grading above. This is due to the fact that a complex $M\in\D(R[t])$ is the same as an $R[t]$-module object in $\D(R)$ by \cite[Thm.s 7.1.2.13, 7.1.3.1]{HA} and that a $\G_m$-equivariant structure on $M$ then amounts to a grading compatible with the action of $R[t]$. \EEDIT{Under this equivalence, the line bundle $\O_{\A^1/\G_m}(1)$ corresponds to the graded $R[t]$-module $tR[t]$.}

\begin{prop}
\label{prop:rees-main}
The Rees construction described by carrying a filtered object $F\in\DF(R)$ to the graded $R[t]$-module object
\begin{equation*}
\Rees(F)\coloneqq \bigoplus_{i\in\Z} \Fil^i\ul{F}\cdot t^{-i}
\end{equation*}
defines a symmetric monoidal equivalence
\begin{equation*}
\D(\A^1/\G_m)\cong\DF(R)\;.
\end{equation*}
This equivalence has the following features:
\begin{enumerate}[label=(\arabic*)]
\item It is $t$-exact for the standard $t$-structures.

\item Restriction to the open substack $\Spec R=\G_m/\G_m\subseteq\A^1/\G_m$ corresponds to forgetting the filtration, i.e.\ to the functor $F\mapsto \ul{F}$.

\item Restriction to the closed substack $B\G_m=\{0\}/\G_m\subseteq\A^1/\G_m$ corresponds to passage to the associated graded, i.e.\ to the functor $\gr_{\Fil}^\bullet$.

\item Tensoring with \EEDIT{$\O(n)$} corresponds to shifting the filtration by $n$, i.e.\ to the functor $F\mapsto F\{n\}$ with $\Fil^i \ul{F\{n\}}=\Fil^{i+n} \ul{F}$.

\item It matches completeness of an object in $\DF(R)$ with derived $t$-completeness in $\D(\A^1/\G_m)$.
\end{enumerate}
\end{prop}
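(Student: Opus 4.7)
The plan is to first identify $\D(\A^1/\G_m)$ with the $\infty$-category $\Mod_{R[t]}(\D_{\gr}(R))$ of graded $R[t]$-modules, as recalled in the text above, and thereby reduce the task to producing a symmetric monoidal equivalence between $\DF(R)$ and $\Mod_{R[t]}(\D_{\gr}(R))$ implemented by Rees. To this end, one defines an inverse functor carrying a graded $R[t]$-module $M = \bigoplus_i M_i$ (with $t$ of grading degree $-1$) to the filtered object with $\Fil^i \underline{F} := M_i$ and transition map $\Fil^{i+1}\underline{F} \to \Fil^i\underline{F}$ given by the action of $t$; this is well-defined precisely because $t$ has grading degree $-1$. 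One then checks that the two composites are naturally equivalent to the identity: starting from a graded $R[t]$-module, reassembling the degrees with the $t$-action recovers the original module, while starting from a filtered object, the Rees module records the $\Fil^i\underline{F}$ as its grading pieces with the transition maps as the $t$-action, so decomposing yields back the original filtration.

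Next, I would verify the symmetric monoidal compatibility by comparing the Day-convolution product on $\DF(R)$ -- given by $(F \otimes G)(n) = \colim_{i+j \geq n} F(i) \otimes G(j)$ -- with the tensor product of graded $R[t]$-modules. Concretely, the degree-$n$ piece of $\Rees(F) \otimes_{R[t]} \Rees(G)$ unwinds to the same colimit, which after a bit of bookkeeping with the bar resolution of $R[t]$ gives the required identification of monoidal structures.

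With the equivalence in hand, properties (1)--(4) are essentially formal. The $t$-structures on either side are defined termwise on the grading or filtration pieces, which gives (1). Restriction to $\G_m/\G_m = \Spec R$ corresponds to inverting $t$, and the degree-zero part of $M[t^{-1}]$ is $\colim_i M_i$ with transitions induced by $t$, matching $\colim_i \Fil^i\underline{F} = \underline{F}$ in the filtered picture, which gives (2). Restriction to $B\G_m = \{0\}/\G_m$ corresponds to setting $t = 0$ and decomposing grading-wise, yielding $\bigoplus_i \gr^i_{\Fil}\underline{F}$ as in (3). Finally, the identification $\O_{\A^1/\G_m}(1) \leftrightarrow tR[t]$ as graded $R[t]$-modules (the latter being a grading shift of $R[t]$ by one) immediately translates tensoring with $\O(n)$ into the filtration shift of (4); the sign convention chosen for the $\G_m$-action on $\A^1$ is exactly what makes the shift come out the right way.

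The part that requires slightly more care is property (5). One unwinds the definition: the object of $\D(\A^1/\G_m)$ corresponding to $\Rees(F)$ is derived $t$-complete iff $\RHom_{R[t]}(R[t, t^{-1}], \Rees(F)) = 0$. Writing $R[t, t^{-1}] = \colim(R[t] \xrightarrow{t} R[t] \xrightarrow{t} \cdots)$ and passing this colimit to a limit on the $\RHom$-side, the resulting limit in each grading degree becomes $\lim_i \Fil^i\underline{F}$, so its vanishing matches completeness of the filtration exactly. I expect the symmetric monoidal compatibility and this completeness matching to be the main technical steps, while the remaining properties follow directly from the set-up.
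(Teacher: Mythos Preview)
Your proof sketch is correct and follows the standard argument. Note, however, that the paper itself does not prove this proposition: it simply cites \cite[Prop.~2.2.6]{FGauges} and \cite{Moulinos}, so there is no ``paper's own proof'' to compare against beyond those references. What you have written is essentially the argument one finds there---identify $\D(\A^1/\G_m)$ with graded $R[t]$-modules, build the inverse to Rees by reading off grading pieces, and then verify the listed features by unwinding what happens under this dictionary---so your approach is the expected one.
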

\begin{proof}
See \cite[Prop. 2.2.6]{FGauges} or \cite{Moulinos}.
\end{proof}

We shortly want to discuss the analogue of the result above over the base $\Spf\Z_p$. \EDIT{More precisely,} we take $R=\Z$ and consider $\A^1/\G_m\times\Spf\Z_p$ and $B\G_m\times\Spf\Z_p$.

\begin{cor}
There are symmetric monoidal equivalences
\begin{equation*}
\begin{split}
\D(B\G_m\times\Spf\Z_p)&\cong\widehat{\D}_{\gr}(\Z_p)\coloneqq\Fun(\Z, \widehat{\D}(\Z_p)) \\
\D(\A^1/\G_m\times\Spf\Z_p)&\cong\widehat{\DF}(\Z_p)\coloneqq\Fun((\Z, \geq), \widehat{\D}(\Z_p))
\end{split}
\end{equation*}
with the same features as in \cref{prop:rees-main}.
\end{cor}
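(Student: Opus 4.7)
My plan is to deduce the corollary directly from \cref{prop:rees-main} (applied with base ring $R=\Z$) by passing to $p$-adic formal completions on both sides. The key input is the identification
\begin{equation*}
\D(\cal{X} \times \Spf\Z_p) \cong \widehat{\D}(\cal{X}),
\end{equation*}
the full subcategory of derived $p$-complete objects, valid for any stack $\cal{X}$. This follows since $\cal{X} \times \Spf\Z_p = \colim_n \cal{X} \times \Spec\Z/p^n$ and $\D(-)$ carries colimits of stacks into limits of $\infty$-categories, as in the affine situation recalled just after the definition of $\D(\cal{X})$ in the previous subsection. Applying this to $\cal{X} = B\G_m$ and $\cal{X} = \A^1/\G_m$ and invoking \cref{prop:rees-main}, I obtain symmetric monoidal equivalences between $\D(B\G_m \times \Spf\Z_p)$, respectively $\D(\A^1/\G_m \times \Spf\Z_p)$, and the subcategories of derived $p$-complete objects in $\D_{\gr}(\Z)$, respectively $\DF(\Z)$.

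To conclude, I need to identify the derived $p$-complete objects in $\D_{\gr}(\Z) = \Fun(\Z, \D(\Z))$ with $\Fun(\Z, \widehat{\D}(\Z_p))$, and similarly for the filtered version. Derived $p$-completeness is characterised by the vanishing of $\RHom(\Z[\tfrac{1}{p}], -)$, a condition which is checked pointwise on a diagram in $\D(\Z)$; hence this identification is immediate, and the symmetric monoidal structure transfers because $p$-completion is symmetric monoidal on $\D(\Z)$.

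It remains to observe that features (1)--(5) of \cref{prop:rees-main} all survive $p$-completion. For (1)--(4) this is automatic: the standard $t$-structure on $\widehat{\D}(\Z_p)$ is inherited from that on $\D(\Z)$, and the functors in question---pullback along the open immersion $\G_m/\G_m \hookrightarrow \A^1/\G_m$ and the closed immersion $B\G_m \hookrightarrow \A^1/\G_m$, as well as tensoring with $\O(n)$---all preserve $p$-complete objects and commute with $p$-completion. Feature (5), matching filtered completeness with derived $t$-completeness (where now $t\in\O(\A^1)$ is the coordinate, not the $t$ of a $t$-structure), is the only point requiring a sliver of care, but follows once one notes that both vanishing conditions can be checked before $p$-completing, as inverse limits in $\widehat{\D}(\Z_p)$ agree with those computed in $\D(\Z)$ followed by $p$-completion. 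I expect this last compatibility to be the main, yet minor, technical step.
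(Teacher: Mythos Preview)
Your proposal is correct and follows essentially the same route as the paper. Both arguments rest on the identification $\D(B\G_m\times\Spf\Z_p)\cong\lim_n\D(B\G_m\times\Spec\Z/p^n\Z)$ coming from $\Spf\Z_p=\colim_n\Spec\Z/p^n\Z$; the only organisational difference is that the paper applies the Rees equivalence over each $\Z/p^n\Z$ and then commutes $\Fun(\Z,-)$ with the limit, whereas you apply Rees once over $\Z$ and then single out the $p$-complete objects---the paper's route is marginally more direct since it sidesteps the (easy but not-quite-stated) identification of $\D(\cal{X}\times\Spf\Z_p)$ with $p$-complete objects in $\D(\cal{X})$ for non-affine $\cal{X}$.
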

\begin{proof}
We only prove the first equivalence, the second one is similar. As any morphism $\Spec S\rightarrow B\G_m\times\Spf\Z_p$ factors through $B\G_m\times\Spec\Z/p^n\Z$ for some $n$, using (\ref{eq:rees-dbgm}) \EDIT{and the universal property of the limit}, we obtain equivalences
\begin{equation*}
\begin{split}
\D(B\G_m\times\Spf\Z_p)&\cong\lim_n \D(B\G_m\times\Spec\Z/p^n\Z)\cong\lim_n \Fun(\Z, \D(\Z/p^n\Z)) \\
&\cong\Fun(\Z, \lim_n \D(\Z/p^n\Z))\cong\Fun(\Z, \widehat{\D}(\Z_p))\;,
\end{split}
\end{equation*}
as desired. The claimed features are directly inherited from the corresponding properties of the equivalences over the bases $\Spec\Z/p^n\Z$.
\end{proof}

In fact, one can check that a similar argument shows that, for any stack $\cal{X}$, we have 
\begin{equation*}
\D(\A^1/\G_m\times\cal{X})\cong\Fun((\Z, \geq), \D(\cal{X}))\;.
\end{equation*}
Finally, let us remark that everything of course works completely analogously in the case of increasing filtrations instead of decreasing filtrations. In this case, one just considers $\A^1/\G_m$ for $\A^1=\Spec R[u]$ with $u$ being placed in degree \EEDIT{$1$}.

\subsubsection{The Hodge-filtered de Rham stack}
\label{subsect:fildrstack}

We are finally ready to define the Hodge-filtered de Rham stack associated to a $p$-adic formal scheme $X$. As this should capture the Hodge filtration on de Rham cohomology, in view of the results of the previous section, it is only natural that it will be a stack over $\A^1/\G_m$. Throughout, we work over $\Spf\Z_p$.

\begin{defi}
Over $\A^1/\G_m$, the canonical map \EEDIT{$\V(\O(1))^\sharp\rightarrow\G_a^\sharp\rightarrow\G_a$} is a quasi-ideal and hence defines a 1-truncated animated $\G_a$-algebra stack
\begin{equation*}
\EEDIT{
\G_a^{\dR, +}\coloneqq\Cone(\V(\O(1))^\sharp\xrightarrow{\can}\G_a)
}
\end{equation*}
over $\A^1/\G_m$. The \emph{Hodge-filtered de Rham stack} $X^{\dR, +}$ of $X$ is the stack $\pi_{X^{\dR, +}}: X^{\dR, +}\rightarrow\A^1/\G_m$ defined by
\begin{equation*}
X^{\dR, +}(\Spec S\rightarrow\A^1/\G_m)\coloneqq \Map(\Spec\G_a^{\dR, +}(S), X)\;,
\end{equation*}
where the mapping space is computed in derived algebraic geometry. If $X=\Spf R$ is affine, we also write $R^{\dR, +}$ in place of $X^{\dR, +}$.
\end{defi}

\begin{rem}
\label{rem:fildrstack-unfiltereddr}
Observe that the preimage of $\G_m/\G_m\subseteq\A_1/\G_m$ under $\pi_{X^{\dR, +}}$ recovers the de Rham stack $X^\dR$. The preimage of $B\G_m\subseteq \A^1/\G_m$ is called the \emph{Hodge stack} of $X$ and denoted $X^\Hod$ or also \EDIT{$R^\Hod$} if $X=\Spf R$ is affine.
\end{rem}

\begin{ex}
\label{ex:fildrstack-perfd}
Generalising \cref{ex:drstack-perfd}, we claim that, for a perfectoid ring $R$, there is an isomorphism of stacks
\begin{equation}
\label{eq:fildrstack-perfd}
R^{\dR, +}\cong\Spf(A_\inf(R)[\tfrac{u^n}{n!}, t: n\geq 1]_{(p)}^\wedge/(ut-\xi))/\G_m
\end{equation}
over $\A^1/\G_m$, \EDIT{where $A=A_\inf(R)[\tfrac{u^n}{n!}, t: n\geq 1]_{(p)}^\wedge/(ut-\xi)$ is equipped with the $(p, \xi)$-adic topology (note that this agrees with the $p$-adic topology due to the existence of divided powers for $u$ and hence also for $\xi=ut$) and, as usual, $t$ has degree \EEDIT{$-1$} while $u$ has degree \EEDIT{$1$}. We note here that while $A$ is clearly derived $p$-complete, it is also classically $p$-complete since it is $p$-adically separated.} 

To prove (\ref{eq:fildrstack-perfd}), as before, it suffices to describe this isomorphism on $S$-valued points for $\G_a^\sharp$-acyclic $p$-nilpotent rings $S$ equipped with a morphism $\Spec S\rightarrow\A^1/\G_m$ by \cref{lem:gasharp-acyclic}. Then $R^{\dR, +}(S)$ is the groupoid of maps of animated rings \EEDIT{$R\rightarrow\Cone(\V(\O(1))^\sharp(S)\rightarrow S)$} by $\G_a^\sharp$-acyclicity of $S$ (as any \EEDIT{$\V(\O(1))^\sharp$-torsor} over $S$ is also a $\G_a^\sharp$-torsor) and if the morphism $\Spec S\rightarrow\A^1/\G_m$ is classified by the generalised Cartier divisor $t: L\rightarrow S$, then \EEDIT{$\V(\O(1))^\sharp(S)$} identifies with the set of global sections of $L$ equipped with a system of divided powers. Now given any such map \EEDIT{$R\rightarrow\Cone(\V(\O(1))^\sharp(S)\rightarrow S)$}, we obtain a unique lift
\begin{equation*}
\EEDIT{
\begin{tikzcd}[ampersand replacement=\&]
A_\inf(R)\ar[r, dotted]\ar[d] \& S\ar[d] \\
R=A_\inf(R)/(\xi)\ar[r] \& \Cone(\V(\O(1))^\sharp(S)\rightarrow S)\nospacepunct{\;.}
\end{tikzcd}
}
\end{equation*}
as in \cref{ex:drstack-perfd} and then the commutativity of the diagram provides a unique factorisation $S\xrightarrow{u} L\xrightarrow{t} S$ of the multiplication-by-$\xi$-map together with a system of divided powers for $u$ (i.e.\ maps $\frac{u^n}{n!}: S\rightarrow L^{\tensor n}$). \EEDIT{Together with the map $A_\inf(R)\rightarrow S$ we obtained,} this is precisely the datum of an $S$-valued point of $\Spf A/\G_m$ and the construction is clearly reversible, so we are done.
\end{ex}

\begin{ex}
\label{ex:fildrstack-perf}
In the previous example, let $R=k$ be a perfect field of characteristic $p$. Then we find that
\begin{equation*}
k^{\dR, +}\cong \Spf(W(k)[\tfrac{u^n}{n!}, t: n\geq 1]_{(p)}^\wedge/(ut-p))/\G_m\;. \qedhere
\end{equation*}
\end{ex}

As expected, coherent cohomology on the Hodge-filtered de Rham stack of $X$ computes \EDIT{the} Hodge-filtered de Rham cohomology of $X$ in good cases:

\begin{thm}
\label{thm:fildrstack-comparison}
Let $X$ be a \EEDIT{smooth qcqs} $p$-adic formal scheme and consider its Hodge-filtered de Rham stack $\pi_{X^{\dR, +}}: X^{\dR, +}\rightarrow\A^1/\G_m$. Then $\H_{\dR, +}(X)\coloneqq \pi_{X^{\dR, +}, *}\O_{X^{\dR, +}}$ identifies with $\Fil^\bullet_{\Hod} R\Gamma_\dR(X)$ in $\widehat{\DF}(\Z_p)$ under the Rees equivalence.
\end{thm}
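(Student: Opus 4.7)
The plan is to mimic the strategy used for the unfiltered comparison in \cref{thm:drstack-comparison}, with the Rees equivalence providing the bridge to the filtered picture. First, I would reduce to the affine smooth case $X=\Spf R$ by Zariski descent: both functors $Y\mapsto \H_{\dR, +}(Y)$ (since $Y\mapsto Y^{\dR, +}$ is fpqc-local in the target) and $Y\mapsto \Fil_\Hod^\bullet R\Gamma_\dR(Y)$ satisfy Zariski descent, so working locally one may further assume that $R$ is étale over $\Z_p\langle T_1, \dots, T_d\rangle$.

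The next step is to construct a natural comparison map
\begin{equation*}
\H_{\dR, +}(X)\rightarrow \Rees(\Fil_\Hod^\bullet R\Gamma_\dR(X))
\end{equation*}
in $\widehat{\DF}(\Z_p)\cong \D(\A^1/\G_m\times\Spf\Z_p)$. By \cref{rem:fildrstack-unfiltereddr}, the preimage of the open substack $\G_m/\G_m\subseteq \A^1/\G_m$ under $\pi_{X^{\dR, +}}$ is the de Rham stack $X^\dR$, and hence flat base change together with \cref{thm:drstack-comparison} identifies the restriction of $\H_{\dR, +}(X)$ to $\G_m/\G_m$ with $R\Gamma_\dR(X)$, i.e.\ with the underlying complex of the Rees side by \cref{prop:rees-main}(2). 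This already suggests that the comparison map should arise from promoting the unfiltered de Rham comparison along the map $X^{\dR, +}\to\A^1/\G_m$; concretely, one builds it locally from the explicit Koszul-type presentation of the filtered de Rham complex, using that (for smooth $R$) the pushforward of $\O_{X^{\dR, +}}$ can be computed by a completed de Rham--Koszul resolution that remembers its natural filtration via the grading induced by the $\G_m$-action.

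It then remains to check the comparison map is an equivalence, for which the key input is an identification of the associated graded, i.e.\ of the pullback along the closed immersion $B\G_m\hookrightarrow \A^1/\G_m$. This pullback is the pushforward of $\O_{X^\Hod}$ along $X^\Hod\to B\G_m$, where $X^\Hod$ is the Hodge stack from \cref{rem:fildrstack-unfiltereddr}. At $t=0$ the quasi-ideal $\V(\O(1))^\sharp\rightarrow \G_a$ degenerates to the zero map, so the ring stack $\G_a^{\dR, +}|_{B\G_m}$ takes the form $\V(\O(1))^\sharp\rtimes \G_a$; unravelling the functor of points of $X^\Hod$ in local coordinates on $\Spf R$ and running a direct Koszul calculation should give a natural identification
\begin{equation*}
\pi_{X^\Hod, *}\O_{X^\Hod}\cong \bigoplus_{n\geq 0} R\Gamma(X, \widehat{\Omega}^n_{X/\Z_p})[-n]
\end{equation*}
with the appropriate grading, matching $\gr_\Hod^\bullet R\Gamma_\dR(X)$.

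Finally, both $\H_{\dR, +}(X)$ and $\Rees(\Fil_\Hod^\bullet R\Gamma_\dR(X))$ are complete filtered complexes --- the right-hand side tautologically and the left-hand side because the pushforward from $X^{\dR, +}$ commutes with the derived $t$-completion corresponding to completeness via \cref{prop:rees-main}(5), using the structure of $\A^1/\G_m$ over $B\G_m$. A map between complete filtered objects which is an equivalence on associated gradeds is an equivalence, so this concludes the proof. The main obstacle is the explicit local computation in the third paragraph: unpacking the functor of points of $X^\Hod$ from the quasi-ideal cone definition and identifying the resulting cohomology with Hodge cohomology requires carefully tracking the grading data and comparing the Koszul differential to the (twisted) de Rham differential.
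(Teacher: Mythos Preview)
The paper does not actually prove this statement: its proof is a one-line citation to \cite[Thm.~2.5.6]{FGauges}. Your outline is essentially the argument given there, and it is also the template the present paper follows for the analogous comparison in \cref{lem:fildhod-comparisonfilteredsmooth} (smooth case of \cref{thm:fildhod-comparisonfiltered}): reduce to the affine case, identify the underlying unfiltered object via \cref{thm:drstack-comparison}, compute the associated graded via the Hodge stack $X^\Hod$, and conclude by completeness. So your approach is correct and matches the standard one.

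One small point worth tightening: rather than building the comparison map by hand from a Koszul resolution, the cleaner route (and the one used in \cite{FGauges} and in the proof of \cref{lem:fildhod-comparisonfilteredsmooth}) is to show directly that $\pi_{X^{\dR,+},*}\O_{X^{\dR,+}}$ is $t$-complete and has $n$-th graded piece concentrated in degree $n$; this forces it to be the canonical filtration on its underlying object, which you already know is $R\Gamma_\dR(X)$. The $t$-completeness is exactly the gerbe argument you sketch (and which reappears verbatim in \cref{lem:fildhod-comparisonfilteredsmooth} and \cref{lem:finiteness-pushforwardcomplete}), and the graded computation is the identification $X^\Hod \cong B_{X\times B\G_m}\V(\T_{X/\Z_p}(1))^\sharp$ together with \cref{lem:syntomicetale-vesharpreps}. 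This avoids having to construct the comparison map separately.
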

\begin{proof}
This is \cite[Thm. 2.5.6]{FGauges}.
\end{proof}

Note that the above statement also implies that the pushforward of $\O_{X^\Hod}$ to $\Z_p^\Hod=B\G_m$ identifies as a graded object with \EDIT{the} Hodge cohomology of $X$. Again, we are led to the following definition:

\begin{defi}
Let $X$ be a \EEDIT{smooth qcqs} $p$-adic formal scheme. For a quasi-coherent complex $E\in\D(X^{\dR, +})$, we define the \emph{Hodge-filtered de Rham cohomology} of $X$ with coefficients in $E$ as
\begin{equation*}
\EDIT{\Fil^\bullet_\Hod R\Gamma_\dR(X, E)\coloneqq \pi_{X^{\dR, +}, *}(E)\;.}
\end{equation*}
\end{defi}

\begin{rem}
\label{rem:fildrstack-vect}
\EDIT{
As in the case of the de Rham stack, one can show that the above notion of coefficients for Hodge-filtered de Rham cohomology agrees with a more classical one, \EEDIT{see \cite[Rem. 2.5.8]{FGauges}}: Namely, a vector bundle on $X^{\dR, +}$ is the same as a vector bundle $E$ on $X$ equipped with a decreasing filtration $\Fil^\bullet E$ by subbundles and a flat connection $\nabla: E\rightarrow E\tensor\Omega_{X/\Z_p}^1$ which has nilpotent $p$-curvature mod $p$ and satisfies Griffiths transversality with respect to the filtration $\Fil^\bullet E$, i.e.\ we have $\nabla(\Fil^i E)\subseteq \Fil^{i-1} E\tensor\Omega_{X/\Z_p}^1$ for all $i\in\Z$.
}
\end{rem}

\subsection{Prismatisation and syntomification}
\label{subsect:prismsyn}

Having seen stacky formulations of de Rham cohomology and the Hodge filtration, we now describe a similar story for prismatic cohomology and the Nygaard filtration. As before, let $X$ be a bounded $p$-adic formal scheme. We roughly follow the treatment of \cite[Ch.s 4, 5]{FGauges}. 

\subsubsection{The prismatisation}

\begin{defi}
\label{defi:prismatisation-cwdiv}
For a $p$-nilpotent ring $S$, a \emph{Cartier--Witt divisor} on $S$ is a generalised Cartier divisor $\alpha: I\rightarrow W(S)$ on $W(S)$ satisfying the following two conditions:
\begin{enumerate}[label=(\roman*)]
\item The \EDIT{ideal generated by the} image of the map $I\xrightarrow{\alpha} W(S)\rightarrow S$ \EDIT{is nilpotent}.
\item The image of the map $I\xrightarrow{\alpha} W(S)\xrightarrow{\delta} W(S)$ generates the unit ideal.
\end{enumerate}
Here, $\delta: W(S)\rightarrow W(S)$ is the usual $\delta$-structure on $W(S)$.
\end{defi}

\begin{ex}
\label{ex:prismatisation-prismtocwdiv}
We show how to construct Cartier--Witt divisors from prisms. Namely, let $(A, I)$ be a prism and $S$ an $A$-algebra such that $(p, I)$ is nilpotent in $S$. Then the structure map $A\rightarrow S$ uniquely lifts to a map of $\delta$-rings $A\rightarrow W(S)$ and this produces a Cartier--Witt divisor $I\tensor_A W(S)\rightarrow W(S)$ on $S$. \EEDIT{Indeed, condition (i) from \cref{defi:prismatisation-cwdiv} is clear and condition (ii) is due to the fact that, pro-Zariski-locally, the ideal $I$ is generated by a distinguished element, see \cite[Lem. III.1.9]{PrismaticBhatt}.}
\end{ex}

\begin{ex}
\label{ex:prismatisation-cwdivtodiv}
We show how to construct generalised Cartier divisors from Cartier--Witt divisors. Namely, let $I\xrightarrow{\alpha} W(S)$ be a Cartier--Witt divisor on a $p$-nilpotent ring $S$. Then the induced map $I\tensor_{W(S)} S\rightarrow S$ is a generalised Cartier divisor on $S$ with the property that the \EEDIT{ideal generated by the} image of $I\tensor_{W(S)} S$ in $S$ is nilpotent.
\end{ex}

\begin{defi}
For a Cartier--Witt divisor $I\xrightarrow{\alpha} W(S)$ on $W(S)$, write $\ol{W(S)}$ for the 1-truncated animated ring $\cofib(I\xrightarrow{\alpha} W(S))$. Then the \emph{prismatisation} $X^\prism$ is the stack over $\Spf\Z_p$ given by assigning to a $p$-nilpotent ring $S$ the groupoid of pairs 
\begin{equation*}
\EDIT{
(I\xrightarrow{\alpha} W(S), \Spec \ol{W(S)}\rightarrow X)\;,
}
\end{equation*}
where $I\xrightarrow{\alpha} W(S)$ is a Cartier--Witt divisor on $S$ and $\Spec\ol{W(S)}\rightarrow X$ is a morphism of derived formal schemes. If $X=\Spf R$ is affine, we also write $R^\prism$ in place of $X^\prism$.
\end{defi}

\begin{rem}
Actually, one can show that the fibred category of triples 
\begin{equation*}
(S, I\xrightarrow{\alpha} W(S), \Spec\ol{W(S)}\rightarrow X)
\end{equation*}
as above is already fibred in groupoids: Indeed, maps between Cartier--Witt divisors satisfy a rigidity property analogous to the one for maps between prisms, i.e.\ they are always isomorphisms, see \cite[Lem. 5.1.5]{FGauges}. Moreover, as the maps $S\leftarrow W(S)\rightarrow\ol{W(S)}$ are pro-infinitesimal thickenings due to $\alpha(I)$ being nilpotent mod $p$, any endomorphism of $\ol{W(S)}$ lifting the identity on $S$ must be an isomorphism.
\end{rem}

\begin{ex}
\label{ex:prismatisation-perfd}
We claim that, for a perfectoid ring $R$, there is an isomorphism of stacks
\begin{equation}
\label{eq:prismatisation-perfd1}
R^\prism\cong\Spf A_\inf(R)\;,
\end{equation}
\EDIT{where $A_\inf(R)$ is endowed with the $(p, \xi)$-adic topology.} To prove this, consider an $S$-valued point of $R^\prism$, i.e.\ a Cartier--Witt divisor $I\xrightarrow{\alpha} W(S)$ on $S$ together with a map $R\rightarrow \ol{W(S)}$. Using the fact that $W(S)\rightarrow\ol{W(S)}$ is a pro-infinitesimal thickening, a similar deformation-theoretic argument as in \cref{ex:drstack-perfd} shows that we get a unique lift
\begin{equation}
\label{eq:prismatisation-perfd2}
\begin{tikzcd}
A_\inf(R)\ar[r, dotted]\ar[d] & W(S)\ar[d] \\
R=A_\inf(R)/(\xi)\ar[r] & \ol{W(S)}
\end{tikzcd}
\end{equation}
and hence an $S$-valued point $A_\inf(R)\rightarrow W(S)\rightarrow S$ of $\Spf A_\inf(R)$; \EDIT{indeed, note that the image of $\xi$ in $W(S)$ lies in $I$ by the commutativity of the above diagram, hence $\xi$ is nilpotent in $S$ due to the fact that the image of $I$ in $S$ generates a nilpotent ideal.} To see that this construction is reversible and thus prove (\ref{eq:prismatisation-perfd1}), first observe that, $W(S)\rightarrow S$, too, is a pro-infinitesimal thickening, so again by deformation theory, we see that there are equivalences
\begin{equation*}
\Map(A_\inf(R), W(S))\cong\Map(A_\inf(R), S)\cong\Map_\delta(A_\inf(R), W(S))\;,
\end{equation*}
where the last part follows from the adjunction between the Witt vector functor and the forgetful functor from $\delta$-rings. Thus, we learn that we can recover the map $A_\inf(R)\rightarrow W(S)$ from the map $A_\inf(R)\rightarrow S$ and, moreover, that the map $A_\inf(R)\rightarrow W(S)$ is automatically a $\delta$-map. Hence, we can base-change the Cartier--Witt divisor $A_\inf(R)\xrightarrow{\xi} A_\inf(R)$ along the map $A_\inf(R)\rightarrow W(S)$ and obtain a map of Cartier--Witt divisors 
\begin{equation*}
(W(S)\xrightarrow{\xi} W(S))\rightarrow (I\xrightarrow{\alpha} W(S))
\end{equation*}
from the commutativity of the diagram (\ref{eq:prismatisation-perfd2}). This must be an isomorphism by rigidity of maps, so we can recover the Cartier--Witt divisor $I\xrightarrow{\alpha} W(S)$.
\end{ex}

\begin{ex}
\label{ex:prismatisation-perf}
In the previous example, let $R=k$ be a perfect field of characteristic $p$. Then we find that
\begin{equation*}
k^\prism\cong\Spf W(k)\;. \qedhere
\end{equation*}
\end{ex}

\begin{ex}
\label{ex:prismatisation-prismaticsitetoxprism}
We explain how any object $(\Spf A\leftarrow\Spf \ol{A}\rightarrow X)\in X_\prism$ of the absolute prismatic site of $X$ gives rise to a point
\begin{equation*}
\rho_{A, X}: \Spf A\rightarrow X^\prism\;,
\end{equation*}
where we endow $A$ with the $(p, I)$-adic topology. Namely, for any $A$-algebra $S$ in which $(p, I)$ is nilpotent, we obtain from \cref{ex:prismatisation-prismtocwdiv} a Cartier--Witt divisor $I\tensor_A W(S)\rightarrow W(S)$ on $S$ and it automatically comes with a map $\Spec\ol{W(S)}\rightarrow\Spf\ol{A}\rightarrow X$ \EEDIT{since the morphism $A\rightarrow S$ lifts to a $\delta$-morphism $A\rightarrow W(S)$}; here, we use that $\pi_0(\ol{W(S)})$ is $p$-nilpotent by \cite[Lem. 3.3]{PFS}.
\end{ex}

\begin{ex}
\label{ex:prismatisation-xprismtoa1gm}
By \cref{ex:prismatisation-cwdivtodiv}, any Cartier--Witt divisor $I\xrightarrow{\alpha} W(S)$ on a $p$-nilpotent ring $S$ yields a generalised Cartier divisor $I\tensor_{W(S)} S\rightarrow S$ on $S$ such that the image of $I\tensor_{W(S)} S$ in $S$ is nilpotent. Thus, we obtain a morphism of stacks
\begin{equation*}
X^\prism\rightarrow\widehat{\A}^1/\G_m\;,
\end{equation*}
where $\widehat{\A}^1$ denotes the formal completion of $\A^1$ at the origin.
\end{ex}

\begin{ex}
\label{ex:prismatisation-frob}
For a $p$-nilpotent ring $S$ and any $S$-valued point $(I\xrightarrow{\alpha} W(S), \Spec \ol{W(S)}\xrightarrow{\eta} X)$ of $X^\prism$, we obtain another $S$-valued point
\begin{equation*}
(F^*I\xrightarrow{F^*\alpha} W(S), \Spec \ol{W(S)}\xrightarrow{\eta\circ\ol{F}} X) 
\end{equation*}
of $X^\prism$, where $F$ denotes the Frobenius of $W(S)$ and $\ol{F}$ is the induced map 
\begin{equation*}
\ol{F}: \cofib(F^*I\xrightarrow{F^*\alpha} W(S))\rightarrow\cofib(I\xrightarrow{\alpha} W(S))\;;
\end{equation*}
indeed, one can check that $F^*I\xrightarrow{F^*\alpha} W(S)$ is again a Cartier--Witt divisor, see \cite[Rem. 5.1.10]{FGauges}. Thus, we obtain an endomorphism
\begin{equation*}
F_X: X^\prism\rightarrow X^\prism
\end{equation*}
called the \emph{Frobenius} on $X^\prism$. If $X=\Spf R$ is affine, we also write $F_R$ in place of $F_X$.
\end{ex}

As before, in good situations, coherent cohomology of the structure sheaf on $X^\prism$ agrees with \EDIT{the} prismatic cohomology of $X$ computed via the absolute prismatic site:

\begin{thm}
\label{thm:prismatisation-comparison}
Let $X$ be a bounded $p$-adic formal scheme and \EEDIT{furthermore} assume that $X$ is $p$-quasisyntomic and qcqs. Then there is a natural isomorphism
\begin{equation*}
R\Gamma(X^\prism, \O_{X^\prism})\cong R\Gamma_\prism(X)\;.
\end{equation*}
\end{thm}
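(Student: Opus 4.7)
The plan is to reduce to the case of quasiregular semiperfectoid (qrsp) rings via quasisyntomic descent. Since $X$ is $p$-quasisyntomic and qcqs, it admits a cover by affines $\Spf R$ with $R$ quasisyntomic, each of which in turn admits a quasisyntomic cover by qrsp rings $R^0$ (yielding a \v{C}ech nerve $R^\bullet$ of qrsp rings). The right-hand side $R\Gamma_\prism(X)$ satisfies descent along such covers essentially by the definition of absolute prismatic cohomology as a quasisyntomic sheafification. For the left-hand side, one checks that the formation $X \mapsto X^\prism$ carries a quasisyntomic cover $\Spf R^0 \to \Spf R$ to a cover of $R^\prism$ whose \v{C}ech nerve identifies with $(R^\bullet)^\prism$; combined with fpqc descent for $\cal{D}(-)$ on stacks, this yields the descent needed for coherent cohomology of the structure sheaf. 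Thus it suffices to prove the theorem when $X = \Spf R$ with $R$ qrsp.

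In the qrsp case, the absolute prismatic site $R_\prism$ has an initial object $(\Prism_R, I)$ and one has $R\Gamma_\prism(R) \cong \Prism_R$. The construction of \cref{ex:prismatisation-prismaticsitetoxprism} applied to this initial prism produces a map
\begin{equation*}
\rho_R\colon \Spf \Prism_R \longrightarrow R^\prism,
\end{equation*}
and I claim that $\rho_R$ is an isomorphism of stacks; the theorem then follows at once. To check this, I compare $S$-valued points for $p$-nilpotent test rings $S$. A point of the right-hand side is a Cartier--Witt divisor $\alpha\colon J \to W(S)$ together with a map $R \to \ol{W(S)}$; the strategy is to show that such data extend uniquely to a $\delta$-ring map $\Prism_R \to W(S)$ carrying $I$ into $J$ (and, by rigidity of Cartier--Witt divisors as in \cite[Lem. 5.1.5]{FGauges}, inducing an isomorphism with $\alpha$ after base change).

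The crux of the argument, and its main obstacle, is this extension and uniqueness statement; it plays the same role for qrsp $R$ that the deformation-theoretic argument in \cref{ex:prismatisation-perfd} plays for perfectoid $R$. Two approaches are available. The first is to exploit a presentation of a qrsp $R$ as a quotient $R_0/(x_1, \dots, x_r)$ of a perfectoid $R_0$ (up to derived issues), handle $R_0$ by \cref{ex:prismatisation-perfd}, and deduce the case of $R$ from the universal property of the prismatic envelope $\Prism_R$ as the derived $(p,I)$-completed $\delta$-$\Prism_{R_0}$-algebra freely adjoining divided powers of the images of the $x_j$; the fact that $W(S) \to \ol{W(S)}$ has divided powers (via the Verschiebung and the nilpotence of the image of $J$ in $S$) then supplies the required universal map. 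The second, more intrinsic approach is to use that $\Prism_R$ for qrsp $R$ is characterised by a universal property for maps $R \to \overline{A}$ into reductions of (animated) prisms $A$, and to verify that $(W(S), J)$ together with $R \to \ol{W(S)}$ fits into this framework by standard deformation theory of $\delta$-rings along pro-nilpotent thickenings. Either way, one obtains the desired bijection on $S$-points and hence the isomorphism $\rho_R$; unravelling the descent then gives the theorem.
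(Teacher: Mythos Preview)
The paper's proof is a one-line citation to \cite[Cor.~8.17]{PFS} and \cite[Thm.~4.4.30]{APC}. Your proposal is essentially an attempt to unpack those references, and the overall strategy---quasisyntomic descent to qrsp rings, then identify $R^\prism\cong\Spf\Prism_R$ for qrsp $R$---is correct and matches what underlies the cited results.

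That said, your first approach to the qrsp case is confused: prismatic envelopes are \emph{not} built by ``freely adjoining divided powers of the $x_j$''---that is the PD-envelope construction relevant to crystalline cohomology, not the $\delta$-ring construction of \cite{Prisms} relevant here---and the claim that ``$W(S)\to\ol{W(S)}$ has divided powers via the Verschiebung'' is not correct for a general Cartier--Witt divisor $J\to W(S)$. What one actually uses is that $(W(S),J)$ behaves as an (animated) prism, so that the universal property of $(\Prism_R,I)$ as the initial object of $R_\prism$, together with the deformation-theoretic lift of \cref{ex:prismatisation-perfd} applied to a perfectoid cover $R_0\twoheadrightarrow R$, furnishes the required $\delta$-map $\Prism_R\to W(S)$; rigidity then identifies the Cartier--Witt divisors. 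Your second approach is closer to this but is only gestured at. Note also that descent on the stacky side requires the analogue of \cref{lem:filprism-quasisyntomiccover} for $X^\prism$, which follows from the $X^\N$ case stated there by base change along $j_\dR$.
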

\begin{proof}
Combine \cite[Cor. 8.17]{PFS} with \cite[Thm. 4.4.30]{APC}.
\end{proof}

Even more strongly, one can show that $\D(X^\prism)$ is also the correct category for a notion of coefficients for prismatic cohomology in the following sense:

\begin{prop}
\label{prop:prismatisation-crystals}
Let $X$ be a bounded $p$-adic formal scheme \EDIT{and} assume that $X$ is $p$-quasisyntomic. Then the maps $\rho_{A, X}$ from \cref{ex:prismatisation-prismaticsitetoxprism} induce an equivalence
\begin{equation*}
\D(X^\prism)\cong \lim_{(A, I)\in X_\prism} \widehat{\D}(A)
\end{equation*}
of symmetric monoidal stable $\infty$-categories; \EDIT{here, $X_\prism$ denotes the absolute prismatic site of $X$.}
\end{prop}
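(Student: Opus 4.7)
The strategy is to construct the comparison functor via the pullback maps $\rho_{A,X}^*: \D(X^\prism) \to \widehat{\D}(A)$ and prove it is an equivalence by a descent argument reducing to the case where $X$ is the formal spectrum of a quasiregular semiperfectoid ring. The plan rests on two ingredients: verifying that both sides satisfy quasisyntomic descent on $X$, and identifying both sides in the local case.

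For the descent step, the left-hand side satisfies fpqc descent by construction of $\D(-)$ as a limit over affine test schemes, so one only needs to show that, for a quasisyntomic cover $\{\Spf R_i \to X\}$ by quasiregular semiperfectoid rings $R_i$, the induced map $\bigsqcup (R_i)^\prism \to X^\prism$ is a flat cover in the sense that the associated Čech nerve is again of the form $(R_\bullet)^\prism$ for a simplicial quasiregular semiperfectoid ring $R_\bullet$ over $X$. The right-hand side satisfies quasisyntomic descent by the standard theory of prismatic crystals of Bhatt--Scholze. Since $X$ is assumed $p$-quasisyntomic, such a cover exists, so both sides reduce to the case $X = \Spf R$ with $R$ quasiregular semiperfectoid.

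For the local identification, when $R$ is quasiregular semiperfectoid, the absolute prismatic site $R_\prism$ has an initial object $(\Prism_R, I_R)$, so the right-hand side reduces to $\widehat{\D}(\Prism_R)$. The corresponding task on the left is to show that $\rho_{\Prism_R, R}: \Spf \Prism_R \to R^\prism$ is an isomorphism of stacks. Following the pattern of \cref{ex:prismatisation-perfd}, given an $S$-valued point of $R^\prism$ consisting of a Cartier--Witt divisor $\alpha: I \to W(S)$ together with a map $R \to \ol{W(S)}$, one observes that $(W(S), \ker\alpha)$ is a prism equipped, after quotienting by $\ker\alpha$, with a map from $R$; hence the universal property of $(\Prism_R, I_R)$ as the initial object of $R_\prism$ yields a unique $\delta$-map $\Prism_R \to W(S)$ carrying $I_R$ into $\ker\alpha$. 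A rigidity argument for Cartier--Witt divisors as in \cref{ex:prismatisation-perfd} shows that this map in fact recovers the original Cartier--Witt divisor via base change, and conversely any map $\Prism_R \to W(S)$ produces a point of $R^\prism$ via \cref{ex:prismatisation-prismtocwdiv}.

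The main obstacle will be the compatibility in the first step: verifying that the Čech nerve of the cover $\bigsqcup (R_i)^\prism \to X^\prism$ really is of the form $(R_\bullet)^\prism$ for a simplicial quasiregular semiperfectoid $R_\bullet$ computing a cover of $X$. After the local identification from the second step, this boils down to the assertion that, for a quasisyntomic map $R \to R'$ of quasiregular semiperfectoids, the induced map $\Spf \Prism_{R'} \to \Spf \Prism_R$ represents the prismatization of $R \to R'$ together with the known behaviour of $\Prism_{(-)}$ on tensor products of quasiregular semiperfectoids from \cite{Prisms}. This bookkeeping is the only nontrivial part of the argument once the local case is in hand.
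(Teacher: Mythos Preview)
The paper's own proof is a one-line citation to \cite[Prop.~8.15]{PFS}, so there is no in-paper argument to compare against; your proposal is an attempt to sketch what that cited proof presumably contains. The descent strategy you outline (reduce both sides via quasisyntomic descent to the case $X=\Spf R$ with $R$ quasiregular semiperfectoid, then identify both sides with $\widehat{\D}(\Prism_R)$) is the right shape and is indeed how the argument in \cite{PFS} proceeds.

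However, your local identification step has a genuine gap. Given a Cartier--Witt divisor $\alpha: I \to W(S)$ on a $p$-nilpotent ring $S$ together with a map $R \to \ol{W(S)}$, you write that ``$(W(S), \ker\alpha)$ is a prism'' and then invoke the universal property of $\Prism_R$ as the initial object of $R_\prism$. There are two problems here. First, the notation is confused: $I$ is an invertible $W(S)$-module and $\alpha$ is a map into $W(S)$, so presumably you mean the image $\alpha(I)$ rather than $\ker\alpha$. Second, and more seriously, even after fixing this, $(W(S), \alpha(I))$ is \emph{not} a prism: the definition of a prism requires the $\delta$-ring to be derived $(p,I)$-complete, and $W(S)$ has no reason to be complete with respect to the image of $\alpha$. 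So the universal property of $\Prism_R$ as an initial object of $R_\prism$ does not apply to produce a map $\Prism_R \to W(S)$.

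This is not a minor technicality. The perfectoid case in \cref{ex:prismatisation-perfd} works precisely because one can bypass the prism axioms entirely via deformation theory (vanishing of $L_{A_\inf(R)/\Z_p}$), and then identify the $\delta$-structure a posteriori using the adjunction defining $W$. For a general quasiregular semiperfectoid $R$, the cotangent complex $L_{\Prism_R/\Z_p}$ does not vanish, so neither deformation theory nor the naive universal property gives the lift directly. The actual proof that $\rho_{\Prism_R,R}: \Spf\Prism_R \to R^\prism$ is an isomorphism (equivalently, that the relative prismatisation of $R$ over $\Prism_R$ is a point) is one of the main technical results of \cite{PFS} and requires substantially more input than you indicate; see the material around \cite[Cor.~7.18, Cor.~8.13]{PFS}.
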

\begin{proof}
See \cite[Prop. 8.15]{PFS}.
\end{proof}

In particular, note that the category $\D(\Z_p^\prism)$ is equivalent to the category of prismatic crystals.

\begin{ex}
By pushforward along the morphism $\pi_{X^\prism}: X^\prism\rightarrow\Z_p^\prism$ obtained by functoriality, we obtain a prismatic crystal $\H_\prism(X)\coloneqq \pi_{X^\prism, *}\O_{X^\prism}$.
\end{ex}

\begin{ex}
\label{ex:prismatisation-htstack}
By \cref{prop:prismatisation-crystals} and rigidity, assigning to any prism $(A, I)$ the ideal $I$ defines a line bundle $\cal{I}$ on $\Z_p^\prism$ called the \emph{Hodge--Tate ideal sheaf}. Alternatively, one may describe it by assigning to every Cartier--Witt divisor $I\xrightarrow{\alpha} W(S)$ on a $p$-nilpotent ring $S$ the line bundle $I\tensor_{W(S)} S$ \EEDIT{on $S$}. By pulling back to $X^\prism$, we obtain a line bundle $\cal{I}_X$ on $X^\prism$, which we also denote by $\cal{I}_R$ in the case where $X=\Spf R$ is affine. The line bundle $\cal{I}_X$ cuts out a closed \EDIT{substack} $X^\HT$ called the \emph{Hodge--Tate stack} of $X$, which may alternatively be obtained as a pullback
\begin{equation*}
\begin{tikzcd}
X^\HT\ar[d]\ar[r] & X^\prism\ar[d] \\
B\G_m\ar[r] & \widehat{\A}^1/\G_m\nospacepunct{\;,}
\end{tikzcd}
\end{equation*}
where $X^\prism\rightarrow\widehat{\A}^1/\G_m$ is the morphism from \cref{ex:prismatisation-xprismtoa1gm}. Again, if $X=\Spf R$ is affine, we denote $X^\HT$ also by $R^\HT$.
\end{ex}

\begin{ex}
Recall that for any prism $(A, I)$, there is a special invertible $A$-module $A\{1\}$ called the \emph{Breuil--Kisin twist}, see \cite[Def. 2.5.2]{APC}. It can be thought of heuristically as the \EDIT{infinite} tensor product $\bigotimes_{n\geq 0} (\phi^n)^* I$, where $\phi$ denotes the Frobenius lift associated to the prism $(A, I)$, and has the property that there are canonical isomorphisms $A\{1\}/IA\{1\}\cong I/I^2$ and $\phi^*A\{1\}\cong I^{-1}A\{1\}$, see \cite[Rem.s 2.5.7, 2.5.9]{APC}. As the construction $(A, I)\mapsto A\{1\}$ is compatible with base change by \cite[Rem. 2.5.5]{APC}, it determines a line bundle $\O_{\Z_p^\prism}\{1\}$ \EEDIT{on $\Z_p^\prism$} by \cref{prop:prismatisation-crystals} which we also call \emph{Breuil--Kisin twist} and whose $n$-th power we denote by $\O_{\Z_p^\prism}\{n\}$. Pullback to $X^\prism$ then yields line bundles $\O_{X^\prism}\{n\}$ with the property that \EEDIT{$\O_{X^\prism}\{1\}/\cal{I}_X\O_{X^\prism}\{1\}\cong \cal{I}_X/\cal{I}_X^2$ and} $F_X^*\O_{X^\prism}\{1\}\cong \cal{I}_X^{-1}\tensor\O_{X^\prism}\{1\}$.
\end{ex}

As before, this leads to a notion of prismatic cohomology with coefficients:

\begin{defi}
Let $X$ be a bounded $p$-adic formal scheme which is $p$-quasisyntomic and qcqs. For a quasi-coherent complex $E\in\D(X^\prism)$, we define the \emph{prismatic cohomology} of $X$ with coefficients in $E$ as
\begin{equation*}
R\Gamma_\prism(X, E)\coloneqq R\Gamma(X^\prism, E)\;.
\end{equation*}
\end{defi}

\subsubsection{The Nygaard-filtered prismatisation}

Let $W$ be the ring scheme of $p$-typical Witt vectors, i.e.\ $W\cong\prod_{n\in\mathbb{N}} \Spec\Z[t_1, t_2, \dots]$ as schemes \EDIT{so that the functor of points of $W$ is given by $S\mapsto W(S)$ for any $p$-nilpotent ring $S$ via the Witt components}; if no confusion arises, we will also denote the base change of $W$ to another ring $S$ by the same letter. As usual, we denote the Frobenius and the Verschiebung by $F$ and $V$, respectively. We will study the following classes of $W$-module schemes:

\begin{defi}
Let $S$ be a $p$-nilpotent ring and $M$ an affine $W$-module scheme over $S$. Then $M$ is called ...
\begin{enumerate}[label=(\roman*)]
\item ... an \emph{invertible $W$-module} if it is fpqc-locally on $R$ isomorphic to $W$ as a $W$-module. 

\item ... an \emph{invertible $F_*W$-module} if it is fpqc-locally on $R$ isomorphic to $F_*W$ as a $W$-module.

\item ... \emph{$\sharp$-invertible} if it is fpqc-locally on $R$ isomorphic to $\G_a^\sharp$ as a $W$-module.
\end{enumerate}
\end{defi}

In part (iii) of the above definition, $\G_a^\sharp$ is regarded as a $W$-module in the following manner:

\begin{lem}
\label{lem:admissible-fundamentalseq}
There is a unique isomorphism $W[F]\cong\G_a^\sharp$ lifting the composition $W[F]\subseteq W\rightarrow\G_a$, where $W[F]$ denotes the kernel of the Frobenius. In fact, there is a short exact sequence of $W$-module sheaves
\begin{equation*}
\begin{tikzcd}
0\ar[r] & \G_a^\sharp\ar[r] & W\ar[r] & F_*W\ar[r] & 0\nospacepunct{\;.}
\end{tikzcd}
\end{equation*}
\end{lem}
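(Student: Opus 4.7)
The plan is to construct the isomorphism $W[F] \cong \G_a^\sharp$ explicitly via ghost components, then deduce fpqc-surjectivity of Frobenius to obtain the full short exact sequence. To build the map $\iota \colon \G_a^\sharp \to W$, I would start with an $R$-valued point of $\G_a^\sharp$, i.e.\ an element $a \in R$ together with a compatible divided power system $(a^{[n]})_{n \geq 1}$, and attempt to construct a Witt vector $\iota(a) = (\alpha_0, \alpha_1, \dots) \in W(R)$ whose ghost components are $(a, 0, 0, \dots)$. Since ghost Frobenius on $W$ is the shift, such a Witt vector automatically lies in $W[F]$. The defining equations
\begin{equation*}
\alpha_0^{p^n} + p\alpha_1^{p^{n-1}} + \cdots + p^n\alpha_n = 0, \qquad \alpha_0 = a,
\end{equation*}
can be solved recursively for the $\alpha_n$ as $\Z$-linear combinations of the divided powers $a^{[p^k]}$ (e.g.\ $\alpha_1 = -(p-1)!\, a^{[p]}$), producing universal polynomials that depend only on the divided power structure. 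This gives a well-defined map $\iota$, and the $W$-linearity can be checked on ghost components after inverting $p$ (where divided powers are unique) and then extended by flatness of $\Z[t, t^2/2!, \dots]$ over $\Z$.

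Next, I would argue that $\iota$ identifies $\G_a^\sharp$ with $W[F]$. Given any section $(\alpha_0, \alpha_1, \dots) \in W[F](R)$, its ghost components $(w_0, w_1, \dots)$ satisfy $w_n = 0$ for $n \geq 1$, and the universal equations above rearrange to exhibit $\alpha_0$ as equipped with a canonical divided power system — one defines $\alpha_0^{[p^k]}$ from the $\alpha_n$'s — giving an inverse construction. For uniqueness of the isomorphism lifting $W[F] \subseteq W \to \G_a$, I would observe that both $W[F]$ and $\G_a^\sharp$ are quasi-coherent thickenings of $\G_a$, and that any endomorphism of $\G_a^\sharp$ as a $W$-module fixing the underlying map to $\G_a$ must act as the identity: indeed, in the universal case $R = \Z[t, t^2/2!, \dots]$, the divided powers $t^{[n]}$ are uniquely determined by $t$ together with the divided power relations, leaving no room for a non-trivial lift.

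For the final exact sequence, I must show that $F \colon W \to F_*W$ is an fpqc-local surjection of sheaves. The cleanest reduction is via the fundamental exact sequence $0 \to W \xrightarrow{V} W \to \G_a \to 0$ combined with the identity $FV = p$: working mod $p$, $F$ becomes the coordinatewise $p$-th power map on Witt components, which is fpqc-locally surjective by adjoining $p$-th roots; a standard dévissage in the $p$-adic filtration of $W$ then lifts the result to arbitrary $p$-nilpotent test rings. Combining this with the identification $W[F] \cong \G_a^\sharp$ yields the desired short exact sequence of $W$-module sheaves.

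The main obstacle will be verifying that the explicit map $\iota$ is genuinely $W$-linear and not just $\G_a$-linear: the $W$-action on the divided powers mixes all the $\alpha_n$'s through the $\delta$-structure, and checking this directly from the recursive formulas is delicate. I would sidestep the worst of this by exploiting the injectivity of the ghost map on $p$-torsion-free rings to reduce $W$-linearity to a statement after inverting $p$, where everything becomes transparent.
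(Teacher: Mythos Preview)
The paper does not prove this; it simply cites \cite[Lem.~2.6.1, Rem.~2.6.2]{FGauges}. Your strategy is the standard one and is correct in outline.

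Two remarks. First, your ``main obstacle'' is a non-issue: reread the sentence immediately preceding the lemma --- the $W$-module structure on $\G_a^\sharp$ is \emph{defined} by transport along this very isomorphism, so you only need an isomorphism of group schemes lifting the projection to $\G_a$, and your ghost-component argument already gives additivity over the universal ($p$-torsion-free) PD ring, hence everywhere by base change. The genuine technical content is the integrality check that the recursively defined $\alpha_n$ land in $\Z[t,t^{[2]},t^{[3]},\dots]$ rather than merely in $\Q[t]$, and your universality reduction is the right way to organise that.

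Second, the surjectivity step via ``d\'evissage in the $p$-adic filtration of $W$'' is too vague as written: $p$ is not nilpotent in $W(S)$ even when it is nilpotent in $S$ (e.g.\ $W(\F_p)=\Z_p$), so a naive induction on powers of $p$ does not terminate. A cleaner route is to show that the scheme morphism $F\colon W\to W$ over $\Spec\Z$ is faithfully flat, by reducing to the truncations $F\colon W_{n+1}\to W_n$ (finite-type, so the fibrewise flatness criterion applies: over $\Q$ it is a coordinate shift via ghost, over $\F_p$ it is componentwise Frobenius, hence finite free times a projection) and then passing to the colimit. Faithful flatness of a scheme map gives surjectivity of the associated fpqc-sheaf map immediately.
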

\begin{proof}
See \cite[Lem. 2.6.1, Rem. 2.6.2]{FGauges}.
\end{proof}

There is actually a very straightfoward classification of the three classes of $W$-modules introduced above:

\begin{lem}
\label{lem:admissible-classification}
Let $S$ be a $p$-nilpotent ring. 
\begin{enumerate}[label=(\roman*)]
\item The groupoid of invertible $W$-modules is equivalent to the Picard groupoid of $W(S)$ via the construction $L\mapsto L\tensor_{W(S)} W$ for $L\in\Pic(W(S))$, \EEDIT{where the sheaf $L\tensor_{W(S)} W$ is defined by
\begin{equation*}
(L\tensor_{W(S)} W)(T)\coloneqq L\tensor_{W(S)} W(T)
\end{equation*}
for any $S$-algebra $T$.}

\item The groupoid of invertible $F_*W$-modules is equivalent to the groupoid of invertible $W$-modules via the construction $M\mapsto F_*M$ for invertible $W$-modules $N$.

\item The category of $\sharp$-invertible modules is equivalent to the category of invertible $S$-modules via the construction $L\mapsto \V(L)^\sharp$ for $L\in\Pic S$.
\end{enumerate}
\end{lem}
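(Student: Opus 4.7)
The plan is to prove all three parts via a common framework of fpqc descent combined with a Picard groupoid calculation. In each case, both the source and target categories satisfy fpqc descent, and the proposed functor maps the trivial object to the trivial object, so it suffices to verify that it induces an isomorphism on the relevant automorphism sheaves; combined with fpqc descent, this yields the equivalence.

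For part (i), given $L\in\Pic(W(S))$, the sheaf $L\tensor_{W(S)} W$ is naturally a $W$-module which becomes isomorphic to $W$ after a suitable fpqc cover trivialising $L$ (using that $S\rightarrow S'$ flat induces $W(S)\rightarrow W(S')$ flat for $p$-nilpotent rings). The key computation is $\sEnd_W(W)=W$ --- an endomorphism of $W$ as a $W$-module is determined by the image of $1\in W$ --- so $\underline{\Aut}_W(W)=W^\times$, matching the automorphism group of the trivial object in $\Pic(W(S))$ on $S$-points. The inverse functor assigns to an invertible $W$-module $M$ its global sections $M(S)$ as a $W(S)$-module; this is a line bundle on $W(S)$ by the same descent argument. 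Part (ii) is analogous: one verifies that $F_*$ takes $W$ to $F_*W$, matches automorphism sheaves, and is compatible with fpqc descent, so that it induces an equivalence between invertible $W$-modules and invertible $F_*W$-modules. Combined with (i), invertible $F_*W$-modules are also classified by $\Pic(W(S))$.

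For part (iii), the essential computation is $\sEnd_W(\G_a^\sharp)=\G_a$. By \cref{lem:admissible-fundamentalseq}, $\G_a^\sharp\cong W[F]$, and the Verschiebung–Frobenius identity $V(w)\cdot x = V(w\cdot F(x))$ in Witt vectors shows that $V(w)\cdot x = 0$ for $x\in W[F]$; hence the $W$-action on $\G_a^\sharp$ factors through the quotient $W/VW\cong\G_a$. A $\G_a$-linear endomorphism of $\G_a^\sharp$ is in turn determined by an element of $\G_a$ (this can be verified by testing on $\G_a^\sharp$-acyclic rings using \cref{lem:gasharp-acyclic}), so $\sEnd_W(\G_a^\sharp)=\G_a$ and $\underline{\Aut}_W(\G_a^\sharp)=\G_m$. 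Consequently, $\sharp$-invertible $W$-modules are classified by $\G_m$-torsors on $S$, i.e.\ by $\Pic(S)$. The functor $L\mapsto\V(L)^\sharp$ realises this classification explicitly: an fpqc trivialisation of $L$ induces a trivialisation of $\V(L)^\sharp$ as a $\sharp$-invertible $W$-module, since $\V(\O_S)^\sharp=\G_a^\sharp\times S$. The inverse can be described as passing to the linearisation of $M$ at the zero section.

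The main obstacle will be the computation of $\sEnd_W(\G_a^\sharp)$ in (iii): one must carefully verify that the $W$-action on $\G_a^\sharp$ does factor through $\G_a$ via the Witt vector identities and then classify $\G_a$-linear endomorphisms of $\G_a^\sharp$. Parts (i) and (ii) are more formal, though they still require some care regarding the behaviour of Witt vectors under fpqc covers of $p$-nilpotent rings in order to match descent on the two sides.
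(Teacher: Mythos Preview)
The paper does not give its own proof: it simply cites \cite[Constr.~5.2.2]{FGauges}. Your proposal supplies the standard argument that lies behind that citation, namely computing the automorphism sheaf of the trivial object in each case and invoking fpqc descent to upgrade this to an equivalence of groupoids. The key computations you isolate are the right ones: $\sEnd_W(W)=W$ for (i), and for (iii) the observation that the $W$-action on $\G_a^\sharp\cong W[F]$ factors through $W/VW\cong\G_a$ via the identity $x\cdot V(w)=V(F(x)\cdot w)=0$ for $x\in W[F]$, together with $\sEnd_W(\G_a^\sharp)=\G_a$.

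Two small remarks. First, the justification you offer for $\sEnd_{\G_a}(\G_a^\sharp)=\G_a$ via $\G_a^\sharp$-acyclic rings is a bit oblique; a more direct route is to compute Hopf-algebra maps on $\O(\G_a^\sharp)=\Z_p[t,\tfrac{t^2}{2!},\dots]$ compatible with the $\G_a$-comodule structure, or equivalently to use Cartier duality with $\widehat{\G}_a$ (as the paper itself does later in \cref{lem:syntomicetale-vesharpreps}). Second, in (i) you need that $S\mapsto\Pic(W(S))$ is an fpqc sheaf, i.e.\ that a faithfully flat map $S\to S'$ of $p$-nilpotent rings induces a faithfully flat map $W(S)\to W(S')$; this is true and can be checked via the $V$-adic filtration, but deserves a word. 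With these points addressed, your argument is complete and matches the intended proof.
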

\begin{proof}
See \cite[Constr. 5.2.2]{FGauges}.
\end{proof}

Note that, for $L\in\Pic(W(S))$, the sheaf $L\tensor_{W(S)} W$ is representable by the scheme $\Spec(A\tensor \Z[t_1, t_2, \dots])$, where $A$ is the ring of global sections of the vector bundle $\V(L)$; \EEDIT{indeed, for any $S$-algebra $T$, we have}
\begin{equation*}
\EEDIT{
\begin{split}
\Hom(A\tensor \Z[t_1, t_2, \dots], T)&\cong \Hom(A, \Hom(\Z[t_1, t_2, \dots], T))\cong \Hom(A, W(T)) \\
&\cong \Hom(\Spec W(T), \V(L))\cong L\tensor_{W(S)} W(T)\;.
\end{split}
}
\end{equation*}
Also note that, for $L\in\Pic S$, the scheme $\V(L)^\sharp$ is actually affine since it is a $\G_a^\sharp$-torsor over $\Spec S$ and $\G_a^\sharp$ is affine.

\begin{ex}
\label{ex:admissible-cwdivtoinvertible}
For a $p$-nilpotent ring $S$, any Cartier--Witt divisor $I\xrightarrow{\alpha} W(S)$ gives rise to a morphism of invertible $W$-module schemes $M\xrightarrow{d} W$ over $S$ by \cref{lem:admissible-classification}.
\end{ex}

\begin{defi}
Let $S$ be a $p$-nilpotent ring. An \emph{admissible} $W$-module is an affine $W$-module scheme $M$ which (as a sheaf) can be written as an extension of an invertible $F_*W$-module by a $\sharp$-invertible $W$-module.
\end{defi}

\begin{ex}
Any invertible $W$-module $M=L\tensor_{W(R)} W$ for $L\in\Pic(W(R))$ is admissible as the sequence
\begin{equation*}
\begin{tikzcd}
0\ar[r] & L\tensor_{W(R)} \G_a^\sharp\ar[r] & L\tensor_{W(R)} W\ar[r] & L\tensor_{W(R)} F_*W\ar[r] & 0
\end{tikzcd}
\end{equation*}
is exact by \cref{lem:admissible-fundamentalseq}.
\end{ex}

\begin{rem}
\label{rem:filprism-admissibleseq}
\EDIT{
Given two admissible $W$-modules $M_1$ and $M_2$ sitting in exact sequences
\begin{equation*}
\begin{tikzcd}[ampersand replacement=\&]
0\ar[r] \& \V(L_i)^\sharp\ar[r] \& M_i\ar[r] \& F_*M'_i\ar[r] \& 0\nospacepunct{\;,}
\end{tikzcd}
\end{equation*}
any map $M_1\rightarrow M_2$ lifts uniquely to a map
\begin{equation*}
\begin{tikzcd}[ampersand replacement=\&]
0\ar[r] \& \V(L_1)^\sharp\ar[r]\ar[d] \& M_1\ar[r]\ar[d] \& F_*M_1'\ar[r]\ar[d] \& 0 \\
0\ar[r] \& \V(L_2)^\sharp\ar[r] \& M_2\ar[r] \& F_*M_2'\ar[r] \& 0
\end{tikzcd}
\end{equation*}
since one can show that $\sHom_W(\G_a^\sharp, F_*W)=0$, see \cite[Prop. 5.2.1]{FGauges}. In particular, it follows that a sequence witnessing the admissibility of some $W$-module $M$ is unique up to unique isomorphism; in the sequel, we will call this the \emph{admissible sequence} associated to $M$ and denote it by
\begin{equation*}
\begin{tikzcd}[ampersand replacement=\&]
0\ar[r] \& \V(L_M)^\sharp\ar[r] \& M\ar[r] \& F_*M'\ar[r] \& 0\nospacepunct{\;.}
\end{tikzcd}
\end{equation*}
Note that the uniqueness of the admissible sequence implies that being admissible is an fpqc-local property.
}
\end{rem}

\begin{defi}
Let $S$ be a $p$-nilpotent ring. A \emph{filtered Cartier--Witt divisor} on $S$ consists of an admissible $W$-module scheme $M$ and a map $d: M\rightarrow W$ such that the induced map $F_*M'\rightarrow F_*W$ of associated invertible $F_*W$-modules comes from a Cartier--Witt divisor on $R$ via the construction of \cref{ex:admissible-cwdivtoinvertible}. We denote the groupoid of filtered Cartier--Witt divisors on $S$ by $\Z_p^\N(S)$ and this defines a stack $\Z_p^\N$ on $p$-nilpotent rings.
\end{defi}

\begin{ex}
We explain how to turn any Cartier--Witt divisor $I\xrightarrow{\alpha} W(S)$ on a $p$-nilpotent ring $S$ into a filtered Cartier--Witt divisor. Indeed, one can show that the induced map $I\tensor_{W(S)} W\rightarrow W$ is a filtered Cartier--Witt divisor and that any filtered Cartier--Witt divisor $M\xrightarrow{d} W$ on $S$ such that $M$ is invertible arises in this way, see \cite[Constr. 5.3.2]{FGauges}. This defines a map
\begin{equation*}
j_\HT: \Z_p^\prism\rightarrow\Z_p^\N
\end{equation*}
of stacks and one can in fact show that it is an open immersion.
\end{ex}

\begin{ex}
\label{ex:filteredprism-drmap}
We explain how to turn any generalised Cartier divisor $L\xrightarrow{t} S$ on a $p$-nilpotent ring $S$ into a filtered Cartier--Witt divisor. Namely, we can associate to $t$ the map 
\begin{equation*}
\V(L)^\sharp\oplus F_*W\xrightarrow{(t^\sharp, V)} W
\end{equation*}
of admissible $W$-modules and this is indeed a filtered Cartier--Witt divisor since the associated map $F_*W\rightarrow F_*W$ is multiplication by $p$. Thus, we obtain a morphism of stacks
\begin{equation*}
i_{\dR, +}: \A^1/\G_m\rightarrow\Z_p^\N
\end{equation*}
called the \emph{Hodge-filtered de Rham map} and restricting $i_{\dR, +}$ to $\Spf\Z_p=\G_m/\G_m$ yields a morphism of stacks
\begin{equation*}
i_\dR: \Spf\Z_p\rightarrow\Z_p^\N\;,
\end{equation*}
which we call the \emph{de Rham map} \EEDIT{and, similarly, restricting $i_{\dR, +}$ to $B\G_m=\{0\}/\G_m$, we obtain a morphism
\begin{equation*}
i_\dR: B\G_m\rightarrow\Z_p^\N\;,
\end{equation*}
which we call the \emph{Hodge map}.} We warn the reader that, despite the notation, the maps \EEDIT{$i_{\dR, +}, i_\dR$ and $i_\Hod$ are \emph{not} closed immersions.}
\end{ex}

Recall \EEDIT{from \cref{rem:filprism-admissibleseq}} that, given a filtered Cartier--Witt divisor $M\xrightarrow{d} W$ on a $p$-nilpotent ring $S$, we obtain an induced map of admissible sequences
\begin{equation*}
\begin{tikzcd}
0\ar[r] & \V(L_M)^\sharp\ar[d, "\sharp(d)"]\ar[r] & M\ar[d, "d"]\ar[r] & F_*M'\ar[d, "F_*(d')"]\ar[r] & 0 \\
0\ar[r] & \G_a^\sharp\ar[r] & W\ar[r] & F_*W\ar[r] & 0\nospacepunct{\;.}
\end{tikzcd}
\end{equation*}

\begin{defi}
Fix a $p$-nilpotent ring $S$ and a filtered Cartier--Witt divisor $M\xrightarrow{d} W$ on $S$.
\begin{enumerate}[label=(\roman*)]
\item As $d': M'\rightarrow W$ is a Cartier--Witt divisor, sending $d$ to $d'$ defines a map of stacks
\begin{equation*}
\pi: \Z_p^\N\rightarrow\Z_p^\prism
\end{equation*}
called the \emph{structure map}.

\item The map $\sharp(d)$ uniquely has the form $t(d)^\sharp$ for some $t(d): L_M\rightarrow S$ by \cref{lem:admissible-classification}. Hence, sending $d$ to $t(d)$ defines a map of stacks
\begin{equation*}
t: \Z_p^\N\rightarrow\A^1/\G_m
\end{equation*}
called the \emph{Rees map}.
\end{enumerate}
\end{defi}

\begin{rem}
\label{rem:filprism-jdr}
One can show that the preimage of $\G_m/\G_m\subseteq\A^1/\G_m$ under the Rees map identifies with $\Z_p^\prism$ via the structure map, see \cite[Constr. 5.3.5]{FGauges}, and hence we obtain an open immersion
\begin{equation*}
j_\dR: \Z_p^\prism\rightarrow\Z_p^\N\;.
\end{equation*}
\end{rem}

\begin{rem}
The images of the open immersions $j_\HT$ and $j_\dR$ are disjoint: Indeed, if a filtered Cartier--Witt divisor $M\xrightarrow{d} W$ on a $p$-nilpotent ring $S$ is in the image of $j_\dR$, then $\sharp(d)$ is an isomorphism; if, however, it is in the image of $j_\HT$, then $\sharp(d)$ has the form $t^\sharp$ for the map $t: I\tensor_{W(S)} \G_a\rightarrow \G_a$ coming from a Cartier--Witt divisor $I\xrightarrow{\alpha} W(S)$ and \EEDIT{and thus its image generates a nilpotent ideal}. Since $\sHom_W(\G_a^\sharp, \G_a^\sharp)\cong\G_a$ by \cref{lem:admissible-classification}, these two possibilities exclude each other.
\end{rem}

It turns out that one can show that any filtered Cartier--Witt divisor $M\xrightarrow{d} W$ on a $p$-nilpotent ring $S$ is a quasi-ideal and that $R\Gamma_\fl(\Spec S, \ol{W})$ is concentrated in degrees $-1$ and $0$, see \cite[Prop.s 5.3.8, 5.3.9]{FGauges}, where $\ol{W}$ denotes the sheaf of $\infty$-groupoids $\cofib(M\xrightarrow{d} W)$, hence $R\Gamma_\fl(\Spec S, \ol{W})$ is naturally a 1-truncated animated $W(S)$-algebra.

\begin{defi}
The construction carrying a filtered Cartier--Witt divisor $M\xrightarrow{d} W$ on a $p$-nilpotent ring $S$ to $R\Gamma_\fl(\Spec S, \ol{W})$ yields an animated $W$-algebra stack $\G_a^\N\rightarrow\Z_p^\N$. The \emph{Nygaard-filtered prismatisation} $X^\N$ of $X$ is the stack $\pi_{X^\N}: X^\N\rightarrow\Z_p^\N$ defined by
\begin{equation*}
X^\N(\Spec S\rightarrow\Z_p^\N)\coloneqq \Map(\Spec\G_a^\N(S), X)\;,
\end{equation*}
where the mapping space is computed in derived algebraic geometry. If $X=\Spf R$ is affine, we also write $R^\N$ in place of $X^\N$.
\end{defi}

\begin{defi}
\begin{enumerate}[label=(\roman*)]
\item For a filtered Cartier--Witt divisor $M\xrightarrow{d} W$ on a $p$-nilpotent ring $S$, the diagram
\begin{equation*}
\begin{tikzcd}
M\ar[r]\ar[d, "d"] & F_*M'\ar[d, "F_*(d')"] \\
W\ar[r] & F_*W
\end{tikzcd}
\end{equation*}
determines a map of animated $W$-algebra stacks $\G_a^\N\rightarrow\pi^*\G_a^\prism$ (i.e.\ the map is linear over $F: W\rightarrow W$). The \emph{structure map}
\begin{equation*}
\pi_X: X^\N\rightarrow X^\prism 
\end{equation*}
is the map of stacks induced by $\G_a^\N\rightarrow\pi^*\G_a^\prism$; it lives over $\pi: \Z_p^\N\rightarrow\Z_p^\prism$.

\item The \emph{Rees map} 
\begin{equation*}
t_X: X^\N\rightarrow\A^1/\G_m
\end{equation*}
is the composition of the map $X^\N\rightarrow\Z_p^\N$ with $t: \Z_p^\N\rightarrow\A^1/\G_m$.
\end{enumerate}
\end{defi}

Also note that the open immersions $j_\HT, j_\dR: \Z_p^\prism\rightarrow\Z_p^\N$ induce open immersions $j_\HT, j_\dR: X^\prism\rightarrow X^\N$ with disjoint image via pullback to $\G_a^\N$. Finally, observe that, over $\A^1/\G_m$, there is an equivalence
\begin{equation}
\label{eq:filprism-drmap}
\EEDIT{
\Cone(\V(\O(1))^\sharp\oplus F_*W\xrightarrow{(t^\sharp, V)} W)\cong\Cone(\V(\O(1))^\sharp\xrightarrow{t^\sharp} \G_a)
}
\end{equation}
and hence, we obtain a map of stacks
\begin{equation*}
i_{\dR, +}: X^{\dR, +}\rightarrow X^\N
\end{equation*}
living over the map $\A^1/\G_m\rightarrow\Z_p^\N$ from \cref{ex:filteredprism-drmap}, which we also call the \emph{Hodge-filtered de Rham map}. Again, restriction to $X^\dR$ yields a map
\begin{equation*}
i_\dR: X^\dR\rightarrow X^\N
\end{equation*}
called the \emph{de Rham map}; note that $i_\dR$ factors through $j_\dR$ by construction. \EEDIT{Finally, restriction to $X^\Hod$ yields a map
\begin{equation*}
i_\Hod: X^\Hod\rightarrow X^\N
\end{equation*}
called the \emph{Hodge map}.}

\EEDIT{The most important} maps we have constructed so far and their compatibilities are summarised by the following commutative diagram:
\begin{equation}
\label{eq:filprism-maps}
\EDIT{
\begin{tikzcd}[ampersand replacement=\&]
\&\& X^\prism \ar[drr, equals] \&\& \\
X^\prism\ar[rr, "j_\HT"] \ar[dd] \ar[urr, "F_X"] \&\& X^\N \ar[dd, "t_X" {yshift=10pt}] \ar[u, "\pi_X", swap] \&\& X^\prism \ar[ll, "j_\dR", swap] \ar[dd] \\
\& X^{\dR, +}\ar[ur, "i_{\dR, +}" {yshift=-2pt}]\ar[dr] \&\& X^\dR\ar[ur, "i_\dR" {yshift=-2pt}] \ar[dr]\ar[ll] \&  \\
\widehat{\A}^1/\G_m \ar[rr] \&\& \A^1/\G_m \&\& \G_m/\G_m \ar[ll]
\end{tikzcd}
}
\end{equation}

\begin{ex}
\label{ex:filprism-perfd}
Generalising \cref{ex:prismatisation-perfd}, we claim that, for a perfectoid ring $R$, there is an isomorphism of stacks
\begin{equation}
\label{eq:filprism-perfd}
R^\N\cong\Spf(A_\inf(R)\langle u, t\rangle/(ut-\phi^{-1}(\xi)))/\G_m\;,
\end{equation}
\EDIT{where $A\coloneqq A_\inf(R)\langle u, t\rangle/(ut-\phi^{-1}(\xi))$ is endowed with the $(p, \xi)$-adic topology and, as usual, $t$ has degree \EEDIT{$-1$} while $u$ has degree \EEDIT{$1$}. We also note that while $A$ is clearly derived $(p, \xi)$-complete, it is also classically $(p, \xi)$-complete since it is $(p, \xi)$-adically separated.} 

To prove (\ref{eq:filprism-perfd}), consider an $S$-valued point of $R^\N$, i.e.\ a filtered Cartier--Witt divisor $M\xrightarrow{d} W$ on $S$ together with a map $R\rightarrow R\Gamma_\fl(\Spec S, \ol{W})$ of animated rings. Considering the diagram of admissible sequences
\begin{equation*}
\begin{tikzcd}
0\ar[r] & \V(L_M)^\sharp\ar[d, "\sharp(d)"]\ar[r] & M\ar[d, "d"]\ar[r] & F_*M'\ar[d, "F_*(d')"]\ar[r] & 0 \\
0\ar[r] & \G_a^\sharp\ar[r] & W\ar[r] & F_*W\ar[r] & 0\nospacepunct{\;,}
\end{tikzcd}
\end{equation*}
we see that the image of the map $M\xrightarrow{d} W$ is locally nilpotent mod all powers of $p$ since the same is true for the maps $M'\xrightarrow{d'} W$ and $\G_a^\sharp\rightarrow W$, see \cref{ex:drstack-perfd} and \cref{ex:prismatisation-perfd}. Thus, a similar deformation-theoretic argument as in \cref{ex:drstack-perfd} shows that we obtain a unique lift
\begin{equation*}
\begin{tikzcd}
A_\inf(R)\ar[r, dotted]\ar[d] & W(S)\ar[d] \\
\phi^*R=A_\inf(R)/(\phi^{-1}(\xi))\ar[r] & R\Gamma_\fl(\Spec S, \ol{W})\nospacepunct{\;,}
\end{tikzcd}
\end{equation*}
\EEDIT{where $\phi$ denotes the Frobenius of $A_\inf(R)$} \EDIT{and we note that the composite map $A_\inf(R)\rightarrow W(S)\rightarrow F_*W(S)$ agrees with the lift we obtain in \cref{ex:prismatisation-perfd} from the $S$-valued point $\Spec S\rightarrow R^\N\xrightarrow{\pi_R} R^\prism$ after undoing the Frobenius twist; in particular, we again obtain that the image of $\xi$ in $S$ under the composite map $A_\inf(R)\rightarrow W(S)\rightarrow S$ is nilpotent. Now passing to fibres in the above diagram} yields a commutative diagram
\begin{equation*}
\begin{tikzcd}
A_\inf(R)\ar[r]\ar[d, "\phi^{-1}(\xi)"] & R\Gamma_\fl(\Spec S, M)\ar[d] \\
A_\inf(R)\ar[r] & W(S)\nospacepunct{\;,}
\end{tikzcd}
\end{equation*}
which in turn yields a map 
\begin{equation*}
\alpha: (W\xrightarrow{\phi^{-1}(\xi)} W)\rightarrow (M\xrightarrow{d} W)
\end{equation*}
of filtered Cartier--Witt divisors on $S$; \EDIT{indeed, the source is a filtered Cartier--Witt divisor on $S$ by \cref{lem:admissible-fundamentalseq} and the fact that the image of $\xi$ in $S$ is nilpotent.} In terms of admissible sequences, this means that we get a diagram
\begin{equation*}
\begin{tikzcd}
0\ar[r] & \G_a^\sharp\ar[r]\ar[d, "u^\sharp", swap]\ar[dd, "\phi^{-1}(\xi)" {yshift=12.2pt, xshift=5pt}, bend right=45, swap] & W\ar[r]\ar[d, "\alpha", swap]\ar[dd, "\phi^{-1}(\xi)" {yshift=20pt, xshift=5pt}, bend right=45, swap] & F_*W\ar[r]\ar[d, "\alpha'"]\ar[dd, "\phi^{-1}(\xi)" {yshift=21.1pt, xshift=5pt}, bend right=45, swap] & 0 \\
0\ar[r] & \V(L_M)^\sharp\ar[r]\ar[d, "t^\sharp", swap] & M\ar[r]\ar[d, "d", swap] & F_*M'\ar[r]\ar[d, "F_*(d')"] & 0 \\
0\ar[r] & \G_a^\sharp\ar[r] & W\ar[r] & F_*W\ar[r] & 0\;,
\end{tikzcd}
\end{equation*}
where we wrote the maps in the left column as $u^\sharp$ and $t^\sharp$ for some $u: R\rightarrow L_M$ and $t: L_M\rightarrow R$ using \cref{lem:admissible-classification}. Observing that $ut=\phi^{-1}(\xi)$, we see that, \EEDIT{together with the map $A_\inf(R)\rightarrow W(S)\rightarrow S$ obtained above,} the datum $R\xrightarrow{u} L_M\xrightarrow{t} R$ determines an $S$-valued point of $\Spf A/\G_m$. Moreover, $\alpha'$ is an isomorphism by rigidity and hence the construction is reversible (namely, we can obtain $M$ as the pushout of the upper left corner in the category of sheaves), so (\ref{eq:filprism-perfd}) is proved.
\end{ex}

\begin{ex}
\label{ex:filprism-perf}
In the previous example, let $R=k$ be a perfect field of characteristic $p$. Then we find that
\begin{equation*}
k^\N\cong\Spf(W(k)\langle u, t\rangle/(ut-p))/\G_m\;. \qedhere
\end{equation*}
\end{ex}

\begin{rem}
\label{rem:filprism-jdrjhtperfd}
\EEDIT{
Under the isomorphisms (\ref{eq:prismatisation-perfd1}) and (\ref{eq:filprism-perfd}), the open immersion $j_\dR$ identifies with the map
\begin{equation*}
\Spf A_\inf(R)\cong \Spf(A_\inf(R)\langle t, t^{-1}\rangle)/\G_m\rightarrow \Spf(A_\inf(R)\langle u, t\rangle/(ut-\phi^{-1}(\xi)))/\G_m
\end{equation*}
obtained by inverting $t$. Meanwhile, the open immersion $j_\HT$ identifies with the map
\begin{equation*}
\begin{split}
\Spf A_\inf(R)\cong \Spf(A_\inf(R)\langle u, u^{-1}\rangle)/\G_m&\rightarrow \Spf(A_\inf(R)\langle u, t\rangle/(ut-\xi))/\G_m \\
&\rightarrow \Spf(A_\inf(R)\langle u, t\rangle/(ut-\phi^{-1}(\xi)))/\G_m
\end{split}
\end{equation*}
obtained by composing the Frobenius on $A_\inf(R)$ with inverting $u$.
}
\end{rem}

\begin{rem}
\label{rem:filprism-regular}
\EDIT{
There is an fpqc cover $\Spf\Z_p\langle u, t\rangle\rightarrow\Z_p^\N$ and hence the stack $\Z_p^\N$ is noetherian and regular. This can be proved by reduction to the perfectoid case treated in \cref{ex:filprism-perfd}, see \cite[Ex. 5.5.20]{FGauges} for details.
}
\end{rem}

As one should have expected by now, in good situations, coherent cohomology on the \EEDIT{Nygaard-filtered prismatisation} of $X$ computes \EDIT{the} Nygaard-filtered prismatic cohomology of $X$ (here, we use the definition of the Nygaard filtration on absolute prismatic cohomology from \cite[Def. 5.5.3]{APC}):

\begin{thm}
\label{thm:filprism-comparison}
Let $X$ be a bounded $p$-adic formal scheme. Assume that $X$ is $p$-quasisyntomic and qcqs. Then $t_{X, *}\O_{X^\N}$ identifies with $\Fil^\bullet_\N R\Gamma_\prism(X)$ in $\widehat{\DF}(\Z_p)$ under the Rees equivalence.
\end{thm}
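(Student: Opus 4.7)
The strategy is to reduce to the case of a perfectoid ring via $p$-quasisyntomic descent and then perform an explicit computation using the presentation of $R^\N$ from \cref{ex:filprism-perfd}. As a preparatory step, I would verify that both sides satisfy $p$-quasisyntomic descent on bounded $p$-adic formal schemes. For the right-hand side, this is built into the definition of the Nygaard filtration on absolute prismatic cohomology, which is obtained by descent from quasiregular semiperfectoid rings. For the left-hand side, it follows from the fact that $X\mapsto X^\N$ is formed as a mapping stack and is therefore fpqc-local in $X$, together with flat base change for pushforward of quasi-coherent complexes along $t_X$ (the situation is analogous to how \cref{thm:prismatisation-comparison} is deduced from the affine quasiregular semiperfectoid case).

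Since perfectoid rings form a basis for the quasisyntomic topology on qcqs $p$-quasisyntomic formal schemes, this reduces the claim to the case $X=\Spf R$ with $R$ perfectoid. Here \cref{ex:filprism-perfd} provides the explicit presentation $R^\N\cong\Spf(A_\inf(R)\langle u,t\rangle/(ut-\phi^{-1}(\xi)))/\G_m$, and the Rees map $t_R$ is induced by the inclusion $\Z_p[t]\hookrightarrow A_\inf(R)\langle u,t\rangle/(ut-\phi^{-1}(\xi))$. Under the equivalence $\D(\A^1/\G_m\times\Spf\Z_p)\cong\widehat{\DF}(\Z_p)$ of \cref{prop:rees-main}, with $t$ placed in degree $-1$ and $u$ in degree $+1$, the pushforward $t_{R,*}\O_{R^\N}$ corresponds to the graded $\Z_p[t]$-module $A_\inf(R)\langle u,t\rangle/(ut-\phi^{-1}(\xi))$. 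Unwinding the Rees convention $\Rees(F)=\bigoplus_i\Fil^i\ul{F}\cdot t^{-i}$, one finds: in degree $-n$ (for $n\geq 0$) the piece is $A_\inf(R)\cdot t^n$, matching $\Fil^{-n}=A_\inf(R)$; and in degree $n\geq 0$ the piece is generated by $u^n$, which under the relation $u^nt^n=\phi^{-1}(\xi)^n$ corresponds to $\phi^{-1}(\xi)^nA_\inf(R)\cdot t^{-n}$, matching $\Fil^n=\phi^{-1}(\xi)^nA_\inf(R)$. Thus, as a filtered object, $t_{R,*}\O_{R^\N}$ identifies with $A_\inf(R)$ equipped with the $\phi^{-1}(\xi)$-adic filtration.

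On the other hand, by \cref{ex:prismatisation-perfd}, for $R$ perfectoid one has $R\Gamma_\prism(R)\cong A_\inf(R)$, and it is a standard fact (going back to \cite{Prisms}) that the Nygaard filtration on absolute prismatic cohomology of a perfectoid ring is precisely the $\phi^{-1}(\xi)$-adic filtration. This gives the desired identification in the perfectoid case, and the identification is manifestly natural in $R$ by functoriality of all the constructions involved. The general $p$-quasisyntomic qcqs case then follows by descent. The main obstacle I expect is the verification of $p$-quasisyntomic descent for the left-hand side with a filtered refinement; this requires showing that both the formation of $\pi_{X^\N,*}$ and the subsequent pushforward $t_*$ commute with the relevant (completed) totalisations, and carefully checking that the Rees bookkeeping is preserved along the Čech nerve of a cover $X\to\Spf R$ by a perfectoid ring.
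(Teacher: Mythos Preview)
Your overall strategy—quasisyntomic descent plus an explicit computation on a basis—is exactly the one the paper uses. The paper's one-line proof cites \cite[Cor.~5.5.11, Rem.~5.5.18]{FGauges} (which in this paper are \cref{thm:filprism-qrsp} and \cref{lem:filprism-quasisyntomiccover}) together with \cite[Cor.~5.5.21]{APC}, and then invokes quasisyntomic descent.

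There is, however, a genuine gap in your reduction step. You assert that perfectoid rings form a basis for the quasisyntomic topology, but this is not correct: while every bounded $p$-quasisyntomic qcqs formal scheme admits a quasisyntomic cover by a perfectoid ring, the terms of the resulting \v{C}ech nerve are only \emph{quasiregular semiperfectoid}, not perfectoid. Thus your explicit computation from \cref{ex:filprism-perfd}, which is valid only for perfectoid $R$, does not suffice to run the descent argument. The paper circumvents this by invoking \cref{thm:filprism-qrsp}, which gives the presentation
\[
R^\N\cong\Spf(\Rees(\Fil^\bullet_\N\Prism_R))/\G_m
\]
for \emph{every} quasiregular semiperfectoid $R$; with this in hand the claim is immediate on the whole basis, and descent applies. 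Your perfectoid computation is correct and is precisely the special case of this presentation, but you need the more general qrsp version to close the argument.

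A smaller point: your justification for quasisyntomic descent on the left-hand side (``$X\mapsto X^\N$ is formed as a mapping stack and is therefore fpqc-local in $X$'') is too quick. The required input is that a quasisyntomic cover $X\to Y$ induces a \emph{flat} cover $X^\N\to Y^\N$, which is a nontrivial statement; this is exactly \cref{lem:filprism-quasisyntomiccover} (i.e.\ \cite[Rem.~5.5.18]{FGauges}), and the paper invokes it explicitly.
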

\begin{proof}
This follows from \cite[Cor. 5.5.11, Rem. 5.5.18]{FGauges} and \cite[Cor. 5.5.21]{APC} via quasisyntomic descent.
\end{proof}

Again, the category $\D(X^\N)$ should provide a sensible notion of coefficients for Nygaard-filtered prismatic cohomology.

\begin{defi}
The category $\D(X^\N)$ is called the category of \emph{gauges} on $X$ and denoted $\Gauge_\prism(X)$. If $X=\Spf R$ is affine, we also write $\Gauge_\prism(R)$ in place of $\Gauge_\prism(\Spf R)$.
\end{defi}

\begin{ex}
By pushforward along the morphism $\pi_{X^\N}: X^\N\rightarrow\Z_p^\N$, we obtain a gauge $\H_\N(X)\coloneqq \pi_{X^\N, *}\O_{X^\N}$, which we call the \emph{structure gauge} of $X$. 
\end{ex}

\begin{ex}
\label{ex:filprism-bktwist}
The line bundle
\begin{equation*}
\EEDIT{
\O_{\Z_p^\N}\{1\}\coloneqq \pi^*\O_{\Z_p^\prism}\{1\}\tensor t^*\O(1)\in \Gauge_\prism(\Z_p)
}
\end{equation*}
is called the \emph{Breuil--Kisin twist} and we denote its $n$-th tensor power by $\O_{\Z_p^\N}\{n\}$. Via pullback to $X^\N$, we obtain line bundles $\O_{X^\N}\{n\}\in\Gauge_\prism(X)$.
\end{ex}

\begin{defi}
Let $X$ be a bounded $p$-adic formal scheme which is $p$-quasisyntomic and qcqs. For a gauge $E$ on $X$, we define the \emph{Nygaard-filtered prismatic cohomology} of $X$ with coefficients in $E$ as
\begin{equation*}
\EDIT{\Fil^\bullet_\N R\Gamma_\prism(X, E)\coloneqq t_{X, *}(E)\;.}
\end{equation*}
\end{defi}

\EDIT{
Recalling that the stack $\Z_p^\N$ is noetherian by virtue of the cover $\Spf\Z_p\langle u, t\rangle\rightarrow \Z_p^\N$ from \cref{rem:filprism-regular}, we can also introduce a reasonable notion of coherent sheaves on $\Z_p^\N$:

\begin{defi}
A quasi-coherent complex on $\Z_p^\N$ is called \emph{coherent} if its pullback to $\Spf\Z_p\langle u, t\rangle$ along the map from \cref{rem:filprism-regular} is a finitely generated $\Z_p\langle u, t\rangle$-module. The full subcategory of $\Perf(\Z_p^\N)$ spanned by coherent sheaves is denoted $\Coh(\Z_p^\N)$.
\end{defi}

\begin{rem}
\label{rem:filprism-cohheartperf}
Since $\Z_p\langle u, t\rangle$ \EEDIT{is regular}, one may alternatively describe $\Coh(\Z_p^\N)$ as the heart of a canonical $t$-structure on $\Perf(\Z_p^\N)$ induced by the $t$-structure on $\Perf(\Z_p\langle u, t\rangle)$, see \cite[Rem. 5.5.19]{FGauges}.
\end{rem}
}

We conclude this section by giving an alternative description of the Nygaard-filtered prismatisation via descent which is sometimes helpful. For this, one first proves that, similarly to the case of perfectoid rings treated in \cref{ex:filprism-perfd}, one can explicitly describe the Nygaard-filtered prismatisation of a quasiregular semiperfectoid ring $R$ in the sense of \cite[Def. 4.20]{THHandPAdicHodgeTheory}.

\begin{thm}
\label{thm:filprism-qrsp}
For any quasiregular semiperfectoid ring $R$, there is a natural isomorphism 
\begin{equation*}
R^\N\cong \Spf(\Rees(\Fil^\bullet_\N\Prism_R))/\G_m
\end{equation*}
of stacks over $\A^1/\G_m$, \EDIT{where $\Rees(\Fil^\bullet_\N\Prism_R)$ is endowed with the $(p, I)$-adic topology and $(\Prism_R, I)$ is the initial object of the absolute prismatic site of $R$.}
\end{thm}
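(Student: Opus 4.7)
The plan is to reduce to the perfectoid case treated in \cref{ex:filprism-perfd} by means of quasisyntomic descent. Every qrsp ring $R$ admits a quasisyntomic cover $R \to R_\infty$ by a perfectoid ring whose \v{C}ech nerve is levelwise qrsp (in fact, $p$-completely fpqc over perfectoid rings); so it will suffice to establish that both sides of the claimed identity define sheaves on qrsp rings in the quasisyntomic topology and that they agree naturally on perfectoid inputs.

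The descent property for $R \mapsto R^\N$ should reduce to the statement that $R \mapsto \Map(R, -)$ sends the \v{C}ech nerve of a quasisyntomic cover to a limit diagram, while descent for $R \mapsto \Spf(\Rees(\Fil^\bullet_\N \Prism_R))/\G_m$ is a consequence of the well-known quasisyntomic descent of the Nygaard-filtered absolute prismatic cohomology (as used in the proof of \cref{thm:filprism-comparison}) combined with compatibility of the Rees and $\Spf(-)/\G_m$ constructions with the relevant cosimplicial limits. Between the two sides I would construct a natural map as follows: given an $S$-valued point of the right-hand side, consisting of a generalised Cartier divisor $t : L \to S$ (classifying $\Spec S \to \A^1/\G_m$) together with a graded map $\Rees(\Fil^\bullet_\N \Prism_R) \to \bigoplus_{n \in \Z} L^{\otimes (-n)}$, I would extract from the underlying ring map $\Prism_R \to S$ and its filtration a filtered Cartier--Witt divisor on $S$ together with a map from $R$ to the corresponding $\ol{W}$, essentially by assembling the line bundles $L^{\otimes n}$ into an admissible $W$-module fitting the admissible sequence.

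The principal difficulty is making the construction of this natural map functorial in $R$ and verifying that on perfectoid inputs it recovers the identification of \cref{ex:filprism-perfd}; this in particular requires checking that, for perfectoid $R$, the Rees construction applied to the $\phi^{-1}(\xi)$-adic filtration on $A_\inf(R)$ yields $A_\inf(R)\langle u, t\rangle/(ut - \phi^{-1}(\xi))$ after $p$-completion, and that the filtration on admissible $W$-modules reflects the Nygaard filtration under this identification. Once descent and the perfectoid case are in hand, the natural transformation is a global isomorphism.
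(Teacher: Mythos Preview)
Your descent argument contains a genuine circularity. You correctly note that the \v{C}ech nerve of a quasisyntomic cover $R\to R_\infty$ by a perfectoid ring is only levelwise qrsp, not perfectoid. But then the sheaf condition gives
\[
R^\N \;\cong\; \lim_{[n]\in\Delta} \bigl(R_\infty^{\widehat{\otimes}_R (n+1)}\bigr)^\N
\]
and similarly for the right-hand side, so to conclude that your natural transformation is an isomorphism on $R$ you need it on each $R_\infty^{\widehat{\otimes}_R (n+1)}$. These are arbitrary qrsp rings, which is exactly the class you are trying to handle. Perfectoid rings do not form a basis of the quasisyntomic topology in the required sense: there is no way to cover a general qrsp ring by perfectoids whose \v{C}ech nerve remains perfectoid. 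So knowing the statement on perfectoid inputs together with the sheaf property does not suffice.

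The paper simply cites \cite[Cor.~5.5.11]{FGauges}, but the argument there (and the analogous one spelled out in the paper for the diffracted Hodge stack, \cref{thm:fildhod-dhodconjqrsp}) proceeds differently. One first constructs the comparison map by left Kan extending the global-sections functor from smooth $\Z_p$-algebras (where one has \cref{thm:filprism-comparison}) to all $p$-complete animated rings; this produces a natural map $\Rees(\Fil^\bullet_\N\Prism_R)\to \pi_{R^\N,*}\O_{R^\N}$ and hence, by adjunction, a map $R^\N\to \Spf(\Rees(\Fil^\bullet_\N\Prism_R))/\G_m$. One then checks it is an isomorphism not by descent but by \emph{stratification} over $\A^1/\G_m$: after pulling back to $\G_m/\G_m$ it becomes the identification $R^\prism\cong\Spf\Prism_R$, and after pulling back to $B\G_m$ it becomes the statement on the associated graded (Hodge stack). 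Both of these are known directly for qrsp $R$ without any reduction to the perfectoid case.
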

\begin{proof}
See \cite[Cor. 5.5.11]{FGauges}.
\end{proof}

Subsequently, one can use the fact that quasiregular semiperfectoid rings form a basis of the quasisyntomic site introduced by Bhatt--Morrow--Scholze in \cite[Def. 4.10]{THHandPAdicHodgeTheory} to obtain a description of $X^\N$ for any $p$-adic formal scheme $X$ which is $p$-quasisyntomic and qcqs. For this, one needs to use the following property of the Nygaard-filtered prismatisation:

\begin{lem}
\label{lem:filprism-quasisyntomiccover}
Assume that $f: X\rightarrow Y$ is a quasi-syntomic cover of $p$-adic formal schemes which are $p$-quasisyntomic and qcqs. Then the induced map $X^\N\rightarrow Y^\N$ is a flat cover.
\end{lem}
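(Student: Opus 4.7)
The strategy is to reduce the problem to the case where $X$ and $Y$ are formal spectra of quasi-regular semiperfectoid (qrsp) rings, exploiting the explicit description from \cref{thm:filprism-qrsp}, and then check flatness directly on the level of Rees algebras.

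First, I would use that ``being a flat cover'' is fpqc-local on source and target, together with the qcqs and $p$-quasisyntomic hypotheses on $X$ and $Y$, to reduce to the case of an affine quasi-syntomic cover $R_Y\to R_X$ of $p$-quasisyntomic rings. Using that qrsp rings form a basis of the quasi-syntomic site \EEDIT{(\cite[Prop. 4.31]{THHandPAdicHodgeTheory})}, I would then pick a qrsp cover $R_Y\to R_Y'$ and subsequently a qrsp cover $R_X'$ of the derived $p$-completed tensor product $R_X\widehat{\otimes}^{\mathbb{L}}_{R_Y} R_Y'$, producing a commutative square of quasi-syntomic covers whose bottom row $R_Y'\to R_X'$ is a quasi-syntomic cover of qrsp rings. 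Granting for now that the statement of the lemma holds for qrsp covers of $p$-quasisyntomic rings (handled below by a Čech-nerve argument, using that iterated derived $p$-completed tensor products $R_Y'\widehat{\otimes}^{\mathbb{L}}_{R_Y}\dots\widehat{\otimes}^{\mathbb{L}}_{R_Y} R_Y'$ remain discrete and qrsp under the quasi-syntomic hypothesis), faithfully flat descent reduces the claim to showing that $(\Spf R_X')^\N\to (\Spf R_Y')^\N$ is a flat cover. Indeed, this follows by applying the two-out-of-three property of flat covers to the commutative square
\begin{equation*}
\begin{tikzcd}[ampersand replacement=\&]
(\Spf R_X')^\N\ar[r]\ar[d] \& (\Spf R_Y')^\N\ar[d] \\
(\Spf R_X)^\N\ar[r] \& (\Spf R_Y)^\N\nospacepunct{\;,}
\end{tikzcd}
\end{equation*}
in which the vertical arrows are flat covers by the auxiliary claim.

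In this qrsp situation, \cref{thm:filprism-qrsp} identifies the morphism in question with
\begin{equation*}
\Spf(\Rees(\Fil^\bullet_\N \Prism_{R_X'}))/\G_m\to\Spf(\Rees(\Fil^\bullet_\N \Prism_{R_Y'}))/\G_m\;,
\end{equation*}
so it suffices to show that the map $\Rees(\Fil^\bullet_\N \Prism_{R_Y'})\to\Rees(\Fil^\bullet_\N \Prism_{R_X'})$ is $(p, I)$-completely faithfully flat, where $I$ denotes the Hodge--Tate ideal. Inspecting this grading degree by degree and using the fibre sequences $\Fil^{i+1}_\N\to \Fil^i_\N\to \gr^i_\N$ along with completeness of the Nygaard filtration, this in turn reduces to faithful flatness of each map $\gr^i_\N \Prism_{R_Y'}\to \gr^i_\N \Prism_{R_X'}$. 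For qrsp rings these associated graded pieces are known to identify, up to a Frobenius twist, with pieces of the conjugate-filtered Hodge--Tate complex, whose $(p, I)$-completely faithful flatness under quasi-syntomic covers of qrsp rings is a standard consequence of the foundational results of Bhatt--Morrow--Scholze. The main obstacle I expect is precisely this flatness statement for the graded pieces, together with bookkeeping the $\G_m$-quotient and the derived completions throughout the reduction; once this input is in place, the argument is essentially formal.
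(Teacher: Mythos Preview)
The paper does not prove this lemma; it simply cites \cite[Rem.~5.5.18]{FGauges}. Your direct strategy via \cref{thm:filprism-qrsp} is in the same spirit as what underlies that reference, but two steps in the sketch are not yet justified.

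First, the reduction to qrsp rings. Your two-out-of-three argument on the square requires already knowing that $(\Spf R')^\N\to(\Spf R)^\N$ is a flat cover whenever $R$ is $p$-quasisyntomic and $R\to R'$ is a quasi-syntomic cover with $R'$ qrsp. The \v{C}ech-nerve observation you invoke only tells you that the higher terms $R'^{\widehat{\otimes}_R n}$ are qrsp, so the qrsp-to-qrsp case controls the face maps between them; it says nothing about the augmentation map to $(\Spf R)^\N$, which is what you need. You would have to argue separately that $(-)^\N$ sends this cover to an effective epimorphism and that the map is flat---neither of which follows formally from the nerve being termwise nice.

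Second, in the qrsp-to-qrsp step, faithful flatness of each $\gr^i_\N\Prism_{R_Y'}\to\gr^i_\N\Prism_{R_X'}$ does not formally imply $(p,I)$-complete faithful flatness of the Rees algebra map. Completeness of the Nygaard filtration gives $t$-completeness of the Rees algebra, but $t$ is not topologically nilpotent for the $(p,I)$-adic topology, so the usual ``check flatness on the special fibre'' argument does not apply. What one actually uses is that the Nygaard filtration is compatible with $(p,I)$-completely flat base change of prisms, so that $\Rees(\Fil^\bullet_\N\Prism_{R_X'})$ is obtained from $\Rees(\Fil^\bullet_\N\Prism_{R_Y'})$ by tensoring up along the flat map $\Prism_{R_Y'}\to\Prism_{R_X'}$; flatness of the Rees map is then immediate. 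Your graded-piece statement is a consequence of this, not a substitute for it.
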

\begin{proof}
See \cite[Rem. 5.5.18]{FGauges}.
\end{proof}

\subsubsection{The syntomification}
\label{subsect:syntomification}

\begin{defi}
\label{defi:syntomification-def}
The \emph{syntomification} $X^\Syn$ of $X$ is defined as the pushout 
\begin{equation*}
\begin{tikzcd}[column sep=large]
X^\prism\sqcup X^\prism\ar[r, "j_\HT\sqcup j_\dR"]\ar[d] & X^\N\ar[d, "j_\N"] \\
X^\prism\ar[r, "j_\prism"] & X^\Syn\;.
\end{tikzcd}
\end{equation*}
If $X=\Spf R$ is affine, we also write $R^\Syn$ in place of $X^\Syn$.
\end{defi}

Note that the pushout above is really a coequaliser and that we can construct it by first taking coequalisers on the level of points and then sheafifying. By \cite[Thm. 8.2.2]{Prismatization}, it does not make a difference whether we perform this construction in the category of stacks in groupoids or in the category of stacks in $\infty$-groupoids. Also note that $j_\prism$ is an open immersion and $j_\N$ is étale.

\begin{defi}
The category $\D(X^\Syn)$ is called the category of \emph{$F$-gauges} on $X$ and denoted $\FGauge_\prism(X)$. If $X=\Spf R$ is affine, we also write $\FGauge_\prism(R)$ in place of $\FGauge_\prism(\Spf R)$.
\end{defi}

Observe that we have a functor 
\begin{equation*}
j_\N^*: \FGauge_\prism(X)\rightarrow\Gauge_\prism(X)\;,
\end{equation*}
which we call \emph{passage to the underlying gauge}, and that there is an equaliser diagram
\begin{equation}
\label{eq:syntomification-equaliser}
\begin{tikzcd}
\D(X^\Syn)\ar[r, "j_\N^*"] & \D(X^\N)\ar[r,shift left=.75ex,"j_\HT^*"]
  \ar[r,shift right=.75ex,swap,"j_\dR^*"] & \D(X^\prism)\nospacepunct{\;.}
\end{tikzcd}
\end{equation}
In particular, for any $E\in\D(X^\Syn)$, there is a fibre sequence
\begin{equation}
\label{eq:syntomification-fibreseq}
\begin{tikzcd}[column sep=large]
R\Gamma(X^\Syn, E)\ar[r] & R\Gamma(X^\N, j_\N^*E)\ar[r, "j_\HT^*-j_\dR^*"] & R\Gamma(X	^\prism, j_\prism^*E)\nospacepunct{\;.}
\end{tikzcd}
\end{equation}

\begin{ex}
Let $\pi_{X^\Syn}$ denote the morphism $X^\Syn\rightarrow\Z_p^\Syn$ obtained by functoriality. We obtain an $F$-gauge $\H_\Syn(X)\coloneqq \pi_{X^\Syn, *}\O_{X^\Syn}$, which we call the \emph{structure $F$-gauge} of $X$. Note that the underlying gauge of $\H_\Syn(X)$ is $\H_\N(X)$.
\end{ex}

\begin{ex}
\label{ex:syntomification-bktwists}
The Breuil--Kisin twist $\O_{X^\N}\{1\}$ descends to a line bundle $\O_{X^\Syn}\{1\}$ since
\begin{equation*}
j_\dR^*\O_{X^\N}\{1\}\cong \O_{X^\prism}\{1\}\cong F_X^*\O_{X^\prism}\tensor \cal{I}_X\cong j_\HT^*\O_{X^\N}\{1\}
\end{equation*}
by (\ref{eq:filprism-maps}). We also call $\O_{X^\Syn}\{1\}$ \emph{Breuil--Kisin twist} and again denote its $n$-th tensor power by $\O_{X^\Syn}\{n\}$.
\end{ex}

\begin{ex}
\label{ex:syntomification-fgaugesperfd}
Let $R$ be a perfectoid ring. By \cref{ex:filprism-perfd}, a gauge $E$ on $R$ carries the same data as a diagram
\begin{equation*}
\begin{tikzcd}
\dots \ar[r,shift left=.5ex,"t"]
  & \ar[l,shift left=.5ex, "u"] M^{i+1} \ar[r,shift left=.5ex,"t"] & \ar[l,shift left=.5ex, "u"] M^i \ar[r,shift left=.5ex,"t"] & \ar[l,shift left=.5ex, "u"] M^{i-1} \ar[r,shift left=.5ex,"t"] & \ar[l,shift left=.5ex, "u"] \dots
\end{tikzcd}
\end{equation*}
of $(p, \xi)$-complete $A_\inf(R)$-complexes such that $ut=tu=\phi^{-1}(\xi)$. Writing $M^{-\infty}=(\colim_i M^{-i})_{(p, \xi)}^\wedge$ and $M^\infty=(\colim_i M^i)_{(p, \xi)}^\wedge$ for the $(p, \xi)$-completed colimits along the $t$- and $u$-maps, respectively, we see \EEDIT{using \cref{rem:filprism-jdrjhtperfd}} that $j_\dR^*E$ identifies with $M^{-\infty}$ while $j_\HT^*E$ identifies with $\phi^*M^\infty$. Thus, specifying a descent datum of $E$ to $R^\Syn$ amounts to specifying an isomorphism $\tau: \phi^*M^\infty\cong M^{-\infty}$. In this situation, observing that $R\Gamma(R^\N, j_\N^* E)=M^0$, we see that the fibre sequence (\ref{eq:syntomification-fibreseq}) yields 
\begin{equation*}
R\Gamma(R^\Syn, E)=\fib(M^0\xrightarrow{t^\infty-\tau u^\infty} M^{-\infty})\;.
\end{equation*}
Here, $t^\infty: M^0\rightarrow M^{-\infty}$ and $u^\infty: M^0\rightarrow M^\infty$ denote the maps induced by the $t$- and $u$-maps, respectively, and to define the composition $\tau u^\infty$, we use the identification $M^\infty\cong\phi^*M^\infty$.
\end{ex}

Similar to what we have seen previously, the syntomification and the category of $F$-gauges provide a sensible notion of syntomic cohomology with coefficients:

\begin{defi}
Let $X$ be a bounded $p$-adic formal scheme which is $p$-quasisyntomic and qcqs. For an $F$-gauge $E$ on $X$, we define the \emph{syntomic cohomology} of $X$ with coefficients in $E$ as
\begin{equation*}
R\Gamma_\Syn(X, E)\coloneqq R\Gamma(X^\Syn, E)\;.
\end{equation*}
If $E=\O_{X^\Syn}\{n\}$ is a Breuil-Kisin twist, we also write $R\Gamma_\Syn(X, \Z_p(n))$ instead of $R\Gamma_\Syn(X, E)$ and we use $R\Gamma_\Syn(X, \Q_p(n))$ to denote $R\Gamma_\Syn(X, E)[\tfrac{1}{p}]$.
\end{defi}

In some sense, the category of $F$-gauges on $X$ should capture a universal notion of coefficients for various $p$-adic cohomology theories. Evidence for this is given by the existence of certain \emph{realisation functors} from $F$-gauges to coefficients for various other cohomology theories.

\begin{ex}
\label{ex:syntomification-tcrys}
By functoriality of the syntomification applied to $X_{p=0}\rightarrow X$, we obtain a pullback functor 
\begin{equation*}
T_\crys: \FGauge_\prism(X)\rightarrow\FGauge_\prism(X_{p=0})\;,
\end{equation*}
which we call the \emph{crystalline realisation}.
\end{ex}

\begin{ex}
\label{ex:syntomification-tdr+}
The Hodge-filtered de Rham map $i_{\dR, +}: X^{\dR, +}\rightarrow X^\N$ composed with the map $j_\N: X^\N\rightarrow X^\Syn$ induces a pullback functor
\begin{equation*}
T_{\dR, +}: \FGauge_\prism(X)\rightarrow \D(X^{\dR, +})\;,
\end{equation*}
which we call the \emph{Hodge-filtered de Rham realisation}. Pulling back further to $X^\dR$, we obtain a functor
\begin{equation*}
\EDIT{
T_\dR: \FGauge_\prism(X)\rightarrow \D(X^\dR)\;,
}
\end{equation*}
which we call the \emph{de Rham realisation}.
\end{ex}

There is also an \emph{étale realisation}
\begin{equation*}
T_\et: \Perf(X^\Syn)\rightarrow\D^b_\lisse(X_\eta, \Z_p)\;.
\end{equation*}
Here, $X_\eta$ denotes the generic fibre of $X$ regarded as an adic space over $\Q_p$ and $\D^b_\lisse(X_\eta, \Z_p)$ denotes the full subcategory of $\D(X_{\eta, \proet}, \Z_p)$ spanned by derived $p$-complete locally bounded objects whose mod $p$ reduction has locally constant cohomology sheaves with finitely generated stalks.

Using the arc-topology introduced in \cite{arcTopology}, it suffices to describe $T_\et$ in the case where $X=\Spf R$ is perfectoid by descent, see \cite[Constr. 6.3.2]{FGauges}. Here, in the notation of \cref{ex:syntomification-fgaugesperfd}, we see that $u^\infty$ and $t^\infty$ induce isomorphisms
\begin{equation*}
M^\infty[\tfrac{1}{\phi^{-1}(\xi)}]\xleftarrow{u^\infty} M^0[\tfrac{1}{\phi^{-1}(\xi)}]\xrightarrow{t^\infty} M^{-\infty}[\tfrac{1}{\phi^{-1}(\xi)}]\;.
\end{equation*}
Hence, $\tau$ induces an isomorphism
\begin{equation*}
\phi^*M^{-\infty}[\tfrac{1}{\xi}]\cong \phi^*M^\infty[\tfrac{1}{\xi}]\cong M^{-\infty}[\tfrac{1}{\xi}]
\end{equation*}
endowing $M^{-\infty}$ with the structure of a prismatic $F$-crystal on $R$. Thus, we obtain a natural map
\begin{equation}
\label{eq:syntomification-fgaugetolaurentfcrystal}
\Perf(R^\Syn)\rightarrow \Perf^\phi(A_\inf(R)[\tfrac{1}{\xi}]^\wedge_{(p)})\;,
\end{equation}
\EDIT{where the right-hand side denotes the category of perfect complexes over $A_\inf(R)[\tfrac{1}{\xi}]^\wedge_{(p)}$ equipped with an automorphism linear over the Frobenius of $A_\inf(R)[\tfrac{1}{\xi}]^\wedge_{(p)}$.} By \cite[Ex. 3.5]{FCrystals}, the latter category identifies with $\D^b_\lisse(\Spa(R[\tfrac{1}{p}], R), \Z_p)$ via the construction
\begin{equation}
\label{eq:syntomification-laurentfcrystaltolocsys}
M\mapsto (M\tensorL_{A_\inf(R)} W(R[\tfrac{1}{p}]^\flat))^{\phi=1}\;,
\end{equation}
where $\phi$ acts diagonally \EEDIT{and the $\phi$-invariants are taken in the derived sense, i.e.\ the right-hand side denotes the fibre of the map}
\begin{equation*}
\EEDIT{
\phi-\id: M\tensorL_{A_\inf(R)} W(R[\tfrac{1}{p}]^\flat)\rightarrow M\tensorL_{A_\inf(R)} W(R[\tfrac{1}{p}]^\flat)\;.
}
\end{equation*}

\begin{rem}
\label{rem:syntomification-etalerealisationglobal}
It is also possible to describe the étale realisation globally (i.e.\ without using descent): Namely, pullback along either one of the maps $X^\prism\rightarrow X^\N\rightarrow X^\Syn$ given by $j_\N\circ j_\HT$ or $j_\N\circ j_\dR$, respectively, induces a functor
\begin{equation}
\label{eq:syntomification-etalerealisationglobal}
\Perf(X^\Syn)\rightarrow \Perf^\phi(X_\prism)\;,
\end{equation}
where the right-hand side denotes the category of prismatic $F$-crystals in perfect complexes on $X$ as defined in \cite[Rem. 4.2]{FCrystals} and the Frobenius structure comes from the gluing description of $X^\Syn$ using the fact that $j_\HT$ and $j_\dR$ differ by a Frobenius twist, see \cite[Rem. 6.3.4]{FGauges} for details. Then the étale realisation is given by the composition
\begin{equation*}
\Perf(X^\Syn)\rightarrow \Perf^\phi(X_\prism)\rightarrow \Perf^\phi(X_\prism, \O_\prism[1/\cal{I}_\prism]_{(p)}^\wedge)\cong \D^b_\lisse(X_\eta, \Z_p)\;;
\end{equation*}
here, the last equivalence is \cite[Cor. 3.7]{FCrystals}.
\end{rem}

\begin{ex}
One can show that the étale realisation carries the Breuil--Kisin twists $\O_{X^\Syn}\{n\}$ to the usual Tate twist $\Z_p(n)$, see \cite[Ex. 4.9]{FCrystals}.
\end{ex}

In fact, in the case $X=\Spf\Z_p$, it turns out that the étale realisation defines an equivalence between a certain full subcategory of $\Perf(\Z_p^\Syn)$ and the category $\Rep_{\Z_p}^\crys(G_{\Q_p})$ of crystalline $G_{\Q_p}$-representations in finite free $\Z_p$-modules, \EDIT{as we will now recall; here, $G_{\Q_p}$ denotes the absolute Galois group of $\Q_p$.}

\begin{defi}
Let $R$ be a perfectoid valuation ring. Then the category $\Coh^\refl(R^\Syn)$ of \emph{reflexive} $F$-gauges on $R$ is the full subcategory of $\Perf(R^\Syn)$ spanned by $F$-gauges $E$ satisfying the following two properties:
\begin{enumerate}[label=(\roman*)]
\item $j_\prism^*E$ is a locally free sheaf.
\item $j_\N^*E\in\Perf(A^\N)\cong\Perf_{\gr}(A_\inf(R)\langle u, t\rangle/(ut-\phi^{-1}(\xi)))$ is $(u, t)$-regular, i.e.\ the Koszul complex $\Kos(j_\N^*E; u, t)$ is discrete.
\end{enumerate} 
\end{defi}

\begin{prop}
\label{prop:syntomification-refltofcrystal}
Let $R$ be a perfectoid valuation ring. Then the following categories are equivalent:
\begin{enumerate}[label=(\roman*)]
\item The category $\Vect^\phi(A_\inf(R))$ of prismatic $F$-crystals in vector bundles on $R$.
\item The category $\Coh^\refl(R^\Syn)$ of reflexive $F$-gauges on $R$.
\end{enumerate}
\end{prop}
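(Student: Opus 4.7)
The plan is to construct mutually quasi-inverse functors between the two categories by unraveling the gluing description of $R^\Syn$ from \cref{defi:syntomification-def}, combined with the explicit perfectoid presentations of $R^\prism$ and $R^\N$ given in \cref{ex:prismatisation-perfd} and \cref{ex:filprism-perfd}. By \cref{ex:syntomification-fgaugesperfd}, an $F$-gauge on $R$ is equivalently the datum of a graded $(p, \xi)$-complete perfect $A_\inf(R)\langle u, t\rangle/(ut - \phi^{-1}(\xi))$-module $M^\bullet = (M^i)_{i \in \Z}$ with structure maps $t \colon M^i \to M^{i-1}$ and $u \colon M^i \to M^{i+1}$ satisfying $ut = tu = \phi^{-1}(\xi)$, together with a Frobenius gluing isomorphism $\tau \colon \phi^* M^\infty \xrightarrow{\cong} M^{-\infty}$; the reflexivity conditions then amount to (i) $M^{-\infty}$ being a vector bundle on $\Spf A_\inf(R)$ and (ii) the Koszul complex $\Kos(M^\bullet; u, t)$ being concentrated in cohomological degree zero.

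In the forward direction, given a reflexive $F$-gauge, set $N := M^{-\infty}$, which is a vector bundle by (i), and extract the Frobenius $\phi_N \colon \phi^* N[1/\xi] \cong N[1/\xi]$ from $\tau$ as follows: the relation $ut = \phi^{-1}(\xi)$ ensures that inverting $\phi^{-1}(\xi)$ simultaneously inverts both $u$ and $t$ on $M^\bullet$, making the colimit maps $M^0 \to M^\infty$ and $M^0 \to M^{-\infty}$ into isomorphisms after inverting $\phi^{-1}(\xi)$ and yielding a natural identification $M^\infty[1/\phi^{-1}(\xi)] \cong M^{-\infty}[1/\phi^{-1}(\xi)]$; applying $\phi^*$ and composing with $\tau$ then produces the required $\phi_N$ after inverting $\xi$. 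In the reverse direction, given an $F$-crystal $(N, \phi_N)$, one reconstructs $M^\bullet$ via a Nygaard-type filtration, taking $\Fil^i$ to be the appropriate submodule measuring the integrality of $\phi_N$ with respect to powers of $\xi$ and forming the associated Rees module with $t$-maps given by the filtration inclusions and $u$-maps by multiplication by $\phi^{-1}(\xi)$; the Frobenius gluing $\tau$ is then encoded directly by $\phi_N$, after suitably renormalising the indexing so that $M^{-\infty}$ recovers $N$ itself.

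The main obstacle will be verifying that condition (ii), $(u,t)$-regularity, is precisely the property that makes these two functors mutually quasi-inverse. Concretely, this regularity should force each $M^i$ to be recoverable from $N$ as the appropriate Nygaard submodule via the transition maps and $\tau$, so that no data is lost when passing to the underlying $F$-crystal; conversely, it should guarantee that the Rees module built from the Nygaard filtration on a genuine vector bundle $F$-crystal indeed satisfies $(u,t)$-regularity. Both verifications reduce to Koszul-style computations that hinge on $\phi^{-1}(\xi)$ being a non-zerodivisor on $A_\inf(R)$ and on the graded pieces of the Nygaard filtration being flat over $A_\inf(R)/\xi$, a consequence of $N$ being a vector bundle. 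Full faithfulness on morphisms then follows automatically from the Rees equivalence of \cref{prop:rees-main}, completing the equivalence of categories.
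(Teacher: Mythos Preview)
The paper does not give a proof of this proposition at all; it simply cites \cite[Cor.~6.6.5]{FGauges}. Your sketch is therefore considerably more detailed than what appears here, and it is in fact consistent with the explicit description of the functor $\Vect^\phi(A_\inf(R))\rightarrow\Coh^\refl(R^\Syn)$ recorded in \cref{rem:syntomification-descriptionrefltofcrystal} immediately following the proposition: the Nygaard-type filtration you describe in the reverse direction is exactly the formula $\Fil^i M=\phi_*(\xi^i M\cap \phi^*M)$ stated there, and your forward direction $N=M^{-\infty}$ with Frobenius extracted from $\tau$ is the evident inverse.

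One point you gloss over is where the hypothesis that $R$ is a perfectoid \emph{valuation} ring (as opposed to a general perfectoid ring) enters. In your final paragraph you invoke ``the graded pieces of the Nygaard filtration being flat over $A_\inf(R)/\xi$, a consequence of $N$ being a vector bundle,'' but flatness of these graded pieces is not automatic from $N$ being projective alone; it uses that $A_\inf(R)$ is a valuation ring, so that finitely presented torsion-free modules are automatically free. Similarly, checking that the Rees module you build is genuinely a perfect complex on $R^\N$ (and not merely quasi-coherent) uses this structure. Since the paper defers the proof entirely to the reference, this is not a discrepancy with the paper so much as a gap in your sketch that you would need to fill to make the argument self-contained.
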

\begin{proof}
See \cite[Cor. 6.6.5]{FGauges}.
\end{proof}

\begin{rem}
\label{rem:syntomification-descriptionrefltofcrystal}
The functor $\Vect^\phi(A_\inf(R))\rightarrow\Coh^\refl(R^\Syn)$ can actually be described as follows: To a prismatic $F$-crystal $M$ in vector bundles on $R$ with $\tau: \phi^*M[\tfrac{1}{\xi}]\cong M[\tfrac{1}{\xi}]$, we associate the $\phi^{-1}(\xi)$-adically filtered $A_\inf(R)$-module $\Fil^\bullet M$ given by $\Fil^i M=\phi_*(\xi^i M\cap \phi^*M)$, where the intersection takes place inside $\phi^*M[\tfrac{1}{\xi}]\cong M[\tfrac{1}{\xi}]$ via $\tau$; by \cref{ex:syntomification-fgaugesperfd}, this data will determine an $F$-gauge on $R$.
\end{rem}

\begin{defi}
An $F$-gauge $E\in\Perf(\Z_p^\Syn)$ is called \emph{reflexive} if its pullback to $\O_C^\Syn$ is reflexive, \EDIT{where we recall that $C$ denotes a fixed completed algebraic closure of $\Q_p$.} The full subcategory of $\Perf(\Z_p^\Syn)$ spanned by reflexive $F$-gauge is denoted $\Coh^\refl(\Z_p^\Syn)$.
\end{defi}

Note that the notation $\Coh^\refl(\Z_p^\Syn)$ is actually not misleading: Indeed, one can show that any reflexive $F$-gauge on $\Z_p$ is coherent, see \cite[Rem. 6.6.12]{FGauges}. Here, we call an $F$-gauge on $\Z_p$ \emph{coherent} if its pullback to $\Z_p^\N$ is coherent. The full subcategory of $\Perf(\Z_p^\Syn)$ spanned by coherent $F$-gauges is denoted $\Coh(\Z_p^\Syn)$.

\begin{thm}
\label{thm:syntomification-reflcrys}
There is an equivalence of categories
\begin{equation*}
\Coh^\refl(\Z_p^\Syn)\cong \Vect^\phi((\Z_p)_\prism)\cong\Rep_{\Z_p}^\crys(G_{\Q_p})
\end{equation*}
induced by étale realisation.
\end{thm}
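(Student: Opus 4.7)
The plan is to prove the two equivalences by reducing, via descent along $\Spf \O_C \to \Spf \Z_p$, to the local assertion of \cref{prop:syntomification-refltofcrystal} for the first equivalence and to the Bhatt--Scholze classification of prismatic $F$-crystals by crystalline representations for the second.

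For the first equivalence $\Coh^\refl(\Z_p^\Syn) \cong \Vect^\phi((\Z_p)_\prism)$, the relevant functor is the restriction of \eqref{eq:syntomification-etalerealisationglobal} to reflexive $F$-gauges, i.e.\ pullback along $\Z_p^\prism \to \Z_p^\Syn$ equipped with the Frobenius structure coming from the gluing description of the syntomification. The first reflexivity condition (local freeness of $j_\prism^*E$) ensures that the resulting prismatic $F$-crystal is a vector bundle. To establish the equivalence, I would pull back along the $p$-completely faithfully flat cover $\Spf \O_C \to \Spf \Z_p$: by \cref{prop:syntomification-refltofcrystal} applied to $\O_C$, the analogous functor $\Coh^\refl(\O_C^\Syn) \to \Vect^\phi(A_\inf(\O_C))$ is an equivalence, and the explicit inverse described in \cref{rem:syntomification-descriptionrefltofcrystal} (which is where the Koszul regularity part of reflexivity will be used) is functorial in the perfectoid base. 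The equivalence over $\Z_p$ should then follow once one verifies that both sides satisfy effective descent along this cover: on the $F$-gauge side, this should reduce via \cref{lem:filprism-quasisyntomiccover} and its evident analogue for the syntomification, whereas on the $F$-crystal side it is Galois descent for the $G_{\Q_p}$-action on $A_\inf(\O_C)$.

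For the second equivalence $\Vect^\phi((\Z_p)_\prism) \cong \Rep^\crys_{\Z_p}(G_{\Q_p})$, I would invoke the classification theorem of Bhatt--Scholze from \cite{FCrystals}, which identifies prismatic $F$-crystals in vector bundles on $\Spf\Z_p$ with crystalline representations of $G_{\Q_p}$ in finite free $\Z_p$-modules. Compatibility with the étale realisation is ensured by \cref{rem:syntomification-etalerealisationglobal}: the global étale realisation on $\Perf(\Z_p^\Syn)$ factors through $\Perf^\phi((\Z_p)_\prism)$ via the same pullback that witnesses our first equivalence, so postcomposing with the Bhatt--Scholze functor recovers $T_\et$ after identifying crystalline $\Z_p$-local systems on $\Spa \Q_p$ with crystalline $G_{\Q_p}$-representations.

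The main obstacle is the descent step for the first equivalence: one must check that the inverse construction of \cref{rem:syntomification-descriptionrefltofcrystal} is functorial in the perfectoid input in a way compatible with Galois actions, so that the gluing datum on $\O_C^\Syn$ arising from the cover corresponds precisely to the $G_{\Q_p}$-descent datum on $A_\inf(\O_C)$, and moreover that descent along $\O_C^\Syn \to \Z_p^\Syn$ is effective for the subcategory of reflexive $F$-gauges (not merely that the cover is faithfully flat). The remaining verifications — matching Frobenius structures and checking that the composite functor really is $T_\et$ — should follow by comparing the local formulae in \eqref{eq:syntomification-fgaugetolaurentfcrystal} and \eqref{eq:syntomification-laurentfcrystaltolocsys} with the Fontaine-style description of crystalline representations.
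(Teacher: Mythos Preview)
The paper does not give its own proof here: it simply cites \cite[Thm.~6.6.13]{FGauges}. Your sketch is a plausible outline of how such a proof could proceed, and your identification of the second equivalence with the Bhatt--Scholze theorem from \cite{FCrystals} is exactly right.

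For the first equivalence, your descent strategy is reasonable in spirit but the details you flag as obstacles are genuine. The cover $\Spf\O_C\to\Spf\Z_p$ is not quasisyntomic in the literal sense of \cref{lem:filprism-quasisyntomiccover} (it is pro-\'etale but not finitely presented), so you would need to check that the induced map $\O_C^\Syn\to\Z_p^\Syn$ is still an effective descent morphism for the relevant categories; this is true but requires some care. More substantively, the Čech nerve involves self-products like $\O_C\,\widehat\otimes_{\Z_p}\,\O_C$, and identifying descent data there with the $G_{\Q_p}$-action is not immediate---it is essentially the content of reformulating prismatic descent along this cover as Galois descent. Your parenthetical ``Galois descent for the $G_{\Q_p}$-action on $A_\inf(\O_C)$'' on the $F$-crystal side hides the same issue: one must argue that $\Vect^\phi((\Z_p)_\prism)$ is equivalent to $G_{\Q_p}$-equivariant objects in $\Vect^\phi(A_\inf(\O_C))$, which again requires justifying that the perfectoid cover suffices to compute the limit over the full prismatic site. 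None of this is wrong, but each step needs an argument rather than an assertion; the cited reference carries these out.
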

\begin{proof}
See \cite[Thm. 6.6.13]{FGauges}.
\end{proof}

Actually, it turns out that this allows one to show that the étale realisation of all coherent sheaves on $\Z_p^\Syn$ is a crystalline $G_{\Q_p}$-representations as soon as one inverts $p$ and that, moreover, in this setting, cohomology of a coherent sheaf agrees with the crystalline part of the Galois cohomology of its étale realisation:

\begin{prop}
\label{prop:syntomification-cohcrystalline}
For any $M\in\Coh(\Z_p^\Syn)$, the $G_{\Q_p}$-representation $T_\et(M)[\tfrac{1}{p}]$ is crystalline and the map
\begin{equation*}
R\Gamma(\Z_p^\Syn, M)[\tfrac{1}{p}]\rightarrow R\Gamma(G_{\Q_p}, T_\et(M)[\tfrac{1}{p}])
\end{equation*}
induced by étale realisation has the following properties:
\begin{enumerate}[label=(\roman*)]
\item It induces an isomorphism on $H^0$.

\item It induces an injective map on $H^1$ with image the subspace of $H^1(G_{\Q_p}, T_\et(M)[\tfrac{1}{p}])$ spanned by crystalline extensions of $\Q_p$ by $T_\et(M)[\tfrac{1}{p}]$.

\item All $H^i(\Z_p^\Syn, M)[\tfrac{1}{p}]$ for $i\geq 2$ vanish.
\end{enumerate}
\end{prop}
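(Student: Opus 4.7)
The plan is to first handle the case where $M$ is reflexive and then reduce the general coherent case to it via a dévissage argument. For the reflexive case, the starting point is \cref{thm:syntomification-reflcrys}, which identifies $\Coh^\refl(\Z_p^\Syn)$ with $\Rep^\crys_{\Z_p}(G_{\Q_p})$ via the étale realisation; in particular, if $M$ is already reflexive, then $V\coloneqq T_\et(M)[\tfrac{1}{p}]$ is automatically crystalline.

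To compute $R\Gamma(\Z_p^\Syn, M)[\tfrac{1}{p}]$ in the reflexive case, I would use the categorical Beilinson fibre square (\cref{thm:intro-beilfibsqcat}) to express it as $\RHom(\O, M)[\tfrac{1}{p}]$ inside $\DF(\Q_p)\times_{\D(\Q_p)}\D(\Mod^\phi(\Q_p))$. Via \cref{rem:syntomification-descriptionrefltofcrystal}, the image of $M$ there is naturally identified with the filtered $\phi$-module $D_\crys(V)$ equipped with its Hodge filtration. The Hom complex in the fibre product is then the classical complex computing Fontaine's cohomology of a filtered $\phi$-module, which by Bloch--Kato's fundamental exact sequence
\begin{equation*}
0\rightarrow\Q_p\rightarrow B_\crys^{\phi=1}\oplus \Fil^0 B_\dR\rightarrow B_\dR\rightarrow 0
\end{equation*}
computes exactly the ``crystalline part'' of the Galois cohomology of $V$: the $H^0$ recovers $V^{G_{\Q_p}}=H^0(G_{\Q_p},V)$, the $H^1$ recovers the Bloch--Kato Selmer subspace $H^1_f(G_{\Q_p},V)$ of crystalline extensions of $\Q_p$ by $V$, and the cohomology vanishes in degrees $\geq 2$ because the relevant two-step complex of filtered $\phi$-modules has no higher derived functors. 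Compatibility with étale realisation should then yield that the resulting map into $R\Gamma(G_{\Q_p},V)$ is exactly the natural one induced by $T_\et$.

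For the dévissage to general coherent $M$, I would argue that $\Coh(\Z_p^\Syn)[\tfrac{1}{p}]$ is generated, as an abelian category, by reflexive $F$-gauges (possibly twisted by $\O\{n\}$): exploiting that $\Z_p^\N$ is noetherian and regular (\cref{rem:filprism-regular}) so that $\Coh(\Z_p^\N)$ is the heart of the standard $t$-structure on $\Perf(\Z_p^\N)$ (\cref{rem:filprism-cohheartperf}), I would show that, after inverting $p$, any coherent $F$-gauge admits a finite filtration whose successive quotients are reflexive. Since the conclusion of the proposition is preserved under finite extensions and Breuil--Kisin twists $\O\{n\}$ (which shift Hodge--Tate weights but not the formal shape of the argument), this will propagate the reflexive case to all coherent $F$-gauges and, in particular, yield that $T_\et(M)[\tfrac{1}{p}]$ is crystalline as a successive extension of crystalline representations.

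The main obstacle will be the dévissage step: controlling when a coherent $F$-gauge on $\Z_p^\Syn$ becomes, up to $p$-isogeny, an iterated extension of reflexive ones is delicate because the reflexivity condition mixes a condition at $j_\prism$ (local freeness of the prismatic $F$-crystal) with a Koszul-regularity condition on the underlying gauge. A secondary difficulty is verifying that the map into Galois cohomology produced by the categorical Beilinson fibre square agrees on the nose with the one induced by $T_\et$; this should ultimately follow from the construction of $T_\et$ in \cref{rem:syntomification-etalerealisationglobal} together with functoriality, but needs to be spelled out carefully in order for the identifications of $H^0$ and $H^1$ with crystalline Galois cohomology classes to be canonical.
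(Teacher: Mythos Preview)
The paper does not actually prove this proposition: it simply cites \cite[Prop.~6.7.3]{FGauges}. So there is no ``paper's own proof'' to compare against beyond noting that this is an external input.

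Your proposal, however, has a genuine circularity problem. You invoke the categorical Beilinson fibre square (\cref{thm:intro-beilfibsqcat}, i.e.\ \cref{thm:beilfibsq-categorical}) to compute $R\Gamma(\Z_p^\Syn,M)[\tfrac{1}{p}]$ in the reflexive case. But in this paper, \cref{thm:beilfibsq-categorical} is proved via \cref{prop:beilfibsq-perfzpsyncrys}, which in turn rests on \cref{thm:beilfibsq-main}, and the proof of \cref{thm:beilfibsq-main} explicitly uses \cref{prop:syntomification-cohcrystalline} (both to know that $T_\et(E)[\tfrac{1}{p}]$ is crystalline and, in the final paragraph, to match $R\Gamma(\Z_p^\Syn,E)[\tfrac{1}{p}]$ with $\RHom_{\MF^\phi_\adm(\Q_p)}(\Q_p,D)$). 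So your argument, as written, assumes what it is trying to prove. If you wanted to make your route non-circular, you would have to replace the appeal to \cref{thm:intro-beilfibsqcat} by a direct computation of $R\Gamma(\Z_p^\Syn,M)$ for reflexive $M$ starting from \cref{thm:syntomification-reflcrys} and the explicit description of reflexive $F$-gauges --- essentially reproducing the content of \cite[Prop.~6.7.3]{FGauges} from scratch.

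Separately, your d\'evissage step is more complicated than necessary: by \cite[Prop.~6.7.1]{FGauges} (invoked in the proof of \cref{lem:beilfibsq-etalephimod}), every coherent $F$-gauge on $\Z_p$ is already $p$-isogenous to a reflexive one, so no filtration by reflexive subquotients is needed --- the reflexive case immediately gives the general case after inverting $p$.
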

\begin{proof}
See \cite[Prop. 6.7.3]{FGauges}.
\end{proof}

We now focus on studying $\Z_p^\Syn$ further and will hence omit the subscript $\Z_p^\Syn$ from the notation for the Breuil--Kisin twists for the purpose of simplification. Namely, we will describe a special locus $\Z_{p, \red}^\Syn$ on $\Z_p^\Syn$ called the \emph{reduced locus} and this will require us to construct a certain section $v_1\in H^0(\Z_p^\Syn, \O\{p-1\}/p)$. Here, we only briefly sketch the argument and refer to \cite[Constr. 6.2.1]{FGauges} for details. First, by quasisyntomic descent, see \cref{lem:filprism-quasisyntomiccover}, it suffices to construct compatible sections $v_{1, R}\in H^0(R^\Syn, \O_{R^\Syn}\{p-1\}/p)$ for all $p$-torsionfree quasiregular semiperfectoid rings $R$ and, by the gluing description of $R^\Syn$, this is the same as constructing sections $v_{1, R}\in H^0(R^\N, \O_{R^\N}\{p-1\}/p)$ satisfying a compatibility with respect to the Frobenius. However, using \cref{thm:filprism-qrsp}, we have
\begin{equation*}
H^0(R^\N, \O_{R^\N}\{p-1\}/p)=\Fil^{p-1}_\N\Prism_R\{p-1\}/p
\end{equation*}
and due to the isomorphism $\Prism_R\{p-1\}/p\cong I^{-1}/p$, we are reduced to constructing a nonzero map $I/p\rightarrow \Fil^{p-1}_\N\Prism_R/p$. However, one can show that $I/p\subseteq \Fil^p_\N\Prism_R/p$ as submodules of $\Prism_R/p$ and hence the map we seek is given by
\begin{equation*}
I/p\subseteq \Fil^p_\N\Prism_R/p\subseteq\Fil^{p-1}_\N\Prism_R/p\;.
\end{equation*}
Finally, the necessary compatibility with the Frobenius follows from $\phi(I)\tensor_{\Prism_R} I^{-(p-1)}/p\cong I/p$.

\begin{ex}
\label{ex:syntomification-v1perfd}
Perhaps most importantly for what follows, if $R$ is perfectoid, the section $v_{1, R}$ identifies with
\begin{equation}
\label{eq:syntomification-v1perfd}
u^pt=\xi t^{-p+1}\in A_\inf(R)\langle u, t\rangle/(ut-\phi^{-1}(\xi), p)\;,
\end{equation}
where we have used $\xi$ to trivialise the Breuil--Kisin twist. In particular, due to $ut=\xi^{1/p}$ in $A_\inf(R)\langle u, t\rangle/(ut-\phi^{-1}(\xi), p)$, the loci $\{v_1\neq 0\}$ and $\{\xi\neq 0\}$ in $\Spec A_\inf(R)\langle u, t\rangle/(ut-\phi^{-1}(\xi), p)$ are the same.
\end{ex}

\begin{defi}
The \emph{reduced locus} $\Z_{p, \red}^\Syn$ of $\Z_p^\Syn$ is the vanishing locus of $(p, v_1)$. Its pullback to $\Z_p^\N$ is called the \emph{reduced locus} of $\Z_p^\N$ and denoted $\Z_{p, \red}^\N$.
\end{defi}

It turns out that there is a very concrete way to describe $\Z_{p, \red}^\Syn$ and cohomology on it. To explain this, we need to consider several components of $\Z_{p, \red}^\Syn$. Note that we are now working over $\F_p$.

\begin{enumerate}[label=(\roman*)]
\item The \emph{conjugate-filtered Hodge--Tate component} $\Z_{p, \HT, c}^\N$ is the closed substack of $(\Z_p^\N)_{p=0}$ cut out by the equation $t=0$ and one can show that $\Z_{p, \HT, c}^\N\cong \G_a/(\G_a^\sharp\rtimes\G_m)$, see \cite[Prop. 5.3.7]{FGauges}. Thus, the stack $\Z_{p, \HT, c}^\N$ admits the following stratification:
\begin{enumerate}[label=(\arabic*)]
\item The substack $\Z_{p, \HT}^\N\coloneqq B\Stab_{\G_a^\sharp\rtimes\G_m}(1\in\G_a)\cong B\G_m^\sharp$ is open and one can show that it identifies with $j_\HT(\Z_p^\prism)\cap \Z_{p, \red}^\N$. Here, $\G_m^\sharp$ denotes the (base change to $\F_p$ of) the PD-hull of $\G_m$ at $1$, i.e.\ $\Spec\Z[t, t^{-1}, \frac{(t-1)}{2!}, \frac{(t-1)}{3!}, \dots]$.

\item The substack $\Z_{p, \Hod}^\N\coloneqq B\Stab_{\G_a^\sharp\rtimes\G_m}(0\in\G_a)\cong B(F_*\G_a^\sharp\rtimes\G_m)$ is closed and identifies with the reduced complement of $\Z_{p, \HT}^\N$.
\end{enumerate}

\item The \emph{Hodge-filtered de Rham component} $\Z_{p, \dR, +}^\N$ is the closed substack parametrising filtered Cartier--Witt divisors $M\xrightarrow{d} W$ where the admissible sequence of $M$ is locally split; \EEDIT{for simplicity, we will call such filtered Cartier--Witt divisors \emph{locally split} in the sequel.} Noting that $i_{\dR, +}: \A^1/\G_m\rightarrow \Z_{p, \dR, +}^\N$ is an fpqc cover splitting the restriction $t_{\dR, +}: \Z_{p, \dR, +}^\N\rightarrow\A^1/\G_m$ of the Rees map, we conclude that there is an isomorphism of stacks $\Z_{p, \dR, +}^\N\cong B_{\A^1/\G_m}\cal{G}$ over $\A^1/\G_m$, where $\cal{G}$ denotes the group scheme $\Aut(i_{\dR, +})$. One can explicitly compute $\cal{G}$, see \cite[Ch. 7]{Prismatization}; for us, however, it is only important that $\Z_{p, \dR, +}^\N$ admits the following stratification:
\begin{enumerate}[label=(\arabic*)]
\item The preimage of $\Spec\F_p\subseteq\A^1/\G_m$ under $t_{\dR, +}$ identifies with the open substack $j_\dR(\Z_p^\prism)\cap \Z_{p, \red}^\N$ and \EDIT{is} denoted $\Z_{p, \dR}^\N$. Up to a Frobenius twist, which is trivial here, it is isomorphic to $\Z_{p, \HT}^\N$.

\item The preimage of $B\G_m\subseteq\A^1/\G_m$ under $t_{\dR, +}$ is isomorphic to $\Z_{p, \Hod}^\N$.
\end{enumerate}
\end{enumerate}

To obtain $\Z_{p, \red}^\N$ from these components, one glues $\Z_{p, \HT, c}^\N$ and $\Z_{p, \dR, +}^\N$ along $\Z_{p, \Hod}^\N$. Further gluing $\Z_{p, \HT}^\N$ and $\Z_{p, \dR}^\N$ along a Frobenius twist, which is trivial in our situation, then yields $\Z_{p, \red}^\Syn$. Thus, to study cohomology on $\Z_{p, \red}^\Syn$, we need to study cohomology on the loci described above, starting with $\Z_{p, \dR}^\N$ and $\Z_{p, \Hod}^\N$:

\begin{enumerate}[label=(\arabic*)]
\item For $\Z_{p, \dR}^\N\cong B\G_m^\sharp$, one can show that
\begin{equation*}
\D(\Z_{p, \dR}^\N)\cong \D_{(\Theta^p-\Theta)-\nilp}(\F_p[\Theta])\;,
\end{equation*}
where the latter category consists of $\F_p$-complexes $V$ equipped an operator $\Theta$ such that the action of $\Theta^p-\Theta$ on cohomology is locally nilpotent, \EDIT{i.e.\ for every $i\in\Z$ and $x\in H^i(V)$, there is some $n\geq 0$ such that $(\Theta^p-\Theta)^n(x)=0$,} see \cite[Thm. 3.5.8]{APC}. Under this equivalence, the Breuil--Kisin twist $\O\{n\}$ corresponds to $(\F_p, \Theta=n)$ and if $E\in\D(\Z_{p, \dR}^\N)$ corresponds to $(V, \Theta)$, then
\begin{equation*}
R\Gamma(\Z_{p, \dR}^\N, E)=\fib(V\xrightarrow{\Theta} V)\;,
\end{equation*}
see \cite[Ex. 3.5.6, Prop. 3.5.11]{APC}.

\item For $\Z_{p, \Hod}^\N\cong B(F_*\G_a^\sharp\rtimes\G_m)$, one can show that
\begin{equation*}
\D(\Z_{p, \Hod}^\N)\cong \D_{\gr, \Theta-\nilp}(\F_p[\Theta])\;,
\end{equation*}
where $\Theta$ has degree $-p$, i.e.\ the latter category consists of $\Z$-indexed collections of $\F_p$-complexes $V_i$ equipped with operators $\Theta_i: V_i\rightarrow V_{i-p}$ such that the action of $\Theta=\bigoplus_i \Theta_i$ on cohomology is locally nilpotent. Under this equivalence, the Breuil--Kisin twist $\O\{n\}$ corresponds to the vector space $\F_p$ sitting in grading degree $-n$ and if $E\in\D(\Z_{p, \Hod}^\N)$ corresponds to \EEDIT{$(\{V_i\}, \{\Theta_i\})$}, then
\begin{equation*}
R\Gamma(\Z_{p, \Hod}^\N, E)=\fib(V_0\xrightarrow{\Theta} V_{-p})\;.
\end{equation*}
\end{enumerate}

On $\Z_{p, \dR, +}^\N$ and $\Z_{p, \HT, c}^\N$, we obtain the following descriptions of quasi-coherent complexes:

\begin{enumerate}[label=(\roman*)]
\item For $\Z_{p, \HT, c}^\N\cong \G_a/(\G_a^\sharp\rtimes\G_m)$, one can show that 
\begin{equation*}
\D(\Z_{p, \HT, c}^\N)\cong\D_{\gr, D-\nilp}(\cal{A}_1)\;,
\end{equation*}
where $\cal{A}_1\coloneqq\F_p\{x, D\}/(Dx-xD-1)$ with $D$ having degree \EEDIT{$-1$} and $x$ having degree \EEDIT{$1$}, \EEDIT{see also \cref{prop:fildhod-zpntzero}}; here, the notation $\F_p\{x, D\}$ refers to the free (non-commutative!) $\F_p$-algebra on the variables $x$ and $D$. Alternatively, via the Rees equivalence, we may describe any $E\in\D(\Z_{p, \HT, c}^\N)$ as an increasingly filtered $\F_p$-complex $\Fil_\bullet V$ together with operators $D: \Fil_\bullet V\rightarrow \Fil_{\bullet-1} V$ which are locally nilpotent in cohomology such that $Dx=xD+1$, where $x: \Fil_\bullet V\rightarrow\Fil_{\bullet+1} V$ denotes the transition maps. \EEDIT{We warn the reader that, in particular, this relation means that the operators $xD: \Fil_\bullet V\rightarrow\Fil_\bullet V$ are \emph{not} compatible with the filtration, but one rather has commutative diagrams
\begin{equation*}
\begin{tikzcd}[ampersand replacement=\&]
\Fil_i V\ar[r, "x"]\ar[d, "xD"] \& \Fil_{i+1} V\ar[d, "xD-1"] \\
\Fil_i V\ar[r, "x"] \& \Fil_{i+1} V\nospacepunct{\;;}
\end{tikzcd}
\end{equation*}
however, since we are working over $\F_p$ and hence $xD-p=xD$, this also means that the operator $xD$ \emph{is} compatible with the coarser filtration
\begin{equation*}
\Fil_{p\bullet} V = (\dots\rightarrow \Fil_{-p} V\rightarrow \Fil_0 V\rightarrow\Fil_p V\rightarrow\dots)\;.
\end{equation*}
Under the above description of $\D(\Z_{p, \HT, c}^\N)$, the Breuil--Kisin twist $\O\{n\}$ corresponds to the filtered complex $\Fil_\bullet V$ with $\Fil_\bullet V=\F_p$ if $\bullet\geq -n$ and zero else with the transition maps between nonzero terms being the identity; moreover, the operator $D: \Fil_i V\rightarrow\Fil_{i-1} V$ is given by multiplication by $i$ for all $i\in\Z$.}

If $E$ corresponds to a graded $\cal{A}_1$-module $M$, then restriction to $\Z_{p, \dR}^\N$ corresponds to passing to $(M[1/x^p]_{\deg=0}, \Theta=xD)$; alternatively, one may view this as passing from \EEDIT{$\Fil_{p\bullet} V$} to the underlying non-filtered complex and taking $\Theta=xD$. Restriction to $\Z_{p, \Hod}^\N$, however, corresponds to passing to $M/x^p$ and one can in fact show that $\D(\Z_{p, \Hod}^\N)\cong\D_{\gr, D-\nilp}(\cal{A}_1/x^p)$ as one can show that $\D(\Z_{p, \Hod}^\N)\cong\D_{\gr, D-\nilp}(\cal{A}_1/x^p)$ due to
\begin{equation*}
\Z_{p, \Hod}^\N\cong \G_a[F]/(\G_a^\sharp\rtimes\G_m)\subseteq\G_a/(\G_a^\sharp\rtimes\G_m)\cong\Z_{p, \HT, c}^\N\;,
\end{equation*}
where $\G_a[F]$ denotes the kernel of the Frobenius on $\G_a$; note that $x^p\in\cal{A}_1$ is central, so the notation $\cal{A}_1/x^p$ is unambiguous. \EEDIT{To reconcile this with the description of $\D(\Z_{p, \Hod}^\N)$ as $\D_{\gr, \Theta-\nilp}(\F_p[\Theta])$ from earlier, one has to observe that the map $\F_p[\Theta]\rightarrow \cal{A}_1/x^p$ given by $\Theta\mapsto D^p$ endows the target with the structure of a split Azumaya algebra over the source with splitting module $\cal{A}_1/\cal{A}_1x$ compatibly with the grading. In particular, there is an equivalence of categories
\begin{equation*}
\D_{\gr, D-\nilp}(\cal{A}_1/x^p)\xrightarrow{\cong} \D_{\gr, \Theta-\nilp}(\F_p[\Theta])\;, \hspace{0.3cm} N\mapsto N\tensor_{\cal{A}_1/x^p} \cal{A}_1/\cal{A}_1x
\end{equation*}
and thus, under the description $\D(\Z_{p, \Hod}^\N)\cong \D_{\gr, \Theta-\nilp}(\F_p[\Theta])$ from above, restriction to $\Z_{p, \Hod}^\N$ corresponds to passing to the associated graded $\gr_\bullet V$ of $\Fil_\bullet V$ and taking $\Theta=D^p$.}

Finally, we have
\begin{equation*}
R\Gamma(\Z_{p, \HT, c}^\N, E)=\fib(\Fil_0 V\xrightarrow{D} \Fil_{-1} V)\;.
\end{equation*}

\item For $\Z_{p, \dR, +}^\N\cong B_{\A^1/\G_m}\cal{G}$, one can show that any $E\in\D(\Z_{p, \dR, +}^\N)$ corresponds to a \EEDIT{decreasingly} filtered complex $\Fil^\bullet V\in\DF(\F_p)$ together with an operator $\Theta: \Fil^\bullet V\rightarrow\Fil^{\bullet-p} V$ such that the action of $\Theta^p-t^{p^2-p}\Theta$ on $\Rees(\Fil^\bullet V)$ is locally nilpotent in cohomology, see \cite[Prop. 6.5.6]{FGauges}. Under this correspondence, the Breuil--Kisin twist $\O\{n\}$ corresponds to the filtered complex $\Fil^\bullet V$ with $\Fil^\bullet V=\F_p$ if $\bullet\leq -n$ and zero else with the transition maps between nonzero terms being the identity and $\Theta=n$. For any $E\in\D(\Z_{p, \dR, +}^\N)$ corresponding to $(\Fil^\bullet V, \Theta)$, restriction to $\Z_{p, \dR}^\N$ corresponds to passage to the underlying non-filtered complex and restriction to $\Z_{p, \Hod}^\N$ corresponds to passage to the associated graded; moreover, we have
\begin{equation*}
R\Gamma(\Z_{p, \dR, +}^\N, E)=\fib(\Fil^0V\xrightarrow{\Theta} \Fil^{-p}V)\;.
\end{equation*}
\end{enumerate}

For $E\in\D(\Z_{p, \red}^\Syn)$, let us introduce the notation $F_\dR(E)$ for $R\Gamma(\Z_{p, \dR}^\N, E|_{\Z_{p, \dR}^\N})$ and define $F_\Hod(E), F_{\HT, c}(E)$ and $F_{\dR, +}(E)$ analogously. Then we have natural restriction maps
\begin{equation*}
\begin{split}
a_E: F_{\dR, +}(E)&\rightarrow F_\dR(E)\oplus F_\Hod(E)\;, \\
b_E: F_{\HT, c}(E)&\rightarrow F_\dR(E)\oplus F_\Hod(E)
\end{split}
\end{equation*}
\EEDIT{and} the gluing description of $\Z_{p, \red}^\Syn$ shows that we have 
\begin{equation}
\label{eq:syntomification-cohomologyreduced}
R\Gamma(\Z_{p, \red}^\Syn, E)=\fib(F_{\dR, +}(E)\oplus F_{\HT, c}(E)\xrightarrow{a_E-b_E} F_\dR(E)\oplus F_\Hod(E))\;.
\end{equation}

As we now have a good handle on cohomology on $\Z_{p, \red}^\Syn$, we want to make use of this to understand cohomology on $\Z_p^\Syn$.

\begin{defi}
\label{defi:syntomification-fil}
Let \EEDIT{$E\in\D((\Z_p^\Syn)_{p=0})$}. The filtration
\begin{equation*}
\begin{tikzcd}[column sep=scriptsize]
\dots\ar[r, "v_1"] & R\Gamma(\Z_p^\Syn, E\{-(p-1)\})\ar[r, "v_1"] & R\Gamma(\Z_p^\Syn, E)\ar[r, "v_1"] & R\Gamma(\Z_p^\Syn, E\{p-1\})\ar[r, "v_1"] & \dots  
\end{tikzcd}
\end{equation*}
is called the \emph{syntomic filtration} and denoted $\Fil_\bullet^\Syn R\Gamma(\Z_p^\Syn, E)[\frac{1}{v_1}]$; here, $E\{n\}$ denotes the tensor product $E\tensor\O\{n\}$ for any $n$. We denote the underlying unfiltered object by $R\Gamma(\Z_p^\Syn, E)[\frac{1}{v_1}]$.
\end{defi}

\EDIT{As one can show that $v_1$ is topologically nilpotent, the syntomic filtration is complete, see \cref{prop:syntomicetale-filtrationetale}. Moreover, notice} that, for any \EEDIT{$E\in\D((\Z_p^\Syn)_{p=0})$}, there is a canonical isomorphism
\begin{equation}
\label{eq:syntomification-grsyn}
\gr^\Syn_\bullet R\Gamma(\Z_p^\Syn, E)[\tfrac{1}{v_1}]\cong R\Gamma(\Z_{p, \red}^\Syn, E/v_1\{\bullet(p-1)\})\;,
\end{equation}
where $E/v_1\coloneqq\cofib(E\{-(p-1)\}\xrightarrow{v_1} E)$. Moreover, \EEDIT{if $E\in\Perf((\Z_p^\Syn)_{p=0})$}, one can check that there is a natural isomorphism
\begin{equation}
\label{eq:syntomification-synet}
R\Gamma(\Z_p^\Syn, E)[\tfrac{1}{v_1}]\cong R\Gamma(G_{\Q_p}, T_\et(E))\;,
\end{equation}
see \cite[Eq. (6.4.1)]{FGauges}.

\newpage

\section{The conjugate-filtered diffracted Hodge stack}
\label{sect:conjdhod}

\EDIT{In this section, we will develop a stacky formulation of diffracted Hodge cohomology in a similar spirit to what has been done in Section \ref{sect:stacks} for de Rham cohomology and prismatic cohomology.} Namely, we will introduce stacks $\pi_{X^\dHod}: X^\dHod\rightarrow\Spf\Z_p$ and $\pi_{X^\dHod, c}: X^{\dHod, c}\rightarrow\A^1/\G_m$ attached to any bounded $p$-adic formal scheme $X$ whose coherent cohomology computes \EDIT{the} diffracted Hodge cohomology of $X$ together with its conjugate filtration as introduced in \cite[§4.7]{APC} in good cases. While the diffracted Hodge stack $X^\dHod$ was already introduced by Bhatt--Lurie in \cite[Constr. 3.8]{PFS}, albeit without an explicit proof of the comparison with diffracted Hodge cohomology as stated below, its filtered refinement $X^{\dHod, c}$ does not seem to appear in the literature so far. More specifically, we are going to show the following:

\begin{thm}
\label{thm:fildhod-comparison}
Let $X$ be a bounded $p$-adic formal scheme and assume that $X$ is $p$-quasisyntomic and qcqs. Then $\pi_{X^\dHod, *}\O_{X^\dHod}$ identifies with $R\Gamma_\dHod(X)$ in $\widehat{\D}(\Z_p)$.
\end{thm}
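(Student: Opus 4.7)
The plan is to emulate the proof strategy of \cref{thm:filprism-comparison}, first reducing to the case of quasiregular semiperfectoid rings via quasisyntomic descent, and then carrying out an explicit calculation.

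For the descent step, I would establish an analogue of \cref{lem:filprism-quasisyntomiccover} for the diffracted Hodge stack: if $f: X \to Y$ is a quasisyntomic cover of $p$-adic formal schemes which are $p$-quasisyntomic and qcqs, then the induced map $X^\dHod \to Y^\dHod$ is a flat cover. Given this, both sides of the asserted isomorphism satisfy quasisyntomic descent---the left-hand side by flat descent for quasi-coherent sheaves on stacks, and the right-hand side by the construction of $R\Gamma_\dHod(-)$ in \cite[\S 4.7]{APC}. As quasiregular semiperfectoid rings form a basis of the quasisyntomic topology on $p$-quasisyntomic qcqs formal schemes, this reduces the claim to the case where $X = \Spf R$ for a quasiregular semiperfectoid ring $R$.

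In this case, I would give an explicit description of $R^\dHod$ as the formal spectrum (modulo a suitable group action) of a ring built from the initial prism $\Prism_R$ in the absolute prismatic site of $R$, paralleling the calculations performed for $R^\prism$, $R^{\dR, +}$ and $R^\N$ in \cref{ex:prismatisation-perfd}, \cref{ex:fildrstack-perfd} and \cref{ex:filprism-perfd}, respectively. The key input would be a deformation-theoretic argument showing that any $S$-valued point of $R^\dHod$ lifts uniquely to a map out of $\Prism_R$ through the appropriate admissible sequence, using vanishing of suitable cotangent complexes on the perfectoid cover sitting above $R$ and the fact that the relevant animated rings differ from their $\pi_0$ by pro-nilpotent ideals.

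Finally, a direct comparison of the resulting formula for $\pi_{R^\dHod, *}\O_{R^\dHod}$ with the explicit description of $R\Gamma_\dHod(R)$ for a quasiregular semiperfectoid ring $R$ given in \cite[\S 4.7]{APC} (where diffracted Hodge cohomology is concentrated in degree zero and admits a concrete model in terms of $\Prism_R$) would complete the argument. The main obstacle will be carrying out the second step: producing the right explicit presentation of $R^\dHod$ in the quasiregular semiperfectoid case. This is the crucial geometric input of the whole argument; once this is available, both the descent reduction and the final comparison reduce to essentially formal manipulations, and it is also the input that will feed into the filtered and Sen-operator-equipped refinements stated in \cref{thm:intro-fildhod}(ii)--(iii).
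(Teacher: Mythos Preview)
Your strategy is plausible but takes a much longer route than necessary, and in fact runs the paper's logic in reverse. The key observation you are missing is that both sides of the claimed isomorphism are \emph{defined} as pullbacks from the Hodge--Tate picture along the same map $\eta: \Spf\Z_p \to \Z_p^\HT$. Recall that $X^\dHod$ is defined (\cref{defi:fildhod-dhod}) as the fibre product $X^\HT \times_{\Z_p^\HT} \Spf\Z_p$, while $\widehat{\Omega}_R^\dHod$ is defined (Section~\ref{subsect:review-dhod}) as the underlying $\Z_p$-complex of $\H_{\ol{\prism}}(R) \in \D(\Z_p^\HT)$ under the equivalence of \cref{prop:fildhod-bgmsharp}, i.e.\ as $\eta^*\H_{\ol{\prism}}(R)$. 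The paper's proof therefore reduces to Zariski-localising to the affine case, invoking the known identification $\pi_{R^\HT,*}\O_{R^\HT} \cong \H_{\ol{\prism}}(R)$, and applying base change for the defining cartesian square --- no quasisyntomic descent or explicit qrsp calculation is needed.

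Your proposed ``crucial geometric input'' --- an explicit presentation of $R^\dHod$ for $R$ quasiregular semiperfectoid --- is actually \emph{deduced} from \cref{thm:fildhod-comparison} in the paper (see \cref{prop:fildhod-dhodqrsp}): one first shows $R^\dHod$ is affine by identifying it with a relative Hodge--Tate stack and citing affineness results from \cite{PFS}, and then reads off its coordinate ring from the theorem. Obtaining this presentation directly via deformation theory, as you suggest, is not straightforward: the cotangent complex arguments from the perfectoid examples do not apply verbatim to qrsp rings, and your remark about ``vanishing of suitable cotangent complexes on the perfectoid cover'' does not immediately give what is needed. So while your approach could likely be made to work, the explicit qrsp step is genuinely harder than the one-line base change argument, and the paper avoids it entirely.
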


\begin{thm}
\label{thm:fildhod-comparisonfiltered}
Let $X$ be a bounded $p$-adic formal scheme and assume that $X$ is $p$-quasisyntomic and qcqs. Then $\pi_{X^{\dHod, c}, *}\O_{X^{\dHod, c}}$ identifies with $\Fil_\bullet^\conj R\Gamma_\dHod(X)$ under the Rees equivalence. 
\end{thm}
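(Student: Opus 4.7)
The plan is to follow the strategy used to prove \cref{thm:filprism-comparison}: reduce via quasisyntomic descent to the case where $X = \Spf R$ for a quasiregular semiperfectoid ring $R$, and then establish an explicit description of $R^{\dHod, c}$ from which the claim follows via the Rees equivalence.

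For the descent step, I first need the analogue of \cref{lem:filprism-quasisyntomiccover} for the stack $X^{\dHod, c}$, namely that a quasi-syntomic cover $X \to Y$ of $p$-quasisyntomic qcqs formal schemes induces a flat cover $X^{\dHod, c} \to Y^{\dHod, c}$. This should follow from the analogous property of $X^\dHod$ (which underpins \cref{thm:fildhod-comparison}) by noting that $X^{\dHod, c}$ is a refinement of $X^\dHod$ fibred over $\A^1/\G_m$. Since the conjugate-filtered diffracted Hodge cohomology of \cite[§4.7]{APC} is itself defined by descent along the quasisyntomic site, this reduces the problem to the case $X = \Spf R$ with $R$ quasiregular semiperfectoid.

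In this case, I aim to establish an explicit isomorphism of stacks over $\A^1/\G_m$,
\begin{equation*}
R^{\dHod, c} \cong \Spf\bigl(\Rees(\Fil_\bullet^\conj R\Gamma_\dHod(R))\bigr)/\G_m,
\end{equation*}
in direct analogy with \cref{thm:filprism-qrsp}. This should be extracted by tracing through the definition of $X^{\dHod, c}$ in terms of the appropriate quasi-ideal setup and matching the grading that arises naturally over $\A^1/\G_m$ with the conjugate filtration; restriction to the open substack $\G_m/\G_m \subset \A^1/\G_m$ must recover the description of $R^\dHod$ used to prove \cref{thm:fildhod-comparison}, giving a consistency check. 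Once this identification is in hand, \cref{prop:rees-main} immediately turns it into the desired identification of $\pi_{R^{\dHod, c}, *} \O_{R^{\dHod, c}}$ with $\Fil_\bullet^\conj R\Gamma_\dHod(R)$ as a complete filtered object.

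The main obstacle will be the explicit computation in the quasiregular semiperfectoid case and, in particular, pinning down the correct indexing of the filtration. A convenient cross-check is to restrict to the closed substack $B\G_m \subset \A^1/\G_m$ and identify the associated graded of $\pi_{R^{\dHod, c}, *} \O_{R^{\dHod, c}}$ with $\gr_\bullet^\conj R\Gamma_\dHod(R)$; matching this against the Hodge-cohomological description of the associated graded pieces on the smooth locus via \cite[Rem. 4.7.18]{APC} fixes the normalisation. Combined with the unfiltered comparison of \cref{thm:fildhod-comparison} on the open part and the completeness of the conjugate filtration, this is enough to pin down the filtered answer uniquely and match it with the Rees construction.
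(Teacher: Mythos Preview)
Your overall architecture—quasisyntomic descent to the quasiregular semiperfectoid case, then an explicit identification $R^{\dHod, c}\cong \Spf\Rees(\Fil^\conj_\bullet\widehat{\Omega}^\dHod_R)/\G_m$—matches the paper's. The crucial difference is in how the qrsp identification is obtained, and this is where your proposal has a genuine gap.

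You write that the qrsp identification ``should be extracted by tracing through the definition of $X^{\dHod, c}$ in terms of the appropriate quasi-ideal setup.'' But unlike the perfectoid computations in \cref{ex:drstack-perfd} or \cref{ex:filprism-perfd}, there is no direct deformation-theoretic argument available for a general qrsp ring $R$: the cotangent complex $L\Omega_{R/\Z_p}$ does not vanish, so you cannot simply lift maps along the pro-infinitesimal thickenings involved. The paper instead proceeds in the reverse order from what you suggest. It first proves the smooth case (\cref{lem:fildhod-comparisonfilteredsmooth}) directly—by showing the pushforward is $u$-complete and has graded pieces in the correct cohomological degrees, forcing it to be the canonical filtration—and then \emph{uses} the smooth case to construct the comparison map in the qrsp case: since $\Fil^\conj_\bullet\widehat{\Omega}^\dHod_{(-)}$ commutes with sifted colimits (\cref{rem:fildhod-siftedcolims}), left Kan extension from smooth $\Z_p$-algebras produces a natural map
\[
\Rees(\Fil^\conj_\bullet\widehat{\Omega}^\dHod_R)\rightarrow \pi_{R^{\dHod, c}, *}\O_{R^{\dHod, c}}
\]
for every animated $R$, and only then does one check it is an isomorphism via the stratification $\G_m/\G_m\sqcup B\G_m\to\A^1/\G_m$. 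What you describe as a ``cross-check'' (the smooth/Hodge-cohomological input) is in fact the load-bearing step that produces the map you want to verify. Without it, you have no candidate morphism to test on the open and closed strata.
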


Finally, we will also show that one can naturally incorporate the Sen operator $\Theta$ on diffracted Hodge cohomology into this picture in a way that encodes the divisibility properties of the Sen operator with respect to the conjugate filtration; \EDIT{namely, for any $n\in\Z$, the endomorphism 
\begin{equation*}
\Theta+n: \Fil^\conj_n R\Gamma_\dHod(X)\rightarrow\Fil^\conj_n R\Gamma_\dHod(X)
\end{equation*}
factors uniquely through $\Fil^\conj_{n-1} R\Gamma_\dHod(X)$, see \cite[Rem. 4.9.10]{APC}, and this will be reflected in the stacky picture. To this end,} first recall that, for the Hodge--Tate stack $\pi_{X^\HT}: X^\HT\rightarrow\Z_p^\HT$ \EEDIT{from \cref{ex:prismatisation-htstack}, the complex} $\pi_{X^\HT, *}\O_{X^\HT}$ identifies with $R\Gamma_\dHod(X)$ equipped with its Sen operator essentially by definition. The stack $X^{\HT, c}\coloneqq (X^\N)_{t=0}$ provides a filtered refinement of this picture in the following way:

\begin{thm}
\label{thm:fildhod-comparisonsen}
Let $X$ be a bounded $p$-adic formal scheme and assume that $X$ is $p$-quasisyntomic and qcqs. Under the equivalence
\begin{equation*}
\D(\Z_p^{\HT, c})\cong \widehat{\D}_{\gr, D-\nilp}(\Z_p\{x, D\}/(Dx-xD-1))
\end{equation*}
from \cref{lem:fildhod-complexeszpntzero}, the underlying graded \EEDIT{$\Z_p[x]$-complex} of $\pi_{X^{\HT, c}, *}\O_{X^{\HT, c}}$ identifies \EDIT{with} $\Fil_\bullet^\conj R\Gamma_\dHod(X)$ under the Rees equivalence \EEDIT{and, under this identification, the operator 
\begin{equation*}
xD-i: \Fil_i^\conj R\Gamma_\dHod(X)\rightarrow \Fil_i^\conj R\Gamma_\dHod(X)
\end{equation*}
identifies with the Sen operator on $\Fil_i^\conj R\Gamma_\dHod(X)$ for all $i\in\Z$.} Here, we write $\pi_{X^{\HT, c}}: X^{\HT, c}\rightarrow\Z_p^{\HT, c}$ for the map induced by functoriality.
\end{thm}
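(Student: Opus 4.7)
The plan is to combine \cref{thm:fildhod-comparisonfiltered} with a direct analysis of the $D$-action on the structure sheaf of $X^{\HT, c}$.

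First, I would show that the operation of ``forgetting the $D$-action'' on $\D(\Z_p^{\HT, c})$ corresponds geometrically to pushforward along a natural morphism $\Z_p^{\HT, c} \to \A^1/\G_m$ factoring through $B\G_m$; on the reduced locus this is the projection $\G_a/(\G_a^\sharp \rtimes \G_m) \to \ast/\G_m$, and an analogous integral statement holds by a direct computation. Using base change as in \cref{sect:basechange}, this reduces the computation of the underlying graded $\Z_p[x]$-complex of $\pi_{X^{\HT, c}, *}\O_{X^{\HT, c}}$ to the pushforward $\pi_{X^{\dHod, c}, *}\O_{X^{\dHod, c}}$, which by \cref{thm:fildhod-comparisonfiltered} identifies with $\Fil_\bullet^\conj R\Gamma_\dHod(X)$ under the Rees equivalence.

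Next, I would observe that $xD - i$ restricts to a well-defined endomorphism of $\Fil_i^\conj R\Gamma_\dHod(X)$: indeed, the identity $(xD - i)(xv) = x((xD - (i-1))v)$, which is a direct consequence of $Dx = xD + 1$, shows compatibility with the transition maps of the Rees presentation. Moreover, on the associated graded piece $\gr_i^\conj$, the operator $xD$ vanishes since its image lands in $x \cdot \Fil_{i-1}^\conj$, so $xD - i$ acts as $-i$; this matches the known behavior of the Sen operator on $\gr_i^\conj \cong R\Gamma(X, \widehat{\Omega}^i_{X/\Z_p})[-i]$, giving the desired identification at the level of the associated graded.

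To promote this from the graded level to all of $\Fil_i^\conj$, I would reduce by quasisyntomic descent, using \cref{lem:filprism-quasisyntomiccover}, to the case where $X = \Spf R$ is quasiregular semiperfectoid, where both $R\Gamma_\dHod(R)$ together with its Sen operator and the $D$-action on the structure sheaf of $R^{\HT, c}$ admit concrete descriptions and the match can be checked by hand. Alternatively, one can argue by using that the open immersion $\Z_p^\HT \hookrightarrow \Z_p^{\HT, c}$ corresponds, on derived categories, to inverting $x$ in $\Z_p\{x, D\}/(Dx - xD - 1)$, under which $\Theta = xD$ recovers the Sen operator on the unfiltered complex $R\Gamma_\dHod(X)$ viewed as a complex on $\Z_p^\HT \cong B\G_m^\sharp$; the identification of $xD - i$ with the Sen operator on $\Fil_i^\conj$ then follows by tracing this through the Rees equivalence together with completeness of the filtration.

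The principal obstacle is the final step: confirming carefully that the $D$-action on $\O_{X^{\HT, c}}$ coming from the stacky structure (ultimately from the $\G_a^\sharp$-part of $\G_a^\sharp \rtimes \G_m$ in the description of $\Z_p^{\HT, c}$) really does recover the Sen operator on $R\Gamma_\dHod(X)$ after inverting $x$ and accounting for the shift by $-i$. This verification likely requires an explicit computation on a quasiregular semiperfectoid cover, made compatibly with the construction of the Sen operator via the $\G_m^\sharp$-action on the Hodge--Tate stack $\Z_p^\HT$.
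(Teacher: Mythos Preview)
Your approach is essentially correct and matches the paper's overall structure: invoke \cref{thm:fildhod-comparisonfiltered} for the underlying filtered object, then check that both $xD - i$ and the Sen operator act as $-i$ on $\gr_i^\conj$. Where you diverge is in the final step. You treat the passage from associated graded to $\Fil_i^\conj$ as the principal obstacle and propose either quasisyntomic descent to the quasiregular semiperfectoid case or restriction along $\Z_p^\HT \hookrightarrow \Z_p^{\HT, c}$. The paper's argument is more direct: since the conjugate filtration is complete and vanishes in negative degrees, and since both families of operators are compatible with the transition maps (by the commutative diagram in \cref{rem:fildhod-complexeszpntzeroexplicit} for $\{xD - i\}$, trivially for $\Theta$), the identification reduces by induction to the associated graded level, where it is immediate. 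Your alternative via inverting $x$ so that $xD$ becomes $\Theta$ is a valid route as well, and arguably makes the link between the two operators more transparent, but the completeness-plus-induction argument avoids any explicit verification on a cover.

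One small correction in your first step: forgetting the $D$-action corresponds to \emph{pullback} along the flat cover $\A^1/\G_m \to \Z_p^{\HT, c}$ (this is the map from \cref{rem:fildhod-pullback}), not pushforward along a map in the other direction; there is no natural morphism $\Z_p^{\HT, c} \to \A^1/\G_m$ factoring through $B\G_m$.
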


\begin{rem}
Despite the notation, coherent cohomology on $X^{\HT, c}$ does \emph{not} compute conjugate-filtered Hodge--Tate cohomology in the sense of \cite[Constr. 4.5.5]{APC} and we also do not see a way to endow $X^{\HT, c}$ with a map to $\A^1/\G_m$. Nevertheless, it seems as though there is some relation between $X^{\HT, c}$ and \EDIT{the} conjugate-filtered Hodge--Tate cohomology of $X$, which is why we still use the notation $X^{\HT, c}$. More specifically, as $R\Gamma(X^{\HT, c}, \O_{X^\HT, c})$ identifies \EEDIT{with} $\gr^0_\N R\Gamma_\prism(X)$, at least in favourable cases, see \cref{lem:nygaardhodge-rgammazpntzero}, the comparison 
\begin{equation*}
\EDIT{
R\Gamma(\Z_p^\prism, \gr^m_\N F^*\cal{H}_\prism(X))\cong \Fil^\conj_m R\Gamma_{\ol{\prism}}(X)\{m\}
}
\end{equation*}
from \cite[Rem. 5.7.2]{APC}, \EDIT{where $F: \Z_p^\prism\rightarrow\Z_p^\prism$ is the Frobenius from \cref{ex:prismatisation-frob} and $\gr^\bullet_\N F^*\cal{H}_\prism(X)$ is the associated graded of the Nygaard filtration on $F^*\cal{H}_\prism(X)$ as defined in \cite[Not. 5.5.1]{APC},} suggests that $X^{\HT, c}$ might differ from the ``correct'' definition of a conjugate-filtered Hodge--Tate stack by a Frobenius twist in some sense. It would be interesting to construct a stack $X^{\HT, c, '}$ equipped with a map $\pi_{X^{\HT, c, '}}: X^{\HT, c, '}\rightarrow\A^1/\G_m$ such that $\pi_{X^{\HT, c, '}, *}\O_{X^{\HT, c, '}}$ indeed identifies with $\Fil^\conj_\bullet R\Gamma_{\ol{\prism}}(X)$.
\end{rem}

\subsection{Recollections on diffracted Hodge cohomology}
\label{subsect:review-dhod}

We start by briefly recalling the definition of diffracted Hodge cohomology from \cite[§4.7]{APC}. To do this, first recall that, for a prism $(A, I)$, we may extend relative Hodge--Tate cohomology as a functor $R\mapsto R\Gamma_{\ol{\Prism}}(R/A)$ on $\ol{A}$-algebras \EEDIT{$R$} to all animated $\ol{A}$-algebras via left Kan extension as in \cite[Constr. 4.1.3]{APC}. Moreover, the conjugate filtration $\Fil_\bullet^\conj R\Gamma_{\ol{\Prism}}(R/A)$ \EEDIT{is an increasing filtration} on $R\Gamma_{\ol{\Prism}}(R/A)$ \EEDIT{and can be} obtained by left Kan extension from the full subcategory of smooth $\ol{A}$-algebras, where it is just defined to be the canonical filtration on $R\Gamma_{\ol{\Prism}}(R/A)$, see \cite[Rem. 4.1.7]{APC}.

Now recall that, analogously to the description of $\D(\Z_p^\prism)$ from \cref{prop:prismatisation-crystals}, we have
\begin{equation*}
\D(\Z_p^\HT)\cong \lim_{(A, I)\in (\Z_p)_\prism} \widehat{\D}(A/I)\;,
\end{equation*}
\EDIT{see \cite[Rem. 3.5.3]{APC}.} Thus, for any ring $R$, the association $(A, I)\mapsto \Fil_\bullet^\conj R\Gamma_{\ol{\Prism}}(\ol{A}\tensorL R/A)$ defines a filtered quasi-coherent complex $\Fil_\bullet^\conj\H_{\ol{\prism}}(R)$ on $\Z_p^\HT$, whose underlying unfiltered object we denote by $\H_{\ol{\prism}}(R)$. \EDIT{Essentially} by definition, we have $\H_{\ol{\prism}}(R)=\H_\prism(R)|_{\Z_p^\HT}$ and the Hodge--Tate comparison implies that there is an isomorphism
\begin{equation}
\label{eq:fildhod-htcomparison1}
\gr_n^\conj\H_{\ol{\prism}}(R)\cong L\widehat{\Omega}_R^n\tensor\O_{\Z_p^\HT}\{-n\}[-n]
\end{equation}
for any $n$.

Now recall from \cite[Thm. 3.4.13]{APC} that $\Z_p^\HT\cong B\G_m^\sharp$ and that the category $\D(B\G_m^\sharp)$ admits the following straightfoward linear algebraic description:

\begin{prop}
\label{prop:fildhod-bgmsharp}
There is an equivalence of categories
\begin{equation*}
\D(B\G_m^\sharp)\cong \widehat{\D}_{(\Theta^p-\Theta)-\nilp}(\Z_p[\Theta])
\end{equation*}
induced by pullback along the quotient map $\Spf\Z_p\rightarrow B\G_m^\sharp$. Here, we demand the \EDIT{local} nilpotence of $\Theta^p-\Theta$ only mod $p$; \EEDIT{in other words, the right-hand side denotes the full subcategory of all $M\in\widehat{\D}(\Z_p[\Theta])$ for which $\Theta^p-\Theta$ acts locally nilpotently on $H^i(M\tensorL_{\Z_p} \F_p)$ for all $i\in\Z$.}
\end{prop}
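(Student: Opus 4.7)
The plan is to use fpqc descent along the quotient map $q:\Spf\Z_p\to B\G_m^\sharp$ and then explicitly classify the resulting comodules. Since $\G_m^\sharp$ is affine and flat over $\Spf\Z_p$, the map $q$ is an fpqc cover, and its Čech nerve identifies $\D(B\G_m^\sharp)$ with the $\infty$-category $\CoMod_H(\widehat{\D}(\Z_p))$ of derived $p$-complete comodules over the Hopf algebra $H\coloneqq\cO(\G_m^\sharp)^\wedge_p=\Z_p[t^{\pm 1},(t-1)^{[n]}:n\geq 1]^\wedge_p$, whose coproduct is determined by the group law $t\mapsto t_1t_2$ on $\G_m$. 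Under this identification, the pullback functor $q^*$ corresponds to the forgetful functor sending a comodule to its underlying complex.

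For such a comodule $M$ with coaction $\phi:M\to M\,\hat\otimes_{\Z_p}\,H$, set $s\coloneqq t-1$ and expand $\phi(m)=\sum_{n\geq 0}A_n(m)\otimes s^{[n]}$ in the PD basis. Counitality forces $A_0=\id$, and coassociativity (using the coproduct $\Delta(s)=s_1+s_2+s_1s_2$) shows inductively that $A_n=(\Theta)_n\coloneqq\Theta(\Theta-1)\cdots(\Theta-n+1)$ for $\Theta\coloneqq A_1$. A clean sanity check is the weight-$\lambda$ eigenline, where $\phi(m)=m\otimes t^\lambda=\sum_n(\lambda)_n\,m\otimes s^{[n]}$; the general case then follows from the universality of the underlying polynomial identity. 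Hence the comodule structure is fully determined by the single operator $\Theta$, giving a functor $M\mapsto(M,\Theta)$ from $\CoMod_H(\widehat{\D}(\Z_p))$ to $\widehat{\D}(\Z_p[\Theta])$ which is manifestly fully faithful and compatible with $q^*$.

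It remains to characterise the essential image. The sum defining $\phi$ must converge $p$-adically in $M\,\hat\otimes\,H$, which amounts to the condition that $(\Theta)_n(m)\to 0$ $p$-adically in $M$ as $n\to\infty$ for every $m$. Grouping the factors of $(\Theta)_{p^k}=\prod_{j=0}^{p^k-1}(\Theta-j)$ by residue class modulo $p$ and applying the classical identity $\prod_{c=0}^{p-1}(x-c)\equiv x^p-x\pmod p$ yields $(\Theta)_{p^k}\equiv(\Theta^p-\Theta)^{p^{k-1}}\pmod p$; combining this with elementary $p$-adic estimates on the intermediate factorials $(\Theta)_N$ for $p^k\leq N<p^{k+1}$ translates the convergence condition into local nilpotence of $\Theta^p-\Theta$ on $H^*(M\tensorL_{\Z_p}\F_p)$. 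Conversely, any such $(M,\Theta)$ defines a well-formed coaction via the same formula, and coassociativity is automatic from the polynomial identities already used. The main technical obstacle lies precisely in this last step: one needs to pass carefully between $p$-adic smallness of $(\Theta)_n(m)$ at the chain level and local nilpotence of $\Theta^p-\Theta$ at the cohomology level, for which the reverse direction in particular requires replacing $M$ by a suitable resolution where the cohomological nilpotence hypothesis can be lifted to a chain-level one.
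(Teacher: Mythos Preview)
The paper does not give a proof here; it simply cites \cite[Thm.~3.5.8]{APC}. So there is no argument in the paper to compare against directly, and what you have written is substantially more than what appears.

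Your strategy---descent to $H$-comodules, expansion in the divided-power basis $s^{[n]}=(t-1)^{[n]}$, and the identification $A_n=(\Theta)_n$ via coassociativity---is the natural direct attack and is essentially correct at the level of the abelian category of discrete $p$-complete $H$-comodules. The falling-factorial formula and the congruence $(\Theta)_{p^k}\equiv(\Theta^p-\Theta)^{p^{k-1}}\pmod p$ are both right, and they do pin down the nilpotence condition in the discrete case.

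The gap you flag at the end, however, is genuine and not easily patched by ``choosing a suitable resolution''. The issue is that local nilpotence of $\Theta^p-\Theta$ on $H^*(M/p)$ gives no uniform bound on the nilpotence exponent, so there is no reason a single chain model should realise chain-level convergence of $\sum_n(\Theta)_n(-)\otimes s^{[n]}$ globally. Concretely: even for a discrete module, local nilpotence means only that each element is killed by some power, and lifting this to a complex with unbounded cohomology is exactly where the naive argument breaks. The standard way around this is to first establish the equivalence on hearts (where your argument works cleanly) and then pass to derived categories by checking that both sides are, say, left-complete for their natural $t$-structures, so that the derived equivalence follows formally. Alternatively, one can organise the argument around the short exact sequence
\[
0\longrightarrow\Z_p\longrightarrow\O(\G_m^\sharp)\xrightarrow{\,t\,\d/\d t\,}\O(\G_m^\sharp)\longrightarrow 0,
\]
which gives a more intrinsic construction of $\Theta$ and interacts better with the derived formalism; this is closer to how the cited reference proceeds (cf.\ \cite[Prop.~3.5.10]{APC}, also invoked later in the paper).
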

\begin{proof}
See \cite[Thm. 3.5.8]{APC}.
\end{proof}

Thus, we may identify $\Fil^\conj_\bullet \H_{\ol{\prism}}(R)$ with a filtered $p$-complete $\Z_p$-complex $\Fil^\conj_\bullet\widehat{\Omega}_R^\dHod$ equipped with an operator $\Theta: \Fil^\conj_\bullet\widehat{\Omega}_R^\dHod\rightarrow\Fil^\conj_\bullet\widehat{\Omega}_R^\dHod$ such that $\Theta^p-\Theta$ is \EEDIT{locally} nilpotent \EEDIT{in cohomology} mod $p$, \EDIT{which we call the Sen operator}; the underlying unfiltered object of $\Fil^\conj_\bullet\widehat{\Omega}_R^\dHod$ is denoted $\widehat{\Omega}_R^\dHod$. As the $n$-th power of the Breuil-Kisin twist identifies with the $\Z_p$-module $\Z_p$ with $\Theta$ acting by multiplication by $n$ under the equivalence from \cref{prop:fildhod-bgmsharp}, by virtue of (\ref{eq:fildhod-htcomparison1}), we have isomorphisms
\begin{equation}
\label{eq:fildhod-htcomparison2}
\gr_n^\conj\widehat{\Omega}_R^\dHod\cong L\widehat{\Omega}^n_R[-n]
\end{equation}
for all $n$ \EDIT{and the Sen operator acts by multiplication by $-n$ on $\gr_n^\conj\widehat{\Omega}_R^\dHod$}. 

\begin{rem}
\label{rem:fildhod-siftedcolims}
\EDIT{Using (\ref{eq:fildhod-htcomparison2}) and the fact that} the functor $R\mapsto L\widehat{\Omega}^n_R[-n]$ from commutative rings to \EDIT{$\widehat{\D}(\Z_p)$} commutes with sifted colimits, we conclude by induction that the functor $R\mapsto \Fil^\conj_\bullet\widehat{\Omega}^\dHod_R$ commutes with sifted colimits as well \EDIT{(note that $\Fil^\conj_\bullet\widehat{\Omega}^\dHod_R=0$ for $\bullet<0$).}
\end{rem}

Finally, as the construction $R\mapsto\Fil^\conj_\bullet\widehat{\Omega}_R^\dHod$ is compatible with $p$-complete étale localisation by \cite[Prop. 4.7.11]{APC}, the constructions above glue in the following sense: For any bounded $p$-adic formal scheme $X$, we obtain a filtered quasi-coherent complex $\Fil^\conj_\bullet\widehat{\Omega}_X^\dHod$ on $X$ with the property that
\begin{equation*}
\Fil^\conj_\bullet\widehat{\Omega}_X^\dHod|_{\Spf R}=\Fil^\conj_\bullet\widehat{\Omega}_R^\dHod
\end{equation*}
for any affine open $\Spf R\rightarrow X$. Again, we denote the underlying unfiltered object by $\widehat{\Omega}_X^\dHod$. Moreover, also the corresponding Sen operators glue to give a Sen operator $\Theta$ on $\Fil^\conj_\bullet\widehat{\Omega}_X^\dHod$. This finally leads to the following definition:

\begin{defi}
\label{defi:fildhod-defdhod}
Let $X$ be a bounded $p$-adic formal scheme. The \emph{diffracted Hodge cohomology} of $X$ is defined as
\begin{equation*}
R\Gamma_\dHod(X)\coloneqq R\Gamma(X, \widehat{\Omega}_X^\dHod)\;.
\end{equation*}
It is equipped with a \emph{conjugate filtration} given by 
\begin{equation*}
\Fil^\conj_\bullet R\Gamma_\dHod(X)\coloneqq R\Gamma(X, \Fil^\conj_\bullet\widehat{\Omega}_X^\dHod)
\end{equation*}
and a \emph{Sen operator} $\Theta: \Fil^\conj_\bullet R\Gamma_\dHod(X)\rightarrow\Fil^\conj_\bullet R\Gamma_\dHod(X)$ induced by the Sen operator on $\Fil^\conj_\bullet\widehat{\Omega}_X^\dHod$.
\end{defi}

\subsection{The diffracted Hodge stack}

We now examine the stack $X^\dHod$ as introduced by Bhatt--Lurie in \cite[Constr. 3.8]{PFS} \EDIT{and relate it to \cref{defi:fildhod-defdhod}}. For the rest of this section, let $X$ be a bounded $p$-adic formal scheme.

\begin{defi}
\label{defi:fildhod-dhod}
For a bounded $p$-adic formal scheme $X$, its \emph{diffracted Hodge stack} $X^\dHod$ is defined as the pullback
\begin{equation*}
\begin{tikzcd}
X^\dHod\ar[r, "i_\dHod"]\ar[d, "\pi_{X^\dHod}"] & X^\HT\ar[d] \\
\Spf\Z_p\ar[r, "\eta"] & \Z_p^\HT\nospacepunct{\;,}
\end{tikzcd}
\end{equation*}
where the map $\eta: \Spf\Z_p\rightarrow\Z_p^\HT$ identifies with the quotient map under the isomorphism $\Z_p^\HT\cong B\G_m^\sharp$.
\end{defi}

\begin{rem}
Despite the notation, the map $i_\dHod$ is generally \emph{not} a closed immersion. Moreover, recalling from \cite[Thm. 3.4.13]{APC} that the map $\eta: \Spf\Z_p\rightarrow \Z_p^\HT$ corresponds to assigning to any $p$-nilpotent ring $S$ the Cartier--Witt divisor $W(S)\xrightarrow{V(1)} W(S)$, we immediately see that
\begin{equation*}
X^\dHod(S)=\Map(\Spec(W(S)/V(1)), X)\;,
\end{equation*}
where the mapping space is computed in derived algebraic geometry and the quotient $W(S)/V(1)$ is to be taken in the derived sense. 
\end{rem}

We can now prove the first theorem announced in the beginning \EDIT{of this section}:

\begin{proof}[Proof of \cref{thm:fildhod-comparison}]
As the constructions $X\mapsto X^\dHod$ and $X\mapsto \widehat{\Omega}_X^\dHod$ are compatible with Zariski localisation by \cite[Rem. 3.9]{PFS} and \cite[Prop. 4.7.11]{APC}, we may assume that $X=\Spf R$ is affine. As $\pi_{R^\prism, *}\O_{R^\prism}=\H_\prism(R)$ by \cite[Cor.s 8.13, 8.17]{PFS} and $\H_{\ol{\prism}}(R)=\H_\prism(R)|_{\Z_p^\HT}$, we conclude that $\pi_{R^\HT, *}\O_{R^\HT}=\H_{\ol{\prism}}(R)$. Now the result follows from base change for the cartesian square
\begin{equation*}
\EDIT{
\raisebox{\depth}{
\begin{tikzcd}[baseline={(current bounding box.center)}, ampersand replacement=\&]
R^\dHod\ar[r, "i_{\dHod}"]\ar[d, "\pi_{R^\dHod}"] \& R^\HT\ar[d, "\pi_{R^\HT}"] \\
\Z_p^\dHod\ar[r] \& \Z_p^\HT\nospacepunct{\;.}
\end{tikzcd}
}
}
\qedhere
\end{equation*}
\end{proof}

In the sequel, we will also need a more concrete description of $X^\dHod$ in the case where $X=\Spf R$ for $R$ quasiregular semiperfectoid. 

\begin{rem}
\label{rem:fildhod-degzero}
\EEDIT{For $R$ quasiregular semiperfectoid,} the complex $\widehat{\Omega}_R^\dHod$ is concentrated in degree zero. Indeed, the conjugate filtration is complete and its graded pieces $L\widehat{\Omega}^n_R[-n]$ are all concentrated in degree zero, \EEDIT{see \cite[Rem. 4.21]{THHandPAdicHodgeTheory}}, \EEDIT{hence $\Fil^\conj_n \widehat{\Omega}_R^\dHod$ is concentrated in degree zero for any $n\in\Z$ and we conclude that the same is true for $\widehat{\Omega}_R^\dHod$.}
\end{rem}

\begin{prop}
\label{prop:fildhod-dhodqrsp}
Assume that $X=\Spf R$ for a quasiregular semiperfectoid ring $R$. Then there is an isomorphism $R^\dHod\cong \Spf\widehat{\Omega}^\dHod_R$.
\end{prop}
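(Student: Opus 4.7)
The plan is to show that $R^\dHod$ is representable by an affine formal scheme, and then to invoke the pushforward identification from \cref{thm:fildhod-comparison} to identify this affine scheme with $\Spf\widehat{\Omega}_R^\dHod$. Concretely, \cref{thm:fildhod-comparison} applied to $\Spf R$ yields an isomorphism $\pi_{R^\dHod, *}\O_{R^\dHod}\cong \widehat{\Omega}_R^\dHod$, and this complex is concentrated in degree zero by \cref{rem:fildhod-degzero} since $R$ is quasiregular semiperfectoid; hence, once we know $R^\dHod$ is affine, it must coincide with $\Spf$ of its ring of global sections, which is $\widehat{\Omega}_R^\dHod$. This gives the desired identification
\begin{equation*}
\Psi\colon R^\dHod\stackrel{\cong}{\longrightarrow} \Spf\widehat{\Omega}_R^\dHod\,.
\end{equation*}

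To prove affineness of $R^\dHod$, I would use the defining cartesian square from \cref{defi:fildhod-dhod}, namely $R^\dHod\cong R^\HT\times_{\Z_p^\HT}\Spf\Z_p$, together with $\Z_p^\HT\cong B\G_m^\sharp$. Since $\Spf\Z_p\rightarrow B\G_m^\sharp$ is the universal $\G_m^\sharp$-torsor and $\G_m^\sharp$ is an affine group scheme, the projection $R^\dHod\rightarrow R^\HT$ is itself a $\G_m^\sharp$-torsor; consequently, affineness of $R^\dHod$ reduces to affineness of $R^\HT$.

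To establish that $R^\HT$ is affine, I would exploit that the absolute prismatic site of the quasiregular semiperfectoid ring $R$ has an initial object $(\Prism_R, I)$, which produces via \cref{ex:prismatisation-prismaticsitetoxprism} a morphism $\rho_{\Prism_R, R}\colon \Spf\Prism_R\rightarrow R^\prism$, and show that this is an isomorphism by a deformation-theoretic argument extending the one of \cref{ex:prismatisation-perfd} to the qrsp case. Granted this, $R^\HT$ identifies with the closed subscheme of $\Spf\Prism_R$ cut out by the locally principal ideal $I$, so $R^\HT\cong\Spf(\Prism_R/I)$ is affine, completing the proof.

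The principal technical obstacle is verifying that $\rho_{\Prism_R, R}$ is an isomorphism in the general qrsp case: while the perfectoid argument of \cref{ex:prismatisation-perfd} relies crucially on the vanishing of $L\Omega_{R^\flat/\F_p}$ to obtain unique lifts along pro-infinitesimal thickenings, for a quasiregular semiperfectoid ring $R$ one instead needs to exploit that $R$ can be written as a quotient of a perfectoid ring by an ideal generated by elements admitting divided powers and perform the deformation-theoretic lift in stages, reducing the existence and uniqueness of the universal Cartier--Witt divisor over $\Spf \Prism_R$ to the perfectoid base case.
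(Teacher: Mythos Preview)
Your overall strategy—reduce to affineness of $R^\dHod$, then read off the ring of global sections via \cref{thm:fildhod-comparison} and \cref{rem:fildhod-degzero}—is exactly the paper's. The difference lies in how affineness is obtained.

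The paper does not go through the absolute $R^\HT$ or $R^\prism$. Instead, it uses that the map $\eta\colon\Spf\Z_p\to\Z_p^\HT$ is the restriction to the Hodge--Tate locus of the point $\rho_{\Z_p[[\tilde p]]}\colon\Spf\Z_p[[\tilde p]]\to\Z_p^\prism$ attached to the prism $(\Z_p[[\tilde p]],(\tilde p))$; this identifies $R^\dHod$ with the \emph{relative} Hodge--Tate stack of $R$ over that prism, which is affine for qrsp $R$ by \cite[Cor.~7.18]{PFS}. This sidesteps the absolute prismatisation entirely.

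Your route—show $R^\prism\cong\Spf\Prism_R$, deduce $R^\HT$ is affine, then use that $R^\dHod\to R^\HT$ is a $\G_m^\sharp$-torsor—is also valid, and the torsor step is fine. The conclusion $R^\prism\cong\Spf\Prism_R$ for qrsp $R$ is true (it follows, for instance, from the description of $R^\N$ in \cref{thm:filprism-qrsp} by restricting to the locus $\{t\neq 0\}$). However, your proposed direct deformation-theoretic proof of this isomorphism is where the gap lies: the perfectoid argument in \cref{ex:prismatisation-perfd} relies essentially on the vanishing of $L_{R^\flat/\F_p}$, and your sketch (``quotient of a perfectoid by an ideal generated by elements admitting divided powers, reduce to the perfectoid base case'') does not straightforwardly produce the required map $\Prism_R\to S$, since a Cartier--Witt divisor on $S$ does not give $(W(S),I)$ the structure of a prism to which the universal property of $\Prism_R$ directly applies. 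You would be better off citing the result rather than attempting a bespoke deformation argument. The paper's route via the relative Hodge--Tate stack is cleaner precisely because \cite[Cor.~7.18]{PFS} handles the qrsp case directly without needing the absolute statement.
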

\begin{proof}
Let $(\Z_p[[\tilde{p}]], (\tilde{p}))$ denote the prism from \cite[Not. 3.8.9]{APC}. Then the map $\eta$ identifies with the base change of $\rho_{\Z_p[[\tilde{p}]]}: \Spf\Z_p[[\tilde{p}]]\rightarrow\Z_p^\prism$ to $\Z_p^\HT$ by \cite[Prop. 3.8.12]{APC} and hence $R^\dHod$ identifies with the relative \EDIT{Hodge--Tate stack} of $R$ with respect to $(\Z_p[[\tilde{p}]], (\tilde{p}))$ as defined in \cite[Var. 5.1]{PFS}. \EDIT{Thus, the stack $R^\dHod$ is actually an affine formal scheme by \cite[Cor. 7.18]{PFS} and the claim follows from \cref{thm:fildhod-comparison}.}
\end{proof}

\subsection{Incorporating the conjugate filtration}

We now construct the conjugate-filtered diffracted Hodge stack $X^{\dHod, c}$. For this, we first make some preliminary remarks. Namely, recall that, by the proof of \cite[Prop. 5.3.7]{FGauges}, there is an fpqc cover $\A^1/\G_m\rightarrow (\Z_p^\N)_{t=0}$, where we now take $\A^1/\G_m$ to be $\Spf\Z_p\langle u\rangle/\G_m$ with $u$ having degree \EEDIT{$1$} and consequently label the tautological section on $\A^1/\G_m$ by \EEDIT{$u: \O\rightarrow\O(1)$}. This cover is given by a filtered Cartier--Witt divisor $M\xrightarrow{d} W$ on $\A^1/\G_m$ constructed by virtue of the commutative diagram
\begin{equation*}
\EEDIT{
\begin{tikzcd}[ampersand replacement=\&]
0\ar[r] \& \G_a^\sharp\ar[r]\ar[d, "u^\sharp", swap] \& W\ar[r]\ar[d]\ar[dd, "V(1)" {yshift=20pt, xshift=5pt}, bend right=45, swap] \& F_*W\ar[r]\ar[d, equals] \& 0 \\
0\ar[r] \& \V(\O(1))^\sharp\ar[r]\ar[d, "0", swap] \& M\ar[r]\ar[d, "d", swap] \& F_*W\ar[r]\ar[d, "p"] \& 0 \\
0\ar[r] \& \G_a^\sharp\ar[r] \& W\ar[r] \& F_*W\ar[r] \& 0\nospacepunct{\;;}
\end{tikzcd}
}
\end{equation*}
here, $M$ is defined as the pushout of the upper left square and the maps out of $M$ are induced by the other maps in the diagram using the universal property.

\begin{defi}
\label{defi:fildhod-dhodconj}
In the situation above, we obtain a 1-truncated animated $W$-algebra stack
\begin{equation*}
\G_a^{\dHod, c}\coloneqq\Cone(M\xrightarrow{d} W)
\end{equation*}
over $\A^1/\G_m$. The \emph{conjugate-filtered diffracted Hodge stack} $X^{\dHod, c}$ of $X$ is the stack over $\A^1/\G_m$ defined by
\begin{equation*}
X^{\dHod, c}(\Spec S\rightarrow\A^1/\G_m)\coloneqq \Map(\Spec\G_a^{\dHod, c}(S), X)\;,
\end{equation*}
where the mapping space is computed in derived algebraic geometry. If $X=\Spf R$ is affine, we also write $R^{\dHod, c}$ in place of $X^{\dHod, c}$.
\end{defi}

\begin{rem}
\label{rem:fildhod-pullback}
It is immediate from the definition above that one may alternatively describe $X^{\dHod, c}$ as a pullback
\begin{equation*}
\begin{tikzcd}
X^{\dHod, c}\ar[r, "i_{\dHod, c}"]\ar[d, "\pi_{X^{\dHod, c}}"] & (X^\N)_{t=0}\ar[d] \\
\A^1/\G_m\ar[r] & (\Z_p^\N)_{t=0}\nospacepunct{\;,}
\end{tikzcd}
\end{equation*}
\EEDIT{where the bottom map is given by the filtered Cartier--Witt divisor on $\A^1/\G_m$ constructed above.} Also note that the preimage of $\G_m/\G_m\subseteq\A^1/\G_m$ under $\pi_{X^{\dHod, c}}$ recovers $X^\dHod$: Indeed, in this case, $u$ is an isomorphism and hence $M\xrightarrow{d} W$ identifies with $W\xrightarrow{V(1)} W$. Moreover, the preimage of $B\G_m\subseteq\A^1/\G_m$ under $\pi_{X^{\dHod, c}}$ recovers $X^\Hod$: This follows from the fact that $u=0$ in this case and hence $M\xrightarrow{d} W$ identifies with \EEDIT{$\V(\O(1))^\sharp\oplus F_*W\xrightarrow{(0, V)} W$}.
\end{rem}

To prove \cref{thm:fildhod-comparisonfiltered}, \EDIT{we first have to show the corresponding assertion in the special case where the $p$-adic formal scheme $X$ is smooth:}

\begin{lem}
\label{lem:fildhod-comparisonfilteredsmooth}
Let $X$ be a \EEDIT{smooth qcqs} $p$-adic formal scheme. Then $\pi_{X^{\dHod, c}, *}\O_{X^{\dHod, c}}$ identifies with $\Fil_\bullet^\conj R\Gamma_\dHod(X)$ under the Rees equivalence. 
\end{lem}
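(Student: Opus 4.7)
The plan is to reduce to the quasiregular semiperfectoid case via quasisyntomic descent and then carry out an explicit computation using \cref{thm:filprism-qrsp}.

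First, both sides commute with Zariski localisation on $X$, so we may assume $X=\Spf R$ for some $p$-completely smooth $\Z_p$-algebra $R$. Choose a $p$-quasisyntomic cover $R\to R'$ by a quasiregular semiperfectoid ring and let $R\to R^\bullet$ denote its Cech nerve, whose terms are again quasiregular semiperfectoid. By \cref{lem:filprism-quasisyntomiccover}, applying $(-)^\N$ levelwise yields a flat cover; restricting to the closed substack $(\Z_p^\N)_{t=0}\hookrightarrow\Z_p^\N$ and then base-changing along $q:\A^1/\G_m\to(\Z_p^\N)_{t=0}$ preserves flatness, so we obtain a flat cover $(R^\bullet)^{\dHod,c}\to R^{\dHod,c}$ over $\A^1/\G_m$. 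Flat cohomological descent then identifies $\pi_{R^{\dHod,c},*}\O_{R^{\dHod,c}}$ with the totalisation of the cosimplicial object $\pi_{(R^\bullet)^{\dHod,c},*}\O_{(R^\bullet)^{\dHod,c}}$. Combined with the analogous quasisyntomic descent on the right-hand side, which follows from \cref{rem:fildhod-siftedcolims}, this reduces us to the case $R$ quasiregular semiperfectoid.

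For such $R$, \cref{thm:filprism-qrsp} identifies $R^\N\cong\Spf(\Rees(\Fil^\bullet_\N\Prism_R))/\G_m$ over $\A^1/\G_m$. Pulling back along $B\G_m\hookrightarrow\A^1/\G_m$ produces $(R^\N)_{t=0}\cong\Spf(\bigoplus_n\gr^n_\N\Prism_R)/\G_m$ over $(\Z_p^\N)_{t=0}$, and a further base change along $q$ presents $R^{\dHod,c}$ as an affine formal scheme modulo $\G_m$ whose coordinate ring can be computed explicitly. Its pushforward to $\A^1/\G_m$ is then a graded $\Z_p[u]$-module which, under the Rees equivalence, corresponds to some increasingly filtered $\Z_p$-complex. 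One then identifies this with $\Fil^\conj_\bullet\widehat{\Omega}_R^\dHod$ using the Nygaard--Hodge-Tate comparison (cf.\ \cite[Rem. 5.5.21]{APC}), which relates $\gr^\bullet_\N\Prism_R$ to conjugate-filtered Hodge-Tate cohomology up to a Frobenius twist, together with the fact that pulling back along $q$ lands at exactly the point $\eta:\Spf\Z_p\to\Z_p^\HT$ used to define $\widehat{\Omega}_R^\dHod$ (see \cref{prop:fildhod-dhodqrsp} for the unfiltered analogue).

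The main obstacle is this last matching step: the identification of the associated graded of the Nygaard filtration with conjugate-filtered Hodge--Tate cohomology involves a Frobenius twist, and one must verify that the specific filtered Cartier--Witt divisor defining $q$---in particular the appearance of $V(1)$ as the outer composite in the diagram preceding \cref{defi:fildhod-dhodconj}---is precisely what absorbs this twist. This becomes transparent upon further base-changing to a perfectoid cover, where the explicit model $A_\inf(R)\langle u,t\rangle/(ut-\phi^{-1}(\xi))$ from \cref{ex:filprism-perfd} makes all the structure visible and the twist reduces to the standard identification $A_\inf(R)/\phi^{-1}(\xi)\cong R$ via Frobenius.
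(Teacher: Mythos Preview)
Your approach inverts the paper's logical order and has a genuine gap at the crucial step. The paper does \emph{not} reduce the smooth case to the quasiregular semiperfectoid case; rather, it proves the smooth case directly and then uses it (via left Kan extension) to deduce the qrsp case in \cref{thm:fildhod-dhodconjqrsp}, which in turn yields the general \cref{thm:fildhod-comparisonfiltered} by descent. So the identification you need in your step~2 --- that $R^{\dHod,c}\cong \Spf\Rees(\Fil^\conj_\bullet\widehat{\Omega}_R^\dHod)/\G_m$ for qrsp $R$ --- is precisely \cref{thm:fildhod-dhodconjqrsp}, whose proof in the paper \emph{uses} the lemma you are trying to establish.

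Your sketch of a direct computation for qrsp $R$ does not fill this gap. Pulling back $\bigoplus_n\gr^n_\N\Prism_R$ along the cover $q:\A^1/\G_m\to(\Z_p^\N)_{t=0}\cong\G_a/(\G_a^\sharp\rtimes\G_m)$ requires computing a fibre product over a non-affine quotient stack, and the comparison you invoke relates $\gr^\bullet_\N$ to conjugate-filtered \emph{Hodge--Tate} cohomology (with a Frobenius twist and a Breuil--Kisin twist), not to the diffracted Hodge filtration directly; matching the full filtered object, not just its graded pieces, is exactly the subtle point. Passing to a perfectoid cover does not obviously resolve this. (A minor point: descent for $\Fil^\conj_\bullet R\Gamma_\dHod$ comes from \cite[Rem.~4.7.9]{APC}, not from \cref{rem:fildhod-siftedcolims}, which concerns sifted colimits.)

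The paper's argument is entirely different and makes essential use of smoothness: for smooth $X$, the conjugate filtration on $\widehat{\Omega}_R^\dHod$ is by definition the \emph{canonical} filtration, so it suffices to show that $\pi_{X^{\dHod,c},*}\O_{X^{\dHod,c}}$ is the canonical filtration on its underlying object. This reduces to checking $u$-completeness and that $\gr^n$ is concentrated in degree $n$. The latter follows from the Hodge comparison via \cref{thm:fildrstack-comparison}; the former is proved by a deformation-theoretic argument showing that $X^{\dHod,c}\to X\times\A^1/\G_m$ is a $\V(\T_{X/\Z_p}(1))^\sharp$-gerbe, reducing to the trivial gerbe, and then reading off $u$-completeness from the explicit Koszul-type description of its pushforward.
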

\begin{proof}
Similarly to the proof of \cref{thm:fildhod-comparison}, we may reduce to the case where $X=\Spf R$ is affine. As the underlying unfiltered object of $\pi_{X^{\dHod, c}, *}\O_{X^{\dHod, c}}$ identifies with $\widehat{\Omega}_R^\dHod$ by \cref{thm:fildhod-comparison} and \EDIT{$\Fil^\conj_\bullet\widehat{\Omega}_R^\dHod$ is the canonical filtration on $\widehat{\Omega}_R^\dHod$ in this case, all we have to show is that $\pi_{X^{\dHod, c}, *}\O_{X^{\dHod, c}}$ identifies with the canonical filtration on its underlying unfiltered object. Hence, by \cite[Ex. 2.2.3]{FGauges},} it suffices to show that $\pi_{X^{\dHod, c}, *}\O_{X^{\dHod, c}}$ is derived $u$-complete and that its $n$-th graded piece is concentrated in degree $n$ for any $n$. For the claim about the graded pieces, note that the pullback of $\pi_{X^{\dHod, c}, *}\O_{X^{\dHod, c}}$ to $B\G_m$ identifies with the Hodge cohomology $\bigoplus_n L\widehat{\Omega}^n_R[-n]$ of $X$ by virtue of \cref{thm:fildrstack-comparison}. To prove $u$-completeness, we use the strategy from the proof of \cite[Thm. 2.7.9]{FGauges}: First observe that $\pi_0(\G_a^{\dHod, c})\cong\G_a$ and \EEDIT{$\pi_1(\G_a^{\dHod, c})\cong\V(\O(1))^\sharp$}. Thus, the projection $\G_a^{\dHod, c}\rightarrow\G_a$ is a square-zero extension of the target by \EEDIT{$B\V(\O(1))^\sharp$}
over $\A^1/\G_m$ and hence the induced map
\begin{equation*}
\nu: X^{\dHod, c}\rightarrow X\times\A^1/\G_m
\end{equation*}
of stacks over $\A^1/\G_m$ is a \EEDIT{$\V(\T_{X/\Z_p}(1))^\sharp$-gerbe}, \EDIT{where $\T_{X/\Z_p}$ denotes the tangent sheaf of $X$ over $\Z_p$: Indeed, for any $p$-nilpotent test ring $S$ equipped with a map to $\A^1/\G_m$, the animated algebras $\G_a^{\dHod, c}(S)$ and $\G_a(S)$ are also $p$-nilpotent and hence we find some $n\geq 0$ such that
\begin{equation*}
X^{\dHod, c}(S)=\Map(\Spec \G_a^{\dHod, c}(S), X)=\Map(\Spec \G_a^{\dHod, c}(S), X_{p^n=0})
\end{equation*}
and similarly $(X\times\A^1/\G_m)(S)=\Map(\Spec \G_a(S), X_{p^n=0})$; now the claim follows by derived deformation theory.}\footnote{More precisely, we are using the following assertion in derived algebraic geometry: Let $X$ be a finite type $\Z_p$-scheme and $A'\rightarrow A$ a square-zero extension of animated $\Z_p$-algebras with fibre $N\in\D^{\leq 0}(A)$. Then the fibre of the map $X(A')\rightarrow X(A)$ over a point $\eta: \Spec A\rightarrow X$ is a torsor for $\operatorname{Der}_{\Z_p}(\O_X, \eta_*N)\cong\Map_A(\eta^*L_{X/\Z_p}, N)$; \EDIT{the proof is similar to \cite[Thm. 5.1.13]{FlatPurity}.} Note that, if furthermore $N=L[1]\in\D^{\leq -1}(A)$ and $X$ is smooth, we have $\Map_A(\eta^*L_{X/\Z_p}, N)\cong B(\eta^*\T_{X/\Z_p}\tensor_A L)$.} Finally, as the relative cohomology sheaves $H^i(\nu_*\O_{X^\dHod, c})$ of such a gerbe are independent of the gerbe structure by the proof of \cite[Cor. 2.7.2.(1)]{FGauges} and it is enough to check that each of them is $u$-complete, we may replace $\nu$ with the trivial gerbe \EEDIT{$\nu': B_{X\times\A^1/\G_m}\V(\T_{X/\Z_p}(1))^\sharp\rightarrow X\times\A^1/\G_m$}. \EEDIT{Now the conclusion follows as the $\G_m$-equivariant version of \cref{lem:syntomicetale-vesharpreps} implies that the pushforward of the structure sheaf along $\nu'$ is given by the quasi-coherent complex 
\begin{equation*}
\EEDIT{
\Tot(\O_X\rightarrow \widehat{\Omega}^1_{X/\Z_p}(-1)\rightarrow \widehat{\Omega}^2_{X/\Z_p}(-2)\rightarrow\dots)
}
\end{equation*}
on $X\times \A^1/\G_m$, which is clearly $u$-complete.}
\end{proof}

\EDIT{From the smooth case of \cref{thm:fildhod-comparisonfiltered} treated in the previous lemma, we can now infer an explicit description of the conjugate-filtered diffracted Hodge stack of a quasiregular semiperfectoid ring.}

\begin{thm}
\label{thm:fildhod-dhodconjqrsp}
Assume that $X=\Spf R$ for a quasiregular semiperfectoid ring $R$. Then there is an isomorphism 
\begin{equation*}
R^{\dHod, c}\cong \Spf\Rees(\Fil^\conj_\bullet\widehat{\Omega}^\dHod_R)/\G_m\;.
\end{equation*}
\end{thm}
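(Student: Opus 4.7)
The plan is to combine the pullback description of $R^{\dHod, c}$ from \cref{rem:fildhod-pullback} with the explicit affine presentation of $R^\N$ for quasiregular semiperfectoid $R$ from \cref{thm:filprism-qrsp}, and then bootstrap from the smooth case treated in \cref{lem:fildhod-comparisonfilteredsmooth}.

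First I would use \cref{thm:filprism-qrsp} to rewrite $R^\N \cong \Spf(\Rees(\Fil^\bullet_\N \Prism_R))/\G_m$, whence the closed substack $R^{\HT, c} = (R^\N)_{t=0}$ identifies with $\Spf(\bigoplus_n \gr^n_\N \Prism_R)/\G_m$ as an affine $\G_m$-quotient formal scheme (here the grading comes from the Rees construction). Combined with the pullback square $R^{\dHod, c} = \A^1/\G_m \times_{(\Z_p^\N)_{t=0}} R^{\HT, c}$ and the affineness of $R^{\HT, c}$ over $(\Z_p^\N)_{t=0}$, this shows that $R^{\dHod, c}$ is itself an affine $\G_m$-quotient formal scheme; write it as $\Spf B/\G_m$ for a graded $\Z_p[u]$-algebra $B$.

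To identify $B$ with $\Rees(\Fil^\conj_\bullet \widehat{\Omega}^\dHod_R)$, I would pursue a left-Kan-extension argument. By \cref{rem:fildhod-siftedcolims}, the functor $R \mapsto \Rees(\Fil^\conj_\bullet \widehat{\Omega}^\dHod_R)$ commutes with sifted colimits of animated $\Z_p$-algebras. One can hope for an analogous commutation on the geometric side: since the proof of \cref{lem:fildhod-comparisonfilteredsmooth} exhibits the map $R^{\dHod, c} \to R \times \A^1/\G_m$ as a $\V(\T_{R/\Z_p}(1))^\sharp$-gerbe and computes the pushforward of $\O_{R^{\dHod, c}}$ as a concrete totalization of differential forms, one can attempt to extend this description to qrsp $R$ and check sifted-colimit preservation of $R \mapsto \pi_{R^{\dHod, c}, *}\O_{R^{\dHod, c}}$ directly from the formula. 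Since \cref{lem:fildhod-comparisonfilteredsmooth} establishes the identification for smooth (in particular polynomial) $R$, writing $R$ as a sifted colimit of polynomial $\Z_p$-algebras would then yield the identification $B \cong \Rees(\Fil^\conj_\bullet \widehat{\Omega}^\dHod_R)$ as graded rings.

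The main obstacle is justifying the sifted-colimit compatibility of $R \mapsto \pi_{R^{\dHod, c}, *}\O_{R^{\dHod, c}}$: a priori, $R$ appears as the \emph{target} of a derived mapping stack, so compatibility with colimits in $R$ is not automatic. A possible workaround would be to bypass sifted colimits entirely and instead match $B$ with $\Rees(\Fil^\conj_\bullet \widehat{\Omega}^\dHod_R)$ by (i) comparing the underlying unfiltered objects via the restriction $R^{\dHod, c}|_{\G_m/\G_m} = R^\dHod \cong \Spf \widehat{\Omega}^\dHod_R$ from \cref{prop:fildhod-dhodqrsp}, (ii) comparing the associated gradeds via the restriction $R^{\dHod, c}|_{B\G_m} = R^\Hod$ together with the Hodge--Tate identification (\ref{eq:fildhod-htcomparison2}), and (iii) invoking completeness of both filtrations to promote the termwise comparison to a comparison of filtered objects.
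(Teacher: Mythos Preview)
Your workaround (i)--(iii) is essentially the paper's strategy, but step (iii) as stated has a genuine gap, and the fix is exactly the left Kan extension idea you abandoned---used differently than you envisioned.

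The problem with (iii) is that knowing the underlying objects and associated gradeds agree abstractly does not let you conclude the filtered objects agree; you need a \emph{map} of filtered objects inducing these identifications. The paper produces this map as follows. Since $\Fil^\conj_\bullet\widehat{\Omega}^\dHod_{(-)}$ is \emph{defined} (on animated $\Z_p$-algebras) as the left Kan extension from smooth algebras, and since \cref{lem:fildhod-comparisonfilteredsmooth} identifies it with $\pi_{(-)^{\dHod,c},*}\O$ on smooth algebras, the universal property of left Kan extension yields a natural map
\[
\Rees(\Fil^\conj_\bullet\widehat{\Omega}^\dHod_R)\longrightarrow \pi_{R^{\dHod,c},*}\O_{R^{\dHod,c}}
\]
in $\D(\A^1/\G_m)$ for every $R$. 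Crucially, this requires \emph{no} sifted-colimit compatibility of the right-hand side---the Kan extension maps to any extension, not just to colimit-preserving ones. Since the source is discrete by \cref{rem:fildhod-degzero}, adjunction turns this into a map of stacks $R^{\dHod,c}\to\Spf\Rees(\Fil^\conj_\bullet\widehat{\Omega}^\dHod_R)/\G_m$ over $\A^1/\G_m$, and now your (i) and (ii) check that this map is an isomorphism after pullback to $\G_m/\G_m$ and to $B\G_m$ respectively.

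One further technical point: to conclude that a map of stacks over $\A^1/\G_m$ is an isomorphism from its restrictions to the open and closed strata, the paper passes to \emph{derived} stacks so that the stratification argument becomes the statement that a map in $\D(A)$ is an isomorphism iff it is one after inverting $f$ and after derived reduction mod $f$. Your preliminary step (affineness of $R^{\dHod,c}$ via \cref{thm:filprism-qrsp} and the pullback square) is a reasonable alternative way to set this up, but the paper does not need it.
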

\begin{proof}
We mimic the proof of \cite[Thm. 5.5.10]{FGauges}. As in loc.\ cit., we will work with derived stacks, i.e.\ stacks on animated rings, since we will have to ponder derived pullbacks, which are more natural in the world of derived stacks; however, it will turn out in the end that all stacks we consider were classical to begin with. For this, we extend all the stacks we have introduced so far to the category of animated rings by left Kan extension as in \cite[Rem. 5.5.13]{FGauges}. To begin the proof, observe that we can define the conjugate filtration on diffracted Hodge cohomology of $p$-complete animated rings via left Kan extension from the full subcategory of smooth $\Z_p$-algebras since it commutes with sifted colimits \EDIT{by \cref{rem:fildhod-siftedcolims}}. Thus, by \cref{lem:fildhod-comparisonfilteredsmooth}, there is a natural map
\begin{equation*}
\Rees(\Fil^\conj_\bullet\widehat{\Omega}^\dHod_R)\rightarrow \pi_{R^{\dHod, c}, *}\O_{R^{\dHod, c}}
\end{equation*}
of commutative algebra objects in $\D(\A^1/\G_m)$. \EEDIT{Recalling that the left-hand side is concentrated in degree zero by \cref{rem:fildhod-degzero}, we see that, by adjunction, the above defines a morphism}
\begin{equation*}
R^{\dHod, c}\rightarrow \Spf\Rees(\Fil^\conj_\bullet\widehat{\Omega}^\dHod_R)/\G_m
\end{equation*}
over $\A^1/\G_m$. To check that this is an isomorphism, we may pull back to $\G_m/\G_m\subseteq\A^1/\G_m$ and $B\G_m\subseteq\A^1/\G_m$ as these form a stratification of $\A^1/\G_m$.\footnote{\EEDIT{Here we are using that, given any animated ring $A$ and an element $f\in\pi_0(A)$, a map $\tau: M\rightarrow N$ in $\D(A)$ is an isomorphism if and only if both $\tau[1/f]$ and $\tau\tensorL_A A/f$ are isomorphisms.}} However, after pulling back to $\G_m/\G_m$, the above morphism becomes
\begin{equation*}
R^\dHod\rightarrow \Spf\widehat{\Omega}^\dHod_R\;,
\end{equation*}
which can be verified to agree with the isomorphism from \cref{prop:fildhod-dhodqrsp}, and after pulling back to $B\G_m$, we obtain
\begin{equation*}
R^\Hod\rightarrow \Spf\left(\bigoplus_n L\widehat{\Omega}^n_R[-n]\right)/\G_m\;,
\end{equation*}
which is also an isomorphism, see the proof of \cite[Thm. 5.5.10]{FGauges}.
\end{proof}

\EDIT{Finally, we have made all the necessary preparations to prove \cref{thm:fildhod-comparisonfiltered} as announced in the beginning of the section.

\begin{proof}[Proof of \cref{thm:fildhod-comparisonfiltered}]
First observe that, by base change, the analogue of \cref{lem:filprism-quasisyntomiccover} also holds true for the conjugate-filtered diffracted Hodge stack. Thus, by quasisyntomic descent for $\Fil^\conj_\bullet R\Gamma_\dHod(X)$, see \cite[Rem. 4.7.9]{APC}, we are reduced to the case where $X=\Spf R$ for a quasiregular semiperfectoid ring $R$. However, in this case, the claim follows from \cref{thm:fildhod-dhodconjqrsp}.
\end{proof}
}

\begin{rem}
Observe that the mod $p$ reduction $(X^{\dHod, c})_{p=0}$ of the conjugate-filtered diffracted Hodge stack agrees with the conjugate-filtered de Rham stack of $X_{p=0}$ defined in \cite[§2.7]{FGauges} up to a Frobenius twist. This reflects the fact that, for smooth $X$, the isomorphism
\begin{equation*}
\widehat{\Omega}_{X^{(1)}}^\dHod\tensor_{\Z_p} \F_p\cong F_*\widehat{\Omega}_{X_{p=0}/\F_p}^\bullet\;,
\end{equation*}
coming from the crystalline comparison theorem for prismatic cohomology, see \cite[Rem. 4.7.18]{FGauges}, is compatible with the respective conjugate filtrations on either side. Here, $X^{(1)}\coloneqq \phi^* X$ denotes the pullback of $X$ along the Frobenius of $\Z_p$ and $F: X_{p=0}\rightarrow X_{p=0}^{(1)}$ is the relative Frobenius for $X_{p=0}$ over $\F_p$.
\end{rem}

\subsection{The Sen operator}

Finally, \EDIT{we construct the stack $X^{\HT, c}$ as follows}:

\begin{defi}
For a bounded $p$-adic formal scheme $X$, its \emph{conjugate-filtered Hodge--Tate stack} $X^{\HT, c}$ is defined as the pullback
\begin{equation*}
\begin{tikzcd}
X^{\HT, c}\ar[r, "i_{\HT, c}"]\ar[d, "\pi_{X^{\HT, c}}"] & X^\N\ar[d] \\
(\Z_p^\N)_{t=0}\ar[r] & \Z_p^\N\nospacepunct{\;.}
\end{tikzcd}
\end{equation*}
In other words, the stack $X^{\HT, c}$ is just the locus inside $X^\N$ cut out by the equation $t=0$. 
\end{defi}

To begin our analysis \EDIT{of the stack $X^{\HT, c}$ and to relate it to the Sen operator on the conjugate-filtered diffracted Hodge cohomology of $X$}, we first study the stack \EDIT{$\Z_p^{\HT, c}=(\Z_p^\N)_{t=0}$} and its category of quasi-coherent complexes.

\begin{prop}
\label{prop:fildhod-zpntzero}
There is a natural identification
\begin{equation*}
(\Z_p^\N)_{t=0}\cong \G_a^\dR/\G_m=\G_a/(\G_a^\sharp\rtimes\G_m)\;.
\end{equation*}
\end{prop}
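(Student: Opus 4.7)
The first equality $\G_a^\dR/\G_m = \G_a/(\G_a^\sharp \rtimes \G_m)$ is essentially formal: since $\G_a^\sharp\hookrightarrow\G_a$ is a monomorphism of sheaves, the cone $\G_a^\dR=\Cone(\G_a^\sharp\to\G_a)$ is just the ordinary quotient sheaf $\G_a/\G_a^\sharp$, and combining with the compatible $\G_m$-scaling actions on $\G_a^\sharp$ and $\G_a$ gives the claimed rewriting. Hence the actual content of the proposition is to construct a natural isomorphism $(\Z_p^\N)_{t=0}\cong\G_a^\dR/\G_m$.

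The plan is to exploit the fpqc cover $\pi\colon \A^1/\G_m\to (\Z_p^\N)_{t=0}$ already constructed in \cref{defi:fildhod-dhodconj}, where $\A^1/\G_m=\Spf\Z_p\langle u\rangle/\G_m$ with $u$ of degree $1$: the filtered Cartier--Witt divisor built there has $\sharp(d)=0$ by construction (the left vertical map in the defining diagram is labelled $0$), so it factors through $(\Z_p^\N)_{t=0}$. The first step is to verify that $\pi$ is surjective fpqc-locally on $(\Z_p^\N)_{t=0}$: for any filtered Cartier--Witt divisor $(M,d)$ with $t=0$ over a $\G_a^\sharp$-acyclic ring $S$ (such rings form a basis for the fpqc topology by \cref{lem:gasharp-acyclic}), the vanishing of $\sharp(d)$ allows one to split the admissible sequence of $M$ and, together with an appropriate lift of $F_*(d')$ to $W$, identify $(M,d)$ with the pushout construction of \cref{defi:fildhod-dhodconj} applied to some element $u\in L_M(S)$.

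Next, I would show that $\pi$ descends through the quotient by the translation action of $\G_a^\sharp$ on $\A^1$ (with semidirect structure coming from $\G_m$-scaling), yielding a map $\bar{\pi}\colon \G_a/(\G_a^\sharp\rtimes\G_m)\to (\Z_p^\N)_{t=0}$. Concretely, for $v\in\V(L_M)^\sharp(S)$ one needs to produce a canonical isomorphism $(M_u, d_u)\cong(M_{u+v}, d_{u+v})$ of filtered Cartier--Witt divisors; the divided powers of $v$ provide a compatible change of splitting of the admissible sequence which induces such an isomorphism, compatibly with $d$.

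Finally, $\bar{\pi}$ is an isomorphism iff its relative diagonal is, i.e., iff the kernel pair of $\pi$ agrees with the action groupoid of $\G_a^\sharp$ on $\A^1$. Given the surjectivity and the factorisation above, this reduces to showing that two $S$-points $u_1,u_2$ of $\A^1$ produce isomorphic filtered Cartier--Witt divisors if and only if $u_1-u_2\in\V(L)^\sharp(S)$, with the isomorphism being uniquely determined by this difference. I expect this last verification to be the main obstacle: it requires carefully classifying isomorphisms between the pushouts $M_u$ and $M_{u'}$ in terms of $\sHom_W$-groups between the building blocks $\V(L)^\sharp$, $W$ and $F_*W$, and invoking the rigidity of the underlying Cartier--Witt datum (cf.~\cite[Lem.~5.1.5]{FGauges}) to rule out any exotic isomorphisms beyond the expected translations by $\G_a^\sharp$.
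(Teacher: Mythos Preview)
The paper does not prove this; it merely cites \cite[Prop.~5.3.7]{FGauges}. Your outline --- construct $\pi$ from the explicit filtered Cartier--Witt divisor, check it is a cover, and identify the fibres with $\G_a^\sharp$-orbits --- is the natural strategy and presumably matches the cited argument. (Beware only that the paper itself attributes the fact that $\pi$ is an fpqc cover to the proof of \cite[Prop.~5.3.7]{FGauges}, so invoking \cref{defi:fildhod-dhodconj} for this is circular in the paper's logic; but since you go on to verify surjectivity anyway, the issue is presentational.)

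Two steps need repair. First, $\G_a^\sharp\to\G_a$ is \emph{not} a monomorphism of sheaves over $p$-nilpotent rings --- for instance, over $\F_p[\epsilon]/\epsilon^2$ there are nonzero divided-power sequences with $s_1=0$ --- so $\G_a^\dR$ is genuinely $1$-truncated and the equality $\G_a^\dR/\G_m=\G_a/(\G_a^\sharp\rtimes\G_m)$ holds only at the level of quotient stacks; this just needs rewording. Second, and more substantively, your surjectivity step claims that $\sharp(d)=0$ lets you \emph{split} the admissible sequence of $M$. This is false: the extension class of $0\to\V(L_M)^\sharp\to M\to F_*M'\to 0$ lives in $\sExt^1_W(F_*M',\V(L_M)^\sharp)$, which by \cite[Prop.~5.2.1]{FGauges} is a twist of $\G_a^\dR$ and is generically nonzero --- indeed, this class is precisely what encodes $u$. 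What one should do instead is choose a $W$-linear map $W\to M$ lifting an identification $F_*W\cong F_*M'$ (this exists since $\sExt^1_W(W,-)=0$); its restriction to $\G_a^\sharp\subset W$ then produces $u\in L_M$, well-defined up to $\V(L_M)^\sharp$, which is exactly the $\G_a^\sharp$-ambiguity you want. To complete the identification with the pushout one must also check that the composite $W\to M\xrightarrow{d}W$ is multiplication by $V(1)$; this amounts to showing that the underlying distinguished element is a unit multiple of $p$ (so the unit can be absorbed into the trivialisation of $M'$), and it is here that the Cartier--Witt condition on $d'$ actually enters.
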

\begin{proof}
See \cite[Prop. 5.3.7]{FGauges}.
\end{proof}

\begin{lem}
\label{lem:fildhod-complexeszpntzero}
There is an equivalence of categories
\begin{equation*}
\D((\Z_p^\N)_{t=0})\cong \widehat{\D}_{\gr, D-\nilp}(\Z_p\{x, D\}/(Dx-xD-1))\;,
\end{equation*}
where $x$ and $D$ have grading degree \EEDIT{$1$} and \EEDIT{$-1$}, respectively, and the \EDIT{local} nilpotence of $D$ is only demanded mod $p$ here.
\end{lem}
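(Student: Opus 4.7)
The plan is to unwind the quotient stack presentation $(\Z_p^\N)_{t=0}\cong \G_a/(\G_a^\sharp\rtimes\G_m)$ from \cref{prop:fildhod-zpntzero}. By fpqc descent, a quasi-coherent complex on $(\Z_p^\N)_{t=0}$ is the same data as a $p$-complete complex on $\G_a=\Spf\Z_p[x]$ equipped with a compatible equivariance structure for the action of the semidirect product $\G_a^\sharp\rtimes\G_m$, and I would build the equivalence by layering on one factor of this acting group at a time.

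First, switching off $\G_a^\sharp$: the $\G_m$-action (which places $x$ in grading degree $1$, as normalised by the cover $\A^1/\G_m\rightarrow (\Z_p^\N)_{t=0}$ used in the proof of \cref{prop:fildhod-zpntzero}) yields $\D(\G_a/\G_m)\cong\widehat{\D}_{\gr}(\Z_p[x])$, by the increasing-filtration variant of the Rees equivalence \cref{prop:rees-main}. Next, I would incorporate the $\G_a^\sharp$-equivariance. A $\G_a^\sharp$-representation is a comodule over the divided power Hopf algebra $\Z_p\langle t\rangle=\O(\G_a^\sharp)$; writing out such a coaction in the cobasis $\{t^{[n]}\}_{n\geq 0}$ as $\rho(m)=\sum_{n\geq 0}\partial_n(m)\tensor t^{[n]}$ and using coassociativity of the comultiplication $\Delta(t^{[n]})=\sum_{i+j=n} t^{[i]}\tensor t^{[j]}$ forces $\partial_n=D^n$ for the single endomorphism $D\coloneqq\partial_1$, subject to the convergence condition that $D$ act locally nilpotently on cohomology modulo $p$ so that $\rho(m)$ is well-defined in the $p$-completed tensor product. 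This is entirely parallel to the description of $\D(B\G_m^\sharp)$ in \cref{prop:fildhod-bgmsharp}, where the analogous convergence obstruction leads to the $(\Theta^p-\Theta)$-nilpotence hypothesis.

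Finally, compatibility of $D$ with multiplication by $x$ encodes the fact that $\G_a^\sharp$ acts on $\G_a$ by translation via $\G_a^\sharp\rightarrow\G_a$: infinitesimally, $D$ becomes the derivation $\partial/\partial x$ on $\Z_p[x]$, yielding the commutation relation $Dx-xD=1$. The semidirect product structure places $D$ in grading degree $-1$, since the quasi-ideal $\G_a^\sharp\rightarrow\G_a$ is $\G_m$-equivariant of weight $1$, so its coordinate ring sits in degree $1$ and the dual operator $D$ in degree $-1$; this is also consistent with $[D,x]$ having degree $0$. Assembling the three steps yields the desired description of $\D((\Z_p^\N)_{t=0})$ as the category of derived $p$-complete graded modules over $\Z_p\{x,D\}/(Dx-xD-1)$ with $D$ locally nilpotent modulo $p$ on cohomology.

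The main obstacle will be the $\G_a^\sharp$-step: carefully verifying, in the derived $p$-complete setting, that $\G_a^\sharp$-equivariant objects correspond exactly to complexes with an endomorphism $D$ acting locally nilpotently modulo $p$ on cohomology, rather than integrally. The derived version of the dictionary between PD-Hopf algebra comodules and locally nilpotent $D$-modules requires some care in this respect, and I would model the argument on the proof of the analogous description of $\D(B\G_m^\sharp)$ in \cite[Thm. 3.5.8]{APC}.
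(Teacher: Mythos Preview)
Your proposal is correct and follows essentially the same route as the paper: both arguments unwind the presentation $(\Z_p^\N)_{t=0}\cong\G_a/(\G_a^\sharp\rtimes\G_m)$, identify $\G_a^\sharp$-equivariance with a locally-nilpotent-mod-$p$ operator $D$ via the divided-power coaction $m\mapsto\sum D^n(m)\tensor t^{[n]}$, and extract the Weyl relation $Dx-xD=1$ from the translation action of $\G_a^\sharp$ on $\G_a$. The only cosmetic difference is that the paper peels off $\G_a^\sharp$ first (establishing the ungraded equivalence $\D(\G_a/\G_a^\sharp)\cong\widehat{\D}_{D\text{-nilp}}(\Z_p\{x,D\}/(Dx-xD-1))$) and then adds the $\G_m$-grading, whereas you do $\G_m$ first and $\G_a^\sharp$ second; either order is fine once one checks, as you do, that the semidirect-product compatibility forces $D$ into degree $-1$.
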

\begin{proof}[Proof]
Using \cref{prop:fildhod-zpntzero} to identify $(\Z_p^\N)_{t=0}$ as $\G_a^\dR/\G_m$, the statement is completely analogous to the description of $\D(\Z_{p, \HT, c}^\N)$ from our discussion of the reduced locus of $\Z_p^\Syn$. Nevertheless, we briefly \EDIT{lay out} the argument and first show that
\begin{equation}
\label{eq:fildhod-complexeszpntzero}
\D(\G_a^\dR)=\D(\G_a/\G_a^\sharp)\cong \D_{D-\nilp}(\Z_p\{x, D\}/(Dx-xD-1))\;;
\end{equation}
then the claim will follow as specifying a $\G_m$-equivariant structure \EEDIT{on a quasi-coherent complex on $\G_a^\dR$} then amounts to specifying a grading \EEDIT{on its image under the equivalence above which is} compatible with the \EEDIT{grading on} $\Z_p\{x, D\}/(Dx-xD-1)$.

To prove (\ref{eq:fildhod-complexeszpntzero}), first recall that the Cartier dual of $\G_a^\sharp$ is the formal completion $\widehat{\G}_a$ of $\G_a$ at the origin, \EDIT{i.e.\ $\widehat{\G}_a\cong\sHom(\G_a^\sharp, \G_m)$}, see also \cref{lem:syntomicetale-vesharpreps}, and that an endomorphism $D: M\rightarrow M$ of a $p$-complete $\Z_p$-module $M$ which is locally nilpotent mod $p$ corresponds to the $\O(\G_a^\sharp)$-comodule structure on $M$ given by
\begin{equation*}
M\rightarrow M\widehat{\tensor}_{\Z_p} \O(\G_a^\sharp)\;, \hspace{0.5cm} m\mapsto \sum_{i\geq 0} D^i(m)\tensor\frac{t^i}{i!}\;,
\end{equation*}
see \cite[Prop. 2.4.4]{FGauges}; here, $\O(\G_a^\sharp)$ denotes the $p$-adic completion of $\Z_p[t, \tfrac{t^2}{2!}, \tfrac{t^3}{3!}, \dots]$. Thus, equipping a $p$-complete $\Z_p\langle x\rangle$-module $M$ with an $\O(\G_a^\sharp)$-equivariant structure means specifiying an endomorphism $D: M\rightarrow M$ which is locally nilpotent mod $p$ such that the diagram
\begin{equation*}
\begin{tikzcd}
M\widehat{\tensor}_{\Z_p} \Z_p\langle x\rangle\ar[r]\ar[d] & M\ar[r] & M\widehat{\tensor}_{\Z_p} \O(\G_a^\sharp) \\
(M\widehat{\tensor}_{\Z_p} \O(\G_a^\sharp))\widehat{\tensor}_{\Z_p} (\Z_p\langle x\rangle\widehat{\tensor}_{\Z_p} \O(\G_a^\sharp))\ar[rr, "\cong"] && (M\widehat{\tensor}_{\Z_p} \Z_p\langle x\rangle)\widehat{\tensor}_{\Z_p} (\O(\G_a^\sharp)\widehat{\tensor}_{\Z_p} \O(\G_a^\sharp))\ar[u]
\end{tikzcd}
\end{equation*}
commutes. Now observing that the endomorphism of $\Z_p\langle x\rangle$ corresponding to the action of $\G_a^\sharp$ on $\G_a$ is given by the usual derivative $\frac{\d}{\d x}: \Z_p\langle x\rangle\rightarrow\Z_p\langle x\rangle$, we see that the condition above translates to
\begin{equation*}
D^i(fm)=\sum_{k+\ell=i} \binom{i}{k}\left(\frac{\d^\ell}{\d^\ell x} f\right) D^k(m)
\end{equation*}
for any $i\geq 0$ and $f\in\Z_p\langle x\rangle, m\in M$. \EDIT{As this is clearly an additive condition, it suffices to demand this for monomials $f=x^n$, where $n\geq 0$. However, a straightforward induction on $n$ then shows} that the relation $D(xm)=xD(m)+m$ already implies all the others, hence specifying a $\G_a^\sharp$-equivariant structure on $M$ amounts to specifying a lift of $M$ to a $\Z_p\{x, D\}/(Dx-xD-1)$-module such that $D$ acts locally nilpotently on $M$ mod $p$ and we are done.
\end{proof}

\begin{rem}
\label{rem:fildhod-complexeszpntzeroexplicit}
In light of \cref{lem:fildhod-complexeszpntzero}, the datum of a quasi-coherent complex $E$ on $(\Z_p^\N)_{t=0}$ may be thought of as follows: Via the Rees equivalence, $E$ corresponds to an increasing filtration $\Fil_\bullet V$ of $p$-complete $\Z_p$-complexes equipped with an endomorphism $D: \Fil_\bullet V\rightarrow\Fil_{\bullet-1} V$ such that the diagram
\begin{equation}
\label{eq:fildhod-complexeszpntzeroexplicit}
\begin{tikzcd}
\dots\ar[r] & \Fil_{-1} V\ar[r, "x"]\ar[d, "xD+1"] & \Fil_0 V\ar[r, "x"]\ar[d, "xD"] & \Fil_1 V\ar[r, "x"]\ar[d, "xD-1"] & \dots\ar[r, "x"] & \Fil_i V\ar[r]\ar[d, "xD-i"] & \dots \\
\dots\ar[r] & \Fil_{-1} V\ar[r, "x"] & \Fil_0 V\ar[r, "x"] & \Fil_1 V\ar[r, "x"] & \dots\ar[r, "x"] & \Fil_i V\ar[r] & \dots
\end{tikzcd}
\end{equation}
commutes. Note that the filtration $\Fil_\bullet V$ precisely corresponds to the pullback of $E$ to $\A^1/\G_m$ under the Rees equivalence.
\end{rem}

\begin{proof}[Proof of \cref{thm:fildhod-comparisonsen}]
By \cref{thm:fildhod-comparisonfiltered}, we already know that the underlying filtered objects agree, so it remains to \EEDIT{prove the assertion about the Sen operator. However, as the conjugate filtration on the diffracted Hodge cohomology of $X$ is complete and the diagram (\ref{eq:fildhod-complexeszpntzeroexplicit}) is commutative, we may reduce to checking the corresponding statement on the associated graded level by induction. Here, we observe that 
\begin{equation*}
xD-i: \Fil_i^\conj R\Gamma_\dHod(X)\rightarrow \Fil_i^\conj R\Gamma_\dHod(X)
\end{equation*}
acts by $-i$ on $\gr_i^\conj R\Gamma_\dHod(X)$ since $xD$ factors through $\Fil_{i-1}^\conj R\Gamma_\dHod(X)$. Recalling that we deduced from (\ref{eq:fildhod-htcomparison1}) that the Sen operator also acts by $-i$ on $\gr_i^\conj R\Gamma_\dHod(X)$, we obtain the result.}
\comment{
and this may be checked on the unfiltered objects, i.e.\ after pullback to $\Z_p^\HT$. \EEDIT{However, here we note that the operator $xD$ on a $\Z_p[x]$-complex $M$ coming from a quasi-coherent complex on $\Z_p^{\HT, c}$ pulls back to the operator $\Theta$ from \cref{prop:fildhod-bgmsharp} as the operator $D$ arises from the exact sequence
\begin{equation*}
\begin{tikzcd}[ampersand replacement=\&]
0\ar[r] \& \Z\ar[r] \& \O(\G_a^\sharp)\ar[r, "\frac{\d}{\d t}"] \& \O(\G_a^\sharp)\ar[r] \& 0\nospacepunct{\;,}
\end{tikzcd}
\end{equation*}
where $\O(\G_a^\sharp)=\Z_p\langle t, \tfrac{t^2}{2!}, \tfrac{t^3}{3!}, \dots\rangle$ as before, see the proof of \cite[Prop. 2.4.4]{FGauges}, while $\Theta$ similarly arises from
\begin{equation*}
\begin{tikzcd}[ampersand replacement=\&]
0\ar[r] \& \Z\ar[r] \& \O(\G_m^\sharp)\ar[r, "t\frac{\d}{\d t}"] \& \O(\G_m^\sharp)\ar[r] \& 0\nospacepunct{\;,}
\end{tikzcd}
\end{equation*}
where $\O(\G_m^\sharp)=\Z_p\langle t, t^{-1}, \tfrac{(t-1)^2}{2!}, \tfrac{(t-1)^3}{3!}, \dots\rangle$, see the proof of \cite[Prop. 3.5.10]{APC}. Now the claim} follows from base change for the cartesian square
\begin{equation*}
\raisebox{\depth}{
\begin{tikzcd}[baseline={(current bounding box.center)}, ampersand replacement=\&]
X^\HT\ar[r, "j_\HT"]\ar[d] \& X^{\HT, c}\ar[d] \\
\Z_p^\HT\ar[r, "j_\HT"] \& \Z_p^{\HT, c}\nospacepunct{\;.}
\end{tikzcd}
}
\qedhere
\end{equation*}
}
\end{proof}

\EDIT{
Finally, we note that, as promised in the beginning of this section, by \cref{rem:fildhod-complexeszpntzeroexplicit}, the $\Z_p\{x, D\}/(xD-Dx-1)$-module structure on conjugate-filtered diffracted Hodge cohomology supplied by \cref{thm:fildhod-comparisonsen} not only encodes the Sen operator $\Theta$, but also the fact that, for all $n\in\Z$, the endomorphism
\begin{equation*}
\Theta+n: \Fil^\conj_n R\Gamma_\dHod(X)\rightarrow \Fil^\conj_n R\Gamma_\dHod(X)
\end{equation*}
admits a factorisation through $\Fil^\conj_{n-1} R\Gamma_\dHod(X)$ as remarked in \cite[Rem. 4.9.10]{APC}. \EEDIT{This is due to the fact that $\Theta+n$ identifies with $(xD-n)+n=xD$, which factors canonically through $x: \Fil^\conj_{n-1} R\Gamma_\dHod(X)\rightarrow \Fil^\conj_n R\Gamma_\dHod(X)$ via $D$.} 
}

\newpage

\section{A stacky comparison of the Nygaard and Hodge filtrations}
\label{sect:nygaardhodge}

In this section, we prove a stacky version of the following theorem \EDIT{of Bhatt--Lurie} comparing the Nygaard filtration on prismatic cohomology with the Hodge filtration on de Rham cohomology. Note that the smoothness assumption can be removed via Kan extensions by \cite[Rem. 5.5.10]{APC}.

\begin{thm}
\label{thm:nygaardhodge-motivation}
Let $X$ be a smooth qcqs $p$-adic formal scheme. Then there is a natural filtered comparison map
\begin{equation*}
\Fil^\bullet_\N R\Gamma_\prism(X)\rightarrow\Fil^\bullet_\Hod R\Gamma_\dR(X)
\end{equation*}
with the property that the induced maps
\begin{equation}
\label{eq:nygaardhodge-motivation}
R\Gamma_\prism(X)/\Fil^i_\N R\Gamma_\prism(X)\rightarrow R\Gamma_\dR(X)/\Fil^i_\Hod R\Gamma_\dR(X)
\end{equation}
are $p$-isogenies for all $i\geq 0$. If \EEDIT{$0\leq i\leq p$}, they are in fact already isomorphisms integrally.
\end{thm}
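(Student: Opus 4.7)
The plan is to construct the comparison map stackily via the Hodge-filtered de Rham map $i_{\dR,+}: X^{\dR,+}\to X^\N$, which lives over $\A^1/\G_m$. Pushing forward the structure sheaf along $i_{\dR,+}$ yields a morphism $t_{X,*}\O_{X^\N}\to\pi_{X^{\dR,+},*}\O_{X^{\dR,+}}$ in $\D(\A^1/\G_m)$, and under the Rees equivalence, combined with \cref{thm:filprism-comparison} and \cref{thm:fildrstack-comparison}, this translates into the desired natural filtered comparison map $\Fil^\bullet_\N R\Gamma_\prism(X)\to\Fil^\bullet_\Hod R\Gamma_\dR(X)$. Taking cofibres of the fibre sequences $\Fil^i\to R\Gamma\to R\Gamma/\Fil^i$ then reduces both assertions to controlling the cofibre of the comparison on the filtration level $i$ itself.

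Both filtrations being complete, I would further reduce to a statement on associated graded pieces: it suffices to show that, for each $n\geq 0$, the induced map $\gr^n_\N R\Gamma_\prism(X)\to\gr^n_\Hod R\Gamma_\dR(X)\cong R\Gamma(X,\widehat{\Omega}^n_{X/\Z_p})[-n]$ has a cofibre which is annihilated by a power of $p$ that is a unit for $n\leq p-1$. Via base change along $B\G_m\hookrightarrow\A^1/\G_m$, this graded map is computed as the pushforward along the restriction $i_\Hod: X^\Hod\to X^{\HT,c}$ of $i_{\dR,+}$. Using \cref{thm:fildhod-comparisonsen}, one may identify $\pi_{X^{\HT,c},*}\O_{X^{\HT,c}}$ under the Rees equivalence with the conjugate-filtered diffracted Hodge cohomology $\Fil^\conj_\bullet R\Gamma_\dHod(X)$ equipped with its operator $D$ (so that $xD-n=\Theta$ at level $n$), while pushforward from $X^\Hod$ recovers the associated graded $\gr^\conj_\bullet R\Gamma_\dHod(X)$. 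Meanwhile, the Nygaard--conjugate comparison (see \cite[Rem. 5.7.2]{APC}) identifies $\gr^n_\N R\Gamma_\prism(X)$ up to a Frobenius twist with $\Fil^\conj_n R\Gamma_\dHod(X)\{n\}$, and one verifies that under this identification the stacky comparison above corresponds to the projection onto the top graded piece of the conjugate filtration.

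The crux is then a direct analysis of the cofibre of $\Fil^\conj_n R\Gamma_\dHod(X)\to \gr^\conj_n R\Gamma_\dHod(X)$, namely $\Fil^\conj_{n-1} R\Gamma_\dHod(X)$, which I would handle by exploiting the factorisation $\Theta+n:\Fil^\conj_n\to\Fil^\conj_{n-1}\to\Fil^\conj_n$ from \cite[Rem. 4.9.10]{APC} (which, in our language, is precisely the relation $xD=(xD-n)+n$ read through \cref{rem:fildhod-complexeszpntzeroexplicit}). Iterating this factorisation from level $n$ down to level $0$ produces an overall scalar of $n!$, so that the cofibre becomes annihilated by $n!$ on cohomology; for $0\leq n\leq p-1$ we have $n!\in\Z_p^\times$, yielding the integral isomorphism, while for general $n$ the factor $n!$ becomes a unit after inverting $p$, giving the $p$-isogeny claim. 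The main obstacle I expect is the Frobenius-twist and Breuil--Kisin-twist bookkeeping required to match the stacky comparison map with the projection $\Fil^\conj_n\to\gr^\conj_n$ under the Nygaard--conjugate comparison and the Hodge--Tate comparison isomorphism $\gr^\conj_n R\Gamma_\dHod(X)\cong R\Gamma(X,\widehat{\Omega}^n_{X/\Z_p})[-n]$; once this identification is pinned down, the Sen-divisibility argument closes the proof uniformly.
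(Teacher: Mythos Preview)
Your overall architecture is sound and is indeed close to the stacky re-derivation the paper gives (after first simply citing \cite[Prop.~5.5.12]{APC} and globalising by Zariski descent): one pushes $\O$ forward along $i_{\dR,+}$, passes to graded pieces, and analyses the resulting map via the conjugate filtration and the Sen operator. The gap is in the identification step.

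The absolute Nygaard graded piece is \emph{not} $\Fil^\conj_n R\Gamma_\dHod(X)$ up to twists. The reference \cite[Rem.~5.7.2]{APC} you invoke concerns $\gr^m_\N F^*\H_\prism(X)$; the Frobenius pullback $F^*$ is not a harmless twist here but changes the answer. Without it, the correct statement (this is \cref{cor:nygaardhodge-fibreseq}, or \cite[Rem.~5.5.8]{APC}) is a fibre sequence
\[
\gr^n_\N R\Gamma_\prism(X)\;\longrightarrow\;\Fil^\conj_n R\Gamma_\dHod(X)\;\xrightarrow{\;D\;}\;\Fil^\conj_{n-1} R\Gamma_\dHod(X)\,.
\]
So the comparison $\gr^n_\N\to\gr^n_\Hod$ is not the projection $\Fil^\conj_n\to\gr^\conj_n$, and its fibre is not $\Fil^\conj_{n-1}$. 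In fact $\Fil^\conj_{n-1}R\Gamma_\dHod(X)$ is \emph{not} $p$-torsion (already for $n=1$ it contains $R\Gamma(X,\O_X)$), so your $n!$-annihilation claim cannot hold and the argument would not close even if the identification were granted.

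What the paper does instead (proof of \cref{thm:nygaardhodge-graded}, then fed into \cref{thm:nygaardhodge-main} applied to $E=\H_\N(X)(i)$) is to show that, with $\Fil_\bullet V=\Fil^\conj_\bullet R\Gamma_\dHod(X)$ and operator $D$, one has
\[
\fib\bigl(\gr^0_\N\to\gr^0_\Hod\bigr)\;\cong\;\fib\bigl(\Fil_{-1}V\xrightarrow{\,xD+1\,}\Fil_{-1}V\bigr)\,,
\]
and more generally the relevant fibre is governed by the endomorphism $xD-i$ on $\Fil_iV$. Since the conjugate filtration is bounded below and $xD-i$ acts by $-i$ on $\gr_iV$, this endomorphism is an isomorphism on each graded piece when $p\nmid i$ and a $p$-isogeny always; that is the mechanism producing both the integral range $0\le i\le p$ and the general $p$-isogeny. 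Your Sen-divisibility intuition is the right ingredient, but it enters as the invertibility of $xD-i$ on graded pieces, not as an $n!$-killing of $\Fil^\conj_{n-1}$.
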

\begin{proof}
The affine version is \cite[Prop. 5.5.12]{APC} \EDIT{and we deduce the general version using Zariski descent and \cref{lem:syntomicetale-colimtot}.} \EEDIT{Note that, in loc.\ cit., the integral version is only stated for $i<p$, but the proof given there in fact also works for the case $i=p$.}
\end{proof}

\EDIT{
To formulate our precise result, we will need the following definition:

\begin{defi}
\label{defi:nygaardhodge-htweights}
Let $X$ be a bounded $p$-adic formal scheme and $E\in\D(X^\N)$ a gauge on $X$. Then the pullback of $E$ along
\begin{equation*}
\EEDIT{
\begin{tikzcd}[ampersand replacement=\&, column sep=large]
X\times B\G_m\ar[r] \& X^{\Hod}\ar[r, "i_\Hod"] \& X^\N\;,
\end{tikzcd}
}
\end{equation*}
where the first map is induced by the canonical map \EEDIT{$\G_a\rightarrow B\V(\O(1))^\sharp\oplus \G_a=\G_a^\Hod$ of stacks over $B\G_m$,} identifies with a graded quasi-coherent complex $M^\bullet$ on $X$ and the set of integers $i$ such that \EEDIT{the $i$-th graded piece $M^i$} is nonzero is called the set of \emph{Hodge--Tate weights} of $E$. For an $F$-gauge $E\in\D(X^\Syn)$, we define its Hodge--Tate weights as the Hodge--Tate weights of $j_\N^*E$.
\end{defi}

\begin{ex}
For any bounded $p$-adic formal scheme $X$, the gauge $\O\{i\}$ only has a single Hodge--Tate weight which equals $-i$. To see this, recall from \cref{ex:filprism-bktwist} that 
\begin{equation*}
\EEDIT{
\O_{X^\N}\{1\}=\pi_{X^\N}^*\O_{\Z_p^\N}\{1\}\;, \hspace{0.5cm} \O_{\Z_p^\N}\{1\}=\pi^*\O_{\Z_p^\prism}\{1\}\tensor t^*\O(1)
}
\end{equation*}
and observe that there is a commutative diagram
\begin{equation*}
\begin{tikzcd}[ampersand replacement=\&]
X\times\A^1/\G_m\ar[r]\ar[rd] \& X^{\dR, +}\ar[d]\ar[r, "i_{\dR, +}"] \& X^\N\ar[d, "\pi_{X^\N}"] \\
\& \A^1/\G\mathrlap{_m}\ar[d]\ar[r, "i_{\dR, +}" {xshift=-1em}, out=25, in=150] \& \Z_p^\N\ar[d, "\pi"]\ar[l, "t", out=-145, in=-25] \\
\& \Spf\Z_p\ar[r] \& \Z_p^\prism\nospacepunct{\;,}
\end{tikzcd}
\end{equation*}
where the lower map is the one associated to the prism $(\Z_p, (p))$ as in \cref{ex:prismatisation-prismaticsitetoxprism}.
\end{ex}

Now the statement we are going to prove can be formulated as follows:}

\begin{thm}
\label{thm:nygaardhodge-main}
There is a commutative square of stacks
\begin{equation*}
\begin{tikzcd}
\Z_p^\dR\ar[r]\ar[d, "i_\dR"] & \Z_p^{\dR, +}\ar[d, "i_{\dR, +}"] \\
\Z_p^\prism\ar[r, "j_\dR"] & \Z_p^\N
\end{tikzcd}
\end{equation*}
which is an almost pushout up to $p$-isogeny. \EDIT{More precisely,} for any $E\in\Perf(\Z_p^\N)$, it induces a pullback diagram
\begin{equation*}
\begin{tikzcd}
R\Gamma(\Z_p^\N, E)[\frac{1}{p}]\ar[r]\ar[d] & R\Gamma(\Z_p^\prism, j_\dR^*E)[\frac{1}{p}]\ar[d] \\
R\Gamma(\Z_p^{\dR, +}, i_{\dR, +}^*E)[\frac{1}{p}]\ar[r] & R\Gamma(\Z_p^\dR, i_\dR^*E)[\frac{1}{p}]\nospacepunct{\;.}
\end{tikzcd}
\end{equation*}
If the Hodge--Tate weights of $E$ are all at least \EEDIT{$-p$}, then the statement already holds integrally.
\end{thm}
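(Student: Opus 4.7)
The plan is to first construct the commutative square and then reduce the cohomological pullback claim to a comparison of local cohomology fibres, ultimately computed on the associated graded of the Nygaard filtration using the explicit description of $\D(\Z_p^{\HT,c})$ from \cref{sect:conjdhod}.

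For the construction of the square, I will use the fact that $\Spf\Z_p$ has trivial Hodge-filtered de Rham cohomology, which yields the identifications $\Z_p^\dR\cong\Spf\Z_p=\G_m/\G_m$ and $\Z_p^{\dR,+}\cong\A^1/\G_m$. The top horizontal map will be the canonical open immersion, and the left vertical will be the factorisation $\Spf\Z_p\to\Z_p^\prism$ of $i_\dR: \Z_p^\dR\to\Z_p^\N$ through $j_\dR$, which exists by the discussion following \cref{ex:filteredprism-drmap}. Commutativity can be verified by checking that both compositions to $\Z_p^\N$ equal $i_\dR$; this will follow from the construction of $i_{\dR,+}$ as a section of the Rees map combined with the relation $\pi\circ j_\dR=\id_{\Z_p^\prism}$ noted in \cref{rem:filprism-jdr}.

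To establish the cohomological pullback property, I will reformulate it as an isomorphism (after inverting $p$, or integrally under the Hodge--Tate weight condition) of the fibres of the two horizontal maps. Both horizontal maps are open immersions—the bottom being $j_\dR$ (see \cref{rem:filprism-jdr}) and the top being the canonical inclusion $\G_m/\G_m\hookrightarrow\A^1/\G_m$—so their fibres are given by local cohomology at the respective closed complements: $R\Gamma_{\Z_p^{\HT,c}}(\Z_p^\N, E)$ for the top row, where $\Z_p^{\HT,c}=(\Z_p^\N)_{t=0}$, and $R\Gamma_{B\G_m}(\A^1/\G_m, i_{\dR,+}^* E)$ for the bottom row. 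A natural comparison map between them will come from pullback along $i_{\dR,+}$, which restricts to the inclusion $B\G_m\hookrightarrow\Z_p^{\HT,c}$ because the preimage of $\Z_p^{\HT,c}=t^{-1}(B\G_m)$ under $i_{\dR,+}$ is $B\G_m$ (using $t\circ i_{\dR,+}=\id$).

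The main step will be to reduce this comparison to an assertion on the associated graded of the Nygaard filtration. Using completeness of the Nygaard filtration (see \cref{prop:rees-main}(5)), the local cohomology at $\{t=0\}$ decomposes into graded pieces living on $\Z_p^{\HT,c}$ (Nygaard side) and on $B\G_m=\Z_p^\Hod$ (Hodge side). The explicit description $\D(\Z_p^{\HT,c})\cong\widehat{\D}_{\gr, D-\nilp}(\Z_p\{x,D\}/(Dx-xD-1))$ from \cref{lem:fildhod-complexeszpntzero}, combined with the formula $R\Gamma(\Z_p^{\HT,c},-)=\fib(\Fil_0\xrightarrow{D}\Fil_{-1})$ arising from the analogous discussion of the reduced locus of $\Z_p^\Syn$, will reduce the comparison to an explicit linear-algebraic statement about the operator $D$ acting on the filtered complex attached to $E$. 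After inverting $p$, the invertibility of $D+n$ for appropriate $n$ (combined with the local nilpotence of $D$ mod $p$ and the perfectness of $E$) should yield the desired isomorphism. The hard part will be the integral refinement: for Hodge--Tate weights $\geq -p$ one must match the relevant filtration degrees against the integral range $i\leq p$ from \cref{thm:nygaardhodge-motivation}, and carefully bounding the $p$-torsion in the cofibre will be the main obstacle.
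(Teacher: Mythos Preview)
Your approach matches the paper's: reduce the pullback claim to comparing the cofibres of the two horizontal restriction maps, decompose these into Nygaard-graded pieces $M^i/M^{i+1}$ versus $\gr^i F$ (living on $\Z_p^{\HT,c}$ and $\Z_p^\Hod$ respectively, after twisting by $t^*\O(i)$), and then compute via the description of $\D(\Z_p^{\HT,c})$ from \cref{lem:fildhod-complexeszpntzero} together with the fibre-sequence formula of \cref{cor:nygaardhodge-fibreseq}. Two sharpenings are needed to make this go through. First, the reduction to associated graded uses that for perfect $E$ the filtrations \emph{stabilise} (so $M^i = M^{-\infty}$ for $i \ll 0$ and $\Fil_i V = 0$ for $i \ll 0$), which is stronger than the completeness you cite from \cref{prop:rees-main}(5). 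Second, the operator whose invertibility on graded pieces you must analyse is $xD - i$ (equivalently the Sen operator), not $D + n$: since $xD$ on $\Fil_i V$ factors through $\Fil_{i-1} V$, it acts by zero on $\gr_i V$, so $xD - i$ acts there by the scalar $-i$; the local nilpotence of $D$ plays no role. Once you see this, the integral refinement is not the hard step but falls out immediately --- the Hodge--Tate weight bound $\geq -p$ (after the twist) forces the relevant $i$ to lie in $\{-(p-1),\dots,-1\}$, where $-i$ is a $p$-adic unit.
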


The above theorem then allows us to generalise \cref{thm:nygaardhodge-motivation} to accommodate gauge coefficients in the case where $X$ is proper in the following way:

\begin{cor}
\label{cor:nygaardhodge-coeffs}
Let $X$ be a $p$-adic formal scheme which smooth and proper over $\Spf\Z_p$. For any perfect gauge $E\in\Perf(X^\N)$, there is a natural pullback square
\begin{equation*}
\begin{tikzcd}
\EDIT{\Fil^0_\N R\Gamma_\prism(X, E)[\frac{1}{p}]}\ar[r]\ar[d] & R\Gamma_\prism(X, j_\dR^*E)[\frac{1}{p}]\ar[d] \\
\EDIT{\Fil^0_\Hod R\Gamma_\dR(X, i_{\dR, +}^*E)[\frac{1}{p}]}\ar[r] & R\Gamma_\dR(X, i_\dR^*E)[\frac{1}{p}]\nospacepunct{\;.}
\end{tikzcd}
\end{equation*}
If the Hodge--Tate weights of $E$ are all at least \EEDIT{$-p$}, then the statement already holds integrally.
\end{cor}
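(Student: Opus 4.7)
The plan is to deduce this from the stacky version \cref{thm:nygaardhodge-main} by pushing forward along the map $\pi_{X^\N}: X^\N \to \Z_p^\N$. Explicitly, for any perfect gauge $E \in \Perf(X^\N)$, I would consider $\pi_{X^\N, *} E$ as an object of $\D(\Z_p^\N)$, apply \cref{thm:nygaardhodge-main} to it, and identify the four corners with the cohomology groups in the statement via base change.

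The first step is to show that, under the smoothness and properness hypotheses on $X/\Spf\Z_p$, the pushforward $\pi_{X^\N, *} E$ lies in $\Perf(\Z_p^\N)$. This should follow from an argument in the spirit of Appendix \ref{sect:finiteness} (which handles the analogous statement for $X^\Syn \to \Z_p^\Syn$): smoothness and properness of $X \to \Spf\Z_p$ propagates to smoothness and properness of $X^\N \to \Z_p^\N$, and the usual coherent duality/finiteness then guarantees that $\pi_{X^\N, *}$ preserves perfect complexes.

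The second step is to identify the four terms of \cref{thm:nygaardhodge-main}, applied to $\pi_{X^\N, *} E$, with the terms appearing in the statement of the corollary. For each of the cartesian squares
\begin{equation*}
\begin{tikzcd}
X^?\ar[r]\ar[d, "\pi_{X^?}"] & X^\N\ar[d, "\pi_{X^\N}"] \\
\Z_p^?\ar[r] & \Z_p^\N
\end{tikzcd}
\end{equation*}
with $? \in \{\prism, \dR, (\dR,+)\}$, the appropriate base change statement (of the type collected in Appendix \ref{sect:basechange}) identifies $f^* \pi_{X^\N, *} E \cong \pi_{X^?, *} f^* E$, where $f$ denotes the bottom horizontal map. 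Taking global sections on $\Z_p^?$ and using \cref{thm:prismatisation-comparison}, \cref{thm:drstack-comparison}, \cref{thm:fildrstack-comparison} and \cref{thm:filprism-comparison} (together with the observation that $R\Gamma(\A^1/\G_m, -)$ corresponds under the Rees equivalence to taking $\Fil^0$) then gives exactly the four cohomology groups $\Fil^0_\N R\Gamma_\prism(X, E)$, $R\Gamma_\prism(X, j_\dR^* E)$, $\Fil^0_\Hod R\Gamma_\dR(X, i_{\dR,+}^* E)$ and $R\Gamma_\dR(X, i_\dR^* E)$.

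For the integral refinement, I would finally observe that if the Hodge--Tate weights of $E$ are all $\geq -p$, then the same is true for $\pi_{X^\N, *} E$: the Hodge--Tate weights of the latter are computed by pulling back along $B\G_m \to \Z_p^\N$, and by base change along $X \times B\G_m \to \Z_p^\N$ this pullback identifies with the derived pushforward to $B\G_m$ of the graded complex $M^\bullet$ on $X$ associated to $E$. In particular, vanishing of $M^i$ for $i < -p$ forces vanishing of the $i$-th graded piece of $\pi_{X^\N, *} E$ for $i < -p$, so the integral hypothesis of \cref{thm:nygaardhodge-main} remains valid. The main obstacle is the first step—establishing the finiteness/perfectness of $\pi_{X^\N, *} E$ under the smoothness and properness hypotheses—while the base change identifications and Hodge--Tate weight bookkeeping are essentially formal once one has them available.
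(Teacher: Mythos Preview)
Your approach is exactly the paper's: push $E$ forward along $\pi_{X^\N}$, invoke the finiteness result (\cref{prop:finiteness-main}) to get perfectness, apply \cref{thm:nygaardhodge-main}, and verify that Hodge--Tate weights do not decrease under pushforward. The base change identifications you spell out are implicit in the paper's one-line proof.

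One correction in your Hodge--Tate weight argument: the square
\[
\begin{tikzcd}
X\times B\G_m\ar[r]\ar[d] & X^\N\ar[d] \\
B\G_m\ar[r] & \Z_p^\N
\end{tikzcd}
\]
is \emph{not} cartesian; the actual fibre product is $X^\Hod$, which is a nontrivial $\V(\T_{X/\Z_p}(1))^\sharp$-gerbe over $X\times B\G_m$ (\cref{lem:syntomicetale-identifyhod}). Hence the graded pieces of $i_\Hod^*\pi_{X^\N,*}E$ are not $R\Gamma(X, M^i)$ but the Higgs cohomology of \cref{prop:syntomicetale-cohomologyhod}, whose degree-$i$ term involves $M^i, M^{i-1}\otimes\Omega^1, M^{i-2}\otimes\Omega^2,\dots$. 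Your conclusion survives: if $M^j=0$ for all $j<-p$, then for $i<-p$ every term in this complex vanishes, so the degree-$i$ graded piece of the pushforward is zero. This is precisely why the paper cites \cref{prop:syntomicetale-cohomologyhod} rather than a direct base change.
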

\begin{proof}[Proof of \cref{cor:nygaardhodge-coeffs}]
Using \cref{prop:finiteness-main}, \EDIT{the statement is an immediate consequence of} \cref{thm:nygaardhodge-main} once we know that pushforward along $X^\N\rightarrow\Z_p^\N$ does not decrease the smallest Hodge--Tate weight of a gauge; \EDIT{however, this follows from} \cref{prop:syntomicetale-cohomologyhod}.
\end{proof}

\EDIT{In particular, by the comparisons from \cref{thm:fildrstack-comparison} and \cref{thm:filprism-comparison}, the above recovers the result of \cref{thm:nygaardhodge-motivation} in the proper case by putting $E=\O\{i\}$.} In fact, we will deduce \cref{thm:nygaardhodge-main} from the following statement, which, in some sense, is the analogous assertion on the level of the associated graded:

\begin{thm}
\label{thm:nygaardhodge-graded}
The Hodge-filtered de Rham map 
\begin{equation*}
i_{\dR, +}: \Z_p^{\dR, +}\rightarrow\Z_p^\N
\end{equation*}
is an almost isomorphism over the locus $t=0$ up to $p$-isogeny. \EDIT{More precisely,} for any $E\in\Perf(\Z_p^\N)$, it induces an isomorphism
\begin{equation*}
R\Gamma(\Z_p^{\HT, c}, E|_{\Z_p^{\HT, c}})[\tfrac{1}{p}]\cong R\Gamma(\Z_p^\Hod, (i_{\dR, +}^*E)|_{\Z_p^\Hod})[\tfrac{1}{p}]\;.
\end{equation*}
If the Hodge--Tate weights of $E$ are all at least \EEDIT{$-(p-1)$}, then the statement already holds integrally.
\end{thm}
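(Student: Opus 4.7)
My plan is to reduce the statement to an explicit computation using the module descriptions from the previous section. By \cref{lem:fildhod-complexeszpntzero}, $\D(\Z_p^{\HT, c}) \simeq \widehat{\D}_{\gr, D-\nilp}(\Z_p\{x, D\}/(Dx-xD-1))$, while $\D(\Z_p^\Hod) = \D(B\G_m) \simeq \widehat{\D}_{\gr}(\Z_p)$. By base change for the cartesian square
\begin{equation*}
\begin{tikzcd}
\Z_p^\Hod \ar[r, "i_\Hod"] \ar[d] & \Z_p^{\HT, c} \ar[d] \\
\Z_p^{\dR, +} \ar[r, "i_{\dR, +}"] & \Z_p^\N\;,
\end{tikzcd}
\end{equation*}
we have $(i_{\dR, +}^*E)|_{\Z_p^\Hod} \cong i_\Hod^*(E|_{\Z_p^{\HT, c}})$, so it suffices to analyse the pullback $i_\Hod^*$. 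Translating to modules, if $E|_{\Z_p^{\HT, c}}$ corresponds to a graded module $(M, x, D)$, the cohomology $R\Gamma(\Z_p^{\HT, c}, E|_{\Z_p^{\HT, c}})$ identifies with $\fib(D: M^0 \to M^{-1})$, while $R\Gamma(\Z_p^\Hod, i_\Hod^*E)$ identifies with the degree-$0$ piece of the derived $x$-cofibre, namely $\cofib(x: M^{-1} \to M^0)$.

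For the rational isomorphism, the key input is the purely geometric fact that $\G_a^\sharp$ coincides with $\G_a$ as group schemes after inverting $p$: the divided powers $t^n/n!$ become ordinary elements of $\Q_p[t]$, trivialising the PD-hull. This yields
\begin{equation*}
\Z_p^{\HT, c}[\tfrac{1}{p}] \cong \G_a/(\G_a \rtimes \G_m) \cong B\G_m \cong \Z_p^\Hod[\tfrac{1}{p}]\;,
\end{equation*}
using that $\G_a$ acts transitively on itself with stabiliser $\G_m$; under this identification, $i_\Hod[\tfrac{1}{p}]$ becomes the identity. Hence $i_\Hod^*[\tfrac{1}{p}]$ is an equivalence of derived $\infty$-categories, and the comparison on $R\Gamma$ is an isomorphism after inverting $p$.

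For the integral statement under the Hodge--Tate weight bound, I would reduce to Breuil--Kisin twists $E = \O\{n\}$ with $0 \leq n \leq p-1$ via a devissage based on the HT weight filtration of $E$. For such twists, an explicit computation of the associated graded $\Z_p\{x, D\}$-module shows that $R\Gamma(\Z_p^{\HT, c}, \O\{n\})$ reduces to the fibre of multiplication by an integer coprime to $p$ on $\Z_p$ (and hence vanishes when $n \neq 0$, matching $R\Gamma(\Z_p^\Hod, \O(-n)) = 0$), while the case $n = 0$ gives $\Z_p$ on both sides. The bound $-(p-1)$ on HT weights is precisely the threshold below which the relevant Sen eigenvalues cease to be $p$-adic units (since $n!$ acquires a factor of $p$ exactly at $n = p$).

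The hard part will be the devissage step from a general perfect $E$ to Breuil--Kisin twists: one must carefully control $p$-torsion through the spectral sequence associated with the HT weight filtration, ensuring that the integral comparison holds for arbitrary perfect $E$ under the bound rather than just for line bundles. As an alternative, one could try to directly analyse the natural comparison map $\fib(D: M^0 \to M^{-1}) \to \cofib(x: M^{-1} \to M^0)$ at the module level, using the HT weight hypothesis together with the relation $Dx - xD = 1$ to show that its cofibre is $p$-torsion controlled by the bound and thus vanishes in the allowed range.
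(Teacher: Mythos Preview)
Your module-level identifications are correct, and in fact the alternative you sketch in your final paragraph is exactly what the paper does. Both of your primary strategies, however, have genuine gaps.

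For the rational statement: writing $\Z_p^{\HT,c}[\tfrac{1}{p}]$ and $\Z_p^\Hod[\tfrac{1}{p}]$ does not make sense here. These are formal stacks, defined only on $p$-nilpotent test rings; inverting $p$ on them yields the empty stack. The observation that $\G_a^\sharp$ agrees with $\G_a$ over $\Q_p$ is the right intuition, but it must be implemented at the level of the module categories or directly on cohomology, not geometrically on the stacks.

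For the integral statement: the proposed d\'evissage to Breuil--Kisin twists does not work. A general $E\in\Perf(\Z_p^\N)$ carries no filtration with graded pieces built from the $\O\{n\}$; the Hodge--Tate weights are numerical invariants of the pullback to $B\G_m$, not the associated graded of a filtration on $E$ itself.

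The paper instead carries out your alternative directly and uniformly. With $E|_{\Z_p^{\HT,c}}$ identified with an increasingly filtered object $\Fil_\bullet V$ carrying $D$, a short diagram chase using \cref{cor:nygaardhodge-fibreseq} identifies the fibre of the comparison map with $\fib(xD+1\colon \Fil_{-1}V\to\Fil_{-1}V)$. Since $E$ is perfect one has $\Fil_iV=0$ for $i\ll 0$, and the commutative diagram \eqref{eq:fildhod-complexeszpntzeroexplicit} reduces the question to the action of $xD-i$ on $\gr_iV$ for $i\leq -1$. But $xD$ factors through $\Fil_{i-1}V$, so $xD-i$ acts on $\gr_iV$ by the scalar $-i$: a $p$-isogeny for all $i\leq -1$, and a $p$-adic unit precisely when $-(p-1)\leq i\leq -1$. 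This single computation handles both the rational and the integral cases; no separate geometric argument or d\'evissage is needed.
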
 

Before we can give the proof of the theorems above, we need to prepare ourselves with two easy lemmas:

\begin{lem}
\label{lem:nygaardhodge-rgammazpntzero}
Let $E\in\Perf(\Z_p^\N)$ and identify its pushforward to $\A^1/\G_m$ \EEDIT{along} the Rees map with a filtered complex
\begin{equation*}
\begin{tikzcd}
\dots\ar[r] & M^{i+1}\ar[r] & M^i\ar[r] & M^{i-1} \ar[r] & \dots
\end{tikzcd}
\end{equation*}
Then we have
\begin{equation*}
R\Gamma(\Z_p^{\HT, c}, E)=M^0/M^1\;.
\end{equation*}
\end{lem}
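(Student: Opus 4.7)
The plan is to realise $\Z_p^{\HT,c}$ as the pullback of the closed immersion $B\G_m\hookrightarrow\A^1/\G_m$ along the Rees map and then translate the computation into the filtered world via the Rees equivalence. First, I would observe that since $\Z_p^{\HT,c}$ is cut out in $\Z_p^\N$ by the equation $t=0$ and $B\G_m=\{0\}/\G_m\subseteq\A^1/\G_m$ is the vanishing locus of the tautological section $t:\O_{\A^1/\G_m}(1)\rightarrow\O_{\A^1/\G_m}$, we obtain a cartesian square
\begin{equation*}
\begin{tikzcd}
\Z_p^{\HT,c}\ar[r, "i"]\ar[d, "\pi'"'] & \Z_p^\N\ar[d, "\tau"] \\
B\G_m\ar[r, "j"] & \A^1/\G_m\nospacepunct{\;,}
\end{tikzcd}
\end{equation*}
where $\tau$ denotes the Rees map. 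Applying base change (as provided in the appendix on base change) yields a natural isomorphism $\pi'_*i^*E\cong j^*\tau_*E$ in $\D(B\G_m)$.

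Next, I would identify both sides using \cref{prop:rees-main}. By hypothesis, $\tau_*E$ corresponds under the Rees equivalence to the filtered complex $M^\bullet$; hence item (3) of \cref{prop:rees-main} identifies $j^*\tau_*E\in\D(B\G_m)$ with the associated graded $\gr^\bullet_\Fil M^\bullet=\bigoplus_i M^i/M^{i+1}$, placed in the natural gradings. Under the sign convention fixed in the discussion preceding \cref{prop:rees-main}, the structure sheaf $\O_{B\G_m}$ corresponds to $\Z_p$ concentrated in degree $0$, so $R\Gamma(B\G_m,-)$ identifies under the equivalence $\D(B\G_m)\cong\widehat{\D}_\gr(\Z_p)$ with the degree-zero component functor. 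Combining, we conclude
\begin{equation*}
R\Gamma(\Z_p^{\HT,c},E)=R\Gamma(B\G_m,j^*\tau_*E)=\gr^0_\Fil M^\bullet=M^0/M^1\;,
\end{equation*}
as desired.

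No genuine obstacle arises; the only point requiring care is the application of base change along the above cartesian square, which is handled uniformly by the general base change statements collected in the appendix, and the book-keeping of the sign convention relating filtration indices to the $\G_m$-weight.
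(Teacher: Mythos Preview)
Your proof is correct and follows essentially the same approach as the paper: set up the cartesian square expressing $\Z_p^{\HT,c}$ as the pullback of $B\G_m\hookrightarrow\A^1/\G_m$ along the Rees map, invoke \cref{prop:basechange-locclosed} for base change, and then read off the degree-zero graded piece via the Rees equivalence. The paper presents the argument in the same order and with the same level of detail.
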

\begin{proof}
\EDIT{As pullback along $B\G_m\rightarrow\A^1/\G_m$ corresponds to passage to the associated graded under the Rees equivalence}, we have 
\begin{equation*}
M^0/M^1\cong R\Gamma(B\G_m, (t_* E)|_{B\G_m})\;,
\end{equation*}
so we have to prove that the cartesian diagram
\begin{equation*}
\begin{tikzcd}
\Z_p^{\HT, c}\ar[r]\ar[d, "t"] & \Z_p^\N\ar[d, "t"] \\
B\G_m\ar[r] & \A^1/\G_m
\end{tikzcd}
\end{equation*}
satisfies base change for perfect complexes, i.e.\ that
\begin{equation*}
(t_*E)|_{B\G_m}\cong t_*E|_{\Z_p^{\HT, c}}
\end{equation*}
via the natural map. However, this follows from an application of \cref{prop:basechange-locclosed}.
\end{proof}

\begin{cor}
\label{cor:nygaardhodge-fibreseq}
In the situation of \cref{lem:nygaardhodge-rgammazpntzero}, identify $E|_{\Z_p^{\HT, c}}$ with a filtered object $\Fil_\bullet V$ together with an operator $D$ as in \cref{rem:fildhod-complexeszpntzeroexplicit}. Then there is a natural fibre sequence
\begin{equation*}
\begin{tikzcd}
M^0/M^1\ar[r] & \Fil_0 V\ar[r, "D"] & \Fil_{-1} V\;.
\end{tikzcd}
\end{equation*}
\end{cor}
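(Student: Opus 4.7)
The plan is to combine \cref{lem:nygaardhodge-rgammazpntzero} with a suitable integral version of the explicit cohomology formula for $\Z_p^{\HT, c}$ worked out (mod $p$) in Section \ref{subsect:syntomification}. By \cref{lem:nygaardhodge-rgammazpntzero}, the cohomology $R\Gamma(\Z_p^{\HT, c}, E|_{\Z_p^{\HT, c}})$ is naturally identified with $M^0/M^1$, so it will suffice to produce a natural equivalence
\begin{equation*}
R\Gamma(\Z_p^{\HT, c}, E|_{\Z_p^{\HT, c}}) \cong \fib\bigl(\Fil_0 V \xrightarrow{D} \Fil_{-1} V\bigr)
\end{equation*}
compatible with the obvious map to $\Fil_0 V$; assembling these two computations then yields precisely the fibre sequence claimed.

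To establish the second equivalence, I would use \cref{prop:fildhod-zpntzero} to identify $\Z_p^{\HT, c} \cong \G_a/(\G_a^\sharp\rtimes\G_m)$ and then compute $R\Gamma$ via descent along the atlas $\G_a \to \Z_p^{\HT, c}$, following the derivation of the analogous formula for $\Z_{p, \HT, c}^\N$ over $\F_p$ given in Section \ref{subsect:syntomification}. Concretely, global sections are computed as the derived $(\G_a^\sharp\rtimes\G_m)$-invariants of the underlying $\Z_p\langle x\rangle$-module of $E|_{\Z_p^{\HT, c}}$, i.e.\ the Rees module of $(\Fil_\bullet V, D)$. Taking $\G_m$-invariants selects the weight-zero piece, which by the Rees equivalence is precisely $\Fil_0 V$; the residual $\G_a^\sharp$-action is then encoded by the operator $D$ from the proof of \cref{lem:fildhod-complexeszpntzero}, and derived $\G_a^\sharp$-invariants of a complex equipped with such a locally nilpotent $D$-action are computed by the two-term fibre $\fib(-\xrightarrow{D}-)$ via the standard two-term resolution of the trivial $\O(\G_a^\sharp)$-comodule (cf.\ the proof of \cref{lem:fildhod-complexeszpntzero}). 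Tracking the grading, the operator $D$ lowers the $\G_m$-weight by one, so the target of $D$ naturally lies in $\Fil_{-1} V$, yielding the desired formula. Crucially, this computation is purely stack-theoretic and does not use anything specific to $\F_p$, so it transfers verbatim to the integral setting.

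The main point that requires care is purely administrative: reconciling the various sign and grading conventions to confirm that the weight-zero piece really is $\Fil_0 V$ and that $D$ lands in $\Fil_{-1} V$ rather than in some other filtration stratum (this is, for instance, consistent with the commutation relation $Dx = xD + 1$ and the diagram (\ref{eq:fildhod-complexeszpntzeroexplicit}) from \cref{rem:fildhod-complexeszpntzeroexplicit}). Once the integral cohomology formula is in place, combining it with \cref{lem:nygaardhodge-rgammazpntzero} delivers the natural identification $M^0/M^1 \cong \fib(\Fil_0 V \xrightarrow{D} \Fil_{-1} V)$, which is exactly the fibre sequence asserted in the corollary.
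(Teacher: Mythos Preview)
Your proposal is correct and follows essentially the same route as the paper: the paper's proof simply invokes \cref{lem:fildhod-complexeszpntzero} to obtain $R\Gamma(\Z_p^{\HT, c}, E|_{\Z_p^{\HT, c}})\cong\fib(\Fil_0 V\xrightarrow{D}\Fil_{-1} V)$ and combines this with \cref{lem:nygaardhodge-rgammazpntzero}. Your write-up just unpacks the cohomology computation (descent along $\G_a\to\G_a/(\G_a^\sharp\rtimes\G_m)$, $\G_m$-invariants selecting weight zero, $\G_a^\sharp$-invariants giving the two-term fibre via $D$) that the paper leaves implicit in its citation of \cref{lem:fildhod-complexeszpntzero}.
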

\begin{proof}
By \cref{lem:fildhod-complexeszpntzero}, we have
\begin{equation*}
R\Gamma(\Z_p^{\HT, c}, E|_{\Z_p^{\HT, c}})\cong\fib(\Fil_0 V\xrightarrow{D}\Fil_{-1} V)\;.
\end{equation*}
Combining this with \cref{lem:nygaardhodge-rgammazpntzero} yields the claim.
\end{proof}

In light of the comparisons from Chapter \ref{sect:conjdhod}, one should view the above statement as a version of \cite[Rem. 5.5.8]{APC} with coefficients. We are now ready to prove \cref{thm:nygaardhodge-main} and \cref{thm:nygaardhodge-graded}:

\begin{proof}[Proof of \cref{thm:nygaardhodge-graded}]
Consider the commutative diagram
\begin{equation}
\label{eq:nygaardhodge-graded}
\begin{tikzcd}
\Z_p^\dR\ar[r]\ar[d, "i_\dR"] & \Z_p^{\dR, +}\ar[d, "i_{\dR, +}"] & \Z_p^\Hod\ar[d]\ar[l] \\
\Z_p^\prism\ar[r, "j_\dR"]\ar[d] & \Z_p^\N\ar[d, "t"] & \Z_p^{\HT, c}\ar[d]\ar[l] \\
\Spf\Z_p\ar[r] & \A^1/\G_m & B\G_m\ar[l]\nospacepunct{\;,}
\end{tikzcd}
\end{equation}
where the vertical compositions are all identities since the Hodge-filtered de Rham map $i_{\dR, +}$ is a section of the Rees map $t$. We identify $t_*E$ with a filtered complex
\begin{equation*}
\begin{tikzcd}
\dots\ar[r] & M^{i+1}\ar[r] & M^i\ar[r] & M^{i-1}\ar[r] & \dots
\end{tikzcd}
\end{equation*}
via the Rees equivalence and use \cref{lem:fildhod-complexeszpntzero} to identify $E|_{\Z_p^{\HT, c}}$ with a filtered complex $\Fil_\bullet V$ together with an operator $D: \Fil_\bullet V\rightarrow\Fil_{\bullet-1} V$ satisfying the \EEDIT{compatibility} properties discussed \EEDIT{in \cref{rem:fildhod-complexeszpntzeroexplicit}}; finally, we let $i_{\dR, +}^* E$ correspond to the filtered complex $\Fil^\bullet F$ under the Rees equivalence. Using \cref{lem:nygaardhodge-rgammazpntzero}, we see that
\begin{equation*}
R\Gamma(\Z_p^{\HT, c}, E|_{\Z_p^{\HT, c}})\cong M^0/M^1\;, \hspace{0.5cm} R\Gamma(\Z_p^\Hod, (i_{\dR, +}^*E)|_{\Z_p^\Hod})\cong \Fil^0 F/\Fil^1 F
\end{equation*}
and thus, we have to examine the fibre of the map $M^0/M^1\rightarrow \Fil^0 F/\Fil^1 F$ induced by the Hodge-filtered de Rham map $i_{\dR, +}$. To this end, first observe that the commutativity of the top right square of (\ref{eq:nygaardhodge-graded}) implies that there is an isomorphism $\Fil^0 F/\Fil^1 F\cong \Fil_0 V/\Fil_{-1} V$. Thus, using \cref{cor:nygaardhodge-fibreseq}, we obtain a commutative diagram of fibre sequences
\begin{equation*}
\begin{tikzcd}
& M^0/M^1\ar[r]\ar[d] & \Fil^0 F/\Fil^1 F\ar[d, "\cong"] \\
\Fil_{-1} V\ar[r, "x"]\ar[d, "Dx"] & \Fil_0 V\ar[d, "D"]\ar[r] & \Fil_0 V/\Fil_{-1} V\ar[d] \\
\Fil_{-1} V\ar[r, equals] & \Fil_{-1} V\ar[r] & 0\nospacepunct{\;,}
\end{tikzcd}
\end{equation*}
from which we conclude that
\begin{equation*}
\fib(M^0/M^1\rightarrow \Fil^0 F/\Fil^1 F)\cong\fib(\Fil_{-1} V\xrightarrow{Dx} \Fil_{-1} V)\cong \fib(\Fil_{-1} V\xrightarrow{xD+1} \Fil_{-1} V)\;,
\end{equation*}
\EEDIT{where the last step is due to $Dx=xD+1$.}
As $E$ is perfect, we have $\Fil_i V=0$ for $i\ll 0$ and thus, \EEDIT{by the commutative diagram (\ref{eq:fildhod-complexeszpntzeroexplicit}),} we are reduced to showing that \EEDIT{the operators}
\begin{equation*}
xD-i: \Fil_i V\rightarrow \Fil_i V
\end{equation*}
induce $p$-isogenies on $\gr_i V$ for all $i\leq -1$ and isomorphisms for \EEDIT{$-(p-1)\leq i\leq -1$}. However, this is clear since \EEDIT{$xD-i$} acts by multiplication by \EEDIT{$-i$} on $\gr_i V$.
\end{proof}

\begin{proof}[Proof of \cref{thm:nygaardhodge-main}]
We use the notations from the previous proof. As
\begin{equation*}
R\Gamma(\Z_p^\N, E)\cong M^0\;, \hspace{0.5cm} R\Gamma(\Z_p^\prism, j_\dR^*E)\cong M^{-\infty}\;,
\end{equation*}
where $M^{-\infty}$ as usual denotes the $p$-completion of $\colim_i M^{-i}$, and
\begin{equation*}
R\Gamma(\Z_p^{\dR, +}, i_{\dR, +}^*E)\cong \Fil^0 F\;, \hspace{0.5cm} R\Gamma(\Z_p^\dR, i_\dR^*E)\cong F\;,
\end{equation*}
where $F$ denotes the $p$-completion of $\colim_i \Fil^{-i} F$, we have to examine the map $F/\Fil^0 F\rightarrow M^{-\infty}/M^0$ induced by $i_{\dR, +}$. Since $E$ is perfect, the filtrations $\Fil^\bullet F$ and $M^\bullet$ eventually stabilise, i.e.\ we have $F=\Fil^i F$ and $M^{-\infty}=M^i$ for $i\ll 0$, and thus we are reduced to showing that the maps $\gr^i F\rightarrow M^i/M^{i+1}$ are $p$-isogenies for all $i\leq -1$ and isomorphisms if $E$ has Hodge--Tate weights all at least \EEDIT{$-p$}. However, note that, using \cref{lem:nygaardhodge-rgammazpntzero}, we have
\begin{equation*}
\EEDIT{
\begin{split}
R\Gamma(\Z_p^{\HT, c}, (E\tensor t^*\O(i))|_{\Z_p^{\HT, c}})&\cong M^i/M^{i+1} \\
R\Gamma(\Z_p^\Hod, (i_{\dR, +}^*(E\tensor t^*\O(i)))|_{\Z_p^\Hod})&\cong \gr^i F
\end{split}
}
\end{equation*}
and if $E$ has Hodge--Tate weights all at least \EEDIT{$-p$}, then \EEDIT{$E\tensor t^*\O(i)$} has Hodge--Tate weights all at least \EEDIT{$-(p+i)$}, so we are done by \cref{thm:nygaardhodge-graded}.
\end{proof}

Note that \cref{thm:nygaardhodge-main} and \cref{thm:nygaardhodge-graded} also apply to certain non-perfect gauges $E$: Namely, going through the proof, one sees that all we have used about $E$ is that it is bounded below with respect to the standard $t$-structure, that the associated decreasing filtrations $\Fil^\bullet F$ and $M^\bullet$ eventually stabilise and that the terms of the increasing filtration $\Fil_\bullet V$ vanish in sufficiently negative degrees. We will now use this observation to recover \cref{thm:nygaardhodge-motivation} from the beginning:

\begin{proof}[Proof of \cref{thm:nygaardhodge-motivation}]
Consider the gauge \EEDIT{$E=\H_\N(X)(i)$}, where the twist is pulled back from $\A^1/\G_m$ via the Rees map $t: \Z_p^\N\rightarrow\A^1/\G_m$. Noting that $i_{\dR, +}^*E, j_\dR^*E$ and $i_\dR^*E$ identify with \EEDIT{$\H_{\dR, +}(X)(i), \H_\prism(X)$} and $\H_\dR(X)$, respectively, we see that, by the comparisons from \cref{thm:drstack-comparison}, \cref{thm:fildrstack-comparison}, \cref{thm:prismatisation-comparison} and \cref{thm:filprism-comparison}, it suffices to show that \cref{thm:nygaardhodge-main} also applies to $E$. However, note that these comparison results also imply that the filtrations $\Fil^\bullet F$ and $M^\bullet$ identify (up to a shift of degrees) with the Hodge and Nygaard filtration, respectively, so they indeed eventually stabilise. Finally, to prove that the terms of $\Fil_\bullet V$ vanish in sufficiently negative degrees, we observe that this filtration identifies (up to a shift of degrees) with $\Fil_\bullet^\conj\Omega_X^\dHod$ by \cref{thm:fildhod-comparisonsen}, whose negative terms are all zero. As $E$ is clearly bounded below, we are done.
\end{proof}

\newpage

\section{The stacky Beilinson fibre square}
\label{sect:beilfibsq}

\EDIT{Recall the following theorem of Antieau--Mathew--Morrow--Nikolaus relating syntomic cohomology to Hodge-filtered de Rham cohomology:}

\begin{thm}
\label{thm:beilfibsq-motivation}
Let $X$ be a \EEDIT{smooth qcqs} $p$-adic formal scheme. For each $i\geq 0$, there is a functorial pullback square
\begin{equation*}
\begin{tikzcd}
R\Gamma_\Syn(X, \Z_p(i))[\frac{1}{p}]\ar[r]\ar[d] & R\Gamma_\Syn(X_{p=0}, \Z_p(i))[\frac{1}{p}]\ar[d] \\
\Fil^i_\Hod R\Gamma_\dR(X)[\frac{1}{p}]\ar[r] & R\Gamma_\dR(X)[\frac{1}{p}]\nospacepunct{\;.}
\end{tikzcd}
\end{equation*}
\end{thm}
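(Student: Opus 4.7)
My plan is to deduce this as the special case $E = \O_{X^\Syn}\{i\}$ of the stacky Beilinson fibre square, Theorem~\ref{thm:intro-beilfibsq}, which will be established later in this section. Once that theorem is in hand, the four corners of the resulting pullback square identify with those appearing here: the top-left is $R\Gamma_\Syn(X, \Z_p(i))[\tfrac{1}{p}]$ by the very definition of syntomic cohomology with Tate twist coefficients; the top-right is $R\Gamma_\Syn(X_{p=0}, \Z_p(i))[\tfrac{1}{p}]$ since the crystalline realisation functor $T_\crys$ preserves Breuil--Kisin twists; and the bottom row is identified via Theorems~\ref{thm:drstack-comparison} and~\ref{thm:fildrstack-comparison}, after matching the pullbacks of the Breuil--Kisin twist to $X^{\dR,+}$ and $X^\dR$ with the appropriate shifts of the Hodge-filtered de Rham and de Rham complexes.

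For the proof of Theorem~\ref{thm:intro-beilfibsq} itself, the key ingredient is Corollary~\ref{cor:nygaardhodge-coeffs}, which already supplies the analogous pullback square with $X^\N$ in place of $X^\Syn$ and $X^\prism$ in place of $(X_{p=0})^\Syn$. The strategy I would pursue is to apply that corollary to $j_\N^* E$ and then combine the resulting square with the fibre sequence~(\ref{eq:syntomification-fibreseq}) coming from the pushout description of the syntomification to descend from Nygaard-filtered prismatic cohomology down to syntomic cohomology. Reorganising the resulting iterated fibre yields the desired square, with the right-hand column assembling, after inverting $p$, into the syntomic cohomology of $X_{p=0}$; this last identification crucially uses that in characteristic $p$ the two open immersions $j_\dR, j_\HT \colon X^\prism \hookrightarrow X^\N$ differ only by a Frobenius twist, so that the fibre of $j_\HT^* - j_\dR^*$ on the relevant mod-$p$ cohomology recovers the Frobenius fixed points appearing in syntomic cohomology of the special fibre.

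The main obstacle is the mismatch between the qcqs hypothesis of Theorem~\ref{thm:beilfibsq-motivation} and the properness hypothesis required by Theorem~\ref{thm:intro-beilfibsq}. Our stacky approach to the latter depends on the pushforward $\pi_{X^\Syn,*}$ preserving perfect $F$-gauges (Proposition~\ref{prop:finiteness-main}), which presently only holds when $X$ is proper over $\Spf\Z_p$. Consequently, the deduction sketched above only yields Theorem~\ref{thm:beilfibsq-motivation} in the smooth proper case; for the general qcqs setting as stated here one must either invoke the original theorem of Antieau--Mathew--Morrow--Nikolaus~\cite[Thm.~6.17]{BeilFibSq}, or pursue the noetherian approximation argument outlined in the introduction, which would require controlling the integral $p$-torsion in the error terms of Theorem~\ref{thm:intro-beilfibsq}---a task that appears out of reach with our current methods. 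For the purposes of this section, I would therefore propose treating Theorem~\ref{thm:beilfibsq-motivation} as a known input and devoting the remainder of the section to establishing the more general coefficient version.
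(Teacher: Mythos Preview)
Your final conclusion is correct but you arrive at it the hard way. In the paper this theorem is purely motivational: it is stated to explain what is being generalised, and the proof is a one-line citation to \cite[Thm.~6.17]{BeilFibSq} for the affine case together with Zariski descent (via \cref{lem:syntomicetale-colimtot}) to pass to the qcqs case. There is no attempt to deduce it from the stacky version, and no circularity issue arises because the paper never uses this theorem as an input to anything else.

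Your sketch of how to prove the stacky Beilinson fibre square itself (\cref{thm:beilfibsq-main}) is also not the route the paper takes. You propose combining the Nygaard--Hodge comparison (\cref{cor:nygaardhodge-coeffs}) with the equaliser description~(\ref{eq:syntomification-fibreseq}) of $X^\Syn$. The paper instead goes through the theory of crystalline Galois representations: it uses \cref{prop:syntomification-cohcrystalline} to reduce to a statement about $\RHom$ in the category $\MF^\phi_\adm(\Q_p)$ of admissible filtered $\phi$-modules (\cref{prop:beilfibsq-crys}), then matches the corners of that square with the stacky cohomologies via explicit computations of $T_\crys(E)[\tfrac{1}{p}]$ and $T_{\dR,+}(E)[\tfrac{1}{p}]$ in terms of $D_\crys(T_\et(E)[\tfrac{1}{p}])$ (\cref{lem:beilfibsq-etalephimod}, \cref{lem:beilfibsq-etalefiltered}). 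The Nygaard--Hodge square plays no role in this argument. Your approach might be viable, but the step where the right-hand column ``assembles into the syntomic cohomology of $X_{p=0}$'' would need substantially more justification than you give; in particular, you would need to identify $R\Gamma(X^\prism, j_\dR^*E)$ rationally with $R\Gamma((X_{p=0})^\prism, -)$ via the crystalline comparison and then track the Frobenius structure through carefully.
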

\begin{proof}
\EDIT{The affine version is \cite[Thm. 6.17]{BeilFibSq} and we deduce the general version using Zariski descent and \cref{lem:syntomicetale-colimtot}.}
\end{proof}

\EDIT{We want to prove a stacky version of this result and thereby also generalise it to allow arbitrary coefficients, at least in the case where $X$ is proper.} Namely, the main theorem we are going to prove is the following:

\begin{thm}
\label{thm:beilfibsq-main}
There is a commutative square of stacks
\begin{equation*}
\begin{tikzcd}
\Z_p^\dR\ar[r]\ar[d] & \Z_p^{\dR, +}\ar[d, "i_{\dR, +}"] \\
\F_p^\Syn\ar[r] & \Z_p^\Syn
\end{tikzcd}
\end{equation*}
which is an almost pushout up to $p$-isogeny. \EDIT{More precisely,} for any $E\in\Perf(\Z_p^\Syn)$, it induces a pullback diagram
\begin{equation*}
\begin{tikzcd}
R\Gamma(\Z_p^\Syn, E)[\frac{1}{p}]\ar[r]\ar[d] & R\Gamma(\F_p^\Syn, T_\crys(E))[\frac{1}{p}]\ar[d] \\
R\Gamma(\Z_p^{\dR, +}, T_{\dR, +}(E))[\frac{1}{p}]\ar[r] & R\Gamma(\Z_p^\dR, T_\dR(E))[\frac{1}{p}]\nospacepunct{\;.}
\end{tikzcd}
\end{equation*}
\end{thm}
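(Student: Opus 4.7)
The plan is to deduce this theorem from the Nygaard--Hodge comparison (\cref{thm:nygaardhodge-main}) via the equaliser description of syntomic cohomology from (\ref{eq:syntomification-fibreseq}). The commutative square of stacks itself will have been constructed in Section \ref{subsect:beilfibsq-construction}, so the remaining task is to verify the claimed pullback property on cohomology after inverting $p$.

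First, I would apply the equaliser fibre sequence (\ref{eq:syntomification-fibreseq}) to both $\Z_p^\Syn$ and $\F_p^\Syn$, obtaining
\begin{equation*}
R\Gamma(\Z_p^\Syn, E) \to R\Gamma(\Z_p^\N, j_\N^* E) \xrightarrow{j_\HT^* - j_\dR^*} R\Gamma(\Z_p^\prism, j_\prism^* E)
\end{equation*}
and the analogous sequence for $\F_p^\Syn$ with $E$ replaced by $T_\crys(E)$, related termwise by the functoriality maps induced by $\Spf \F_p \hookrightarrow \Spf \Z_p$. Next, I would invoke \cref{thm:nygaardhodge-main} applied to the gauge $j_\N^* E \in \Perf(\Z_p^\N)$ to rewrite $R\Gamma(\Z_p^\N, j_\N^* E)[\tfrac{1}{p}]$ as the pullback of $R\Gamma(\Z_p^{\dR,+}, T_{\dR,+}(E))[\tfrac{1}{p}]$ and $R\Gamma(\Z_p^\prism, j_\prism^* E)[\tfrac{1}{p}]$ over $R\Gamma(\Z_p^\dR, T_\dR(E))[\tfrac{1}{p}]$, whose projection onto the $\Z_p^\prism$-factor coincides with $j_\dR^*$.

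Substituting this rewriting into the $\Z_p^\Syn$-fibre sequence expresses $R\Gamma(\Z_p^\Syn, E)[\tfrac{1}{p}]$ as an iterated fibre in which the Hodge-filtered de Rham piece appears explicitly on one side, while the other is forced, by the Frobenius-gluing in the $F$-gauge $E$, to live on $\Z_p^\prism$ via $j_\HT^*$. The claim is that, after $[\tfrac{1}{p}]$, the latter contribution identifies with $R\Gamma(\F_p^\Syn, T_\crys(E))[\tfrac{1}{p}]$ via the functoriality maps; making this identification then yields the desired pullback square. An analogous reorganisation applied to the $\F_p^\Syn$-fibre sequence and a diagram chase should reconcile the two sides.

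The main obstacle is handling the asymmetry between $j_\HT$ and $j_\dR$: \cref{thm:nygaardhodge-main} involves only $j_\dR$, whereas the syntomification equaliser uses both, and it is precisely the Frobenius-twist distinguishing them that must be matched, after inverting $p$, with the reduction from $\Z_p$ to $\F_p$. Making this precise likely requires preliminary lemmas, to be established in the subsections preparing the main proof, that control how the crystalline realisation $T_\crys$ interacts with the pullback functors $j_\HT^*$ and $j_\dR^*$ and how the Frobenius on $\Z_p^\N$ is compatible with both the Nygaard--Hodge decomposition and the map $\F_p^\N \to \Z_p^\N$.
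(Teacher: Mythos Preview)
Your approach is genuinely different from the paper's, but it has a real gap at the point you yourself flag as the ``main obstacle,'' and the paper's proof suggests this obstacle cannot be overcome by the diagrammatic manipulations you outline.

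The paper does \emph{not} deduce the Beilinson square from \cref{thm:nygaardhodge-main} and the equaliser sequence. Instead it passes through classical Fontaine theory: it first reduces to coherent $E$ via the $t$-structure on $\Perf(\Z_p^\Syn)$ (using \cref{lem:beilfibsq-boundedtstructure}), then invokes \cref{prop:syntomification-cohcrystalline} to identify $R\Gamma(\Z_p^\Syn,E)[\tfrac1p]$ with $\RHom_{\MF^\phi_\adm(\Q_p)}(\Q_p, D_\crys(T_\et(E)[\tfrac1p]))$. A separate pullback square is established directly in the category of admissible filtered $\phi$-modules (\cref{prop:beilfibsq-crys}), and the three remaining corners are matched with $R\Gamma(\F_p^\Syn,-)[\tfrac1p]$, $R\Gamma(\Z_p^{\dR,+},-)[\tfrac1p]$, $R\Gamma(\Z_p^\dR,-)[\tfrac1p]$ via the explicit identifications in \cref{lem:beilfibsq-rationalcohfpsyn}, \cref{lem:beilfibsq-etalephimod}, \cref{lem:beilfibsq-rationalcoha1gm}, \cref{lem:beilfibsq-etalefiltered}. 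The key external input is \cref{prop:syntomification-cohcrystalline}, which already encodes the link between syntomic and crystalline cohomology.

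Your plan tries to avoid this detour through $D_\crys$, but the step where you say ``the latter contribution identifies with $R\Gamma(\F_p^\Syn, T_\crys(E))[\tfrac1p]$'' is the entire content of the theorem and you do not supply it. Concretely: the Nygaard--Hodge square expresses $R\Gamma(\Z_p^\N,j_\N^*E)[\tfrac1p]$ as a pullback whose projection to $R\Gamma(\Z_p^\prism,j_\prism^*E)[\tfrac1p]$ is $j_\dR^*$. But $j_\HT^*$ is a \emph{different} map to $R\Gamma(\Z_p^\prism,j_\prism^*E)[\tfrac1p]$, built from the colimit along the $u$-maps together with the Frobenius datum $\tau$; it does not factor through the $j_\dR^*$-projection, and there is no reason for it to interact simply with the Nygaard--Hodge decomposition. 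Moreover, the target you want in the Beilinson square is $R\Gamma(\F_p^\Syn,-)[\tfrac1p]$, which (by \cref{lem:beilfibsq-rationalcohfpsyn}) is the $\phi$-fixed points of $R\Gamma(\Z_p^\dR,-)[\tfrac1p]\cong R\Gamma(\F_p^\prism,-)[\tfrac1p]$, \emph{not} of $R\Gamma(\Z_p^\prism,-)[\tfrac1p]$; these are different stacks and there is no general $p$-isogeny between their cohomologies for perfect coefficients. Bridging this gap amounts to proving a Frobenius-equivariant comparison that is essentially equivalent to the theorem itself, and the paper's route through \cref{prop:syntomification-cohcrystalline} is how this is actually done.
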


This allows us to generalise \cref{thm:beilfibsq-motivation} to accommodate $F$-gauge coefficients in the case where $X$ is proper in the following way:

\begin{cor}
\label{cor:beilfibsq-coeffs}
Let $X$ be a $p$-adic formal scheme which is smooth and proper over $\Spf\Z_p$. For any perfect $F$-gauge $E\in\Perf(X^\Syn)$, there is a natural pullback square
\begin{equation*}
\begin{tikzcd}
R\Gamma_\Syn(X, E)[\frac{1}{p}]\ar[r]\ar[d] & R\Gamma_\Syn(X_{p=0}, T_\crys(E))[\frac{1}{p}]\ar[d] \\
\EDIT{\Fil^0_\Hod R\Gamma_\dR(X, T_{\dR, +}(E))[\frac{1}{p}]}\ar[r] & R\Gamma_\dR(X, T_\dR(E))[\frac{1}{p}]\nospacepunct{\;.}
\end{tikzcd}
\end{equation*}
\end{cor}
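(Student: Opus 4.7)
The strategy is to reduce \cref{cor:beilfibsq-coeffs} to \cref{thm:beilfibsq-main} by pushforward along the structure map $\pi_{X^\Syn}: X^\Syn \to \Z_p^\Syn$. More precisely, given a perfect $F$-gauge $E \in \Perf(X^\Syn)$, I would first invoke \cref{prop:finiteness-main} (mentioned in the proof of \cref{cor:nygaardhodge-coeffs} and treated in \cref{sect:finiteness}) to conclude that $\pi_{X^\Syn,*}(E) \in \Perf(\Z_p^\Syn)$. Then I would apply \cref{thm:beilfibsq-main} to this pushforward, which directly produces a pullback square in $\D(\Q_p)$
\begin{equation*}
\begin{tikzcd}
R\Gamma(\Z_p^\Syn, \pi_{X^\Syn,*}E)[\tfrac{1}{p}] \ar[r]\ar[d] & R\Gamma(\F_p^\Syn, T_\crys(\pi_{X^\Syn,*}E))[\tfrac{1}{p}] \ar[d] \\
R\Gamma(\Z_p^{\dR,+}, T_{\dR,+}(\pi_{X^\Syn,*}E))[\tfrac{1}{p}] \ar[r] & R\Gamma(\Z_p^\dR, T_\dR(\pi_{X^\Syn,*}E))[\tfrac{1}{p}]\nospacepunct{\;.}
\end{tikzcd}
\end{equation*}

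The remaining task is to identify the four corners of this square with those appearing in the statement of the corollary. The upper-left corner is immediate from the adjunction $(\pi_{X^\Syn}^*, \pi_{X^\Syn,*})$, which yields
\begin{equation*}
R\Gamma(\Z_p^\Syn, \pi_{X^\Syn,*}E) \cong R\Gamma(X^\Syn, E) = R\Gamma_\Syn(X, E)\;.
\end{equation*}
For each of the other three corners, one needs a base change identification along the cartesian squares
\begin{equation*}
\begin{tikzcd}
X_{p=0}^\Syn \ar[r]\ar[d] & X^\Syn \ar[d] & X^{\dR,+} \ar[r]\ar[d] & X^\Syn \ar[d] & X^\dR \ar[r]\ar[d] & X^\Syn \ar[d] \\
\F_p^\Syn \ar[r] & \Z_p^\Syn & \Z_p^{\dR,+} \ar[r, "i_{\dR,+}"] & \Z_p^\Syn & \Z_p^\dR \ar[r, "i_\dR"] & \Z_p^\Syn
\end{tikzcd}
\end{equation*}
saying that pullback commutes with the pushforward $\pi_{X^\Syn,*}$. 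Combined with the definitions of the realisation functors (\cref{ex:syntomification-tcrys}, \cref{ex:syntomification-tdr+}) and the comparisons \cref{thm:drstack-comparison}, \cref{thm:fildrstack-comparison}, this furnishes the desired identifications
\begin{equation*}
T_\crys(\pi_{X^\Syn,*}E) \cong \pi_{X_{p=0}^\Syn,*}T_\crys(E)\;, \quad T_{\dR,+}(\pi_{X^\Syn,*}E) \cong \pi_{X^{\dR,+},*}T_{\dR,+}(E)
\end{equation*}
and analogously for $T_\dR$.

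The two main obstacles are thus (a) the perfectness assertion $\pi_{X^\Syn,*}E \in \Perf(\Z_p^\Syn)$, which is precisely the content of \cref{prop:finiteness-main} from \cref{sect:finiteness} and ultimately relies on the properness of $X$ over $\Spf\Z_p$, and (b) the three base change statements along the squares above. The latter are presumably collected in Appendix \ref{sect:basechange}; the map $i_{\dR,+}$ is an étale-like map after glueing in view of the construction of $\Z_p^\Syn$ as a pushout, so base change along it should follow from a variant of the argument giving base change for the open immersions $j_\dR, j_\HT$ together with properness of $X \to \Spf\Z_p$ on the formal-scheme side. Once these are in hand, the corollary is essentially formal.
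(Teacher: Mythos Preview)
Your proposal is correct and matches the paper's approach exactly: the paper's proof is the single sentence ``Using \cref{prop:finiteness-main}, this follows immediately from \cref{thm:beilfibsq-main},'' and you have simply unpacked what ``immediately'' means, namely pushing forward $E$ to $\Z_p^\Syn$, applying \cref{thm:beilfibsq-main}, and invoking base change along the three relevant cartesian squares (collected in Appendix~\ref{sect:basechange}) to identify the corners.

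One small inaccuracy worth flagging: your heuristic that $i_{\dR,+}$ is ``\'etale-like after glueing'' is not quite right. The paper explicitly warns that $i_{\dR,+}: \Z_p^{\dR,+}\to\Z_p^\N$ is \emph{not} a closed immersion despite the notation, and the relevant base change statement (\cref{lem:basechange-nygaarddr}) is proved by a genuinely different route---reducing mod $p$ and factoring through the closed substack $\Z_{p,\dR,+}^\N$ of the reduced locus, where one can separately handle a closed immersion and a torsor. This does not affect your overall argument, since you correctly defer to Appendix~\ref{sect:basechange} for the details.
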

\begin{proof}
Using \cref{prop:finiteness-main}, this follows immediately from \cref{thm:beilfibsq-main}.
\end{proof}

In particular, by the comparison from \cref{thm:fildrstack-comparison}, the above recovers the result of \cref{thm:beilfibsq-motivation} \EDIT{in the proper case} by putting $E=\O\{i\}$.

\subsection{Construction of the square}
\label{subsect:beilfibsq-construction}

We start by constructing the stacky Beilinson fibre square. Namely, using the open immersion $\Z_p^\dR\rightarrow\Z_p^{\dR, +}$ from \cref{rem:fildrstack-unfiltereddr} and the isomorphism $\F_p^\prism\cong\Spf\Z_p\cong\Z_p^\dR$ from \cref{ex:prismatisation-perf}, we obtain a square
\begin{equation}
\label{eq:beilfibsq-constructsquare}
\begin{tikzcd}
\F_p^\prism\cong\Z_p^\dR\ar[d, "j_\prism"]\ar[r] & \Z_p^{\dR, +}\ar[d, "i_{\dR, +}"] \\
\F_p^\Syn\ar[r] & \Z_p^\Syn\nospacepunct{\;.}
\end{tikzcd}
\end{equation}

\begin{prop}
\label{prop:beilfibsq-square}
The diagram (\ref{eq:beilfibsq-constructsquare}) is commutative.
\end{prop}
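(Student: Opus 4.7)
My plan is to reduce the commutativity to an identification of two maps $\Z_p^\dR \cong \F_p^\prism \to \Z_p^\prism$ by showing that both compositions in the square factor through $j_\N \circ j_\dR: \Z_p^\prism \to \Z_p^\Syn$.

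For the right-down composition $\Z_p^\dR \to \Z_p^{\dR, +} \xrightarrow{i_{\dR, +}} \Z_p^\N \xrightarrow{j_\N} \Z_p^\Syn$, I would first observe that, by \cref{rem:fildrstack-unfiltereddr}, the open immersion $\Z_p^\dR \hookrightarrow \Z_p^{\dR, +}$ lives over $\G_m/\G_m \subset \A^1/\G_m$, while $i_{\dR, +}$ is a section of the Rees map $t$ by construction (\cref{ex:filteredprism-drmap}). Hence the composition lands in the preimage of $\G_m/\G_m$ under $t: \Z_p^\N \to \A^1/\G_m$, which by \cref{rem:filprism-jdr} is precisely $j_\dR(\Z_p^\prism)$. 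So the top path factors as $\Z_p^\dR \to \Z_p^\prism \xrightarrow{j_\N \circ j_\dR} \Z_p^\Syn$, with the first map obtained by post-composing with the structure map $\pi$. For the down-right composition $\F_p^\prism \xrightarrow{j_\prism} \F_p^\Syn \to \Z_p^\Syn$, the pushout description of the syntomification (\cref{defi:syntomification-def}) realizes $j_\prism$ as $j_\N \circ j_\dR$ (equivalently as $j_\N \circ j_\HT$, since these agree after the pushout), so combining with functoriality of prismatisation under $\Spf \F_p \hookrightarrow \Spf \Z_p$, the bottom path becomes $\F_p^\prism \to \Z_p^\prism \xrightarrow{j_\N \circ j_\dR} \Z_p^\Syn$.

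At this stage, commutativity reduces to checking that the two maps $\Z_p^\dR \cong \F_p^\prism \to \Z_p^\prism$ coincide. Using \cref{ex:drstack-perfd} (noting that $A_\crys(\Z_p) \cong \Z_p$, since all divided powers $p^n/n!$ already lie in $\Z_p$), \cref{ex:prismatisation-perf}, and \cref{ex:prismatisation-perfd}, all three stacks identify with $\Spf \Z_p$, and I would verify directly on functors of points that both maps correspond to the identity $\Spf \Z_p \to \Spf \Z_p$. For the bottom this is immediate from the definition of functoriality. For the top, it amounts to computing that the filtered Cartier--Witt divisor produced by the top composition via \cref{ex:filteredprism-drmap} maps under $\pi$ to the Cartier--Witt divisor $p: W \to W$ classifying the prism $(\Z_p, (p))$; this is a direct manipulation using the definition of $\pi$ via the admissible sequence together with the identity $F \circ V = p$. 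The main technical hurdle I anticipate is verifying carefully that the composition lands in $j_\dR(\Z_p^\prism)$ in the correct way, i.e.\ that one really obtains the identity of $\Z_p^\prism$ and not, say, a Frobenius twist; this is essentially the content of \cref{rem:filprism-jdr}, which ensures the necessary compatibility with $\pi$.
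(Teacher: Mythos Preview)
Your overall strategy is sound and genuinely different from the paper's. The paper works directly at the level of $\Z_p^\N$: it shows that both compositions $\Z_p^\dR \to \Z_p^\N$ and $\F_p^\prism \xrightarrow{j_\dR} \F_p^\N \to \Z_p^\N$ classify the \emph{same filtered Cartier--Witt divisor}, using the explicit presentation of $\F_p^\N$ from \cref{ex:filprism-perf}, the identification of $u$ with $p$ under $j_\dR$ from \cref{rem:filprism-jdrjhtperfd}, and the computation $\sExt_W(F_*W,\G_a^\sharp)\cong\Cone(\G_a^\sharp\to\G_a)$ to show the admissible sequence splits. Your route is cleaner: you observe (via the diagram (\ref{eq:filprism-maps})) that $i_\dR$ already factors through $j_\dR$, so both paths land in $j_\dR(\Z_p^\prism)\subset\Z_p^\N$, and the problem drops to comparing two ordinary Cartier--Witt divisors rather than filtered ones. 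The paper's approach is more hands-on with the moduli description; yours buys a shorter argument at the cost of invoking the compatibility $\pi\circ j_\dR=\id$ encoded in \cref{rem:filprism-jdr}.

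Two inaccuracies to fix. First, $\Z_p$ is not perfectoid, so \cref{ex:drstack-perfd} does not apply; the identification $\Z_p^\dR\cong\Spf\Z_p$ is simply because $(\Spf\Z_p)^\dR(S)=\Map(\Spec\G_a^\dR(S),\Spf\Z_p)$ is a point for every $p$-nilpotent $S$. Second, and more seriously, $\Z_p^\prism$ is \emph{not} $\Spf\Z_p$---it is the full Cartier--Witt stack, with many $S$-points in general---so the phrase ``both maps correspond to the identity $\Spf\Z_p\to\Spf\Z_p$'' is wrong as written. What you actually need (and what your last paragraph correctly sketches) is that both maps $\Spf\Z_p\to\Z_p^\prism$ classify the Cartier--Witt divisor $p:W\to W$, i.e.\ both equal the map $\rho_{(\Z_p,(p))}$ of \cref{ex:prismatisation-prismaticsitetoxprism}. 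Your computation using $F\circ V=p$ establishes exactly this for the top path; for the bottom path it follows from the description of $\F_p^\prism$ in \cref{ex:prismatisation-perf}. Once you rephrase the target as $\Z_p^\prism$ rather than $\Spf\Z_p$, your argument goes through.
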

\begin{proof}
Clearly, it suffices to show that
\begin{equation*}
\begin{tikzcd}
\F_p^\prism\cong\Z_p^\dR\ar[d, "j_\dR"]\ar[r] & \Z_p^{\dR, +}\ar[d, "i_{\dR, +}"] \\
\F_p^\N\ar[r] & \Z_p^\N\nospacepunct{\;.}
\end{tikzcd}
\end{equation*}
commutes. By \cref{ex:filteredprism-drmap}, the composition $\Z_p^\dR\rightarrow\Z_p^{\dR, +}\rightarrow\Z_p^\N$ corresponds to the filtered Cartier--Witt divisor
\begin{equation*}
\G_a^\sharp\oplus F_*W\xrightarrow{(\incl, F)} W\;.
\end{equation*}
Meanwhile, by \cref{rem:filprism-jdrjhtperfd}, the map $j_\dR$ identifies with the open immersion
\begin{equation*}
\Spf W(k)\cong \Spf(W(k)\langle u, t, t^{-1}\rangle/(ut-p))/\G_m\rightarrow \Spf(W(k)\langle u, t\rangle/(ut-p))/\G_m\;,
\end{equation*}
under which $u$ is identified with $p$. As $p$ admits divided powers in $\Z_p$ and hence also in any $p$-nilpotent ring, tracing through the argument from \cref{ex:filprism-perfd} and using that $\sExt_W(F_*W, \G_a^\sharp)\cong\Cone(\G_a^\sharp\rightarrow\G_a)$, see \cite[Prop. 5.2.1]{FGauges}, thus shows that the filtered Cartier--Witt divisor corresponding to the composition $\F_p^\prism\rightarrow\F_p^\N\rightarrow\Z_p^\N$ is split and is therefore isomorphic to
\begin{equation*}
\G_a^\sharp\oplus F_*W\xrightarrow{(\incl, F)} W\;.
\end{equation*}
This shows that the diagram commutes.
\end{proof}

\subsection{A Beilinson-type fibre square for crystalline representations}
\label{subsect:beilfibsq-crys}

Our strategy to prove \cref{thm:beilfibsq-main} will be to make use of \cref{prop:syntomification-cohcrystalline}. Thus, we will start by quickly reviewing some of the theory of crystalline representations. For more details on this topic, we refer to \cite{Fontaine}.

Recall that we use $C$ to denote a \EDIT{fixed} completed algebraic closure of $\Q_p$ and recall the rings
\begin{equation*}
\EEDIT{
A_\inf\coloneqq A_\inf(\O_C)=W(\O_C^\flat)\;,\hspace{0.5cm} A_\crys\coloneqq A_\crys(\O_C)=A_\inf[\tfrac{\xi^n}{n!}: n\geq 0]_{(p)}^\wedge
}
\end{equation*}
from \cref{ex:drstack-perfd}, where \EEDIT{$\xi$} \EDIT{is a generator of} the kernel of the Fontaine map $\theta: A_\inf\rightarrow\O_C$. Choosing once and for all a compatible system $\epsilon=(1, \zeta_p, \zeta_{p^2}, \dots)\in\O_C^\flat$ of $p$-power roots of unity, one can actually take
\begin{equation*}
\EEDIT{
\xi=\frac{[\epsilon]-1}{[\epsilon^{1/p}]-1}=1+[\epsilon^{1/p}]+\dots+[\epsilon^{(p-1)/p}]\;,
}
\end{equation*}
where $[\hspace{2pt}\cdot\hspace{2pt}]: \O_C^\flat\rightarrow W(\O_C^\flat)$ is the Teichmüller representative. \EDIT{One can show that} $A_\crys$ contains \EDIT{the element}
\begin{equation}
\label{eq:beilfibsq-t}
t=\log[\epsilon]=-\sum_{k=1}^\infty \frac{(1-[\epsilon])^k}{k}\;,
\end{equation}
\EDIT{see \cite[Prop. 7.6]{Fontaine},} and \EDIT{thus} we define rings
\begin{equation*}
B_\crys^+\coloneqq A_\crys[\tfrac{1}{p}]\;,\hspace{0.5cm} B_\crys\coloneqq B_\crys^+[\tfrac{1}{t}]\;;
\end{equation*}
note that one can actually show that $B_\crys=A_\crys[\frac{1}{t}]$. The ring $B_\crys$ is equipped with a natural $G_{\Q_p}$-action and, perhaps most importantly, the Frobenius $\phi$ of $A_\inf$ can be continued to an endomorphism of $B_\crys$. We moreover define a ring
\begin{equation*}
B_\dR^+\coloneqq A_\inf[\tfrac{1}{p}]_{(\xi)}^\wedge\;,
\end{equation*}
which is a complete discrete valuation ring with uniformiser $\xi$ and hence its fraction field $B_\dR\coloneqq B_\dR[\tfrac{1}{\xi}]$ is equipped with a natural descending \EEDIT{$\xi$-adic} filtration
\begin{equation*}
\Fil^i B_\dR\coloneqq \xi^iB_\dR^+\;.
\end{equation*}
Note that $B_\crys^+\subseteq B_\dR^+$ and $B_\crys\subseteq B_\dR$.

\begin{defi}
A finite-dimensional $G_{\Q_p}$-representation $V$ over $\Q_p$ is called \emph{crystalline} if there is a $G_{\Q_p}$-equivariant isomorphism
\begin{equation*}
V\tensor_{\Q_p} B_\crys\cong B_\crys^{\dim V}\;.
\end{equation*}
We use $\Rep_{\Q_p}^\crys(G_{\Q_p})$ to denote the category of crystalline $G_{\Q_p}$-representations.
\end{defi}

For a crystalline representation $V$, we write
\begin{equation*}
D_\crys(V)\coloneqq (V\tensor_{\Q_p} B_\crys)^{G_{\Q_p}}\;,
\end{equation*}
where $G_{\Q_p}$ acts diagonally. \EDIT{From} $B_\dR^{G_{\Q_p}}=B_\crys^{G_{\Q_p}}=\Q_p$, \EDIT{see \cite[Prop. 6.26]{Fontaine}, we conclude that} $D_\crys(V)$ is a $\Q_p$-vector space of dimension $\dim V$. By virtue of the Frobenius on $B_\crys$, it comes equipped with a natural Frobenius action $\phi$ and due to 
\begin{equation*}
D_\crys(V)=(V\tensor_{\Q_p} B_\dR)^{G_{\Q_p}}\;,
\end{equation*}
\EDIT{which again comes from $B_\dR^{G_{\Q_p}}=B_\crys^{G_{\Q_p}}$ and $V$ being crystalline}, it also inherits a natural filtration from $B_\dR$. Thus, $D_\crys$ defines a functor
\begin{equation*}
D_\crys: \Rep_{\Q_p}^\crys(G_{\Q_p})\rightarrow\MF^\phi(\Q_p)
\end{equation*}
from crystalline representations into the category of filtered $\phi$-modules over $\Q_p$, which is defined as follows:

\begin{defi}
A \emph{$\phi$-module} over $\Q_p$ is a $\Q_p$-vector space $M$ together with the data of an isomorphism $\phi^*M\cong M$, where $\phi$ denotes the Frobenius on $\Q_p$, and we denote the category of $\phi$-modules over $\Q_p$ by $\Mod^\phi(\Q_p)$. A \emph{filtered $\phi$-module} over $\Q_p$ is a filtered $\Q_p$-vector space $D\in\MF(\Q_p)\coloneqq\Fun((\Z, \geq), \Mod(\Q_p))$ equipped with the data of a $\phi$-module structure on $\ul{D}=\colim_i D(i)$.
\end{defi}

Note that our definition of a filtered $\phi$-module differs slightly from the usual one in that we also allow non-honest filtrations (i.e.\ filtrations with non-injective transition maps) and that we furthermore do not require the filtration to be \EDIT{complete} and exhaustive, as one would usually do. This has the advantage that the category $\MF^\phi(\Q_p)$ is clearly abelian (while this is not the case with the usual definition) and it is also more convenient for our goal of proving \cref{thm:beilfibsq-main}.

\begin{ex}
Let $V=\Q_p(n)$ be the usual Tate twist. As $\sigma t=\chi(\sigma)t$ for any $\sigma\in G_{\Q_p}$, where $\chi: G_{\Q_p}\rightarrow\Z_p^\times$ is the cyclotomic character, we see that any element of the form $\lambda\tensor t^{-n} u$ for $\lambda, u\in\Q_p$ is a $G_{\Q_p}$-fixed point of $V\tensor_{\Q_p} B_\crys$. As one can show that, for any $G_{\Q_p}$-representation $V$, we have $\dim D_\crys(V)\leq\dim V$ with equality if and only if $V$ is crystalline, \EEDIT{see \cite[Thm. 3.14]{Fontaine},} we see that these are all the fixed points and that $V=\Q_p(n)$ is crystalline. Moreover, the filtration on $D_\crys(V)=\Q_p$ is given by $\Q_p$ in degrees $\leq -n$ and zero else, while the $\phi$-structure is given by multiplication by $p^{-n}$ due to $\phi(t)=pt$.
\end{ex}

It is a theorem of Fontaine that the functor $D_\crys$ defines an equivalence from $\Rep_{\Q_p}^\crys(G_{\Q_p})$ onto an abelian subcategory of $\MF^\phi(\Q_p)$ denoted $\MF^\phi_\adm(\Q_p)$, \EDIT{see \EEDIT{\cite[§5.5.2.(iii)]{Semistable}},} and a filtered $\phi$-module in the essential image of $D_\crys$ is called \emph{admissible}.

Note that we have exact forgetful functors
\begin{equation*}
\begin{split}
T_{\dR, +}: \MF^\phi(\Q_p)&\rightarrow \MF(\Q_p)\;, \\
T_\crys: \MF^\phi(\Q_p)&\rightarrow\Mod^\phi(\Q_p)\;, \\
T_\dR: \MF^\phi(\Q_p)&\rightarrow\Mod(\Q_p)
\end{split}
\end{equation*}
defined by forgetting the $\phi$-structure, the filtration or both, respectively. Note that we use the same notation as for the realisation functors for $F$-gauges, but it will always be clear from the context which functor we mean. Our main goal in this section is to prove the following statement:

\begin{prop}
\label{prop:beilfibsq-crys}
Let $D\in\D^b(\MF^\phi_\adm(\Q_p))$ be a bounded complex of admissible filtered $\phi$-modules. Then the forgetful functors above induce a pullback square
\begin{equation*}
\begin{tikzcd}
\RHom_{\MF^\phi_\adm(\Q_p)}(\Q_p, D)\ar[r]\ar[d] & \RHom_{\Mod^\phi(\Q_p)}(\Q_p, T_\crys(D))\ar[d] \\
\RHom_{\MF(\Q_p)}(\Q_p, T_{\dR, +}(D))\ar[r] & \RHom_{\Q_p}(\Q_p, T_\dR(D))\nospacepunct{\;.}
\end{tikzcd}
\end{equation*}
\end{prop}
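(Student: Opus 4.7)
The plan is to reduce to the case of a single admissible filtered $\phi$-module and then explicitly compute both sides of the purported pullback square. For the reduction, I would exploit that all four forgetful functors are (derived) exact in $D$ and that pullback squares in stable $\infty$-categories are preserved by cofibres; filtering the bounded complex $D$ by its truncations $\tau^{\leq n} D$, whose successive cofibres are the cohomology objects $H^n(D)$ placed in single degrees, thus reduces the problem to the case where $D$ is a single admissible filtered $\phi$-module sitting in degree $0$.

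For such a single $D$, I would compute each of the three outer corners of the square as a concrete complex in $\D(\Q_p)$. Via the Rees equivalence, the object $\Q_p \in \MF(\Q_p)$ corresponds to $\O_{\A^1/\G_m}$, and since $\G_m$ is linearly reductive over $\Q_p$, the functor $R\Gamma(\A^1/\G_m, -)$ equals the exact degree-$0$-part functor; in particular, $\Q_p$ is projective in $\MF(\Q_p)$ and $\RHom_{\MF(\Q_p)}(\Q_p, T_{\dR, +} D) \simeq \Fil^0 \ul D$ is concentrated in degree $0$. A standard two-term resolution of $\Q_p$ in $\Mod^\phi(\Q_p)$ gives $\RHom_{\Mod^\phi(\Q_p)}(\Q_p, T_\crys D) \simeq [\ul D \xrightarrow{1 - \phi} \ul D]$ in degrees $0, 1$, while $\RHom_{\Q_p}(\Q_p, T_\dR D) = \ul D$ is in degree $0$. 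A direct computation of the fibre $P$ of the natural map $\Fil^0 \ul D \oplus [\ul D \xrightarrow{1 - \phi} \ul D] \to \ul D$ then yields $H^0(P) = \Fil^0 \ul D \cap \ul D^{\phi = 1}$, $H^1(P) = \ul D / (1 - \phi)\Fil^0 \ul D$, and $H^{\geq 2}(P) = 0$.

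It remains to verify that the natural comparison map $\RHom_{\MF^\phi_\adm(\Q_p)}(\Q_p, D) \to P$ induced by the three forgetful functors is a quasi-isomorphism. Agreement on $H^0$ is immediate from the identification $\Hom_{\MF^\phi_\adm}(\Q_p, D) = \Fil^0 \ul D \cap \ul D^{\phi = 1}$. The crucial step is the identification on $H^1$: under the Fontaine equivalence between admissible filtered $\phi$-modules and crystalline $G_{\Q_p}$-representations, $\Ext^1_{\MF^\phi_\adm(\Q_p)}(\Q_p, D) \cong H^1_f(G_{\Q_p}, V)$ for the crystalline representation $V$ corresponding to $D$, and the Bloch--Kato fundamental exact sequence $0 \to \Q_p \to B_\crys^{\phi = 1} \to B_\dR / B_\dR^+ \to 0$ tensored with $V$ followed by Galois cohomology yields precisely $H^1_f(G_{\Q_p}, V) \cong \ul D / (1 - \phi)\Fil^0 \ul D$. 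The vanishing of $\Ext^{\geq 2}_{\MF^\phi_\adm(\Q_p)}(\Q_p, D)$ likewise follows from $H^{\geq 2}_f(G_{\Q_p}, V) = 0$ for crystalline $V$.

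The main obstacle is the $H^1$ identification: whereas the remaining steps are formal categorical manipulations (projectivity of $\Q_p$ in $\MF(\Q_p)$, computation of a finite fibre), the $H^1$ match genuinely requires external input from $p$-adic Hodge theory, specifically the Bloch--Kato fundamental exact sequence relating $B_\crys$ and $B_\dR$.
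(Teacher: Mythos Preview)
Your approach is essentially the paper's: both compute the three accessible corners as $\ul D$, $\Fil^0 \ul D$, and $\ul D^{\phi=1}$, reducing the claim to identifying $\RHom_{\MF^\phi_\adm(\Q_p)}(\Q_p, D)$ with $\fib(\ul D^{\phi=1} \to \ul D/\Fil^0 \ul D)$. The paper simply cites \cite[Cor.~2.4.4]{CrystallineExt} for this last step and does not need your preliminary reduction to a single module, since the corner computations already hold for complexes. Your unpacking via Fontaine's equivalence and the Bloch--Kato exponential is correct on $H^0$ and $H^1$ and is essentially the content of that citation. The only soft spot is the $\Ext^{\geq 2}$ step: invoking ``$H^{\geq 2}_f(G_{\Q_p}, V)=0$'' is not quite the same as the vanishing of Yoneda $\Ext^{\geq 2}$ in $\MF^\phi_\adm(\Q_p)$ (which is what the paper's $\RHom$, defined via the Ind-category, computes), since $H^i_f$ for $i\geq 2$ is not \emph{a priori} identified with that Yoneda Ext. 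You should either cite this vanishing directly, as the paper does, or observe that your $H^1$ formula together with strictness of filtrations in short exact sequences of admissible modules yields right-exactness of $\Ext^1(\Q_p,-)$, hence $\Ext^{\geq 2}(\Q_p,-)=0$ by dimension-shifting.
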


Note that, as the category $\MF^\phi_\adm(\Q_p)$ does not have enough injectives \EDIT{(in fact, it does not have any nonzero injective object by \cite[Rem. 1.9]{Bannai})}, the notation $\RHom_{\MF^\phi_\adm(\Q_p)}(\Q_p, D)$ does not make sense a priori. However, whenever we have an abelian category $\cal{A}$, we can embed it into its Ind-category $\Ind(\cal{A})$ and this will have enough injectives. By \cite[Thm. 3.5]{Oort}, we have
\begin{equation*}
\Ext^i_{\cal{A}}(M, N)\cong H^i(\RHom_{\Ind(\cal{A})}(M, N))\;,
\end{equation*}
where we use Yoneda Ext on the left-hand side, and hence we may define 
\begin{equation*}
\RHom_{\cal{A}}(M, N)\coloneqq\RHom_{\Ind(\cal{A})}(M, N)
\end{equation*}
as this recovers the usual definition in the case where $\cal{A}$ has enough injectives.

\begin{proof}[Proof of \cref{prop:beilfibsq-crys}]
First observe that we can readily calculate
\begin{equation*}
\begin{split}
\RHom_{\Q_p}(\Q_p, T_\dR(D))&=\ul{D}\;, \\
\RHom_{\MF(\Q_p)}(\Q_p, T_{\dR, +}(D))&=\Fil^0\ul{D}\;, \\
\RHom_{\Mod^\phi(\Q_p)}(\Q_p, T_\crys(D))&=\ul{D}^{\phi=1}\;,
\end{split}
\end{equation*}
\EDIT{where 
\begin{equation*}
\ul{D}^{\phi=1}\coloneqq\fib(\ul{D}\xrightarrow{\phi-1}\ul{D})
\end{equation*}
denotes the derived $\phi$-invariants of $\ul{D}$,} and hence we only need to show that
\begin{equation*}
\RHom_{\MF^\phi_\adm(\Q_p)}(\Q_p, D)\cong \fib(\ul{D}^{\phi=1}\rightarrow\ul{D}/\Fil^0\ul{D})\;,
\end{equation*}
where the quotient $\ul{D}/\Fil^0\ul{D}$ is to be taken in the derived sense, i.e.\ 
\begin{equation*}
\ul{D}/\Fil^0\ul{D}\coloneqq\cofib(\Fil^0\ul{D}\rightarrow\ul{D})\;.
\end{equation*}
However, this follows from \cite[Cor. 2.4.4]{CrystallineExt}.
\end{proof}

\begin{rem}
\label{rem:beilfibsq-rhommfphionetrunc}
The above proof also shows that, if $D$ is an admissible filtered $\phi$-module, then $\RHom_{\MF^\phi_\adm(\Q_p)}(\Q_p, D)$ is concentrated in degrees $0$ and $1$: \EDIT{Indeed, in this case the map $\Fil^0\ul{D}\rightarrow\ul{D}$ is injective and hence $\ul{D}/\Fil^0\ul{D}$ is concentrated in degree zero.}
\end{rem}

\subsection{Étale realisation and the Beilinson fibre square}

\label{sect:etrealisationandbeilfibsq}

Recall from \cref{prop:syntomification-cohcrystalline} that, for any $E\in\Coh(\Z_p^\Syn)$, the Galois representation $T_\et(E)[\tfrac{1}{p}]$ is crystalline. Hence, we can associate to $E$ a filtered $\phi$-module $D_\crys(T_\et(E)[\tfrac{1}{p}])$ and, in this section, we will show how this filtered $\phi$-module can also be obtained in a geometric manner.

To explain this, recall from \cref{ex:syntomification-fgaugesperfd} that an $F$-gauge $E$ on $\F_p$ corresponds to a diagram 
\begin{equation}
\label{eq:beilfibsq-fgaugefp}
\begin{tikzcd}
\dots \ar[r,shift left=.5ex,"t"]
  & \ar[l,shift left=.5ex, "u"] M^{i+1} \ar[r,shift left=.5ex,"t"] & \ar[l,shift left=.5ex, "u"] M^i \ar[r,shift left=.5ex,"t"] & \ar[l,shift left=.5ex, "u"] M^{i-1} \ar[r,shift left=.5ex,"t"] & \ar[l,shift left=.5ex, "u"] \dots
\end{tikzcd}
\end{equation}
of $p$-complete $\Z_p$-complexes such that $ut=tu=p$ together with an isomorphism $\tau: \phi^*M^\infty\cong M^{-\infty}$. Base changing everything to $\Q_p$, we see that all maps in the diagram above become isomorphisms and the only remaining data is $M^{-\infty}$ together with the isomorphism $\phi^*M^{-\infty}[\frac{1}{p}]\cong M^{-\infty}[\frac{1}{p}]$ induced by $\tau$. Sending $E$ to $M^{-\infty}[\frac{1}{p}]$ equipped with its Frobenius structure determines a functor
\begin{equation}
\label{eq:beilfibsq-dfpsynisog}
(-)[\tfrac{1}{p}]: \D(\F_p^\Syn)\rightarrow\Mod_{\Q_p[x, x^{-1}]}(\D(\Q_p))\cong\D(\Mod^\phi(\Q_p))\;,
\end{equation}
where the last equivalence is due to \cite[Thm. 7.1.3.1]{HA}, and we remind the reader that \EDIT{the restriction of this functor to $\Perf(\F_p^\Syn)$ exactly recovers} the functor from (\ref{eq:syntomification-fgaugetolaurentfcrystal}). \EDIT{Finally, recalling that the pullback $j_\prism^*E\in\D(\F_p^\prism)\cong\widehat{\D}(\Z_p)$, where we have used \cref{ex:prismatisation-perf}, identifies with $M^{-\infty}$, we see that the functor (\ref{eq:beilfibsq-dfpsynisog}) yields a commutative diagram
\begin{equation}
\label{eq:beilfibsq-crysfrob}
\begin{tikzcd}[ampersand replacement=\&, column sep=large]
\D(\F_p^\Syn)\ar[r, "j_\prism^*"]\ar[dr, "{(-)[\tfrac{1}{p}]}", swap] \& \D(\F_p^\prism)\cong \widehat{\D}(\Z_p)\ar[r, "{(-)[\tfrac{1}{p}]}"] \& \D(\Q_p) \\
\&  \D(\Mod^\phi(\Q_p))\ar[ur] \& \nospacepunct{\;,}
\end{tikzcd}
\end{equation}
where $\D(\Mod^\phi(\Q_p))\rightarrow \D(\Q_p)$ is the forgetful functor.}

\begin{lem}
\label{lem:beilfibsq-rationalcohfpsyn}
For any $E\in\D(\F_p^\Syn)$, the functor $(-)[\frac{1}{p}]$ induces an isomorphism
\begin{equation*}
R\Gamma(\F_p^\Syn, E)[\tfrac{1}{p}]\cong \RHom_{\Mod^\phi(\Q_p)}(\Q_p, E[\tfrac{1}{p}])\;.
\end{equation*}
\end{lem}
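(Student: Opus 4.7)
The plan is to apply the explicit description of $F$-gauges over a perfectoid ring from \cref{ex:syntomification-fgaugesperfd} to the case $R = \F_p$, noting that $\F_p$ is perfectoid with $A_\inf(\F_p) = W(\F_p) = \Z_p$ (on which Frobenius acts as the identity) and that one may take $\xi = p$. Under this description, any $E \in \D(\F_p^\Syn)$ corresponds to a diagram $(\dots \to M^{i+1} \to M^i \to \dots)$ of $p$-complete $\Z_p$-complexes with $t$- and $u$-maps satisfying $ut = tu = p$, together with an isomorphism $\tau\colon \phi^* M^\infty \cong M^{-\infty}$. Moreover, that example provides the formula
\[
R\Gamma(\F_p^\Syn, E) \cong \fib\!\bigl(M^0 \xrightarrow{t^\infty - \tau u^\infty} M^{-\infty}\bigr)\;.
\]

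After inverting $p$, all $t$- and $u$-maps become isomorphisms (since they compose to multiplication by $p$), and hence $t^\infty$ furnishes a canonical identification $M^0[\tfrac{1}{p}] \cong M^{-\infty}[\tfrac{1}{p}] = E[\tfrac{1}{p}]$. Using this to rewrite the fibre sequence as an endomorphism of $E[\tfrac{1}{p}]$, one reads off from the definition of the $\phi$-module structure on $E[\tfrac{1}{p}]$ supplied by (\ref{eq:beilfibsq-dfpsynisog}) (crucially using that the Frobenius on $A_\inf(\F_p) = \Z_p$ is trivial, so that $\phi^* M^\infty$ is canonically $M^\infty$) that the map $t^\infty - \tau u^\infty$ corresponds to $1 - \phi$. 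Therefore
\[
R\Gamma(\F_p^\Syn, E)[\tfrac{1}{p}] \cong \fib\!\bigl(E[\tfrac{1}{p}] \xrightarrow{1 - \phi} E[\tfrac{1}{p}]\bigr)\;,
\]
which is precisely the derived $\phi$-invariants of $E[\tfrac{1}{p}]$, i.e.\ $\RHom_{\Mod^\phi(\Q_p)}(\Q_p, E[\tfrac{1}{p}])$.

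No serious obstacle is expected: the argument essentially unpacks the gluing description of $\F_p^\Syn$ and exploits the fact that Frobenius on $\Z_p$ is trivial to compare the resulting $1 - \phi$-type fibre sequence with the standard formula for derived $\phi$-invariants. The only point requiring a small amount of care is to check that, under the $t^\infty$-identification, the composition $\tau u^\infty (t^\infty)^{-1}$ agrees with the $\phi$-structure assigned to $E[\tfrac{1}{p}]$ by (\ref{eq:beilfibsq-dfpsynisog})—but this is immediate from the definitions once one observes that $\phi^*$ acts trivially on $\Z_p$-modules.
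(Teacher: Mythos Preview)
Your proposal is correct and follows essentially the same approach as the paper: both apply \cref{ex:syntomification-fgaugesperfd} to $R=\F_p$ to obtain the fibre formula $R\Gamma(\F_p^\Syn, E)=\fib(M^0\xrightarrow{t^\infty-\tau u^\infty} M^{-\infty})$, then invert $p$ to make $t^\infty$ and $u^\infty$ isomorphisms and identify the resulting map with $1-\phi$. Your write-up is actually slightly more explicit than the paper's in spelling out why $\tau u^\infty (t^\infty)^{-1}$ agrees with the $\phi$-structure, but there is no substantive difference.
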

\begin{proof}
Recall from \cref{ex:syntomification-fgaugesperfd} that
\begin{equation*}
R\Gamma(\F_p^\Syn, E)=\fib(M^0\xrightarrow{t^\infty-\tau u^\infty} M^{-\infty})
\end{equation*}
in the above notation. However, as $t^\infty$ and $u^\infty$ become isomorphisms upon inverting $p$, this yields
\begin{equation*}
R\Gamma(\F_p^\Syn, E)[\tfrac{1}{p}]\cong\fib(M^{-\infty}[\tfrac{1}{p}]\xrightarrow{1-\phi} M^{-\infty}[\tfrac{1}{p}])\cong\RHom_{\Mod^\phi(\Q_p)}(\Q_p, M^{-\infty}[\tfrac{1}{p}])\;,
\end{equation*}
so we are done.
\end{proof}

\begin{lem}
\label{lem:beilfibsq-etalephimod}
For any $E\in\Coh(\Z_p^\Syn)$, there is a natural isomorphism
\begin{equation*}
T_\crys(E)[\tfrac{1}{p}]\cong T_\crys(D_\crys(T_\et(E)[\tfrac{1}{p}]))\;.\footnote{\EDIT{Beware that, on the right-hand side, we use the functor $T_\crys: \MF^\phi(\Q_p)\rightarrow\Mod^\phi(\Q_p)$, while $T_\crys(E)$ on the left-hand side denotes the pullback of $E$ to $\F_p^\Syn$ as defined in \cref{ex:syntomification-tcrys}.}}
\end{equation*}
\end{lem}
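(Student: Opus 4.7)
The plan is to identify both $\phi$-modules in the lemma with the evaluation at the crystalline prism $(\Z_p,(p))$ of the prismatic $F$-crystal underlying $E$, first handling reflexive $F$-gauges and then extending to all coherent $F$-gauges.

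For a reflexive $E$ corresponding, via \cref{thm:syntomification-reflcrys}, to a prismatic $F$-crystal $M\in\Vect^\phi((\Z_p)_\prism)$, I would write $M_0$ for its evaluation at the crystalline prism $(\Z_p,(p))$, which is a $\phi$-module over $\Z_p$. I would verify that $T_\crys(E)[\tfrac{1}{p}]\cong M_0[\tfrac{1}{p}]$ as $\phi$-modules by tracing through the construction of $T_\crys$ as pullback along $\F_p^\Syn\rightarrow\Z_p^\Syn$, the functor (\ref{eq:beilfibsq-dfpsynisog}), the identification $\F_p^\prism\cong\Spf\Z_p$ from \cref{ex:prismatisation-perf}, and the commutativity of (\ref{eq:beilfibsq-crysfrob}). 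Independently, the classical crystalline comparison for prismatic $F$-crystals defined over $\Z_p$ yields a $G_{\Q_p}$- and $\phi$-equivariant isomorphism
\begin{equation*}
T_\et(E)\otimes_{\Z_p} B_\crys\cong M_0\otimes_{\Z_p} B_\crys\;;
\end{equation*}
taking $G_{\Q_p}$-invariants, using that $G_{\Q_p}$ acts trivially on $M_0$ and that $B_\crys^{G_{\Q_p}}=\Q_p$, gives $D_\crys(T_\et(E)[\tfrac{1}{p}])\cong M_0[\tfrac{1}{p}]$ as filtered $\phi$-modules, and forgetting the filtration matches the two sides.

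To extend the isomorphism to arbitrary coherent $F$-gauges $E\in\Coh(\Z_p^\Syn)$, I would observe that both sides of the claimed equality define exact $\Q_p$-linear functors $\Coh(\Z_p^\Syn)\rightarrow\Mod^\phi(\Q_p)$ -- using that $T_\et(E)[\tfrac{1}{p}]$ is crystalline for coherent $E$ by \cref{prop:syntomification-cohcrystalline} and that $D_\crys$ is exact on crystalline representations -- and then use a dévissage argument reducing to the reflexive case treated above.

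The main obstacle is making the crystalline comparison $T_\et(E)\otimes B_\crys\cong M_0\otimes B_\crys$ rigorous and functorial. While this is morally \cite{FCrystals} specialized to the case of prismatic $F$-crystals defined over $\Z_p$, the bookkeeping needed to match the global description of $T_\et$ from \cref{rem:syntomification-etalerealisationglobal} with the classical $D_\crys$ functor is the technical heart of the argument; tracking the various Frobenius and Galois structures precisely, and ensuring that the natural comparison map actually coincides with the one induced by $E\mapsto T_\crys(E)[\tfrac{1}{p}]$, is where care is required.
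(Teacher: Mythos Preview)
Your approach for the reflexive case is essentially the paper's proof: identify $T_\crys(E)[\tfrac{1}{p}]$ with $\cal{E}(\Z_p)[\tfrac{1}{p}]$ for the associated prismatic $F$-crystal $\cal{E}$, establish the $\phi$- and $G_{\Q_p}$-equivariant isomorphism $T_\et(E)\otimes_{\Z_p} B_\crys\cong \cal{E}(\Z_p)\otimes_{\Z_p} B_\crys$, and take Galois invariants. The paper makes the comparison explicit rather than citing it: it uses \cite[Lem.~4.26]{BMS} to get $T_\et(E)\otimes_{\Z_p} A_\inf[\tfrac{1}{\mu}]\cong \cal{E}(A_\inf)[\tfrac{1}{\mu}]$, then the crystal property along the maps of prisms $(A_\inf,(\xi))\to(A_\crys,(p))\leftarrow(\Z_p,(p))$ to pass to $\cal{E}(\Z_p)\otimes_{\Z_p} A_\crys$, and finally inverts $p$ and $\mu$ (noting $\mu\in B_\crys^\times$). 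This is precisely the bookkeeping you flag as the technical heart.

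The one place your outline diverges from the paper is the reduction from coherent to reflexive. You propose a d\'evissage using exactness of both functors, but this is a genuine gap as stated: you have not explained why reflexive $F$-gauges generate $\Coh(\Z_p^\Syn)$ under extensions (or any other d\'evissage mechanism), and there is no obvious reason they should. The paper avoids this entirely by invoking \cite[Prop.~6.7.1]{FGauges}, which says directly that every coherent $F$-gauge on $\Z_p$ is isomorphic to a reflexive one up to $p$-isogeny; since both sides of the lemma have $p$ inverted, one may simply replace $E$ by a reflexive $F$-gauge and no d\'evissage is needed.
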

\begin{proof}
Since any coherent $F$-gauge on $\Z_p$ is isomorphic to a reflexive $F$-gauge up to $p$-isogeny by \cite[Prop. 6.7.1]{FGauges}, we may assume that $E$ is reflexive. Let $\cal{E}\in\Vect^\phi((\Z_p)_\prism)$ denote the prismatic $F$-crystal in vector bundles corresponding to $E$ by \cref{thm:syntomification-reflcrys}. \EEDIT{Identifying $E$ with a diagram (\ref{eq:beilfibsq-fgaugefp}) and writing $M^{-\infty}\coloneqq(\colim_i M^{-i})_{(p)}^\wedge$ as usual, the module} $M^{-\infty}$ identifies with the pullback of $\cal{E}$ to the absolute prismatic site of $\F_p$, i.e.\ $T_\crys(E)[\tfrac{1}{p}]\cong\cal{E}(\Z_p)[\frac{1}{p}]$, so we have to show
\begin{equation}
\label{eq:beilfibsq-etalephimod}
\cal{E}(\Z_p)[\tfrac{1}{p}]\cong (T_\et(E)\tensor_{\Z_p} B_\crys)^{G_{\Q_p}}
\end{equation}
as $\phi$-modules. Recalling that
\begin{equation*}
T_\et(E)=(\cal{E}(A_\inf)\tensor_{A_\inf} W(C^\flat))^{\phi=1}
\end{equation*}
\EEDIT{from (\ref{eq:syntomification-laurentfcrystaltolocsys}),} we see that \cite[Lem. 4.26]{BMS} implies that there is a natural isomorphism
\begin{equation}
\label{eq:beilfibsq-tetcrystal}
T_\et(E)\tensor_{\Z_p} A_\inf[\tfrac{1}{\mu}]\cong \cal{E}(A_\inf)[\tfrac{1}{\mu}]\;,
\end{equation}
where $\mu\coloneqq [\epsilon^{1/p}]-1$; note that our definition of $\mu$ differs from the one in loc.\ cit.\ by a Frobenius twist since not $\cal{E}(A_\inf)$, but $\phi^*\cal{E}(A_\inf)$ is a Breuil--Kisin--Fargues module in the sense of \cite[Def. 4.22]{BMS}. \EEDIT{Using the maps $\O_C\rightarrow \O_C/p\leftarrow\F_p$ and the fact that $(A_\crys, (p))$ is the initial object of the absolute prismatic site of $\O_C/p$, see \cite[Not. 5.1]{FCrystals}, we obtain natural maps of prisms 
\begin{equation*}
(A_\inf, (\xi))\rightarrow (A_\crys, (p))\leftarrow (\Z_p, (p))
\end{equation*}
and hence $\cal{E}$ being a prismatic crystal yields isomorphisms}
\begin{equation}
\label{eq:beilfibsq-fcrystalbasechange}
\cal{E}(A_\inf)\tensor_{A_\inf} A_\crys\cong \cal{E}(A_\crys)\cong \cal{E}(\Z_p)\tensor_{\Z_p} A_\crys\;.
\end{equation}
\EEDIT{Finally, as} $\mu$ is invertible in $B_\crys$ (note that $t\in \xi\mu B_\crys^+$), we overall obtain
\begin{equation*}
\EEDIT{
T_\et(E)\tensor_{\Z_p} B_\crys\cong \cal{E}(A_\inf)\tensor_{A_\inf} B_\crys\cong \cal{E}(\Z_p)\tensor_{\Z_p} B_\crys\;.
}
\end{equation*}
As this isomorphism is both $\phi$- and $G_{\Q_p}$-equivariant, taking $G_{\Q_p}$-fixed points yields (\ref{eq:beilfibsq-etalephimod}).
\end{proof}

Observe that there is also a functor
\begin{equation}
\label{eq:beilfibsq-da1gmisog}
(-)[\tfrac{1}{p}]: \D(\Z_p^{\dR, +})\cong\D(\A^1/\G_m)\cong\widehat{\DF}(\Z_p)\rightarrow\DF(\Q_p)
\end{equation}
\EDIT{given by sending a $p$-complete filtered complex $M^\bullet$ over $\Z_p$ to the filtered complex $M^\bullet[\tfrac{1}{p}]$ over $\Q_p$ and that this yields an analogous result to \cref{lem:beilfibsq-rationalcohfpsyn}. Namely, we have the following:}

\begin{lem}
\label{lem:beilfibsq-rationalcoha1gm}
\EDIT{For any $E\in\D(\Z_p^{\dR, +})$, the functor $(-)[\tfrac{1}{p}]$ induces an isomorphism}
\begin{equation*}
\EDIT{R\Gamma(\Z_p^{\dR, +}, E)[\tfrac{1}{p}]\cong \RHom_{\MF(\Q_p)}(\Q_p, E[\tfrac{1}{p}])\;.}
\end{equation*}
\end{lem}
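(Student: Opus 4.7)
The plan is to compute both sides of the claimed isomorphism explicitly via the Rees equivalence and observe that they agree. Under the equivalence $\D(\Z_p^{\dR, +}) \cong \widehat{\DF}(\Z_p)$, the structure sheaf $\O_{\Z_p^{\dR, +}}$ corresponds to the unit of the symmetric monoidal structure, namely the filtered complex $\Fil^\bullet \Z_p$ given by $\Fil^i \Z_p = \Z_p$ for $i \leq 0$ and $\Fil^i \Z_p = 0$ for $i > 0$; analogously, $\Q_p$ (placed in the same fashion) is the unit object in $\MF(\Q_p)$.

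The key observation is that, for any $E \in \D(\Z_p^{\dR, +})$ corresponding under the Rees equivalence to a filtered complex $\Fil^\bullet \ul{E}$, pushforward along $\pi_{\Z_p^{\dR, +}}: \Z_p^{\dR, +}\rightarrow \Spf \Z_p$ computes
\begin{equation*}
R\Gamma(\Z_p^{\dR, +}, E) \cong \RHom_{\widehat{\DF}(\Z_p)}(\Fil^\bullet \Z_p, \Fil^\bullet \ul{E}) \cong \Fil^0 \ul{E}\;,
\end{equation*}
since a filtered map out of the unit is determined uniquely by the image of $1 \in \Fil^0 \Z_p$, the compatibilities at lower filtration degrees being automatic because the transition maps of the unit are identities. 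An entirely analogous argument over $\Q_p$ yields $\RHom_{\MF(\Q_p)}(\Q_p, \Fil^\bullet V) \cong \Fil^0 V$ for any filtered $\Q_p$-complex $\Fil^\bullet V$.

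Combining these two computations, both sides of the claimed isomorphism identify with $(\Fil^0 \ul{E})[\tfrac{1}{p}]$: indeed, the functor $(-)[\tfrac{1}{p}]$ acts pointwise on the terms of a Rees module, so it commutes with the formation of $\Fil^0$. To match the natural map in the statement with this identification, I would invoke the fact that $(-)[\tfrac{1}{p}]$ is a symmetric monoidal functor sending the unit of $\widehat{\DF}(\Z_p)$ to the unit of $\DF(\Q_p)$, so that both Hom computations above are compatible in the obvious way. I do not anticipate any substantive obstacle here; the lemma amounts to a direct unwinding of definitions once the Rees equivalence is available.
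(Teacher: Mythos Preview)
Your proposal is correct and takes essentially the same approach as the paper: identify $E$ with a filtered complex $M^\bullet$ via the Rees equivalence and observe that both sides compute $M^0[\tfrac{1}{p}]$. The paper's proof is simply a terser version of what you wrote.
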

\begin{proof}
\EDIT{Identifying $E$ with a $p$-complete filtered complex $M^\bullet$ via the Rees equivalence, this follows by observing that}
\begin{equation*}
\EDIT{
R\Gamma(\Z_p^{\dR, +}, E)[\tfrac{1}{p}]\cong M^0[\tfrac{1}{p}]\cong\RHom_{\MF(\Q_p)}(\Q_p, E[\tfrac{1}{p}])\;.}
\qedhere
\end{equation*}
\end{proof}

Furthermore, we also get a statement analogous to \cref{lem:beilfibsq-etalephimod}:

\begin{lem}
\label{lem:beilfibsq-etalefiltered}
For any $E\in\Coh(\Z_p^\Syn)$, there is a natural isomorphism
\begin{equation*}
T_{\dR, +}(E)[\tfrac{1}{p}]\cong T_{\dR, +}(D_\crys(T_\et(E)[\tfrac{1}{p}]))\;.\footnote{\EDIT{Similarly to \cref{lem:beilfibsq-etalephimod}, beware that, on the right-hand side, we use the functor $T_{\dR, +}: \MF^\phi(\Q_p)\rightarrow\MF(\Q_p)$, while $T_{\dR, +}(E)$ on the left-hand side denotes the pullback of $E$ to $\A^1/\G_m$ as defined in \cref{ex:syntomification-tdr+}.}}
\end{equation*}
\end{lem}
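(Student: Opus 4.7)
The plan is to mimic the strategy of \cref{lem:beilfibsq-etalephimod}. First, by \cite[Prop. 6.7.1]{FGauges} we may reduce to the reflexive case; let $\cal{E}\in\Vect^\phi((\Z_p)_\prism)$ denote the prismatic $F$-crystal corresponding to $E$ under \cref{thm:syntomification-reflcrys}, and set $V\coloneqq T_\et(E)[\tfrac{1}{p}]$ with $D\coloneqq D_\crys(V)$. The underlying $\Q_p$-vector spaces of both sides then identify with $\ul{D}$: on the right-hand side by definition of $T_{\dR, +}$ on $\MF^\phi(\Q_p)$, and on the left-hand side by using the factorisation $\Z_p^\dR\rightarrow\F_p^\Syn\rightarrow\Z_p^\Syn$ from \cref{prop:beilfibsq-square} to obtain, via \cref{lem:beilfibsq-etalephimod}, the isomorphism $T_\dR(E)[\tfrac{1}{p}]\cong T_\dR(T_\crys(E))[\tfrac{1}{p}]\cong \ul{D}$.

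It then remains to match the Hodge filtrations: on the right-hand side, this is $\Fil^i D\cong(V\tensor_{\Q_p}\Fil^i B_\dR)^{G_{\Q_p}}$ via the crystalline comparison extended to $B_\dR$. For the left-hand side, the plan is to base change to $\O_C^\Syn$ and apply \cref{rem:syntomification-descriptionrefltofcrystal}, which describes $E|_{\O_C}$ in terms of the $\phi^{-1}(\xi)$-adically filtered $A_\inf$-module $\Fil^i M = \phi_*(\xi^i\cal{E}(A_\inf)\cap\phi^*\cal{E}(A_\inf))$. By tracing through the construction of $i_{\dR, +}$ from \cref{ex:filteredprism-drmap} applied to the perfectoid presentations of $\O_C^\N$ and $\O_C^{\dR, +}$ from \cref{ex:filprism-perfd} and \cref{ex:fildrstack-perfd}, we aim to identify $T_{\dR, +}(E|_{\O_C})[\tfrac{1}{p}]$ with the filtered $B_\dR^+$-module arising from the $\xi$-adic filtration on $\cal{E}(A_\inf)\tensor_{A_\inf} B_\dR^+$ --- the Frobenius twist $\phi_*$ being responsible for absorbing the apparent discrepancy between $\xi$ and $\phi^{-1}(\xi)$.

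Combining this with the extension of (\ref{eq:beilfibsq-tetcrystal}) to a $G_{\Q_p}$-equivariant isomorphism $V\tensor_{\Q_p} B_\dR\cong \cal{E}(A_\inf)\tensor_{A_\inf} B_\dR$ will then transport the filtration into $V\tensor_{\Q_p}\Fil^\bullet B_\dR$; recovering $T_{\dR, +}(E)[\tfrac{1}{p}]$ from its base change via rational Galois descent along $\Spf\O_C\to\Spf\Z_p$ and taking $G_{\Q_p}$-invariants will yield the desired $\Fil^i D$. The main obstacle will be the detailed comparison between the $F$-gauge pullback $i_{\dR, +}^*$ and the $\xi$-adic filtration --- specifically, carefully tracking the Frobenius twist built into the different roles played by $\xi$ and $\phi^{-1}(\xi)$ in the presentations $\O_C^\N\cong\Spf(A_\inf\langle u, t\rangle/(ut-\phi^{-1}(\xi)))/\G_m$ and $\O_C^{\dR, +}\cong\Spf(A_\inf[\tfrac{u^n}{n!}, t]^\wedge_{(p)}/(ut-\xi))/\G_m$ --- together with verifying that rational Galois descent from $\O_C$ back to $\Z_p$ correctly reproduces the filtered $\Q_p$-complex $T_{\dR, +}(E)[\tfrac{1}{p}]$ from its base change.
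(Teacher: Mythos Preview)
Your proposal is correct and follows essentially the same route as the paper: reduce to the reflexive case, descend from $\O_C$ using \cref{rem:syntomification-descriptionrefltofcrystal} together with the explicit presentations of $\O_C^\N$ and $\O_C^{\dR,+}$, then compare filtrations via the isomorphism (\ref{eq:beilfibsq-tetcrystal}) extended to $B_\dR$ and take $G_{\Q_p}$-invariants. The paper makes one point explicit that you leave implicit in ``extending (\ref{eq:beilfibsq-tetcrystal}) to $B_\dR$'': since both filtrations are $\xi$-adic, it suffices to check compatibility in degree $0$, and this works because $\mu=[\epsilon^{1/p}]-1$ maps to $\zeta_p-1\neq 0$ in the residue field $C$ of $B_\dR^+$, so $\mu$ is already a unit in $B_\dR^+$ and (\ref{eq:beilfibsq-tetcrystal}) restricts to an isomorphism $T_\et(E)\tensor_{\Z_p} B_\dR^+\cong \cal{E}(A_\inf)\tensor_{A_\inf} B_\dR^+$.
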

\begin{proof}
\EDIT{As in the proof of of \cref{lem:beilfibsq-etalephimod}}, we may assume that $E$ is reflexive and let $\cal{E}\in\Vect^\phi((\Z_p)_\prism)$ denote the corresponding prismatic $F$-crystal in vector bundles. Recall that the filtration on
\begin{equation*}
D_\crys(T_\et(E)\EDIT{[\tfrac{1}{p}]})=(T_\et(E)\tensor_{\Z_p} B_\crys)^{G_{\Q_p}}=(T_\et(E)\tensor_{\Z_p} B_\dR)^{G_{\Q_p}}
\end{equation*}
is given by 
\begin{equation*}
\Fil^i D_\crys(T_\et(E))=(T_\et(E)\tensor_{\Z_p} \xi^i B_\dR^+)^{G_{\Q_p}}\;.
\end{equation*}
Furthermore, recall that the filtered object $T_{\dR, +}(E)$ is defined by descent from $T_{\dR, +}(E_{\O_C})$, where $E_{\O_C}$ denotes the pullback of $E$ to $\O_C^\Syn$. Now observe that, identifying $E_{\O_C}$ with a graded $A_\inf[u, t]/(ut-\phi^{-1}(\xi))$-module $M$ using \cref{ex:syntomification-fgaugesperfd}, the calculation from \cref{ex:fildrstack-perfd} implies that $T_{\dR, +}(E_{\O_C})$ corresponds to the graded $A_\inf[\frac{u^n}{n!}, t: n\geq 1]/(ut-\xi)$-module $\phi^*M\tensor_{A_\inf} A_\crys$. Hence, by \cref{prop:syntomification-refltofcrystal} and \cref{rem:syntomification-descriptionrefltofcrystal}, we conclude that
\begin{equation*}
T_{\dR, +}(E)[\tfrac{1}{p}]=(\phi^*\cal{E}(A_\inf)\tensor_{A_\inf} B_\crys^+)^{G_{\Q_p}}\;,
\end{equation*}
where \EDIT{the filtration on $\phi^*\cal{E}(A_\inf)\tensor_{A_\inf} B_\crys^+$ is the tensor product filtration and, in turn, the filtration on $\phi^*\cal{E}(A_\inf)$ is given by}
\begin{equation*}
\Fil^i\phi^*\cal{E}(A_\inf)=\xi^i\cal{E}(A_\inf)\cap \phi^*\cal{E}(A_\inf)\;;
\end{equation*}
\EDIT{here,} the intersection takes place inside $\phi^*\cal{E}(A_\inf)[\frac{1}{\xi}]\cong \cal{E}(A_\inf)[\frac{1}{\xi}]$ via the Frobenius structure on $\cal{E}$. Using (\ref{eq:beilfibsq-fcrystalbasechange}), however, we see that
\begin{equation*}
\begin{split}
(\phi^*\cal{E}(A_\inf)\tensor_{A_\inf} B_\crys^+)^{G_{\Q_p}}&\cong (\phi^*\cal{E}(\Z_p)\tensor_{\Z_p} B_\crys^+)^{G_{\Q_p}}\cong \phi^*\cal{E}(\Z_p)[\tfrac{1}{p}] \\
&\cong (\phi^*\cal{E}(\Z_p)\tensor_{\Z_p} B_\dR)^{G_{\Q_p}}\cong (\phi^*\cal{E}(A_\inf)\tensor_{A_\inf} B_\dR)^{G_{\Q_p}}
\end{split}
\end{equation*}
and hence we also have
\begin{equation*}
T_{\dR, +}(E)[\tfrac{1}{p}]=(\phi^*\cal{E}(A_\inf)\tensor_{A_\inf} B_\dR)^{G_{\Q_p}}\;.
\end{equation*}
We conclude that it suffices to show that the isomorphism
\begin{equation}
\label{eq:beilfibsq-tetcrystaldr}
T_\et(E)\tensor_{\Z_p} B_\dR\cong \cal{E}(A_\inf)\tensor_{A_\inf} B_\dR\cong\phi^*\cal{E}(A_\inf)\tensor_{A_\inf} B_\dR
\end{equation}
obtained from (\ref{eq:beilfibsq-tetcrystal}) using $\phi^*\cal{E}(A_\inf)[\frac{1}{\xi}]\cong \cal{E}(A_\inf)[\frac{1}{\xi}]$ is compatible with the filtrations, \EDIT{where again the right-hand side is equipped with the tensor product filtration.} As both sides are $\xi$-adically filtered, \EDIT{i.e.\ their filtrations have the property that $\Fil^i=\xi^i\Fil^0$ for all $i\in\Z$}, it suffices to check this in filtration degree $0$. \EDIT{In other words,} we have to show that the isomorphism above restricts to an isomorphism
\begin{equation*}
T_\et(E)\tensor_{\Z_p} B_\dR^+\cong \bigcup_{i\in\Z} (\xi^i\cal{E}(A_\inf)\cap \phi^*\cal{E}(A_\inf))\tensor_{A_\inf} \xi^{-i} B_\dR^+=\cal{E}(A_\inf)\tensor_{A_\inf} B_\dR^+\;,
\end{equation*}
where the right-hand side is a submodule of $\phi^*\cal{E}(A_\inf)\tensor_{A_\inf} B_\dR$ via the isomorphism $\phi^*\cal{E}(A_\inf)[\frac{1}{\xi}]\cong \cal{E}(A_\inf)[\frac{1}{\xi}]$. However, this again follows from (\ref{eq:beilfibsq-tetcrystal}): As \EDIT{$\mu=[\epsilon^{1/p}]-1$} maps to the nonzero element $\zeta_p-1$ in the residue field of $B_\dR^+$, it is already invertible in $B_\dR^+$ and hence the isomorphism (\ref{eq:beilfibsq-tetcrystaldr}) restricts to an isomorphism
\begin{equation*}
T_\et(E)\tensor_{\Z_p} B_\dR^+\cong \cal{E}(A_\inf)\tensor_{A_\inf} B_\dR^+\;,
\end{equation*}
as desired.
\end{proof}

\subsection{Proof of the main theorem}
\label{subsect:beilfibsq-main}

We are now ready to put everything together and prove \cref{thm:beilfibsq-main}. We begin with a general lemma about $t$-structures on stable $\infty$-categories, which is probably well-known, see also \cite[Lem. 3.2]{Bridgeland}. \EDIT{For this, first recall the following definition:}

\begin{defi}
\label{defi:beilfibsq-tstructbounded}
\EDIT{
Let $\cal{C}$ be a stable $\infty$-category equipped with a $t$-structure $(\cal{C}^{\geq 0}, \cal{C}^{\leq 0})$ and put
\begin{equation*}
\EEDIT{
\cal{C}^-\coloneqq \bigcup_{n\in\Z} \cal{C}^{\leq n}\;, \hspace{0.5cm} \cal{C}^+\coloneqq \bigcup_{n\in\Z} \cal{C}^{\geq -n}\;, \hspace{0.5cm} \cal{C}^b\coloneqq \cal{C}^-\cap\cal{C}^+\;.
}
\end{equation*}
Then the $t$-structure $(\cal{C}^{\geq 0}, \cal{C}^{\leq 0})$ is called \emph{bounded} if $\cal{C}^b=\cal{C}$. It is called \emph{non-degenerate} if}
\begin{equation*}
\EDIT{
\bigcap_{n\in\Z} \cal{C}^{\leq n}=\bigcap_{n\in\Z} \cal{C}^{\geq n}=0\;.
}
\end{equation*}
\end{defi}

\begin{rem}
\label{rem:beilfibsq-nondegeneratetstruct}
\EDIT{
Recall that a $t$-structure $(\cal{C}^{\geq 0}, \cal{C}^{\leq 0})$ on a stable $\infty$-category $\cal{C}$ induces canoncial truncation functors
\begin{equation*}
\tau^{\leq n}: \cal{C}\rightarrow\cal{C}^{\leq n}\;,\hspace{0.5cm} \tau^{\geq n}: \cal{C}\rightarrow\cal{C}^{\geq n}
\end{equation*}
for any $n\in\Z$ which are right-adjoint, respectively left-adjoint, to the inclusions $\cal{C}^{\leq n}\rightarrow\cal{C}$, respectively $\cal{C}^{\geq n}\rightarrow\cal{C}$. Moreover, we obtain cohomology functors
\begin{equation*}
H^n(-)=(\tau^{\geq n}\circ\tau^{\leq n})(-)[n]: \cal{C}\rightarrow \cal{C}^{\geq 0}\cap\cal{C}^{\leq 0}=\cal{C}^\heartsuit\;.
\end{equation*}
If the $t$-structure is non-degenerate, this implies that an object $C\in\cal{C}$ with $H^n(C)=0$ for all $n\in\Z$ must vanish. Moreover, it also implies that $C\in\cal{C}^{\geq 0}$ if and only if $H^n(C)=0$ for all $n<0$ and, similarly, $C\in\cal{C}^{\leq 0}$ if and only if $H^n(C)=0$ for all $n>0$, see \cite[Rem. 1.5]{tStructures}.
}
\end{rem}

\begin{lem}
\label{lem:beilfibsq-boundedtstructure}
Let $\cal{C}$ be a stable $\infty$-category equipped with a bounded non-degenerate $t$-structure. Then $\cal{C}$ is generated under shifts and \EDIT{fibres} by its heart $\cal{C}^\heartsuit$.
\end{lem}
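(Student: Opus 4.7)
The plan is to proceed by induction on the cohomological amplitude of objects in $\mathcal{C}$. Let $\mathcal{D}\subseteq\mathcal{C}$ denote the smallest full subcategory containing $\mathcal{C}^\heartsuit$ and closed under shifts and fibres; since $\mathcal{C}$ is stable, such a subcategory is automatically closed under cofibres as well, because cofibres and fibres differ by a shift. The goal is then to show $\mathcal{D}=\mathcal{C}$. To set up the induction, I will assign to each nonzero $C\in\mathcal{C}$ the quantities $a(C)\coloneqq \min\{n: H^n(C)\neq 0\}$ and $b(C)\coloneqq \max\{n: H^n(C)\neq 0\}$. By boundedness of the $t$-structure, $C$ lies in some $\mathcal{C}^{\geq a}\cap\mathcal{C}^{\leq b}$, so $H^n(C)$ vanishes outside a finite window and $a(C), b(C)$ are well-defined integers; by non-degeneracy (see \cref{rem:beilfibsq-nondegeneratetstruct}), $C$ vanishes precisely when all its cohomology objects vanish, so the induction really sees all of $\mathcal{C}$.

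For the base case, when $C$ has amplitude zero, i.e.\ $H^n(C)=0$ for all $n\neq n_0$ for some $n_0\in\mathbb{Z}$, \cref{rem:beilfibsq-nondegeneratetstruct} gives $C\in\mathcal{C}^{\geq n_0}\cap\mathcal{C}^{\leq n_0}$ and the canonical map $C\rightarrow H^{n_0}(C)[-n_0]$ is an equivalence; hence $C$ is a shift of an object of the heart and lies in $\mathcal{D}$.

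For the inductive step, assume the statement for all objects of strictly smaller amplitude and let $C\in\mathcal{C}$ have amplitude $b-a>0$ with $b=b(C)$. I will consider the standard truncation cofibre sequence
\[
\tau^{\leq b-1} C\rightarrow C\rightarrow \tau^{\geq b} C
\]
and rotate it to the cofibre sequence
\[
C\rightarrow \tau^{\geq b} C\rightarrow (\tau^{\leq b-1} C)[1]\;,
\]
exhibiting $C$ as the fibre of a map between two objects in $\mathcal{D}$. Here $\tau^{\geq b} C$ is concentrated in cohomological degree $b$ (since $C\in\mathcal{C}^{\leq b}$), so it is a shift of a heart object by the base case, while $\tau^{\leq b-1} C$ has the same cohomology as $C$ in degrees $\leq b-1$ and vanishes above, so it has strictly smaller amplitude and lies in $\mathcal{D}$ by the inductive hypothesis. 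The closure of $\mathcal{D}$ under shifts and fibres then yields $C\in\mathcal{D}$, completing the induction.

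The argument is essentially bookkeeping and I do not expect any serious obstacle: the only point requiring care is the verification that truncation functors interact with cohomology in the expected way (so that the amplitude of $\tau^{\leq b-1} C$ really drops), but this is a standard consequence of the axioms of a $t$-structure together with the non-degeneracy hypothesis recalled in \cref{rem:beilfibsq-nondegeneratetstruct}.
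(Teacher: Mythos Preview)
Your proposal is correct and follows essentially the same approach as the paper: both argue by induction using the truncation triangle $\tau^{\leq k-1}E\rightarrow E\rightarrow \tau^{\geq k}E$ with $k$ the top nonvanishing cohomological degree, peeling off a shift of a heart object at each step. The only cosmetic difference is that the paper inducts on the number of nonzero cohomology objects while you induct on the amplitude $b(C)-a(C)$, and the paper phrases the triangle as $E'=\fib(E\rightarrow\tau^{\leq k-1}E)$ rather than invoking $\tau^{\geq k}E$ directly.
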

\begin{proof}
\EDIT{By boundedness of the $t$-structure,} any $E\in\cal{C}$ has only finitely many non-vanishing cohomology groups and we use induction on the number of such nonzero $H^i(E)$. If all $H^i(E)$ vanish, then $E$ is itself zero as the $t$-structure is non-degenerate. Otherwise, let $k$ be maximal such that $H^k(E)\neq 0$ and put
\begin{equation*}
E'\coloneqq\fib(E\rightarrow\tau^{\leq k-1}E)\;.
\end{equation*}
As $H^i(\tau^{\leq k-1} E)=H^i(E)$ except for $i=k$, in which case $H^k(\tau^{\leq k-1}E)=0$, the long exact cohomology sequence shows that $H^i(E')=0$ for all $i\neq k$ and hence $E'[k]$ lies in $\cal{C}^\heartsuit$ by non-degeneracy of the $t$-structure. Thus, we are done by induction.
\end{proof}

\begin{proof}[Proof of \cref{thm:beilfibsq-main}]
The relevant square was already constructed in \cref{prop:beilfibsq-square}, so it remains to show that it is an almost pushout up to $p$-isogeny. \EDIT{First note that the $t$-structure on $\Perf(\Z_p^\Syn)$ from \cref{rem:filprism-cohheartperf} satisfies the hypotheses of \cref{lem:beilfibsq-boundedtstructure}: Indeed, this follows from the fact that this $t$-structure is induced from the one on $\Perf(\Z_p\langle u, t\rangle)$, which is clearly bounded and non-degenerate. As all terms of the square we claim to be a pullback commute with \EEDIT{shifts and fibres}, we may thus assume that $E$ is coherent by \cref{lem:beilfibsq-boundedtstructure}.} In this case, \EEDIT{recalling from \cref{prop:syntomification-cohcrystalline} that $T_\et(E)$ is crystalline and} writing $D=D_\crys(T_\et(E)[\frac{1}{p}])$, we know by \cref{prop:beilfibsq-crys} that there is a pullback square
\begin{equation*}
\begin{tikzcd}
\RHom_{\MF^\phi_\adm(\Q_p)}(\Q_p, D)\ar[r]\ar[d] & \RHom_{\Mod^\phi(\Q_p)}(\Q_p, T_\crys(D))\ar[d] \\
\RHom_{\MF(\Q_p)}(\Q_p, T_{\dR, +}(D))\ar[r] & \RHom_{\Q_p}(\Q_p, T_\dR(D))\nospacepunct{\;.}
\end{tikzcd}
\end{equation*}
\EEDIT{As} \cref{lem:beilfibsq-rationalcohfpsyn}, \cref{lem:beilfibsq-etalephimod}, \EEDIT{\cref{lem:beilfibsq-rationalcoha1gm}} and \cref{lem:beilfibsq-etalefiltered} allow us to rewrite this as
\begin{equation*}
\begin{tikzcd}
\RHom_{\MF^\phi_\adm(\Q_p)}(\Q_p, D)\ar[r]\ar[d] & R\Gamma(\F_p^\Syn, T_\crys(E))[\tfrac{1}{p}]\ar[d] \\
R\Gamma(\Z_p^{\dR, +}, T_{\dR, +}(E))[\tfrac{1}{p}]\ar[r] & R\Gamma(\Z_p^\dR, T_\dR(E))[\tfrac{1}{p}]\;,
\end{tikzcd}
\end{equation*}
it remains to show that $T_\et$ induces a quasi-isomorphism
\begin{equation*}
R\Gamma(\Z_p^\Syn, E)[\tfrac{1}{p}]\cong\RHom_{\MF^\phi_\adm(\Q_p)}(\Q_p, D)\;.
\end{equation*}
For this, first note that the embedding $\MF^\phi_\adm(\Q_p)\cong\Rep_{\Q_p}^\crys(G_{\Q_p})\hookrightarrow\Rep_{\Q_p}(G_{\Q_p})$ induces a morphism
\begin{equation*}
\RHom_{\MF^\phi_\adm(\Q_p)}(\Q_p, D)\rightarrow R\Gamma(G_{\Q_p}, T_\et(E)[\tfrac{1}{p}])
\end{equation*}
and let $Q$ denote its cofibre. We have to show that the composite
\begin{equation}
\label{eq:beilfibsq-compositezero}
R\Gamma(\Z_p^\Syn, E)[\tfrac{1}{p}]\rightarrow R\Gamma(G_{\Q_p}, T_\et(E)[\tfrac{1}{p}])\rightarrow Q\;,
\end{equation}
where the first map is induced by $T_\et$, is zero. From the long exact sequence
\begin{equation*}
\begin{tikzcd}
0\ar[r] & \Hom_{\MF^\phi_\adm(\Q_p)}(\Q_p, D)\ar[r, "\cong"] & \Hom_{G_{\Q_p}}(\Q_p, T_\et(E)[\tfrac{1}{p}])\ar[r] & H^0(Q) \ar[overlay, dll, out=-10, in=170] \\
& \Ext^1_{\MF^\phi_\adm(\Q_p)}(\Q_p, D)  \ar[r, hookrightarrow] & \Ext^1_{G_{\Q_p}}(\Q_p, T_\et(E)[\tfrac{1}{p}])\ar[r] & H^1(Q) \ar[r] & 0\nospacepunct{\;,}
\end{tikzcd}
\end{equation*}
where the last zero is due to the fact that $\RHom_{\MF^\phi_\adm(\Q_p)}(\Q_p, D)$ is concentrated in degrees $0$ and $1$ \EDIT{by \cref{rem:beilfibsq-rhommfphionetrunc},} we see that $H^0(Q)=0$ and that $H^1(Q)$ is the cokernel of the injection $\Ext^1_{\MF^\phi_\adm(\Q_p)}(\Q_p, D)\hookrightarrow \Ext^1_{G_{\Q_p}}(\Q_p, T_\et(E)[\tfrac{1}{p}])$. However, by \cref{prop:syntomification-cohcrystalline}, the complex $R\Gamma(\Z_p^\Syn, E)[\tfrac{1}{p}]$ is concentrated in degrees $0$ and $1$ and the map 
\begin{equation*}
H^1(\Z_p^\Syn, E)[\tfrac{1}{p}]\rightarrow\Ext^1_{G_{\Q_p}}(\Q_p, T_\et(E))
\end{equation*}
factors through $\Ext^1_{\MF^\phi_\adm(\Q_p)}(\Q_p, D)$, hence the map in (\ref{eq:beilfibsq-compositezero}) induces the zero map on all cohomology groups and thus is itself zero. This means that we obtain a unique factorisation
\begin{equation*}
R\Gamma(\Z_p^\Syn, E)[\tfrac{1}{p}]\rightarrow \RHom_{\MF^\phi_\adm(\Q_p)}(\Q_p, D)\rightarrow R\Gamma(G_{\Q_p}, T_\et(E)[\tfrac{1}{p}])
\end{equation*}
and, again by \cref{prop:syntomification-cohcrystalline}, the first map is a quasi-isomorphism, as desired.
\end{proof}

\subsection{Comparison with Fontaine--Messing syntomic cohomology}

As a corollary of \cref{thm:beilfibsq-main}, we can prove a comparison between syntomic cohomology in our sense and the syntomic cohomology introduced by Fontaine--Messing in \cite{FontaineMessing}.

\begin{defi}
Let $X$ be a \EEDIT{smooth qcqs} $p$-adic formal scheme. For $0\leq i\leq p-2$, we define the integral \emph{Fontaine--Messing syntomic cohomology} of $X$ in weight $i$ as
\begin{equation*}
R\Gamma_{\Syn, \FM}(X, \Z_p(i))\coloneqq \fib(\Fil^i_\Hod R\Gamma_\dR(X)\xrightarrow{1-\phi/p^i} R\Gamma_\dR(X))
\end{equation*}
and there is also a rational version defined for all $i\geq 0$ by 
\begin{equation*}
R\Gamma_{\Syn, \FM}(X, \Q_p(i))\coloneqq \fib(\Fil^i_\Hod R\Gamma_\dR(X)[\tfrac{1}{p}]\xrightarrow{\phi-p^i} R\Gamma_\dR(X)[\tfrac{1}{p}])\;.
\end{equation*}
Here, $\phi$ denotes the crystalline Frobenius on $R\Gamma_\dR(X)$.
\end{defi}

Note that the above definition indeed makes sense: By quasisyntomic descent and \cite[Prop. 6.8]{BeilFibSq}, the Frobenius on $\Fil^i_\Hod R\Gamma_\dR(X)$ is divisible by $p^i$ for $0\leq i\leq p-1$. In loc.\ cit., Antieau--Mathew--Morrow--Nikolaus prove the following comparison between syntomic cohomology and Fontaine--Messing syntomic cohomology:

\begin{thm}
\label{thm:beilfibsq-fmratmotivation}
Let $X$ be a \EEDIT{smooth qcqs} $p$-adic formal scheme. For $i\geq 0$, there is a natural isomorphism
\begin{equation*}
R\Gamma_{\Syn, \FM}(X, \Q_p(i))\cong R\Gamma_\Syn(X, \Q_p(i))\;.
\end{equation*}
\end{thm}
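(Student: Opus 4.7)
The plan is to derive this comparison as a direct formal consequence of Theorem \ref{thm:beilfibsq-motivation} by manipulating its pullback square. Applying that theorem to $X$, we obtain a pullback square
\begin{equation*}
\begin{tikzcd}
R\Gamma_\Syn(X, \Q_p(i))\ar[r, "\alpha"]\ar[d, "\beta"] & R\Gamma_\Syn(X_{p=0}, \Q_p(i))\ar[d, "\gamma"] \\
\Fil^i_\Hod R\Gamma_\dR(X)[\tfrac{1}{p}]\ar[r, "\delta"] & R\Gamma_\dR(X)[\tfrac{1}{p}]
\end{tikzcd}
\end{equation*}
in which the bottom horizontal map $\delta$ is simply the inclusion of the filtration step. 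The whole proof will amount to rewriting this pullback in the form prescribed by the Fontaine--Messing definition.

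The first step is to identify the right vertical map $\gamma$. Since $X_{p=0}$ is smooth over $\F_p$, its syntomification $X_{p=0}^\Syn$ is obtained by gluing two copies of $X_{p=0}^\prism$ via Frobenius and the identity, and the Nygaard filtration rationally trivialises on this gluing. Combining this with the fibre sequence (\ref{eq:syntomification-fibreseq}), the identification $R\Gamma_\prism(X_{p=0})[\tfrac{1}{p}]\cong R\Gamma_\crys(X_{p=0})[\tfrac{1}{p}]$ coming from the crystalline comparison, and the fact that the Frobenius on $\O\{i\}$ differs from the Frobenius on $\O$ by a factor of $p^{-i}$ (coming from $F_X^*\O\{1\}\cong\cal{I}_X^{-1}\O\{1\}$), one obtains a natural identification
\[
R\Gamma_\Syn(X_{p=0}, \Q_p(i))\cong \fib\bigl(R\Gamma_\crys(X_{p=0})[\tfrac{1}{p}]\xrightarrow{\phi-p^i} R\Gamma_\crys(X_{p=0})[\tfrac{1}{p}]\bigr)
\]
under which $\gamma$ becomes the composition of the canonical projection from this fibre with the rational de Rham--crystalline comparison isomorphism $R\Gamma_\crys(X_{p=0})[\tfrac{1}{p}]\cong R\Gamma_\dR(X)[\tfrac{1}{p}]$.

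Once this identification is in place, computing the pullback is a formal exercise: writing $A=R\Gamma_\dR(X)[\tfrac{1}{p}]$ and $B=\Fil^i_\Hod R\Gamma_\dR(X)[\tfrac{1}{p}]$, the square realises $R\Gamma_\Syn(X,\Q_p(i))$ as the homotopy pullback of the span $B\xrightarrow{\delta} A\xleftarrow{\text{proj}} \fib(A\xrightarrow{\phi-p^i} A)$, and a direct unwinding of the definition of a homotopy pullback along a fibre sequence shows that this is naturally equivalent to
\[
\fib\bigl(B\xrightarrow{(\phi-p^i)\circ\delta} A\bigr)=R\Gamma_{\Syn,\FM}(X,\Q_p(i))\;,
\]
giving the desired isomorphism. (The Frobenius $\phi$ on $\Fil^i_\Hod R\Gamma_\dR(X)$ exists integrally as a map divisible by $p^i$ via \cite[Prop. 6.8]{BeilFibSq}, but rationally one only needs $\phi$ on $R\Gamma_\dR(X)[\tfrac{1}{p}]$ together with the inclusion $\delta$.)

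The main obstacle will be the identification of $\gamma$: this requires carefully tracking how the Frobenius twists built into the Breuil--Kisin twist $\O\{i\}$ interact with the gluing description of $X_{p=0}^\Syn$ and the crystalline comparison, in order to produce precisely the factor $p^i$ appearing in the Fontaine--Messing complex. Once $\gamma$ is pinned down, all remaining steps are purely formal manipulations of fibre sequences in a stable $\infty$-category.
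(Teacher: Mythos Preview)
Your argument is correct, but the paper takes a much shorter route: it simply cites the affine case as \cite[Thm.~6.22]{BeilFibSq} and then globalises via Zariski descent together with \cref{lem:syntomicetale-colimtot}. Your approach instead re-derives the comparison from the Beilinson fibre square of \cref{thm:beilfibsq-motivation} (itself cited from \cite[Thm.~6.17]{BeilFibSq}) by identifying the right vertical map $\gamma$ and then performing the formal pullback computation. This is essentially the strategy the paper employs later, in the proof of the generalised \cref{thm:beilfibsq-fmrat} for arbitrary $E\in\Perf(\Z_p^\Syn)$, where the input is the stacky Beilinson fibre square \cref{thm:beilfibsq-main} and the identification of $\gamma$ is carried out via \cref{lem:beilfibsq-rationalcohfpsyn} and the stacky crystalline comparison $\phi^*\F_p^\prism\cong\Z_p^\dR$. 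So your argument anticipates that later one; the paper simply prefers, for this motivating statement, to invoke the literature directly rather than reprove it. One small point: the ``main obstacle'' you flag---pinning down $\gamma$ and the factor $p^i$---is genuine work and is precisely what \cite[§6]{BeilFibSq} does en route to their Thm.~6.22, so your derivation is not really more elementary than the citation, just differently packaged.
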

\begin{proof}
The affine version is \cite[Thm. 6.22]{BeilFibSq} \EDIT{and we deduce the general version using Zariski descent and \cref{lem:syntomicetale-colimtot}.}
\end{proof}

\EDIT{To generalise this result to arbitrary coefficients, at least in the case where $X$ is proper, we first observe that de Rham cohomology with $F$-gauge coefficients is also equipped with a crystalline Frobenius, just as in the case with trivial coefficients:}

\begin{rem}
\label{rem:beilfibsq-crysfrob}
\EDIT{
Let $X$ be a bounded $p$-adic formal scheme and $E\in\D(X^\Syn)$ an $F$-gauge on $X$. Then the rational de Rham cohomology $R\Gamma_\dR(X, T_\dR(E))[\tfrac{1}{p}]$ is equipped with an automorphism $\tau$ linear over the Frobenius of $\Z_p$ in the following way: By the stacky version of the crystalline comparison, i.e.\ by $\phi^*\F_p^\prism\cong \Z_p^\dR$, see \cite[Rem. 2.5.12, Constr. 3.1.1]{FGauges}, we have
\begin{equation*}
\EEDIT{
R\Gamma_\dR(X, T_\dR(E))[\tfrac{1}{p}]\cong \phi^*R\Gamma_\prism(X_{p=0}, j_\prism^*T_\crys(E))[\tfrac{1}{p}]=\phi^*j_\prism^*\pi_{(X_{p=0})^\Syn, *}T_\crys(E)[\tfrac{1}{p}]\;.
}
\end{equation*}
However, by (\ref{eq:beilfibsq-crysfrob}), the complex \EEDIT{$j_\prism^*\pi_{(X_{p=0})^\Syn, *}T_\crys(E)[\tfrac{1}{p}]$} is naturally equipped with a Frobenius automorphism since it is pulled back from $\F_p^\Syn$ along $j_\prism$. We call the corresponding automorphism $\tau$ of $R\Gamma_\dR(X, T_\dR(E))[\tfrac{1}{p}]$ the \emph{crystalline Frobenius}.
}
\end{rem}

\begin{thm}
\label{thm:beilfibsq-fmrat}
Let $E\in\Perf(\Z_p^\Syn)$. Then the Hodge-filtered de Rham map induces a fibre sequence
\begin{equation*}
R\Gamma(\Z_p^\Syn, E)[\tfrac{1}{p}]\rightarrow R\Gamma(\Z_p^{\dR, +}, T_{\dR, +}(E))[\tfrac{1}{p}]\xrightarrow{1-\tau} R\Gamma(\Z_p^\dR, T_\dR(E))[\tfrac{1}{p}]\;,
\end{equation*}
where $\tau$ denotes \EDIT{the crystalline Frobenius from \cref{rem:beilfibsq-crysfrob}.}
\end{thm}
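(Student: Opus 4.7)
The plan is to deduce this from the Beilinson fibre square (\cref{thm:beilfibsq-main}) by identifying the rational syntomic cohomology of the special fibre that appears there with the derived Frobenius-invariants of the rational de Rham realisation, with Frobenius given precisely by the crystalline $\tau$. Concretely, I will first apply \cref{thm:beilfibsq-main} to express $R\Gamma(\Z_p^\Syn, E)[\tfrac{1}{p}]$ as the fibred product
\begin{equation*}
R\Gamma(\F_p^\Syn, T_\crys(E))[\tfrac{1}{p}] \times_{R\Gamma(\Z_p^\dR, T_\dR(E))[\tfrac{1}{p}]} R\Gamma(\Z_p^{\dR, +}, T_{\dR, +}(E))[\tfrac{1}{p}]\;.
\end{equation*}

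Next, I will combine \cref{lem:beilfibsq-rationalcohfpsyn} (applied to $T_\crys(E) \in \Perf(\F_p^\Syn)$) with the commutative diagram (\ref{eq:beilfibsq-crysfrob}) to identify
\begin{equation*}
R\Gamma(\F_p^\Syn, T_\crys(E))[\tfrac{1}{p}]\cong\fib\bigl(T_\crys(E)[\tfrac{1}{p}]\xrightarrow{1-\phi} T_\crys(E)[\tfrac{1}{p}]\bigr)\;,
\end{equation*}
where the right-hand side is viewed as a $\phi$-module over $\Q_p$ via (\ref{eq:beilfibsq-dfpsynisog}). The underlying $\Q_p$-complex of $T_\crys(E)[\tfrac{1}{p}]$ is naturally identified with $j_\prism^*T_\crys(E)[\tfrac{1}{p}]$ thanks to (\ref{eq:beilfibsq-crysfrob}); via the stacky crystalline comparison $\F_p^\prism\cong \Z_p^\dR$ recalled in \cref{rem:beilfibsq-crysfrob}, this matches $R\Gamma(\Z_p^\dR, T_\dR(E))[\tfrac{1}{p}]$, and, essentially tautologically from the definition given in \cref{rem:beilfibsq-crysfrob}, the Frobenius on the $\phi$-module $T_\crys(E)[\tfrac{1}{p}]$ transports to the crystalline Frobenius $\tau$. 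Under this identification, the map from $R\Gamma(\F_p^\Syn, T_\crys(E))[\tfrac{1}{p}]$ to $R\Gamma(\Z_p^\dR, T_\dR(E))[\tfrac{1}{p}]$ appearing in the Beilinson fibre square becomes precisely the canonical map from the fibre of $1-\tau$.

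Finally, I invoke the elementary fact that, given a fibre sequence $A\to C\xrightarrow{1-\tau} C$ and any map $g\colon B\to C$, the induced equivalence
\begin{equation*}
A\times_C B\cong \fib\bigl(B\xrightarrow{(1-\tau)\circ g} C\bigr)
\end{equation*}
holds (which is a direct consequence of $A$ also being the shifted cofibre of $1-\tau$ in the stable setting). Applying this with $B=R\Gamma(\Z_p^{\dR, +}, T_{\dR, +}(E))[\tfrac{1}{p}]$, $C=R\Gamma(\Z_p^\dR, T_\dR(E))[\tfrac{1}{p}]$ and $g=i_{\dR, +}^*$ yields the desired fibre sequence. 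There is no serious obstacle to this argument; the only thing requiring genuine verification is the compatibility of the Frobenius on $T_\crys(E)[\tfrac{1}{p}]$ with $\tau$, which amounts to unwinding the definitions of the diagram (\ref{eq:beilfibsq-crysfrob}) and of $\tau$ in \cref{rem:beilfibsq-crysfrob}.
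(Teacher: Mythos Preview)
Your proposal is correct and follows essentially the same approach as the paper. Both arguments first identify $R\Gamma(\F_p^\Syn, T_\crys(E))[\tfrac{1}{p}]$ with the fibre of $1-\tau$ on $R\Gamma(\Z_p^\dR, T_\dR(E))[\tfrac{1}{p}]$ via \cref{lem:beilfibsq-rationalcohfpsyn} and the crystalline comparison, and then combine this with the Beilinson fibre square (\cref{thm:beilfibsq-main}); the only difference is packaging: the paper establishes the remaining compatibility via a $3\times 3$ diagram chase, whereas you invoke the pasting lemma for pullback squares directly, which amounts to the same thing.
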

\begin{proof}
Recall from \cref{lem:beilfibsq-rationalcohfpsyn} that there is a fibre sequence
\begin{equation*}
R\Gamma(\F_p^\Syn, T_\crys(E))[\tfrac{1}{p}]\rightarrow R\Gamma(\F_p^\prism, j_\prism^*T_\crys(E))[\tfrac{1}{p}]\xrightarrow{1-\tau} R\Gamma(\F_p^\prism, j_\prism^*T_\crys(E))[\tfrac{1}{p}]\;.
\end{equation*}
\EDIT{Again using the stacky version $\phi^*\F_p^\prism\cong \Z_p^\dR$ of the crystalline comparison as in \cref{rem:beilfibsq-crysfrob}}, the fibre sequence above becomes
\begin{equation*}
R\Gamma(\F_p^\Syn, T_\crys(E))[\tfrac{1}{p}]\rightarrow R\Gamma(\Z_p^\dR, T_\dR(E))[\tfrac{1}{p}]\xrightarrow{1-\tau} R\Gamma(\Z_p^\dR, T_\dR(E))[\tfrac{1}{p}]
\end{equation*}
and hence \cref{thm:beilfibsq-main} implies that it suffices to show that
\begin{equation*}
\begin{tikzcd}
\fib(R\Gamma(\Z_p^{\dR, +}, T_{\dR, +}(E))[\frac{1}{p}]\xrightarrow{1-\tau}R\Gamma(\Z_p^\dR, T_\dR(E))[\frac{1}{p}])\ar[r]\ar[d] & R\Gamma(\Z_p^\dR, T_\dR(E))[\frac{1}{p}]^{\tau=1}\ar[d] \\
R\Gamma(\Z_p^{\dR, +}, T_{\dR, +}(E))[\frac{1}{p}]\ar[r] & R\Gamma(\Z_p^\dR, T_\dR(E))[\frac{1}{p}]
\end{tikzcd}
\end{equation*}
is a pullback diagram. For this, let $M$ denote the term in the upper left corner and write
\begin{equation*}
N\coloneqq R\Gamma(\Z_p^\dR, T_\dR(E))[\tfrac{1}{p}]/R\Gamma(\Z_p^{\dR, +}, T_{\dR, +}(E))[\tfrac{1}{p}]\;.
\end{equation*}
\EDIT{Since we already know that} all rows and columns except the leftmost column in the commutative diagram
\begin{equation*}
\begin{tikzcd}
M\ar[r]\ar[d] & R\Gamma(\Z_p^{\dR, +}, T_{\dR, +}(E))[\frac{1}{p}]\ar[r, "1-\tau"]\ar[d] & R\Gamma(\Z_p^\dR, T_\dR(E))[\frac{1}{p}]\ar[d] \\
R\Gamma(\Z_p^\dR, T_\dR(E))[\frac{1}{p}]^{\tau=1}\ar[r]\ar[d] & R\Gamma(\Z_p^\dR, T_\dR(E))[\frac{1}{p}]\ar[r, "1-\tau"]\ar[d] & R\Gamma(\Z_p^\dR, T_\dR(E))[\frac{1}{p}]\ar[d] \\
N\ar[r] & N\ar[r] & 0
\end{tikzcd}
\end{equation*}
are fibre sequences, we conclude that also the left column is a fibre sequence and this finishes the proof.
\end{proof}

\begin{cor}
\EDIT{
Let $X$ be a $p$-adic formal scheme which is smooth and proper over $\Spf\Z_p$. For any perfect $F$-gauge $E\in\Perf(X^\Syn)$, there is a natural fibre sequence
\begin{equation*}
R\Gamma_\Syn(X, E)[\tfrac{1}{p}]\rightarrow \Fil^0_\Hod R\Gamma_\dR(X, T_{\dR, +}(E))[\tfrac{1}{p}]\xrightarrow{1-\tau} R\Gamma_\dR(X, T_\dR(E))[\tfrac{1}{p}]\;,
\end{equation*}
where $\tau$ again denotes the crystalline Frobenius from \cref{rem:beilfibsq-crysfrob}.
}
\end{cor}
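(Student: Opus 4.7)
The plan is to deduce this from \cref{thm:beilfibsq-fmrat} applied to the pushforward of $E$ along $\pi_{X^\Syn}: X^\Syn\rightarrow\Z_p^\Syn$. By \cref{prop:finiteness-main}, the complex $\pi_{X^\Syn, *}E$ is a perfect $F$-gauge on $\Z_p$ since $X$ is smooth and proper over $\Spf\Z_p$ and $E\in\Perf(X^\Syn)$, so \cref{thm:beilfibsq-fmrat} yields a fibre sequence
\begin{equation*}
R\Gamma(\Z_p^\Syn, \pi_{X^\Syn, *}E)[\tfrac{1}{p}]\rightarrow R\Gamma(\Z_p^{\dR, +}, T_{\dR, +}(\pi_{X^\Syn, *}E))[\tfrac{1}{p}]\xrightarrow{1-\tau} R\Gamma(\Z_p^\dR, T_\dR(\pi_{X^\Syn, *}E))[\tfrac{1}{p}]\;.
\end{equation*}

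The first term trivially agrees with $R\Gamma_\Syn(X, E)[\tfrac{1}{p}]$ by definition. To rewrite the other two terms, I would invoke base change for the cartesian squares
\begin{equation*}
\begin{tikzcd}
X^{\dR, +}\ar[r, "i_{\dR, +}"]\ar[d, "\pi_{X^{\dR, +}}"] & X^\Syn\ar[d, "\pi_{X^\Syn}"] \\
\Z_p^{\dR, +}\ar[r, "i_{\dR, +}"] & \Z_p^\Syn
\end{tikzcd}
\hspace{0.5cm}
\begin{tikzcd}
X^\dR\ar[r, "i_\dR"]\ar[d, "\pi_{X^\dR}"] & X^\Syn\ar[d, "\pi_{X^\Syn}"] \\
\Z_p^\dR\ar[r, "i_\dR"] & \Z_p^\Syn\nospacepunct{\;,}
\end{tikzcd}
\end{equation*}
which are treated in the base change appendix of the thesis. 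This produces the identifications
\begin{equation*}
\begin{split}
R\Gamma(\Z_p^{\dR, +}, T_{\dR, +}(\pi_{X^\Syn, *}E))&\cong R\Gamma(X^{\dR, +}, T_{\dR, +}(E))=\Fil^0_\Hod R\Gamma_\dR(X, T_{\dR, +}(E))\;, \\
R\Gamma(\Z_p^\dR, T_\dR(\pi_{X^\Syn, *}E))&\cong R\Gamma(X^\dR, T_\dR(E))=R\Gamma_\dR(X, T_\dR(E))\;,
\end{split}
\end{equation*}
where the last equalities use \cref{thm:fildrstack-comparison} and \cref{thm:drstack-comparison} together with the definitions of Hodge-filtered and unfiltered de Rham cohomology with coefficients.

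The final step is to verify that, under these identifications, the map labelled $1-\tau$ in the fibre sequence above really corresponds to $1-\tau$ in the sense of \cref{rem:beilfibsq-crysfrob}. By construction in \cref{rem:beilfibsq-crysfrob}, the crystalline Frobenius on $R\Gamma_\dR(X, T_\dR(E))[\tfrac{1}{p}]$ is inherited via the crystalline comparison $\phi^*\F_p^\prism\cong\Z_p^\dR$ from the $F$-gauge structure on $T_\crys(E)$; I expect this naturality to be a formal consequence of the commutative diagram (\ref{eq:beilfibsq-crysfrob}) applied to $\pi_{X^\Syn, *}E$ in place of a general $F$-gauge on $\F_p$ together with functoriality of the proof of \cref{thm:beilfibsq-fmrat}. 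The main (but mild) obstacle is checking that this naturality really goes through: one has to confirm that the Frobenius used in \cref{thm:beilfibsq-fmrat} when applied to $\pi_{X^\Syn, *}E$ is compatibly transported to the crystalline Frobenius on $R\Gamma_\dR(X, T_\dR(E))[\tfrac{1}{p}]$ under the base change isomorphism. Once this compatibility is established, the desired fibre sequence follows immediately.
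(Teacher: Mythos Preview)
Your approach is correct and matches the paper's own proof, which simply reads ``Using \cref{prop:finiteness-main}, this follows immediately from \cref{thm:beilfibsq-fmrat}.'' You have spelled out the base change identifications and the Frobenius compatibility that the paper leaves implicit; these are indeed routine (the relevant base change squares are \cref{lem:basechange-nygaardsyn} and \cref{lem:basechange-nygaarddr}, and the Frobenius compatibility is automatic from the naturality of the construction in \cref{rem:beilfibsq-crysfrob}), so your hedging at the end is unnecessary.
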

\begin{proof}
\EDIT{Using \cref{prop:finiteness-main}, this follows immediately from \cref{thm:beilfibsq-fmrat}.}
\end{proof}

\EDIT{In particular, by the comparison from \cref{thm:fildrstack-comparison}, the above recovers the result of \cref{thm:beilfibsq-fmratmotivation} in the proper case by putting $E=\O\{i\}$.}

\begin{rem}
In fact, the result of \cref{thm:beilfibsq-fmratmotivation} holds already integrally for $i\leq p-2$. This integral version can also be obtained via $F$-gauges as shown in \cite[Rem. 5.21]{FontaineLaffaille} and \cite[Rem. 6.5.15]{FGauges}, hence \cref{thm:beilfibsq-fmrat} admits an integral analogue for perfect $F$-gauges $E$ with Hodge--Tate weights all at least $-(p-2)$.
\end{rem}

\subsection{The Beilinson fibre square as a categorical statement}

Taking the commutative square from \cref{prop:beilfibsq-square} and passing to categories of perfect complexes, we obtain a commutative diagram
\begin{equation*}
\begin{tikzcd}
\Perf(\Z_p^\Syn)\ar[r, "T_\crys"]\ar[d, "T_{\dR, +}"] & \Perf(\F_p^\Syn)\ar[d] \\
\Perf(\Z_p^{\dR, +})\ar[r] & \Perf(\Z_p^\dR)
\end{tikzcd}
\end{equation*}
of stable $\infty$-categories and hence a functor
\begin{equation*}
\Perf(\Z_p^\Syn)\rightarrow \Perf(\Z_p^{\dR, +})\times_{\Perf(\Z_p^\dR)} \Perf(\F_p^\Syn)\;.
\end{equation*}
Passing to isogeny categories and composing with the functors \EDIT{from (\ref{eq:beilfibsq-dfpsynisog}) and (\ref{eq:beilfibsq-da1gmisog}),} we obtain a functor
\begin{equation*}
\Perf(\Z_p^\Syn)[\tfrac{1}{p}]\rightarrow \DF(\Q_p)\times_{\D(\Q_p)} \D(\Mod^\phi(\Q_p))\;.
\end{equation*}
The category on the right-hand side (or, more precisely, a small variant thereof) has been called the \emph{category of $p$-adic Hodge complexes} in \cite{Bannai}. In this section, we will show that the Beilinson fibre square admits the following categorical extension:

\begin{thm}
\label{thm:beilfibsq-categorical}
The functor
\begin{equation*}
\Perf(\Z_p^\Syn)[\tfrac{1}{p}]\rightarrow \DF(\Q_p)\times_{\D(\Q_p)} \D(\Mod^\phi(\Q_p))
\end{equation*}
described above is a fully faithful embedding.
\end{thm}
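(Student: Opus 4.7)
The plan is to check that, for any two perfect $F$-gauges $E, F$ on $\Z_p$, the natural comparison map
\begin{equation*}
\RHom_{\Perf(\Z_p^\Syn)[\tfrac{1}{p}]}(E, F) \longrightarrow \RHom_{\DF(\Q_p)\times_{\D(\Q_p)} \D(\Mod^\phi(\Q_p))}(T(E), T(F))
\end{equation*}
is an equivalence. Since $E$ is perfect (hence dualisable) and the three realisation functors are symmetric monoidal as pullbacks along maps of stacks, I would first rewrite both sides in terms of internal Hom applied to the single perfect object $G \coloneqq E^\vee\tensor F \in \Perf(\Z_p^\Syn)$. On the left this yields $R\Gamma(\Z_p^\Syn, G)[\tfrac{1}{p}]$.

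Next, I would compute the right-hand side by unfolding the definition of a mapping space in an $\infty$-categorical pullback: it is the homotopy pullback of $\RHom_{\DF(\Q_p)}(T_{\dR,+}(E), T_{\dR,+}(F))$ and $\RHom_{\D(\Mod^\phi(\Q_p))}(T_\crys(E)[\tfrac{1}{p}], T_\crys(F)[\tfrac{1}{p}])$ over $\RHom_{\D(\Q_p)}(T_\dR(E), T_\dR(F))$. By monoidality of the realisations together with \cref{lem:beilfibsq-rationalcohfpsyn} and \cref{lem:beilfibsq-rationalcoha1gm}, these three terms identify respectively with $R\Gamma(\Z_p^{\dR,+}, T_{\dR,+}(G))[\tfrac{1}{p}]$, $R\Gamma(\F_p^\Syn, T_\crys(G))[\tfrac{1}{p}]$, and $R\Gamma(\Z_p^\dR, T_\dR(G))[\tfrac{1}{p}]$.

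It then remains to verify that the two structure maps of this pullback of $R\Gamma$'s coincide with the maps produced by the commutative square from \cref{prop:beilfibsq-square}; once this is done, \cref{thm:beilfibsq-main} applied to the perfect $F$-gauge $G$ identifies the resulting pullback with $R\Gamma(\Z_p^\Syn, G)[\tfrac{1}{p}]$, which is what we want. For the upper-left leg, the forgetful functor $\DF(\Q_p)\rightarrow\D(\Q_p)$ sending a filtered object to its underlying object corresponds under the Rees equivalence to restriction along $\G_m/\G_m\hookrightarrow\A^1/\G_m$, and this matches the pullback along $\Z_p^\dR\rightarrow\Z_p^{\dR,+}$. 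For the lower-right leg, the forgetful map $\D(\Mod^\phi(\Q_p))\rightarrow\D(\Q_p)$ agrees (via the lemmas above and the diagram (\ref{eq:beilfibsq-crysfrob})) with the composition $R\Gamma(\F_p^\Syn,-)[\tfrac{1}{p}]\rightarrow R\Gamma(\F_p^\prism, j_\prism^*-)[\tfrac{1}{p}]\cong R\Gamma(\Z_p^\dR, T_\dR(-))[\tfrac{1}{p}]$, where the last isomorphism uses the crystalline comparison $\phi^*\F_p^\prism\cong\Z_p^\dR$ as in \cref{rem:beilfibsq-crysfrob}.

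I expect this last bookkeeping of structure maps to be the main obstacle: the actual content of the theorem is contained in \cref{thm:beilfibsq-main}, but one has to pin down precisely how duality, Rees equivalence, the identification $\RHom_{\MF(\Q_p)}(\Q_p,-)=\Fil^0$, the crystalline Frobenius twist, and the square from \cref{prop:beilfibsq-square} all fit together. Once one checks that the comparison map is, by construction, the one obtained by applying $\RHom(\O_{\Z_p^\Syn},-)[\tfrac{1}{p}]$ to the fibre-square decomposition of $G$, fully faithfulness is a formal consequence of \cref{thm:beilfibsq-main}.
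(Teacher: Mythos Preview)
Your proposal is correct and follows essentially the same underlying idea as the paper --- reduce via dualisability and tensor--hom to the case $E=\O$, then invoke the Beilinson fibre square --- but the packaging differs in a noteworthy way.

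The paper takes a detour through crystalline representations: it first proves \cref{prop:beilfibsq-perfzpsyncrys}, establishing that $T_\et$ induces an equivalence $\Perf(\Z_p^\Syn)[\tfrac{1}{p}]\cong\D^b(\Rep^\crys_{\Q_p}(G_{\Q_p}))$, and then, in the proof of \cref{thm:beilfibsq-categorical} itself, translates everything into the language of admissible filtered $\phi$-modules via $D_\crys$ and Lemmas~\ref{lem:beilfibsq-etalephimod} and~\ref{lem:beilfibsq-etalefiltered}, finally appealing to \cref{prop:beilfibsq-crys} (the fibre square for $\MF^\phi_\adm(\Q_p)$) combined with tensor--hom. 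Your route is more direct: you bypass $D_\crys$ entirely and apply \cref{thm:beilfibsq-main} straight to $G=E^\vee\tensor F$, which is legitimate since $G$ is perfect. The paper's detour is not wasted effort --- it also yields \cref{prop:beilfibsq-perfzpsyncrys} as an independent result --- but for the bare statement of \cref{thm:beilfibsq-categorical} your argument is cleaner and avoids introducing the auxiliary category $\MF^\phi_\adm(\Q_p)$. The bookkeeping you flag about matching structure maps is indeed routine once one observes, as you do, that the functor in question is \emph{defined} via the square of \cref{prop:beilfibsq-square}.
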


On our way of proving the above result, \EDIT{we will also obtain the following statement, which we learnt from Bhargav Bhatt in private communication, see also \cite[Rem. 6.7.5]{FGauges}:}

\begin{prop}
\label{prop:beilfibsq-perfzpsyncrys}
The functor $T_\et$ induces an equivalence of categories
\begin{equation*}
\Perf(\Z_p^\Syn)[\tfrac{1}{p}]\cong \D^b(\Rep^\crys_{\Q_p}(G_{\Q_p}))
\end{equation*}
between the isogeny category of perfect complexes on $\Z_p^\Syn$ and the bounded derived category of crystalline $\Q_p$-representations of $G_{\Q_p}$.
\end{prop}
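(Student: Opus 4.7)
The plan is to first verify well-definedness of $T_\et$ on the isogeny categories, then prove fully faithfulness via the Beilinson fibre square of \cref{thm:beilfibsq-main} combined with \cref{prop:beilfibsq-crys}, and finally deduce essential surjectivity by identifying the hearts of the source and target.

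For the first step, I would observe that, by \cref{lem:beilfibsq-boundedtstructure} applied to the bounded non-degenerate $t$-structure on $\Perf(\Z_p^\Syn)$ from \cref{rem:filprism-cohheartperf}, every perfect $F$-gauge sits in a finite iterated fibre sequence of objects of $\Coh(\Z_p^\Syn)$ placed in various degrees. Since, by \cref{prop:syntomification-cohcrystalline}, any coherent $M$ satisfies $T_\et(M)[\tfrac{1}{p}]\in\Rep^\crys_{\Q_p}(G_{\Q_p})$, and this subcategory is stable under extensions inside $\Rep_{\Q_p}(G_{\Q_p})$, it follows that $T_\et$ does indeed land in $\D^b(\Rep^\crys_{\Q_p}(G_{\Q_p}))$; moreover, it is $t$-exact when one equips $\Perf(\Z_p^\Syn)[\tfrac{1}{p}]$ with the $t$-structure inherited from \cref{rem:filprism-cohheartperf}.

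For the second step, given perfect $F$-gauges $E$ and $F$, dualisability allows me to write $\RHom(E, F)=R\Gamma(\Z_p^\Syn, E^\vee\tensor F)$, so that \cref{thm:beilfibsq-main} applied to $E^\vee\tensor F$ realises $\RHom(E, F)[\tfrac{1}{p}]$ as the pullback of a Beilinson fibre square. By \cref{lem:beilfibsq-rationalcohfpsyn}, \cref{lem:beilfibsq-etalephimod}, \cref{lem:beilfibsq-rationalcoha1gm} and \cref{lem:beilfibsq-etalefiltered}, this square is naturally isomorphic to the Beilinson square of $D\coloneqq D_\crys(T_\et(E^\vee\tensor F)[\tfrac{1}{p}])$ appearing in \cref{prop:beilfibsq-crys}, whose pullback is $\RHom_{\MF^\phi_\adm(\Q_p)}(\Q_p, D)$. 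Using the monoidality of $T_\et$ (which follows from its description in \cref{rem:syntomification-etalerealisationglobal} as a composition of symmetric monoidal functors) together with the monoidality of $D_\crys$, I would identify $D\cong\RHom(D_\crys(T_\et(E)[\tfrac{1}{p}]), D_\crys(T_\et(F)[\tfrac{1}{p}]))$, so that the above becomes $\RHom_{\MF^\phi_\adm(\Q_p)}(D_\crys(T_\et(E)[\tfrac{1}{p}]), D_\crys(T_\et(F)[\tfrac{1}{p}]))$; via the Fontaine equivalence $\MF^\phi_\adm(\Q_p)\cong\Rep^\crys_{\Q_p}(G_{\Q_p})$, this is exactly the target $\RHom$.

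Finally, since $T_\et$ is $t$-exact and fully faithful and both source and target are generated under shifts and fibres by their respective hearts, essential surjectivity reduces to checking it on hearts. Here I would appeal to \cite[Prop. 6.7.1]{FGauges}, which implies $\Coh(\Z_p^\Syn)[\tfrac{1}{p}]\cong\Coh^\refl(\Z_p^\Syn)[\tfrac{1}{p}]$, combined with \cref{thm:syntomification-reflcrys} to identify the latter with $\Rep^\crys_{\Q_p}(G_{\Q_p})$ via $T_\et$. The main obstacle I expect is the bookkeeping needed to promote the monoidality of $T_\et$ to a genuine symmetric monoidal structure on the isogeny category and to match the resulting tensor--Hom adjunction with the identifications of the four corners of the Beilinson square through the lemmas above; once this is carefully arranged, all remaining verifications are formal.
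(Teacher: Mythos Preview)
Your proposal is correct and follows essentially the same route as the paper: both reduce full faithfulness to the case $E_1=\O$ via dualisability/tensor--hom (the paper phrases this using $\sRHom(E_1,E_2)$, you use $E^\vee\tensor F$), invoke the Beilinson fibre square together with \cref{prop:beilfibsq-crys} and the identification lemmas for that case, and deduce essential surjectivity from \cref{thm:syntomification-reflcrys} via generation by hearts. The only cosmetic differences are that the paper cites ``the proof of \cref{thm:beilfibsq-main}'' directly for the identification $R\Gamma(\Z_p^\Syn,-)[\tfrac{1}{p}]\cong\RHom_{\MF^\phi_\adm(\Q_p)}(\Q_p,-)$ rather than re-deriving it from the pullback property, and it does not route essential surjectivity through \cite[Prop.~6.7.1]{FGauges}.
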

\begin{proof}
To prove full faithfulness, it suffices to show that, for any $E_1, E_2\in\Perf(\Z_p^\Syn)$, we have
\begin{equation*}
\RHom(E_1, E_2)[\tfrac{1}{p}]\cong \RHom_{\MF_\adm^\phi(\Q_p)}(D_1, D_2)\;,
\end{equation*}
where $D_i\coloneqq D_\crys(T_\et(E_i)[\tfrac{1}{p}])$ for $i=1, 2$. \EEDIT{For $E_1=\O$, we already know this: indeed, as all functors in sight commute with shifts and fibres, we may assume that $E_2$ is a coherent sheaf by an application of \cref{lem:beilfibsq-boundedtstructure} and then the claim follows from the proof of \cref{thm:beilfibsq-main}. However, the special case $E_1=\O$ now implies the general case: since} also $\underline{\RHom}(E_1, E_2)$ is a perfect complex, the tensor-hom adjunction yields
\begin{equation*}
\begin{split}
\RHom(E_1, E_2)[\tfrac{1}{p}]&\cong \RHom(\O, \sRHom(E_1, E_2))[\tfrac{1}{p}] \\
&\cong \RHom_{\MF^\phi_\adm(\Q_p)}(\Q_p, D_\crys(T_\et(\sRHom(E_1, E_2))[\tfrac{1}{p}])) \\
&\cong \RHom_{\MF^\phi_\adm(\Q_p)}(\Q_p, \sRHom_{\MF^\phi_\adm(\Q_p)}(D_1, D_2)) \\
&\cong \RHom_{\MF^\phi_\adm(\Q_p)}(D_1, D_2)\;,
\end{split}
\end{equation*}
as desired. Note that we have used that
\begin{equation*}
D_\crys(T_\et(\sRHom(E_1, E_2))[\tfrac{1}{p}])\cong \sRHom_{\MF^\phi_\adm(\Q_p)}(D_1, D_2)\;;
\end{equation*}
\EDIT{this} follows from the fact that $D_\crys$ is \EDIT{a symmetric monoidal equivalence} while the étale realisation is the composition of a pullback along an open immersion with a symmetric monoidal equivalence, \EDIT{both of which preserve internal hom} \EEDIT{between perfect complexes}. Finally, \EDIT{by \cref{lem:beilfibsq-boundedtstructure},} essential surjectivity is immediate from \cref{thm:syntomification-reflcrys} and the fact that the étale realisation preserves fibres \EDIT{(since it commutes with shifts and colimits)}.
\end{proof}

From here, it is quite straightforward to prove \cref{thm:beilfibsq-categorical}.

\begin{proof}[Proof of \cref{thm:beilfibsq-categorical}]
\EDIT{Since $\DF(\Q_p)\cong \D(\MF(\Q_p))$}, all we have is to show is that, for any $E_1, E_2\in\Perf(\Z_p^\Syn)$, the given functor induces a pullback diagram
\begin{equation*}
\begin{tikzcd}
\RHom(E_1, E_2)[\tfrac{1}{p}]\ar[r]\ar[d] & \RHom_{\Mod^\phi(\Q_p)}(T_\crys(E_1)[\tfrac{1}{p}], T_\crys(E_2)[\tfrac{1}{p}])\ar[d] \\
\RHom_{\MF(\Q_p)}(T_{\dR, +}(E_1)[\tfrac{1}{p}], T_{\dR, +}(E_2)[\tfrac{1}{p}])\ar[r] & \RHom_{\Q_p}(T_\dR(E_1)[\tfrac{1}{p}], T_\dR(E_2)[\tfrac{1}{p}])\nospacepunct{\;.}
\end{tikzcd}
\end{equation*}
However, \EEDIT{using \cref{prop:beilfibsq-perfzpsyncrys} and} writing $D_i=D_\crys(T_\et(E_i)[\tfrac{1}{p}])$ for $i=1, 2$ again, it follows from \cref{lem:beilfibsq-etalephimod} and \cref{lem:beilfibsq-etalefiltered} that it is equivalent to prove that the diagram
\begin{equation*}
\begin{tikzcd}
\RHom_{\MF^\phi_\adm(\Q_p)}(D_1, D_2)\ar[r]\ar[d] & \RHom_{\Mod^\phi(\Q_p)}(T_\crys(D_1), T_\crys(D_2))\ar[d] \\
\RHom_{\MF(\Q_p)}(T_{\dR, +}(D_1), T_{\dR, +}(D_2))\ar[r] & \RHom_{\Q_p}(T_\dR(D_1), T_\dR(D_2))
\end{tikzcd}
\end{equation*}
is cartesian. However, this follows from \cref{prop:beilfibsq-crys} using the tensor-hom adjunction and the fact that $T_\crys, T_{\dR, +}$ and $T_\dR$ commute with internal hom.
\end{proof}

\newpage

\section{Comparing syntomic cohomology and étale cohomology}
\label{sect:syntomicetale}

In this section, we prove a comparison theorem between syntomic cohomology and étale cohomology with coefficients in sufficiently negative Hodge--Tate weights. This generalises previous results by Abhinandan, Colmez--Niziol and Tsuji, see \EDIT{\cite[Cor. 1.11]{Abhinandan}}, \cite[Cor. 5.21]{ColmezNiziol} and \cite[Thm. 3.3.4]{Tsuji}. Most notably, while the aforementioned results all concern the rational case, i.e.\ one needs to invert $p$, this is not necessary in our setting and our results hold integrally.

\begin{thm}
\label{thm:syntomicetale-maincoarse}
Let $X$ be a smooth qcqs $p$-adic formal scheme of relative dimension $d$ over $\Spf\Z_p$. For any $F$-gauge $E\in\Perf(X^\Syn)$ with Hodge--Tate weights all at most $-d-2$, there is a natural isomorphism
\begin{equation*}
R\Gamma_\Syn(X, E)\cong R\Gamma_\proet(X_\eta, T_\et(E))\;.
\end{equation*}
\end{thm}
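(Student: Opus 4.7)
The plan is to combine the fine comparison \cref{thm:syntomicetale-mainfine} with a cohomological vanishing argument in high degrees, reducing to the affine case via Zariski descent.

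First, I would apply \cref{thm:syntomicetale-mainfine} with $i=d+1$: since every Hodge--Tate weight of $E$ is at most $-d-2=-(d+1)-1$, the hypothesis is satisfied, yielding an isomorphism
\[
\tau^{\leq d+1}R\Gamma(X^\Syn,E)\cong \tau^{\leq d+1}R\Gamma_\proet(X_\eta,T_\et(E))
\]
and an injection on $H^{d+2}$. If the fine theorem is formulated only for vector bundle $F$-gauges, one first bootstraps to perfect complexes using that $\Perf(X^\Syn)$ is generated under shifts and fibres from $\Vect(X^\Syn)$ (an instance of \cref{lem:beilfibsq-boundedtstructure}) and that Hodge--Tate weights are stable under shifts and fibres.

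Next, by Zariski descent for both syntomic cohomology (compare \cref{lem:syntomicetale-colimtot}) and proétale cohomology of the rigid-analytic generic fibre, one reduces to the case where $X=\Spf R$ is affine. It then suffices to show that, in this affine situation, both sides of the desired comparison are concentrated in cohomological degrees $[0,d+1]$, since the fine comparison above will then upgrade to a full isomorphism.

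For the syntomic side in the affine smooth case, the complexes $R\Gamma(X^\N, j_\N^*E)$ and $R\Gamma(X^\prism, j_\prism^* E)$ should be concentrated in $[0,d]$: via the Nygaard and Hodge--Tate comparisons of \cref{thm:filprism-comparison} and (\ref{eq:fildhod-htcomparison2}), their conjugate/Nygaard graded pieces are built from the Hodge cohomology of $X$, which for affine smooth $X$ of relative dimension $d$ reduces to $\Omega^i_{R/\Z_p}$ in degree zero, so the total complex lies in $[0,d]$. The glueing fibre sequence (\ref{eq:syntomification-fibreseq}) then places $R\Gamma(X^\Syn,E)$ in $[0,d+1]$.

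For the étale side, $X_\eta$ is a smooth affinoid rigid space of dimension $d$ over $\Q_p$. Scholze's almost vanishing theorem gives $H^j_\proet(X_{\eta,C},L)=0$ for $j>d$ for $\Z_p$-local systems $L$, and the Hochschild--Serre spectral sequence relative to $G_{\Q_p}$ (which has $p$-cohomological dimension $2$) provides the a priori bound $j\leq d+2$. The reduction from $d+2$ to $d+1$ would proceed via Tate local duality: for any continuous $\Z_p[G_{\Q_p}]$-representation $V$ with all Hodge--Tate weights strictly negative, the dual twist $V^*(1)$ has all Hodge--Tate weights strictly positive, hence no Galois-invariants, so $H^2(G_{\Q_p},V[\tfrac1p])\cong H^0(G_{\Q_p},V^*(1)[\tfrac1p])^\vee$ vanishes; applied to $V=T_\et(E)$, whose Hodge--Tate weights are all $\leq -d-2<0$, this kills the top-degree contribution.

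The main obstacle will be the étale-side vanishing, specifically the refined bound $d+1$: the Tate-duality argument above is cleanest after inverting $p$, and the integral statement requires separately controlling the $p$-torsion cohomology, presumably by reducing modulo $p$ and using the characteristic-$p$ version of Scholze's vanishing together with careful estimates on the ramification of $T_\et(E)/p$. Equally, one must verify that the syntomic-side amplitude bound $[0,d+1]$ survives the passage from the structure sheaf to a general perfect $F$-gauge with the assumed Hodge--Tate weights, which relies on the constraint that the relevant pushforwards of $E$ from $X^\N$ and $X^\prism$ down to $\Spf\Z_p$ do not enlarge the amplitude beyond what the Hodge--Tate weights allow.
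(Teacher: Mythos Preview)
Your approach is genuinely different from the paper's and has real gaps. The paper does \emph{not} reduce \cref{thm:syntomicetale-maincoarse} to \cref{thm:syntomicetale-mainfine}; instead it runs the same machinery as in the proof of the fine theorem directly on perfect complexes. After reducing mod $p$, \cref{prop:syntomicetale-filtrationetale} says the cofibre of the comparison map is computed by the graded pieces $R\Gamma(X^\Syn_\red, E/v_{1,X}\{k(p-1)\})$ for $k\geq 1$, and the task becomes showing these \emph{vanish outright} (not merely lie in high degrees). With the stronger weight hypothesis, the pushforward to $\Z_{p,\red}^\Syn$ has all Hodge--Tate weights at most $-p-1$ by \cref{prop:syntomicetale-cohomologyhod}, which forces $F_\Hod(E)=F_{\dR,+}(E)=0$ and $F_{\HT,c}(E)\xrightarrow{\sim}F_\dR(E)$ in the gluing formula (\ref{eq:syntomification-cohomologyreduced}), giving the vanishing immediately. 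No cohomological-dimension estimates for either side are needed.

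Your route has two structural problems. First, the bootstrapping step is doubtful: there is no reason that $\Vect(X^\Syn)$ generates $\Perf(X^\Syn)$ under shifts and fibres; the $t$-structure the paper uses (cf.\ \cref{rem:filprism-cohheartperf}) has \emph{coherent sheaves} as its heart, not vector bundles, and \cref{thm:syntomicetale-mainfine} is not stated for those. Second, the amplitude bounds you propose cannot hold for arbitrary $E\in\Perf(X^\Syn)$: a shift $E[n]$ still satisfies the Hodge--Tate hypothesis but moves the cohomology out of $[0,d+1]$, so the claim that both sides live in $[0,d+1]$ is false in general. Even restricting to vector bundles, the integral étale-side bound $d+1$ is exactly the delicate point you yourself flag, and there is no evident way to close it without essentially redoing the reduced-locus analysis the paper carries out anyway.
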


If we restrict to vector bundle $F$-gauges, we can obtain an even finer comparison. However, note that the range of our result differs slightly from those of the results of Colmez--Niziol and Tsuji mentioned above, which is due to the fact that they are working with log structures; see also the discussion in \cite[Rem.s 1.3, 1.12.(i)]{Abhinandan}.

\begin{thm}
\label{thm:syntomicetale-mainfine}
Let $X$ be a smooth qcqs $p$-adic formal scheme. For any vector bundle $F$-gauge $E\in\Vect(X^\Syn)$ with Hodge--Tate weights all at most $-i-1$ for some $i\geq 0$, the natural morphism
\begin{equation*}
R\Gamma_\Syn(X, E)\rightarrow R\Gamma_\proet(X_\eta, T_\et(E))
\end{equation*}
induces an isomorphism
\begin{equation*}
\tau^{\leq i} R\Gamma_\Syn(X, E)\cong\tau^{\leq i} R\Gamma_\proet(X_\eta, T_\et(E))
\end{equation*}
and an injection on $H^{i+1}$.
\end{thm}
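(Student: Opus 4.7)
The plan is to reduce the comparison to a cohomological vanishing statement on (the pullback of) the reduced locus of $\Z_p^\Syn$ and exploit the vector bundle hypothesis to make this vanishing precise. First I would establish a relative version of the isomorphism (\ref{eq:syntomification-synet}), i.e.\ a natural identification
\begin{equation*}
R\Gamma(X^\Syn, E)[\tfrac{1}{v_1}] \cong R\Gamma_\proet(X_\eta, T_\et(E)),
\end{equation*}
which realises the target of the comparison map as the $v_1$-localisation. With this in hand, the claim becomes the statement that the fibre $F$ of $R\Gamma(X^\Syn, E) \to R\Gamma(X^\Syn, E)[\tfrac{1}{v_1}]$ is concentrated in cohomological degrees $\geq i+1$.

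Since $v_1 \in H^0(\Z_p^\Syn, \O\{p-1\}/p)$ lives on the mod-$p$ locus, the fibre $F$ is derived $p$-complete, so by derived Nakayama it is enough to prove the connectivity estimate for $F/p$. Working modulo $p$, the syntomic filtration $\Fil_\bullet^\Syn$ of \cref{defi:syntomification-fil} is complete (because $v_1$ is topologically nilpotent, a fact the author records separately as \cref{prop:syntomicetale-filtrationetale}), so $F/p$ is assembled from the graded pieces
\begin{equation*}
\gr_n^\Syn R\Gamma((X^\Syn)_{p=0}, E/p)[\tfrac{1}{v_1}] \cong R\Gamma(X^\Syn_\red, (E/v_1)\{n(p-1)\}), \qquad n \geq 1,
\end{equation*}
cf.\ (\ref{eq:syntomification-grsyn}). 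It thus suffices to show that each such graded piece lies in degrees $\geq i+1$.

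Each graded piece is cohomology of a quasi-coherent complex on the pullback $X^\Syn_\red \coloneqq X^\Syn \times_{\Z_p^\Syn} \Z_{p,\red}^\Syn$, so I would compute it using the gluing description (\ref{eq:syntomification-cohomologyreduced}), i.e.\ as the fibre of a map between the four pieces $F_\dR, F_\Hod, F_{\HT,c}, F_{\dR,+}$ pulled back to $X$. For a vector bundle $F$-gauge $E$, each pullback to one of the four strata $\Z_{p,\dR}^\N, \Z_{p,\Hod}^\N, \Z_{p,\HT,c}^\N, \Z_{p,\dR,+}^\N$ is a vector bundle on the corresponding $X$-quotient-stack, whose cohomology is described by an explicit fibre sequence built from the Sen-type operator $\Theta$ (or the operator $D$ on $\Z_{p,\HT,c}^\N$). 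The bound on Hodge--Tate weights of $E$ ensures that, after twisting by $\{n(p-1)\}$ with $n \geq 1$, this operator acts invertibly on each graded piece of the conjugate/Hodge filtration, so that the fibres in the formulas for $F_\dR, F_\Hod, F_{\HT,c}, F_{\dR,+}$ are concentrated in the right range of degrees $\geq i+1$.

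The main obstacle will be the final bookkeeping step on the strata: one has to match the Hodge--Tate-weight bound of the $F$-gauge, the twist $\{n(p-1)\}$ coming from the syntomic filtration, and the cohomological amplitude of $X$ contributed by the Hodge/de~Rham cohomology of the vector bundle on $X$. The cleanest way is presumably to do this stratum by stratum: on $\Z_{p,\dR}^\N$ and $\Z_{p,\Hod}^\N$ one uses that $\Theta$ becomes multiplication by a nonzero scalar on every Hodge graded piece (guaranteed by the inequality $-i-1-n(p-1)<0$), and on $\Z_{p,\HT,c}^\N$ and $\Z_{p,\dR,+}^\N$ one invokes an inductive/filtered analogue of the same statement, using completeness of the conjugate (resp.\ Hodge) filtration. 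Once the degrees on each stratum are controlled, the fibre sequence (\ref{eq:syntomification-cohomologyreduced}) delivers the required connectivity of $F/p$, and hence the theorem.
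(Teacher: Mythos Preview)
Your reduction strategy is exactly the paper's: pass mod $p$, use completeness of the syntomic filtration (\cref{prop:syntomicetale-filtrationetale}) to reduce to showing $\gr^\Syn_k$ for $k\geq 1$ is concentrated in degrees $\geq i+1$, then compute on $X^\Syn_\red$ via the gluing formula (\ref{eq:syntomification-cohomologyreduced}). The gap is in the endgame on the strata.

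You propose to argue that $\Theta$ (or $D$) ``becomes multiplication by a nonzero scalar on every Hodge graded piece, guaranteed by the inequality $-i-1-n(p-1)<0$'', and hence the four pieces $F_\dR, F_\Hod, F_{\HT,c}, F_{\dR,+}$ are each concentrated in degrees $\geq i+1$. This fails for two reasons. First, you are over $\F_p$, so what matters is whether the weight is nonzero \emph{modulo $p$}, not whether it is negative; after the twist the weights are $\leq -i-p$, and such integers can perfectly well be divisible by $p$. Second, $F_\dR(E)=\fib(V\xrightarrow{\Theta}V)$ and $F_{\HT,c}(E)=\fib(\Fil_0 V\xrightarrow{D}\Fil_{-1}V)$ are \emph{not} individually concentrated in degrees $\geq i+1$ in general: $V$ is the full (twisted) de~Rham cohomology of $X$ with coefficients, and can have nonzero $H^0$.

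The paper's argument is different and does not use invertibility of $\Theta$ at all. After pushing forward to $\Z_{p,\red}^\Syn$ and identifying the data $(V,\Fil^\bullet V,\Fil_\bullet V,\Theta,D)$, \cref{prop:syntomicetale-cohomologyhod} (the explicit description of pushforward along $(X^\Hod)_{p=0}\to B\G_m$, this is where the vector bundle hypothesis enters) gives a cohomological amplitude bound: writing $d=\dim X$, one finds $\gr^{-i-p+d-k} V$ lies in degrees $\geq d-k$ for $0\leq k\leq d$ and $\gr^m V=0$ for $m>-i-p+d$. These amplitude bounds alone force $F_\Hod(E)=\fib(\gr^0 V\to\gr^{-p}V)$ and (via completeness of $\Fil^\bullet V$ from \cref{lem:finiteness-pushforwardcomplete}) $F_{\dR,+}(E)=\fib(\Fil^0 V\to\Fil^{-p}V)$ into degrees $\geq i+1$. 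For the remaining two pieces the paper does \emph{not} bound them separately: instead, since $\gr^\bullet V=\gr_\bullet V$, the same amplitude estimates show that the natural maps $\Fil_0 V\to V$ and $\Fil_{-1} V\to V$ are isomorphisms in low cohomological degrees, so the restriction map $F_{\HT,c}(E)\to F_\dR(E)$ (part of $b_E$) is an isomorphism in degrees $\leq i$. In the fibre sequence (\ref{eq:syntomification-cohomologyreduced}), the contributions of $F_{\HT,c}$ and $F_\dR$ then cancel in the relevant range, yielding the desired connectivity.
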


\comment{
In the geometric case, i.e.\ if $X$ admits a morphism to $\Spf\O_C$, we can obtain the comparison above in an even larger range. In particular, using $p\geq 2$, this recovers the result from \cite[Thm. 10.1]{THHandPAdicHodgeTheory}.

\begin{thm}
\label{thm:syntomicetale-mainfineoc}
Let $X$ be a smooth qcqs $p$-adic formal scheme over $\Spf\O_C$. For any vector bundle $F$-gauge $E\in\Vect(X^\Syn)$ with Hodge--Tate weights all at most $-i+p-2$ for some $i\geq 0$, the natural morphism
\begin{equation*}
R\Gamma_\Syn(X, E)\rightarrow R\Gamma_\proet(X_\eta, T_\et(E))
\end{equation*}
induces an isomorphism
\begin{equation*}
\tau^{\leq i} R\Gamma_\Syn(X, E)\cong\tau^{\leq i} R\Gamma_\proet(X_\eta, T_\et(E))
\end{equation*}
and an injection on $H^{i+1}$.
\end{thm}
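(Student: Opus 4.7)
My plan is to adapt the strategy of \cref{thm:syntomicetale-mainfine} and then exploit the geometric hypothesis $X\to\Spf\O_C$ to extend the range of comparison by $p-1$. By (\ref{eq:syntomification-synet}), the comparison map is (at least after reduction modulo $p$) identified with the canonical map from $R\Gamma(X^\Syn, E)$ to its $v_1$-localisation. Since the syntomic filtration is complete by \cref{prop:syntomicetale-filtrationetale}, the cofibre of this map is controlled by the associated graded pieces $R\Gamma(X^\Syn_\red, (E/v_1)\{n(p-1)\})$ for $n\geq 1$, computed on the reduced locus via (\ref{eq:syntomification-grsyn}). The task is therefore to bound the connective amplitude of these graded pieces: vanishing in cohomological degrees $\leq i$ yields the claimed isomorphism on $\tau^{\leq i}$ and injection on $H^{i+1}$.

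The first step is a Bockstein reduction to mod-$p$ coefficients, after which one works Zariski-locally on $X$. Using \cref{lem:filprism-quasisyntomiccover} together with the concrete descriptions of $\O_C^\Syn$ from \cref{ex:filprism-perfd} and of $R^\Syn$ for quasiregular semiperfectoid $R$ afforded by \cref{thm:filprism-qrsp}, I would reduce the cohomology computation to one over a quasiregular semiperfectoid $\O_C$-algebra cover. Because $X$ maps to $\Spf\O_C$, the pulled-back $F$-gauge lives over a base where $v_1$ admits the explicit description $\xi t^{-(p-1)}$ from \cref{ex:syntomification-v1perfd}. For each graded piece I would then invoke the gluing description (\ref{eq:syntomification-cohomologyreduced}) of cohomology on $\Z_{p,\red}^\Syn$ to split the computation into contributions from the Hodge-filtered de Rham, conjugate-filtered Hodge--Tate, de Rham and Hodge components, each of which admits a linear-algebraic description from the end of \cref{subsect:syntomification}.

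The key improvement over the arithmetic case, accounting for the extra $p-1$ in the Hodge--Tate weight range, should come from the divided-power structure of $\xi$ in $A_\crys$. Concretely, in $A_\crys$ one has $\xi^p\in pA_\crys$ since $\xi^p/p!\in A_\crys$ and $(p-1)!$ is a unit mod $p$; pulling this back along $\O_C^\dR\cong\Spf A_\crys$ (see \cref{ex:drstack-perfd}) gives additional nilpotence of $v_1$ modulo $p$ on the de Rham component of $\O_C^\Syn_\red$ that is unavailable over $\Z_p$. This should translate into an extra factor of $p-1$ in the vanishing range for the de Rham/Hodge components of the graded pieces, while the Hodge--Tate components are already handled by the Hodge--Tate weight hypothesis as in the arithmetic case.

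The main obstacle will be making the last step quantitative: one must carefully track how the divided powers of $\xi$ interact with the gluing description of $\Z_{p,\red}^\Syn$ and with the grading induced by the Hodge--Tate weights of $E$. I expect this will require formulating a refinement of the linear-algebraic description of $\D(\Z_{p,\dR,+}^\N)$ that is sensitive to the $A_\crys$-structure, rather than just the $\Z_p$-linear structure recalled in \cref{subsect:syntomification}; once such a description is in place, the promised $(p-1)$-step extension of the range should follow by a direct comparison of connectivity estimates with those that underlie the proof of \cref{thm:syntomicetale-mainfine}.
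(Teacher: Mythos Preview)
Your opening reduction (mod~$p$, completeness of the syntomic filtration, bounding the graded pieces $R\Gamma(X^\Syn_\red, (E/v_1)\{k(p-1)\})$ in cohomological degrees $\geq i+1$) matches the paper exactly. The divergence is in what comes next, and here there is a genuine gap: you push forward to $\Z_{p,\red}^\Syn$ and try to run the gluing formula (\ref{eq:syntomification-cohomologyreduced}), whereas the paper instead pushes forward along $\pi'_{X^\Syn}: X^\Syn\to \O_C^\Syn$ and works directly over the perfectoid base, where no gluing is needed at all. Over $\O_C^\N\cong \Spf(A_\inf\langle u,t\rangle/(ut-\phi^{-1}(\xi)))/\G_m$, the pushforward to $\O_{C,\red}^\Syn$ is encoded by the diagram $\{M^\bullet\}$ of \cref{ex:syntomification-fgaugesperfd} with the extra relation $u^pt=0$ (since $v_{1,\O_C}=u^pt$), and cohomology is simply $\fib(M^0\xrightarrow{t^\infty-\tau u^\infty} M^{-\infty})$. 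The connectivity estimates from \cref{prop:syntomicetale-cohomologyhod} bound where the $u$-maps are isomorphisms or injective; then the relation $u^pt=0$ forces $t:M^0\to M^{-1}$, and hence $t^\infty$, to vanish in degrees $\leq i+1$ precisely because $u^p:M^{-1}\to M^{p-1}$ is injective there. This is the exact mechanism yielding the extra $p-1$ in the weight range.

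By contrast, your proposed mechanism via $\xi^p\in pA_\crys$ on $\O_C^\dR\cong\Spf A_\crys$ lands on the wrong component: the paper's argument never touches $A_\crys$ or the de Rham component description, and the improvement is not a divided-power phenomenon but the algebraic relation $u^pt=0$ on $\O_{C,\red}^\N$. In particular, the ``refinement of the linear-algebraic description of $\D(\Z_{p,\dR,+}^\N)$'' you anticipate needing is unnecessary --- the argument over $\O_C^\Syn$ is strictly simpler than the arithmetic one, not a refinement of it.
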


As in the arithmetic case, one can also formulate a version that works for arbitrary perfect complexes on $X^\Syn$ instead of just vector bundles:

\begin{thm}
\label{thm:syntomicetale-maincoarseoc}
Let $X$ be a smooth qcqs $p$-adic formal scheme of relative dimension $d$ over $\Spf\O_C$. For any $F$-gauge $E\in\Perf(X^\Syn)$ with Hodge--Tate weights all at most $-d+p-2$ for some $i\geq 0$, there is a natural isomorphism
\begin{equation*}
R\Gamma_\Syn(X, E)\cong R\Gamma_\proet(X_\eta, T_\et(E))\;.
\end{equation*}
\end{thm}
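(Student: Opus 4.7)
The plan is to deduce this coarse comparison from the fine comparison \ref{thm:syntomicetale-mainfineoc} by combining a reduction from perfect $F$-gauges to vector bundle $F$-gauges with a cohomological vanishing argument in degrees exceeding $d$. Applying Theorem \ref{thm:syntomicetale-mainfineoc} with $i=d$, a vector bundle $F$-gauge with Hodge--Tate weights at most $-d+p-2$ already yields an isomorphism
\begin{equation*}
\tau^{\leq d} R\Gamma_\Syn(X, E)\cong \tau^{\leq d} R\Gamma_\proet(X_\eta, T_\et(E))
\end{equation*}
and an injection on $H^{d+1}$, so it remains to propagate the statement to arbitrary perfect $E$ and to show that both sides vanish in degrees $>d$.

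For the devissage step, I would imitate the strategy of \cref{lem:beilfibsq-boundedtstructure}: the $t$-structure on $\Perf(X^\Syn)$ induced from the one on its affine cover is bounded and non-degenerate, so every perfect $F$-gauge is built from objects in the coherent heart via shifts and fibres. Since both functors commute with fibre sequences, and since a coherent $F$-gauge admits, étale locally on $X^\Syn$, a finite locally free resolution, one can reduce to the vector bundle case. The price to pay is that shifts and twists appear, and one must verify that the Hodge--Tate weight bound $-d+p-2$ is preserved -- the weight condition in \cref{defi:nygaardhodge-htweights} behaves additively under tensoring with $\O\{n\}$, so careful bookkeeping with the Postnikov-type filtration of $E$ should permit the reduction without weakening the bound in a way that breaks the statement.

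For the vanishing in degrees $>d$, the decisive input is that we now work over $\Spf\O_C$ rather than $\Spf\Z_p$: the pro-étale cohomology of a smooth qcqs rigid-analytic space of dimension $d$ over $C$ with coefficients in a $\Z_p$-local system has cohomological dimension $d$ (combining Scholze's vanishing for smooth affinoid spaces over $C$ with a Mayer--Vietoris argument over a finite affinoid cover), which is precisely the improvement by one degree over the arithmetic case \ref{thm:syntomicetale-maincoarse} that lets us absorb the $(+p-2)$ gain coming from \ref{thm:syntomicetale-mainfineoc}. On the syntomic side, the corresponding vanishing can be extracted from the fine comparison applied iteratively to twists of $E$, or equivalently bounded by the coherent cohomological dimension of $X^\Syn\rightarrow\O_C^\Syn$.

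The main obstacle will be tightly controlling the Hodge--Tate weights through the reduction from perfect complexes to vector bundles: the locally free resolution and the Postnikov filtration each introduce cohomological shifts, and one must confirm that compensating twists by Breuil--Kisin line bundles keep every piece inside the range where \ref{thm:syntomicetale-mainfineoc} applies -- together with the fact that one never needs to go beyond cohomological degree $d$ on either side, which is what makes the specific bound $-d+p-2$ the right one. A secondary technical point is verifying the $d$-dimensional vanishing for the pro-étale site of $X_\eta$ with coefficients in a non-trivial $\Z_p$-local system in the qcqs (rather than affinoid) setting, for which I would pass through an affinoid cover using the Čech--to--cohomology spectral sequence.
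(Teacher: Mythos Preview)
Your approach is genuinely different from the paper's, and unfortunately it has real gaps.

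The paper does \emph{not} deduce the coarse statement from the fine one. Instead it argues exactly as in the arithmetic coarse case (\cref{thm:syntomicetale-maincoarse}): after reducing mod $p$ and passing to the syntomic filtration, one must show that each graded piece $R\Gamma(X^\Syn_\red, E/v_{1,X}\{k(p-1)\})$ vanishes for $k\geq 1$. After the twist, $E$ has Hodge--Tate weights $\leq -d-1$. Now the key point is that, since $\O_C$ is perfectoid, one can push forward to $\O_{C,\red}^\Syn$ and use the explicit diagram $\{M^i\}$ from \cref{ex:syntomification-fgaugesperfd} (with $ut=\phi^{-1}(\xi)$ and $u^pt=0$). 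One shows via \cref{prop:syntomicetale-cohomologyhod} that the induced filtration $\Fil^\bullet V=M^\bullet/M^{\bullet-1}$ has $\gr^\bullet V=0$ for $\bullet\geq 0$; completeness forces $\Fil^\bullet V=0$ there, so $u:M^\bullet\to M^{\bullet+1}$ is an isomorphism for $\bullet\geq -1$. Then $u^\infty$ and $u^p$ are isomorphisms, and $u^pt=0$ forces $t:M^0\to M^{-1}$ to vanish. The fibre formula $R\Gamma(\O_{C,\red}^\Syn, E)=\fib(M^0\xrightarrow{t^\infty-\tau u^\infty}M^{-\infty})$ then gives zero because $-\tau u^\infty$ is an isomorphism. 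No cohomological-dimension argument appears anywhere.

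Your proposal, by contrast, hinges on two claims that do not hold in the stated generality. First, the vanishing of both sides in degrees $>d$: for a merely qcqs (not affinoid, not proper) smooth rigid space over $C$, pro-\'etale cohomology with $\Z_p$-local-system coefficients need not vanish above degree $d$ --- Scholze's Artin vanishing gives this for affinoids, but the \v{C}ech spectral sequence over a finite affinoid cover can push the range up. The syntomic side is no better. Second, your d\'evissage from perfect to vector bundle $F$-gauges is not sound: the heart of the $t$-structure on $\Perf(X^\Syn)$ consists of coherent sheaves, not vector bundles, and \cref{thm:syntomicetale-mainfineoc} genuinely requires $E\in\Vect(X^\Syn)$; there is no mechanism here for resolving coherent $F$-gauges by vector-bundle $F$-gauges globally while controlling Hodge--Tate weights. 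The paper's direct argument on the reduced locus sidesteps both issues entirely.
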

}

\subsection{The reduced locus of $X^\Syn$}
\label{subsect:syntomicetale-red}

To prove \cref{thm:syntomicetale-maincoarse} and \cref{thm:syntomicetale-mainfine}, we will define and describe a certain locus inside $X^\N$ and $X^\Syn$, similarly to what we have done for the reduced locus of $\Z_p^\N$ and $\Z_p^\Syn$. For the rest of this section, let $X$ be a bounded $p$-adic formal scheme.

\begin{defi}
The \emph{reduced locus} $X_\red^\Syn$ of $X^\Syn$ is defined as the pullback
\begin{equation*}
\begin{tikzcd}
X^\Syn_\red\ar[d]\ar[r] & X^\Syn\ar[d] \\
\Z_{p, \red}^\Syn\ar[r] & \Z_p^\Syn\nospacepunct{\;.}
\end{tikzcd}
\end{equation*}
Similarly, we define the reduced locus $X_\red^\N$ of $X^\N$.
\end{defi}

Alternatively, one may describe the reduced locus of $X^\Syn$ as the locus cut out by the equations $p=v_{1, X}=0$, where $v_{1, X}\in H^0(X^\Syn, \O\{p-1\}/p)$ denotes the pullback of the section $v_1\in H^0(\Z_p^\Syn, \O\{p-1\}/p)$ to $X^\Syn$. Then $X_\red^\N$ can also be obtained as the pullback of $X_\red^\Syn$ to $X^\N$.

To pass between cohomology on $X_\red^\Syn$ and $X^\Syn$, we again introduce a syntomic filtration similarly to the case $X=\Spf\Z_p$ \EEDIT{treated in \cref{defi:syntomification-fil}:}

\begin{defi}
Let \EEDIT{$E\in\D((X^\Syn)_{p=0})$}. The filtration
\begin{equation*}
\begin{tikzcd}[column sep=scriptsize]
\dots\ar[r, "v_{1, X}"] & R\Gamma(X^\Syn, E\{-(p-1)\})\ar[r, "v_{1, X}"] & R\Gamma(X^\Syn, E)\ar[r, "v_{1, X}"] & R\Gamma(X^\Syn, E\{p-1\})\ar[r, "v_{1, X}"] & \dots  
\end{tikzcd}
\end{equation*}
is called the \emph{syntomic filtration} and denoted $\Fil_\bullet^\Syn R\Gamma(X^\Syn, E)[\frac{1}{v_1}]$. We denote the underlying unfiltered object by $R\Gamma(X^\Syn, E)[\frac{1}{v_1}]$.
\end{defi}

\EDIT{Note that, similarly to (\ref{eq:syntomification-grsyn}), for any \EEDIT{$E\in\D((X^\Syn)_{p=0})$}, there is a canonical isomorphism 
\begin{equation}
\label{eq:syntomicetale-grsyn}
\gr^\Syn_\bullet R\Gamma(X^\Syn, E)[\tfrac{1}{v_1}]\cong R\Gamma(X_\red^\Syn, E/v_{1, X}\{\bullet(p-1)\})\;,
\end{equation}
where $E/v_{1, X}\coloneqq\cofib(E\{-(p-1)\}\xrightarrow{v_{1, X}} E)$. Moreover, as in the case $X=\Spf\Z_p$, \EEDIT{see (\ref{eq:syntomification-synet})}, we have the following result relating the syntomic filtration to étale cohomology:
}

\begin{prop}
\label{prop:syntomicetale-filtrationetale}
Let $X$ be a bounded \EEDIT{qcqs} $p$-adic formal scheme. For any \EEDIT{$E\in\D((X^\Syn)_{p=0})$}, \EDIT{the syntomic filtration is complete}. \EEDIT{If $X$ is moreover $p$-quasisyntomic and $E\in\Perf((X^\Syn)_{p=0})$, then} there is a natural isomorphism
\begin{equation*}
R\Gamma(X^\Syn, E)[\tfrac{1}{v_1}]\cong R\Gamma_\proet(X_\eta, T_\et(E))\;.
\end{equation*}
\end{prop}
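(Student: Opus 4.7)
The plan is to prove both assertions by reducing to the case where $X = \Spf R$ for $R$ perfectoid and then making direct computations using the explicit descriptions of $R^\Syn$ from \cref{ex:filprism-perfd} and of $F$-gauges on $R$ from \cref{ex:syntomification-fgaugesperfd}.

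For completeness of the syntomic filtration, I would first reduce to the affine case Zariski-locally on $X$ (using that Zariski descent commutes with limits), and then descend along a perfectoid cover to reduce to the case where $R$ is perfectoid. In this setting, by \cref{ex:syntomification-v1perfd}, $v_1$ identifies with $\xi t^{-(p-1)}$ after trivialising the Breuil--Kisin twist, so iterated multiplication by $v_1$ amounts to iterated multiplication by $\xi$ together with an increasing shift in the grading weight. Since the complexes $M^i$ appearing in the description of $E$ are derived $(p,\xi)$-complete and we are working modulo $p$, they are $\xi$-complete, and this forces the inverse limit along the filtration to vanish.

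For the isomorphism with pro-étale cohomology, I would similarly reduce to the perfectoid case via $p$-quasisyntomic descent (whose existence for the syntomification follows from \cref{lem:filprism-quasisyntomiccover} together with the gluing definition of $X^\Syn$). The key observation is that in the perfectoid case, inverting $v_1$ is equivalent to inverting $\xi$, again by \cref{ex:syntomification-v1perfd}. Using the formula
\[
R\Gamma(R^\Syn, E) = \fib(M^0 \xrightarrow{t^\infty - \tau u^\infty} M^{-\infty})
\]
from \cref{ex:syntomification-fgaugesperfd} and the fact that both $t^\infty$ and $u^\infty$ become isomorphisms after inverting $\phi^{-1}(\xi)$, hence after inverting $\xi$, the expression $R\Gamma(R^\Syn, E)[\tfrac{1}{v_1}]$ simplifies to the derived $\phi$-invariants
\[
\fib(M^{-\infty}[\tfrac{1}{\xi}] \xrightarrow{1-\phi} M^{-\infty}[\tfrac{1}{\xi}])
\]
of the Laurent $F$-crystal over $A_\inf(R)[\tfrac{1}{\xi}]_{(p)}^\wedge$ associated to $E$. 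By the very construction of the étale realisation via (\ref{eq:syntomification-fgaugetolaurentfcrystal}) and (\ref{eq:syntomification-laurentfcrystaltolocsys}), together with \cite[Ex. 3.5]{FCrystals} identifying perfect complexes over $A_\inf(R)[\tfrac{1}{\xi}]_{(p)}^\wedge$ with $\D^b_\lisse(\Spa(R[\tfrac{1}{p}], R), \Z_p)$, this identifies precisely with $R\Gamma_\proet(X_\eta, T_\et(E))$ in the perfectoid case.

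The hard part will be globalising this perfectoid computation to the general $p$-quasisyntomic case: one needs to verify that the quasisyntomic descent for $R\Gamma(X^\Syn, -)[\tfrac{1}{v_1}]$ is compatible with the arc-descent employed in the construction of $T_\et$ from \cite[Constr. 6.3.2]{FGauges}, and in particular that inverting $v_1$ commutes with the descent limit. Here the completeness statement just established, the perfectness of $E$ (providing appropriate boundedness), and the fact that both descents ultimately factor through covers by perfectoid rings should combine to yield the required compatibility.
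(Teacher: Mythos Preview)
Your approach to the second assertion is essentially the same as the paper's: reduce to quasiregular semiperfectoid (the paper) or perfectoid (you) via quasisyntomic descent, then compute directly. The paper simply cites \cite[Constr.~6.3.1]{FGauges} for the local computation, while you spell it out; you also correctly isolate the key technical point, namely that the filtered colimit defining $[\tfrac{1}{v_1}]$ must commute with the cosimplicial descent limit. The paper handles this by invoking \cref{lem:syntomicetale-colimtot}, which is precisely the statement that filtered colimits of coconnective objects commute with totalisations over stacks admitting a suitable flat cover --- this is where perfectness of $E$ (giving boundedness below) is used.

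There is, however, a genuine gap in your argument for completeness. The hypothesis for that part is only that $X$ is bounded and qcqs, \emph{not} $p$-quasisyntomic, so you cannot descend $X$ itself along a perfectoid or quasisyntomic cover. The paper sidesteps this by observing that $v_{1,X}$ is pulled back from $v_1 \in H^0(\Z_p^\Syn, \O\{p-1\}/p)$, so topological nilpotence of $v_{1,X}$ reduces to topological nilpotence of $v_1$ on $\Z_p^\Syn$ --- and \emph{that} can be checked after quasisyntomic descent to qrsp rings, since $\Z_p$ is certainly $p$-quasisyntomic. Once $v_1$ is known to be topologically nilpotent, completeness follows for arbitrary $E \in \D((X^\Syn)_{p=0})$ by an abstract argument: any map $E' \to \lim_\bullet E\{\bullet(p-1)\}$ becomes zero after pullback to every affine test scheme, hence is zero. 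Your $\xi$-completeness argument is morally the right computation in the perfectoid case, but you need this extra reduction-to-$\Z_p$ step to make it apply in the stated generality.
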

\begin{proof}
\EDIT{We first claim that the section $v_{1, X}$ is topologically nilpotent, i.e.\ its pullback to any scheme $\Spec S\rightarrow X^\Syn$ is nilpotent. To prove this, first note that, as $v_{1, X}$ arises from $v_1$ via pullback, it suffices to prove this for $X=\Spf\Z_p$ and subsequently we may reduce to the case $X=\Spf R$ for a $p$-torsionfree quasiregular semiperfectoid ring $R$ using quasisyntomic descent and \cref{lem:filprism-quasisyntomiccover}. However, here we know from \cref{thm:filprism-qrsp} that
\begin{equation*}
R^\N\cong \Spf(\Rees(\Fil^\bullet_\N\Prism_R))/\G_m\;,
\end{equation*}
where $\Rees(\Fil^\bullet_\N\Prism_R)$ is endowed with the $(p, I)$-adic topology, and, after pullback to $R^\N$, the section $v_{1, R}$ is given by the composition
\begin{equation*}
\begin{split}
\Prism_R/p\cong I^{-1}/p\tensor_{\Prism_R/p} I/p&\rightarrow I^{-1}/p\tensor_{\Prism_R/p} \Fil^p_\N\Prism_R/p \\
&\rightarrow I^{-1}/p\tensor_{\Prism_R/p} \Fil^{p-1}_\N\Prism_R/p\cong\Fil^{p-1}_\N\Prism_R\{p-1\}/p
\end{split}
\end{equation*}
by construction. However, this implies that $v_{1, R}^p$ factors through a map
\begin{equation*}
\begin{split}
\Prism_R/p\rightarrow I^{-(p-1)}/p\tensor_{\Prism_R/p} \Fil^{p(p-1)}_\N\Prism_R/p\cong I/p\tensor_{\Prism_R/p} \Fil^{p(p-1)}_\N\Prism_R\{p(p-1)\}/p
\end{split}
\end{equation*}
and hence $v_{1, R}$ is topologically nilpotent (recall that we are working in the $I$-adic topology), as claimed. This now shows that the syntomic filtration is \EDIT{complete}: Indeed, as the global sections functor commutes with limits, it suffices to show that the limit of the diagram
\begin{equation}
\label{eq:syntomicetale-synfilcomp}
\begin{tikzcd}[ampersand replacement=\&]
\dots\ar[r, "v_{1, X}"] \& E\{-(p-1)\}\ar[r, "v_{1, X}"] \& E\ar[r, "v_{1, X}"] \& E\{p-1\}\ar[r, "v_{1, X}"] \& \dots  
\end{tikzcd}
\end{equation}
vanishes. \EEDIT{However, for any $E'\in\D(\Z_p^\Syn)$ and any family of maps $\{E'\rightarrow E\{\bullet(p-1)\}\}$ which is compatible with the transition maps in the diagram (\ref{eq:syntomicetale-synfilcomp}), each of the maps $E'\rightarrow E\{\bullet(p-1)\}$ becomes zero after pullback to any scheme $\Spec S\rightarrow X^\Syn$ since the limit of the pullback of the diagram (\ref{eq:syntomicetale-synfilcomp}) to $\Spec S$ vanishes by topological nilpotency of $v_{1, X}$. Thus, each of the maps $E'\rightarrow E\{\bullet(p-1)\}$ must have been zero to begin with and this shows that the limit of the diagram (\ref{eq:syntomicetale-synfilcomp}) is zero, as desired.}

To prove the second statement, first observe that there is a natural map
\begin{equation*}
R\Gamma(X^\Syn, E)[\tfrac{1}{v_1}]\rightarrow R\Gamma_\proet(X_\eta, T_\et(E))
\end{equation*}
as the étale realisation carries $v_{1, X}$ to an isomorphism by \cite[Ex. 6.3.3]{FGauges}. To show that this map is an isomorphism, \cref{lem:syntomicetale-colimtot} allows us to reduce to the case where $X=\Spf R$ is quasiregular semiperfectoid using quasisyntomic descent and \cref{lem:filprism-quasisyntomiccover}. However, in this case, the claim follows from the description of the étale realisation functor from \cite[Constr. 6.3.1]{FGauges}. 
}
\end{proof}

\subsection{The components of the reduced locus of $X^\Syn$}
\label{subsect:syntomicetale-compred}

We now explain how to obtain the reduced locus $X^\Syn_\red$ by gluing several components which have a fairly explicit description.  \EDIT{Before proceeding, we encourage the reader to revisit the description of the reduced locus $\Z_{p, \red}^\Syn$ of $\Z_p^\Syn$ given towards the end of Section \ref{subsect:syntomification} as the case $X=\Spf\Z_p$ will be prototypical.} We let $Y\coloneqq X_{p=0}$ denote the special fibre of $X$.

\subsubsection{The de Rham and Hodge--Tate components}

\begin{defi}
The \emph{de Rham component} $X^\N_\dR$ of $X^\N$ is defined as the pullback
\begin{equation*}
\begin{tikzcd}
X^\N_\dR\ar[r]\ar[d] & X^\N\ar[d] \\
\Z_{p, \dR}^\N\ar[r] & \Z_p^\N\nospacepunct{\;.}
\end{tikzcd}
\end{equation*}
\end{defi}

Note that, by definition of $\Z_{p, \dR}^\N$, there is an fpqc cover $\Spec\F_p\rightarrow\Z_{p, \dR}^\N$ given by the filtered Cartier--Witt divisor
\begin{equation*}
\G_a^\sharp\oplus F_*W\xrightarrow{(\incl, V)} W
\end{equation*}
on $\Spec\F_p$. Due to the equivalence
\begin{equation*}
\Cone(\G_a^\sharp\oplus F_*W\xrightarrow{(\incl, V)} W)\cong\Cone(\G_a^\sharp\xrightarrow{\can}\G_a)
\end{equation*}
from \EDIT{(\ref{eq:filprism-drmap})}, we see that this cover induces a pullback square
\begin{equation}
\label{eq:syntomicetale-coverdr}
\begin{tikzcd}
(X^\dR)_{p=0}\ar[r]\ar[d] & X^\N_\dR\ar[d] \\
\Spec\F_p\ar[r] & \Z_{p, \dR}^\N\nospacepunct{\;.}
\end{tikzcd}
\end{equation}

\begin{ex}
\label{ex:syntomicetale-perfddr}
\EEDIT{
We claim that, for a perfectoid ring $R$, there is an isomorphism
\begin{equation*}
R^\N_\dR\cong \Spec R/p
\end{equation*}
compatible with the isomorphism 
\begin{equation*}
R^\N\cong \Spec(A_\inf(R)\langle u, t\rangle/(ut-\phi^{-1}(\xi)))/\G_m
\end{equation*}
from \cref{ex:filprism-perfd}. Indeed, recall that $v_1=u^pt$ from \cref{ex:syntomification-v1perfd} and that we have $\Z_{p, \dR}^\N=j_\dR(\Z_p^\prism)\cap \Z_{p, \red}^\N$. As $j_\dR(\Z_p^\prism)$ identifies with the locus $\{t\neq 0\}$ in $\Z_p^\N$ by \cref{rem:filprism-jdr}, we obtain
\begin{equation*}
\begin{split}
R^\N_\dR=(R^\N_\red)_{t\neq 0}&\cong \Spec(A_\inf(R)\langle u, t, t^{-1}\rangle/(ut-\phi^{-1}(\xi), p, u^pt))/\G_m \\
&\cong \Spec(A_\inf(R)\langle t, t^{-1}\rangle/(p, \phi^{-1}(\xi)^p))/\G_m \\
&\cong \Spec(A_\inf(R)\langle t, t^{-1}\rangle/(p, \xi))/\G_m \\
&\cong \Spec((R/p)[t, t^{-1}])/\G_m\cong\Spec(R/p)\;,
\end{split}
\end{equation*}
as desired.}
\end{ex}

\begin{rem}
Note that, if $X$ is smooth and qcqs, using (\ref{eq:syntomicetale-coverdr}) \EEDIT{and \cref{rem:drstack-vect}}, one can give an explicit description of vector bundles on $X^\N_\dR$ in terms of vector bundles on $Y$ with a flat connection \EEDIT{having locally nilpotent $p$-curvature} equipped with an additional Sen operator $\Theta$ satisfying certain compatibilities.
\end{rem}

Although we will not need it in the sequel, we also shortly want to discuss the Hodge--Tate component $X^\N_\HT$ defined as the pullback
\begin{equation*}
\begin{tikzcd}
X^\N_\HT\ar[r]\ar[d] & X^\N\ar[d] \\
\Z_{p, \HT}^\N\ar[r] & \Z_p^\N\nospacepunct{\;.}
\end{tikzcd}
\end{equation*}
To this end, recall that $\Z_{p, \HT}^\N$ can be described as \EEDIT{$j_\HT(\Z_p^\prism)\cap\Z_{p, \HT, c}^\N=j_\HT(\Z_p^\prism)\cap (\Z_p^\N)_{p=t=0}$} and that the map $j_\HT$ sends a Cartier--Witt divisor $I\xrightarrow{\alpha} W(S)$ on $S$ to the filtered Cartier--Witt divisor $I\tensor_{W(S)} W\xrightarrow{\alpha} W$ on $S$. In this situation, we have
\begin{equation*}
R\Gamma_\fl(S, \ol{W})\cong \ol{W(S)}
\end{equation*}
and thus we conclude that $X^\N_\HT$ identifies with $(X^\HT)_{p=0}$. \EEDIT{Recalling \cref{defi:fildhod-dhod}}, this means that there is a pullback square
\begin{equation}
\label{eq:syntomicetale-coverht}
\begin{tikzcd}
(X^\dHod)_{p=0}\ar[r]\ar[d] & X^\N_\HT\ar[d] \\
\Spec\F_p\ar[r] & \Z_{p, \HT}^\N\nospacepunct{\;,}
\end{tikzcd}
\end{equation}
where the lower map corresponds to the filtered Cartier--Witt divisor $W\xrightarrow{p} W$ over $\F_p$.

\begin{ex}
\label{ex:syntomicetale-perfdht}
\EEDIT{
We claim that, for a perfectoid ring $R$, there is an isomorphism
\begin{equation*}
R^\N_\HT\cong \Spec \phi^*(R/p)
\end{equation*}
compatible with the isomorphism from \cref{ex:filprism-perfd}, where $\phi$ denotes the Frobenius on $A_\inf(R)$. Indeed, recall $v_1=u^pt$ from \cref{ex:syntomification-v1perfd} and that $\Z_{p, \HT}^\N=j_\HT(\Z_p^\prism)\cap \Z_{p, \red}^\N$, hence $R_\HT^\N=j_\HT(R^\prism)\cap R_\red^\N$. As $j_\HT(R^\prism)$ identifies with the locus $\{u\neq 0\}$ in $R^\N$ by \cref{rem:filprism-jdrjhtperfd}, we obtain
\begin{equation*}
\begin{split}
R^\N_\HT=(R^\N_\red)_{u\neq 0}&\cong \Spec(A_\inf(R)\langle u, u^{-1}, t\rangle/(ut-\phi^{-1}(\xi), p, u^pt))/\G_m \\
&\cong \Spec(A_\inf(R)\langle u, u^{-1}\rangle/(p, \phi^{-1}(\xi)))/\G_m \\
&\cong \Spec (\phi^*(R/p)[t, t^{-1}])/\G_m\cong \Spec \phi^*(R/p)\;,
\end{split}
\end{equation*}
as desired.}
\end{ex}

\subsubsection{The Hodge-filtered de Rham component}

\begin{defi}
The \emph{Hodge-filtered de Rham component} $X^\N_{\dR, +}$ of $X^\N$ is defined as the pullback
\begin{equation*}
\begin{tikzcd}
X^\N_{\dR, +}\ar[r]\ar[d] & X^\N\ar[d] \\
\Z_{p, \dR, +}^\N\ar[r] & \Z_p^\N\nospacepunct{\;.}
\end{tikzcd}
\end{equation*}
\end{defi}

Note that, by definition of \EDIT{$\Z_{p, \dR, +}^\N$}, there is an fpqc cover \EDIT{$\A^1/\G_m\rightarrow\Z_{p, \dR, +}^\N$} given by the filtered Cartier--Witt divisor
\begin{equation*}
\EEDIT{
\V(\O(1))^\sharp\oplus F_*W\xrightarrow{(t^\sharp, V)} W
}
\end{equation*}
on $\A^1/\G_m$. Due to the equivalence
\begin{equation*}
\EEDIT{
\Cone(\V(\O(1))^\sharp\oplus F_*W\xrightarrow{(t^\sharp, V)} W)\cong \Cone(\V(\O(1))^\sharp\xrightarrow{t^\sharp} \G_a)
}
\end{equation*}
from \EDIT{(\ref{eq:filprism-drmap}),} we see that this cover induces a pullback square
\begin{equation}
\label{eq:syntomicetale-coverdr+}
\begin{tikzcd}
(X^{\dR, +})_{p=0}\ar[r]\ar[d] & X^\N_{\dR, +}\ar[d] \\
\A^1/\G_m\ar[r] & \Z_{p, \dR, +}^\N\nospacepunct{\;.}
\end{tikzcd}
\end{equation}

\begin{ex}
\label{ex:syntomicetale-perfddr+}
\EEDIT{
We claim that, for a perfectoid ring $R$, there is an isomorphism
\begin{equation*}
R^\N_{\dR, +}\cong \Spf(A_\inf(R)\langle u, t\rangle/(ut-\phi^{-1}(\xi), p, u^p))/\G_m
\end{equation*}
compatible with the isomorphism from \cref{ex:filprism-perfd}. Indeed, recall that $\Z_{p, \dR, +}^\N$ is the closed substack of $(\Z_p^\N)_{p=0}$ parametrising locally split filtered Cartier--Witt divisors. Tracing through the argument of \cref{ex:filprism-perfd} and using that $\sExt_W(F_*W, \G_a^\sharp)\cong\Cone(\G_a^\sharp\rightarrow\G_a)$, see \cite[Prop. 5.2.1]{FGauges}, we see that the filtered Cartier--Witt divisor constructed in \cref{ex:filprism-perfd} is locally split if and only if $u$ admits divided powers and, since we are working over $\F_p$ now, this means demanding that $u^p=0$. Thus, we conclude that
\begin{equation*}
R^\N_{\dR, +}=(R^\N)_{p=u^p=0}\cong \Spf(A_\inf(R)\langle u, t\rangle/(ut-\phi^{-1}(\xi), p, u^p))/\G_m\;,
\end{equation*}
as desired.}
\end{ex}

\begin{rem}
If $X$ is smooth and qcqs, using (\ref{eq:syntomicetale-coverdr+}) \EEDIT{and \cref{rem:fildrstack-vect}}, one can describe vector bundles on $X_{\dR, +}^\N$ explicitly in terms of filtered vector bundles on $Y$ equipped with a flat connection satisfying Griffiths transversality and a Sen operator.
\end{rem}

\subsubsection{The conjugate-filtered Hodge--Tate component}

\begin{defi}
The \emph{conjugate-filtered Hodge--Tate component} $X^\N_{\HT, c}$ of $X^\N$ is defined as the pullback
\begin{equation*}
\begin{tikzcd}
X^\N_{\HT, c}\ar[r]\ar[d] & X^\N\ar[d] \\
\Z_{p, \HT, c}^\N\ar[r] & \Z_p^\N\nospacepunct{\;.}
\end{tikzcd}
\end{equation*}
\end{defi}

Recall \EEDIT{from \cref{rem:fildhod-pullback}} that there is an fpqc cover $\A^1/\G_m\rightarrow\Z_{p, \HT, c}^\N$ \EEDIT{which induces} a pullback square
\begin{equation*}
\begin{tikzcd}
(X^{\dHod, c})_{p=0}\ar[r]\ar[d] & X^\N_{\HT, c}\ar[d] \\
\A^1/\G_m\ar[r] & \Z_{p, \HT, c}^\N\nospacepunct{\;.}
\end{tikzcd}
\end{equation*}

\begin{ex}
\label{ex:syntomicetale-perfdhtc}
\EEDIT{
We claim that, for a perfectoid ring $R$, there is an isomorphism
\begin{equation*}
R^\N_{\HT, c}\cong \Spec(\phi^*(R/p)[u])/\G_m
\end{equation*}
compatible with the isomorphism from \cref{ex:filprism-perfd}, where $\phi$ denotes the Frobenius of $A_\inf(R)$. Indeed, recall that $\Z_{p, \HT, c}^\N$ identifies with the locus $\{p=t=0\}$ in $\Z_p^\N$ and hence
\begin{equation*}
\begin{split}
R^\N_{\HT, c}=(R^\N)_{p=t=0}&\cong \Spf(A_\inf(R)\langle u, t\rangle/(ut-\phi^{-1}(\xi), p, t))/\G_m \\
&\cong \Spf(A_\inf(R)\langle u\rangle/(\phi^{-1}(\xi), p))/\G_m\cong \Spec(\phi^*(R/p)[u])/\G_m\;,
\end{split}
\end{equation*}
as desired.}
\end{ex}

\subsubsection{The Hodge component}
\label{sect:hodcomp}

\begin{defi}
The \emph{Hodge component} $X^\N_\Hod$ of $X^\N$ is defined as the pullback
\begin{equation*}
\begin{tikzcd}
X^\N_\Hod\ar[r]\ar[d] & X^\N\ar[d] \\
\Z_{p, \Hod}^\N\ar[r] & \Z_p^\N\nospacepunct{\;.}
\end{tikzcd}
\end{equation*}
\end{defi}

Note that, by definition of $\Z_{p, \Hod}^\N$, there is an fpqc cover $B\G_m\rightarrow\Z_{p, \Hod}^\N$ given by the filtered Cartier--Witt divisor
\begin{equation*}
\EEDIT{
\V(\O(1))^\sharp\oplus F_*W\xrightarrow{(0, V)} W
}
\end{equation*}
on $B\G_m$. Due to the equivalence
\begin{equation*}
\EEDIT{
\Cone(\V(\O(1))^\sharp\oplus F_*W\xrightarrow{(0, V)} W)\cong \G_a\oplus B\V(\O(1))^\sharp
}
\end{equation*}
from \EDIT{(\ref{eq:filprism-drmap})}, we see that this cover induces a pullback square
\begin{equation}
\label{eq:syntomicetale-coverhod}
\begin{tikzcd}
(X^\Hod)_{p=0}\ar[r]\ar[d] & X^\N_\Hod\ar[d] \\
B\G_m\ar[r] & \Z_{p, \Hod}^\N\nospacepunct{\;.}
\end{tikzcd}
\end{equation}

\begin{ex}
\label{ex:syntomicetale-perfdhod}
\EEDIT{
We claim that, for a perfectoid ring $R$, there is an isomorphism
\begin{equation*}
R^\N_\Hod\cong \Spec(\phi^*(R/p)[u]/(u^p))/\G_m
\end{equation*}
compatible with the isomorphism from \cref{ex:filprism-perfd}, where $\phi$ denotes the Frobenius of $A_\inf(R)$. Indeed, recall that $\Z_{p, \Hod}^\N$ is the intersection of $\Z_{p, \dR, +}^\N$ and $\Z_{p, \HT, c}$ and that $R^\N_{\dR, +}=(R^\N)_{p=u^p=0}$ and $R^\N_{\HT, c}=(R^\N)_{p=t=0}$ by \cref{ex:syntomicetale-perfddr+} and \cref{ex:syntomicetale-perfdhtc}. Thus, we have
\begin{equation*}
\begin{split}
R^\N_\Hod=(R^\N)_{p=t=u^p=0}&\cong \Spf(A_\inf(R)\langle u, t\rangle/(ut-\phi^{-1}(\xi), p, t, u^p))/\G_m \\
&\cong \Spf(A_\inf(R)\langle u\rangle/(\phi^{-1}(\xi), p, u^p))/\G_m\cong \Spec(\phi^*(R/p)[u]/(u^p))/\G_m\;,
\end{split}
\end{equation*}
as desired.
}
\end{ex}

For our analysis of the syntomic cohomology of $X$, it will be vital to have a good understanding of vector bundles on $X_\Hod^\N$ and their cohomology. For this, we will first need to recall some facts about Higgs bundles. Thus, let $Y$ be a scheme or formal scheme which is smooth over a base \EDIT{(formal) scheme} $S$. For us, the case $S=\Spec\F_p$ will be the most relevant as we will want to apply the following definitions in our context of $Y$ being the the special fibre of some smooth $p$-adic formal scheme $X$.

\begin{defi}
Let $V\in\Vect(Y)$ be a vector bundle on $Y$. A \emph{Higgs field} on \EEDIT{$V$} is an $\O_Y$-linear morphism $\Phi: V\rightarrow V\tensor\Omega_{Y/S}^1$ such that $\Phi\wedge\Phi=0$. Here, $\Phi\wedge\Phi: V\rightarrow V\tensor\Omega_{Y/S}^2$ denotes the composition
\begin{equation*}
V\xrightarrow{\mathrlap{\hspace{0.25cm}\Phi}\hphantom{\Phi\tensor\id}} V\tensor\Omega^1_{Y/S}\xrightarrow{\Phi\tensor\id} V\tensor\Omega^1_{Y/S}\tensor\Omega^1_{Y/S}\xrightarrow{\id\tensor\wedge}V\tensor\Omega^2_{Y/S}\;,
\end{equation*}
\EDIT{where $\wedge: \Omega^1_{Y/S}\tensor\Omega^1_{Y/S}\rightarrow\Omega^1_{Y/S}\wedge\Omega^1_{Y/S}\cong \Omega^2_{Y/S}$ is the canonical map.}
\end{defi}

From a Higgs field $\Phi$ on some $V\in\Vect(Y)$, we obtain induced morphisms $\Phi: V\tensor\Omega_{Y/S}^i\rightarrow V\tensor\Omega_{Y/S}^{i+1}$ given by
\begin{equation*}
V\tensor\Omega_{Y/S}^i\xrightarrow{\Phi\tensor\id}V\tensor\Omega_{Y/S}^1\tensor\Omega_{Y/S}^i\xrightarrow{\id\tensor\wedge} V\tensor\Omega_{Y/S}^{i+1}
\end{equation*}
and thus a variant of the Hodge complex 
\begin{equation*}
V\xrightarrow{\Phi} V\tensor\Omega^1_{Y/S}\xrightarrow{\Phi} V\tensor\Omega^2_{Y/S}\xrightarrow{\Phi}\dots\;.
\end{equation*}

By the tensor-hom adjunction, for any vector bundle $V$ on $Y$, an $\O_Y$-linear morphism $\Phi: V\rightarrow V\tensor\Omega_{Y/S}^1$ is equivalent to the data of an $\O_Y$-linear morphism
\begin{equation*}
\Phi_{(-)}: \T_{Y/S}\rightarrow \sEnd_{\O_Y}(V)\;,
\end{equation*}
which can explicitly be described as sending any local section $\theta\in\T_{Y/S}(U)$ to the endomorphism
\begin{equation*}
\Phi_\theta: V|_U\xrightarrow{\mathrlap{\hspace{0.3cm}\Phi}\hphantom{\id\tensor\theta^\vee}} (V\tensor\Omega^1_{Y/S})|_U\xrightarrow{\id\tensor\theta^\vee} V|_U\;;
\end{equation*}
\EDIT{here, $\T_{Y/S}$ denotes the tangent bundle of $Y$ over $S$, as usual.} One can then check that the condition $\Phi\wedge\Phi=0$ is exactly equivalent to requiring that $\Phi_\theta$ and $\Phi_{\theta'}$ commute for any local sections $\theta, \theta'\in\T_{Y/S}(U)$. Thus, we conclude that equipping $V$ with a Higgs field is equivalent to equipping $V$ with the structure of a $\Sym^\bullet_{\O_Y}\T_{Y/S}$-module, \EDIT{where we remind the reader that $\Sym^\bullet_{\O_Y}\T_{Y/S}$ denotes the symmetric algebra of the $\O_Y$-module $\T_{Y/S}$}. Finally, we say that a Higgs field $\Theta$ on $V\in\Vect(Y)$ is \emph{locally nilpotent} if for any local sections $\theta\in\T_{Y/S}(U), s\in V(U)$, we have $\Phi_\theta^n(s)=0$ for some $n>0$.

Finally, we note that the above definitions immediately generalise to the case of arbitrary quasi-coherent complexes $V\in\D(Y)$ instead of vector bundles: A Higgs bundle on $Y$ is an $\O_Y$-linear morphism $\Phi: V\tensor\Omega_{Y/S}^1\rightarrow\Omega_{Y/S}^1$ and this is equivalent to the datum of equipping $V$ with the structure of a $\Sym^\bullet_{\O_Y}\T_{Y/S}$-complex; it is locally nilpotent if for any local section $\theta\in\T_{Y/S}(U)$ and any map $s: \O_U\rightarrow V|_U$, we have $\Phi_\theta^n\circ s=0$ for some $n>0$. Again, we obtain an analogue of the Hodge complex given by the total complex
\begin{equation*}
\Tot(V\xrightarrow{\Phi} V\tensor\Omega^1_{Y/S}\xrightarrow{\Phi} V\tensor\Omega^2_{Y/S}\xrightarrow{\Phi}\dots)\;.
\end{equation*}

We now return to our particular case of $Y=X_{p=0}$ for a smooth $p$-adic formal scheme $X$. Using the pullback square (\ref{eq:syntomicetale-coverhod}), it turns out that the main part of the work lies in analysing vector bundles on $(X^\Hod)_{p=0}$ and their cohomology.

\begin{lem}
\label{lem:syntomicetale-identifyhod}
Let $X$ be a smooth qcqs $p$-adic formal scheme \EDIT{and put $Y=X_{p=0}$}. Then there is an isomorphism
\begin{equation*}
\EEDIT{
(X^\Hod)_{p=0}\cong B_{Y\times B\G_m}\V(\T_{Y/\F_p}(1))^\sharp
}
\end{equation*}
over $Y\times B\G_m$.
\end{lem}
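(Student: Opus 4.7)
The plan is to prove the lemma by constructing an explicit trivialisation of a gerbe, in the spirit of the gerbe argument in the proof of \cref{lem:fildhod-comparisonfilteredsmooth}. Over the closed stratum $B\G_m=\{0\}/\G_m\subseteq\A^1/\G_m$, the tautological generalised Cartier divisor restricts to the zero map $0:\O(1)\to\O$, so the quasi-ideal $\V(\O(1))^\sharp\xrightarrow{\can}\G_a$ over $\A^1/\G_m$ that is used to build $\G_a^{\dR,+}$ pulls back to the zero map. Consequently,
\[
\G_a^\Hod \;=\; \Cone\bigl(\V(\O(1))^\sharp\xrightarrow{0}\G_a\bigr) \;\cong\; \G_a\oplus B\V(\O(1))^\sharp
\]
canonically splits as a trivial square-zero extension of $\G_a$ by the shifted abelian group stack $\V(\O(1))^\sharp[1]$ over $B\G_m$, and in particular admits a retraction onto $\G_a$ as animated $\G_a$-algebra stacks over $B\G_m$.

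By functoriality of the construction $X\mapsto X^\Hod$, this retraction yields a canonical section $s:X\times B\G_m\to X^\Hod$ of the natural projection $X^\Hod\to X\times B\G_m$; passage to the special fibre produces a section $Y\times B\G_m\to (X^\Hod)_{p=0}$ of the corresponding map. Simultaneously, the same square-zero-extension description, combined with derived deformation theory exactly as in the proof of \cref{lem:fildhod-comparisonfilteredsmooth}, shows that $X^\Hod\to X\times B\G_m$ is a gerbe with band $\V(\T_{X/\Z_p}(1))^\sharp$: for any $p$-nilpotent test ring $\Spec S\to X\times B\G_m$ classifying a map $f:\Spec S\to X$ together with a line bundle $L$, the fibre over $(f,L)$ is the groupoid of lifts of $f$ to $\Spec\G_a^\Hod(S)\to X$, which by smoothness of $X$ is a torsor under
\[
\Map_{\O_S}\bigl(f^*\Omega^1_{X/\Z_p},\,\V(L)^\sharp[1]\bigr) \;\cong\; B\V\bigl(f^*\T_{X/\Z_p}\otimes L\bigr)^\sharp(S).
\]
Reducing mod $p$, we deduce that $(X^\Hod)_{p=0}\to Y\times B\G_m$ is a $\V(\T_{Y/\F_p}(1))^\sharp$-gerbe.

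Finally, the section $s$ constructed above trivialises this gerbe, and hence $(X^\Hod)_{p=0}$ is canonically equivalent to the relative classifying stack $B_{Y\times B\G_m}\V(\T_{Y/\F_p}(1))^\sharp$ over $Y\times B\G_m$, as desired. I expect the main technical subtlety to lie in the identification of the lifting torsor with $B\V(f^*\T_{X/\Z_p}\otimes L)^\sharp$: this requires treating $\V(L)^\sharp$ as an $\O_S$-module via the $\G_a$-action on the PD-hull of $\V(L)$, and then invoking the tensor-hom adjunction together with the natural isomorphism $\sHom_{\O}(E^\vee,\V(L)^\sharp)\cong\V(E\otimes L)^\sharp$ for a finite locally free sheaf $E$, applied to $E=f^*\Omega^1_{X/\Z_p}$. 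Apart from this point, the argument is a direct adaptation of the gerbe-trivialisation technique from the diffracted Hodge setting.
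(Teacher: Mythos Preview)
Your approach is essentially the same as the paper's: identify $\G_a^\Hod$ as a split square-zero extension of $\G_a$, use deformation theory (as in \cref{lem:fildhod-comparisonfilteredsmooth}) to see that $(X^\Hod)_{p=0}\to Y\times B\G_m$ is a gerbe with band $\V(\T_{Y/\F_p}(1))^\sharp$, and trivialise it via the splitting. There is one directional slip worth fixing: transmutation is \emph{covariant} in the ring stack (a ring map $R_1\to R_2$ induces $\Map(\Spec R_1,X)\to\Map(\Spec R_2,X)$), so the retraction $\G_a^\Hod\to\G_a$ produces the \emph{projection} $X^\Hod\to X\times B\G_m$, not the section; the section $s$ comes instead from the canonical injection $\G_a\hookrightarrow\G_a\oplus B\V(\O(1))^\sharp$ (which is what the paper invokes). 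Since both maps exist precisely because the extension is split, your argument still goes through once the attribution is corrected.
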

\begin{proof}
First recall from \cref{lem:gasharp-acyclic} that there is a basis of the fpqc topology consisting of $\G_a^\sharp$-acyclic rings. For such a ring $S$ of characteristic $p$, we have
\begin{equation*}
\EEDIT{
(X^\Hod)_{p=0}(S)=Y(S\oplus \V(\O(1))^\sharp(S)[1])
}
\end{equation*}
by definition of $X^\Hod$. As \EEDIT{$S\oplus\V(\O(1))^\sharp(S)[1]\rightarrow S$} is a square-zero extension, the same deformation-theoretic argument as in the proof of \cref{lem:fildhod-comparisonfilteredsmooth} shows that the induced map $(X^\Hod)_{p=0}\rightarrow Y\times B\G_m$ is a \EEDIT{$\V(\T_{Y/\F_p}(1))^\sharp$-gerbe}. However, as the square-zero extension above splits via the canonical injection, this gerbe is also split and hence trivial.
\end{proof}

Due to the description of $(X^\Hod)_{p=0}$ provided by the lemma above, we will make use of the following general statement about $\V(E)^\sharp$-representations for a vector bundle $E$:

\begin{lem}
\label{lem:syntomicetale-vesharpreps}
Let $X$ be a scheme and $E\in\Vect(X)$ a vector bundle on $X$. Then there is a natural equivalence
\begin{equation*}
\D(B\V(E)^\sharp)\cong\D(\widehat{\V(E^\vee)})
\end{equation*}
\EEDIT{of stable $\infty$-categories} compatible with the forgetful functors on both sides; here, $\widehat{\V(E^\vee)}$ denotes the completion of $\V(E^\vee)$ at the zero section. Moreover, if $V$ is a quasi-coherent complex on $B\V(E)^\sharp$ and $\pi: B\V(E)^\sharp\rightarrow X$ is the projection, then
\begin{equation*}
\pi_*V\cong \Tot(V\rightarrow V\tensor E^\vee\rightarrow V\tensor \wedge^2 E^\vee\rightarrow\dots)\;,
\end{equation*}
where the maps are induced by the natural $\Sym^\bullet_{\O_X} E$-module structure on $V$ via the equivalence above. Note that we abuse notation and also use $V$ to denote the pullback of $V$ to $X$.
\end{lem}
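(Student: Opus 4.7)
The plan is to derive the equivalence from Cartier duality between $\V(E)^\sharp$ and $\widehat{\V(E^\vee)}$ and then read off the pushforward formula from a Koszul resolution. Since both sides of the claimed equivalence satisfy fpqc descent along $X$ and pushforward commutes with flat base change, I would first reduce to the affine case $X=\Spec R$ and further to the case where $E$ is trivial, so that $\V(E)^\sharp=(\G_a^\sharp)^n$ and $\widehat{\V(E^\vee)}=\Spf R[[x_1, \ldots, x_n]]$.

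Next, I would establish the equivalence by generalising the argument from the proof of \cref{lem:fildhod-complexeszpntzero} (following \cite[Prop.~2.4.4]{FGauges} for the case $n=1$) from $\G_a^\sharp$ to $\V(E)^\sharp$: a quasi-coherent complex on $B\V(E)^\sharp$ is the datum of a comodule over the PD Hopf algebra $\O(\V(E)^\sharp)$ (the divided power algebra on $E^\vee$), which by Cartier duality corresponds to a module structure over the continuous dual $\widehat{\Sym}_{\O_X}(E)=\O(\widehat{\V(E^\vee)})$ such that the augmentation ideal acts locally nilpotently; this last condition is exactly what is needed for such a module to come from a quasi-coherent complex on the formal scheme $\widehat{\V(E^\vee)}$. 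Compatibility with the forgetful functors is transparent from the construction, since both functors simply record the underlying $\O_X$-complex.

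For the pushforward, under this equivalence $\pi_*V$ identifies with the derived $\V(E)^\sharp$-invariants of $V$, which in turn identifies with the derived $\Hom$ from $\O_X$ (viewed as a $\widehat{\Sym}_{\O_X}(E)$-module via the augmentation) to $V$ in the category of $\widehat{\Sym}_{\O_X}(E)$-modules. The Koszul complex
\begin{equation*}
\dots\rightarrow \widehat{\Sym}_{\O_X}(E)\tensor \wedge^2 E \rightarrow \widehat{\Sym}_{\O_X}(E)\tensor E \rightarrow \widehat{\Sym}_{\O_X}(E)\rightarrow \O_X
\end{equation*}
provides a free resolution of $\O_X$, and substituting it into the $\RHom$ above yields precisely the claimed total complex $V\rightarrow V\tensor E^\vee\rightarrow V\tensor\wedge^2 E^\vee\rightarrow\dots$, with differentials induced by the $\widehat{\Sym}_{\O_X}(E)$-action on $V$, i.e.\ by the Higgs-field-like datum encoded in the original $\V(E)^\sharp$-representation.

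The main obstacle I anticipate is making the Cartier duality step rigorous: namely, one must carefully match the local nilpotence condition arising from the comodule description with the paper's convention for $\D(\widehat{\V(E^\vee)})$ (which, following the conventions of \cref{sect:stacks}, amounts to derived completeness at the augmentation ideal). This should ultimately reduce to a Matlis-style equivalence between $I$-torsion and $I$-complete complexes on the regular ring $\widehat{\Sym}_{\O_X}(E)$ with respect to its augmentation ideal, applied levelwise over the base. Once this identification is set up, the Koszul computation of $\pi_*V$ is essentially a formal consequence.
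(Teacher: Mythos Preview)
Your approach is essentially the same as the paper's: the paper cites \cite[Prop.~2.4.5]{FGauges} for the equivalence and then, after reducing to $X=\Spec R$, identifies $\pi_*V\cong\RHom_{\Sym^\bullet_R E}(R,V)$ and computes this via the Koszul resolution of $R$ over $\Sym^\bullet_R E$. The only notable difference is that the paper works over the \emph{uncompleted} symmetric algebra throughout, which makes your anticipated obstacle about matching local nilpotence with derived completeness disappear from the pushforward computation entirely---the Koszul resolution is a perfectly good free resolution of $R$ over $\Sym^\bullet_R E$, and plugging $V$ into $\RHom$ against it yields the claimed total complex regardless of any completeness or torsion hypotheses on $V$.
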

\begin{proof}
The first assertion is \cite[Prop. 2.4.5]{FGauges}. For the second one, we reduce to the case where $X=\Spec R$ is affine. Then we have
\begin{equation*}
\pi_*V=R\Gamma(B\V(E)^\sharp, V)\cong\RHom_{B\V(E)^\sharp}(\O, V)\cong \RHom_{\Sym^\bullet_R E}(R, V)\;,
\end{equation*}
where the action of $E$ on $R$ is trivial in the rightmost term. Using the Koszul resolution
\begin{equation*}
(\dots\rightarrow\wedge^2 E\tensor_R \Sym^\bullet_R E\rightarrow E\tensor_R \Sym^\bullet_R E\rightarrow\Sym^\bullet_R E)\rightarrow R
\end{equation*}
to compute the RHom above, we obtain the result.
\end{proof}

\begin{prop}
\label{prop:syntomicetale-vecthod}
Let $X$ be a smooth qcqs $p$-adic formal scheme \EDIT{and put $Y=X_{p=0}$}. Then giving a quasi-coherent complex on $(X^\Hod)_{p=0}$ amounts to specifying a graded quasi-coherent complex $V=\bigoplus_i V_i$ on $Y$ equipped with a Higgs field $\Phi: V\rightarrow V\tensor\Omega_{Y/\F_p}^1$ which is locally nilpotent and decreases degree by $1$, i.e.\ the restriction of $\Phi$ to $V_i$ comes with a factorisation through $V_{i-1}\tensor\Omega_{Y/\F_p}^1$.
\end{prop}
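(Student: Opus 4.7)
The plan is to combine the geometric description of $(X^\Hod)_{p=0}$ provided by \cref{lem:syntomicetale-identifyhod} with a $\G_m$-equivariant version of \cref{lem:syntomicetale-vesharpreps}, and then translate the resulting linear-algebraic data into the language of Higgs fields.

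First, I invoke \cref{lem:syntomicetale-identifyhod} to identify
\begin{equation*}
(X^\Hod)_{p=0}\cong B_{Y\times B\G_m}\V(\T_{Y/\F_p}(1))^\sharp
\end{equation*}
over $Y\times B\G_m$. Next, I apply a $\G_m$-equivariant variant of \cref{lem:syntomicetale-vesharpreps} to the vector bundle $E=\T_{Y/\F_p}(1)$ on the base $Y\times B\G_m$ to obtain an equivalence
\begin{equation*}
\D((X^\Hod)_{p=0})\cong \D(\widehat{\V(\Omega^1_{Y/\F_p}(-1))})\;,
\end{equation*}
where the right-hand side is the completion at the zero section, taken over $Y\times B\G_m$. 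Unravelling this, a quasi-coherent complex on $(X^\Hod)_{p=0}$ corresponds to a graded quasi-coherent complex $V=\bigoplus_i V_i$ on $Y$ (from the $B\G_m$-factor) together with a grading-compatible $\Sym^\bullet_{\O_Y}\T_{Y/\F_p}(1)$-action whose underlying action of $\T_{Y/\F_p}(1)$ is locally nilpotent.

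To translate into the language of Higgs fields, I unpack the $B\G_m$-grading: per the sign convention of \cref{subsect:fildrstack}, the line bundle $\O_{B\G_m}(1)$ sits in grading degree $-1$, and hence so does $\T_{Y/\F_p}(1)$. Consequently the action $V\tensor\T_{Y/\F_p}(1)\rightarrow V$ decomposes into $\O_Y$-linear morphisms $V_i\tensor\T_{Y/\F_p}\rightarrow V_{i-1}$, which via the tensor-hom adjunction correspond to morphisms $\Phi_i: V_i\rightarrow V_{i-1}\tensor\Omega^1_{Y/\F_p}$. Bundling these together yields a morphism $\Phi: V\rightarrow V\tensor\Omega^1_{Y/\F_p}$ which decreases grading degree by $1$; the commutativity of the $\Sym^\bullet$-action is exactly the relation $\Phi\wedge\Phi=0$; and the local nilpotence of the action on $V$ is precisely the local nilpotence of $\Phi$ in the sense defined in the paper.

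The main obstacle will be setting up the $\G_m$-equivariant version of \cref{lem:syntomicetale-vesharpreps} cleanly, and making sure the sign conventions for the grading line up so that the induced Higgs field indeed decreases degree by $1$ (and not increases it). Once this is pinned down, however, the rest is a direct unpacking of the module-theoretic data, and the resulting functor is evidently an equivalence because both sides inherit their content from the same $\Sym^\bullet_{\O_Y}\T_{Y/\F_p}(1)$-module structure.
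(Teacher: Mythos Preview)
Your proposal is correct and follows essentially the same approach as the paper, which simply states that the result follows by combining \cref{lem:syntomicetale-identifyhod} with the $\G_m$-equivariant version of \cref{lem:syntomicetale-vesharpreps}. You have merely spelled out in more detail the translation into Higgs-field language and the grading check that the paper leaves implicit.
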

\begin{proof}
This now follows by combining \cref{lem:syntomicetale-identifyhod} \EDIT{with the $\G_m$-equivariant version of} \cref{lem:syntomicetale-vesharpreps}.
\end{proof}

\begin{prop}
\label{prop:syntomicetale-cohomologyhod}
Let $X$ be a smooth qcqs $p$-adic formal scheme \EDIT{and put $Y=X_{p=0}$}. If $E\in\D((X^\Hod)_{p=0})$ corresponds to the data $(V_\bullet, \Phi)$ via \cref{prop:syntomicetale-vecthod}, then the pushforward of $E$ to $B\G_m$ identifies with the graded complex whose degree $i$ term is given by
\begin{equation*}
R\Gamma(Y, \Tot(V_i\xrightarrow{\Phi}V_{i-1}\tensor\Omega_{Y/\F_p}^1\xrightarrow{\Phi} V_{i-2}\tensor\Omega_{Y/\F_p}^2\xrightarrow{\Phi}\dots))\;.
\end{equation*}
\end{prop}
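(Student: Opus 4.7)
My plan is to compute the pushforward along $(X^\Hod)_{p=0}\to B\G_m$ in two stages by factoring it through the intermediate stack $Y\times B\G_m$. By \cref{lem:syntomicetale-identifyhod}, the source identifies with $B_{Y\times B\G_m}\V(\T_{Y/\F_p}(1))^\sharp$ over $Y\times B\G_m$, so the structure map factors as
\begin{equation*}
(X^\Hod)_{p=0}\cong B_{Y\times B\G_m}\V(\T_{Y/\F_p}(1))^\sharp\xrightarrow{\pi} Y\times B\G_m\xrightarrow{q} B\G_m\;,
\end{equation*}
where $\pi$ is the gerbe projection and $q$ is the projection away from $Y$. I would first compute $\pi_*E$ using the $\G_m$-equivariant version of \cref{lem:syntomicetale-vesharpreps}, applied to the vector bundle $\T_{Y/\F_p}(1)$ on $Y\times B\G_m$. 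This yields
\begin{equation*}
\pi_*E\cong \Tot\bigl(V\to V\tensor\Omega^1_{Y/\F_p}(-1)\to V\tensor\Omega^2_{Y/\F_p}(-2)\to\dots\bigr)\;,
\end{equation*}
where the differentials come from the $\Sym^\bullet_{\O_Y}\T_{Y/\F_p}(1)$-module structure on $V$, i.e., from the Higgs field $\Phi$ combined with the natural Koszul maps.

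Next I would unpack the grading. Under the convention from the discussion around equation (\ref{eq:rees-dbgm}) that $\O_{B\G_m}(1)$ corresponds to the rank one graded module placed in degree $-1$, the twist $\O(-k)$ shifts graded degrees up by $k$. Extracting the degree $i$ piece on $Y\times B\G_m$ therefore gives
\begin{equation*}
\Tot\bigl(V_i\xrightarrow{\Phi} V_{i-1}\tensor\Omega^1_{Y/\F_p}\xrightarrow{\Phi} V_{i-2}\tensor\Omega^2_{Y/\F_p}\xrightarrow{\Phi}\dots\bigr)\;,
\end{equation*}
in agreement with \cref{prop:syntomicetale-vecthod}, according to which $\Phi$ decreases grading degree by $1$. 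Finally, pushforward along $q$ is computed grading piece by grading piece, where it identifies with $R\Gamma(Y,-)$ (since $q$ is the base change of $Y\to\Spec\F_p$ along $B\G_m\to\Spec\F_p$). Combining these two steps yields the claimed formula.

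The only genuine work lies in the $\G_m$-equivariant refinement of \cref{lem:syntomicetale-vesharpreps}, which should follow either by an fpqc-descent argument from the original statement or by re-running the proof (the Koszul resolution of $R$ over $\Sym^\bullet_R E$ is naturally graded whenever $E$ is equipped with a $\G_m$-action). Beyond that, the main obstacle is simply keeping the sign conventions for the grading and twists consistent so that the differentials of the Koszul-type complex match the Higgs differential of \cref{prop:syntomicetale-vecthod}; everything else is formal bookkeeping.
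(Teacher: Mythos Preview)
Your proposal is correct and follows essentially the same approach as the paper: the paper's proof is a one-line reference to \cref{lem:syntomicetale-identifyhod} together with the $\G_m$-equivariant version of \cref{lem:syntomicetale-vesharpreps}, and you have simply unpacked this by explicitly factoring through $Y\times B\G_m$ and tracking the grading. Your remark that the only substantive step is the $\G_m$-equivariant refinement of \cref{lem:syntomicetale-vesharpreps} is exactly right and matches what the paper relies on.
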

\begin{proof}
Immediate from \cref{lem:syntomicetale-identifyhod} and \EDIT{the $\G_m$-equivariant version of} \cref{lem:syntomicetale-vesharpreps}.
\end{proof}

Finally, let us explain how to obtain $X^\N_\red$ and $X^\Syn_\red$ from the components above by gluing: From the analogous statements for $X=\Spf\Z_p$, we see that gluing $X_{\dR, +}^\N$ and $X_{\HT, c}^\N$ along $X_\Hod^\N$ yields the stack $X^\N_\red$. To obtain $X^\Syn_\red$, we then have to further glue $X_\dR^\N$ and $X_\HT^\N$ along a Frobenius twist.

\subsection{Proof of the main theorems}

We now move on to proving our main statements concerning the comparison between syntomic cohomology and étale cohomology announced in the beginning. As in the case $X=\Spf\Z_p$, we use the notation $F_\dR(-), F_\Hod(-), F_{\dR, +}(-)$ and $F_{\HT, c}(-)$ to denote cohomology on the components of $X^\Syn_\red$.

\begin{proof}[Proof of \cref{thm:syntomicetale-mainfine}]
Let $d$ be the relative dimension of $X$ over $\Spf\Z_p$. We are going to show that $\cofib(R\Gamma_\Syn(X, E)\rightarrow R\Gamma_\proet(X, T_\et(E)))$ is concentrated in degrees at least $i+1$, \EEDIT{which suffices to establish the statement}. By $p$-completeness, it suffices to check this after reducing mod $p$, i.e.\ we may assume that $E$ is a vector bundle on $(X^\Syn)_{p=0}$. Subsequently, \cref{prop:syntomicetale-filtrationetale} reduces us to proving that the complexes \EDIT{$\gr^\Syn_k R\Gamma(X^\Syn, E)[\tfrac{1}{v_1}]$} are concentrated in \EDIT{cohomological} degrees at least $i+1$ for all $k\geq 1$ \EDIT{as this will imply that the cofibre of the map
\begin{equation*}
R\Gamma(X^\Syn, E)\rightarrow R\Gamma(X^\Syn, E)[\tfrac{1}{v_1}]
\end{equation*}
is concentrated in cohomological degrees at least $i+1$, from which our result follows.} \EDIT{Using the isomorphism
\begin{equation*}
\gr^\Syn_k R\Gamma(X^\Syn, E)[\tfrac{1}{v_1}]\cong R\Gamma(X^\Syn_\red, E/v_{1, X}\{k(p-1)\})
\end{equation*}
from (\ref{eq:syntomicetale-grsyn}), we now replace $E$ by $E/v_{1, X}\{k(p-1)\}$; then our assumption becomes that $E$ is a vector bundle on $X^\Syn_\red$ with Hodge--Tate weights all at most $-i-p$ and we have to} prove that $R\Gamma(X^\Syn_\red, E)$ is concentrated in \EDIT{cohomological} degrees at least $i+1$. For this, we identify the pushforward $\pi_{X^\Syn, *} E$ of $E$ to $\Z_{p, \red}^\Syn$ with the data $(V, \Fil^\bullet V, \Fil_\bullet V)$ with $\gr^\bullet V=\gr_\bullet V$ equipped with operators $D$ and $\Theta$ according to the description of the reduced locus of $\Z_p^\Syn$. Now we make the following observation: by \cref{prop:syntomicetale-cohomologyhod}, the Hodge--Tate weights of $\pi_{X^\Syn, *} E$ are all at most $-i-p+d$; moreover, for $0\leq k\leq d$, the complex $\gr^{-i-p+d-k} V$ is concentrated in degrees at least $d-k$. In particular, we deduce that
\begin{equation*}
F_\Hod(E)=\fib(\gr^0 V\xrightarrow{\Theta} \gr^{-p} V)
\end{equation*}
is concentrated in degrees at least $i+1$. As the filtration $\Fil^\bullet V$ is \EDIT{complete by \cref{lem:finiteness-pushforwardcomplete} and base change for the cartesian square (\ref{eq:syntomicetale-coverdr+})}, we may also conclude that $\Fil^0 V$ is concentrated in degrees at least $i+p$ while $\Fil^{-p} V$ is concentrated in degrees at least $i$. Thus, the complex
\begin{equation*}
F_{\dR, +}(E)=\fib(\Fil^0 V\xrightarrow{\Theta} \Fil^{-p} V)
\end{equation*}
is concentrated in degrees at least $i+1$. Finally, we may also infer that the natural map $\Fil_{-1} V\rightarrow V$ is an isomorphism in cohomological degrees at most \EEDIT{$i+p-1$ while} $\Fil_0 V\rightarrow V$ \EEDIT{is an isomorphism in cohomological degrees at most $i+p$}. Thus, the natural map
\begin{equation*}
\EEDIT{
F_{\HT, c}(E)=\fib(\Fil_0 V\xrightarrow{D} \Fil_{-1} V)\rightarrow\fib(V\xrightarrow{\Theta} V)=F_\dR(E)
}
\end{equation*}
is also an isomorphism in cohomological degrees at most $i+p$. Overall, using the formula
\begin{equation*}
R\Gamma(X^\Syn_\red, E)=\fib(F_{\dR, +}(E)\oplus F_{\HT, c}(E)\xrightarrow{a_E-b_E} F_\dR(E)\oplus F_\Hod(E))
\end{equation*} 
from (\ref{eq:syntomification-cohomologyreduced}), we see that $R\Gamma(X^\Syn_\red, E)$ is indeed concentrated in degrees at least $i+1$, as wanted.
\end{proof}

\begin{proof}[Proof of \cref{thm:syntomicetale-maincoarse}]
As in the \EDIT{proof of \cref{thm:syntomicetale-mainfine}, we may assume that $E$ is a vector bundle on $(X^\Syn)_{p=0}$ by $p$-completeness of both sides and then \cref{prop:syntomicetale-filtrationetale} reduces us to proving that
\begin{equation*}
\gr^\Syn_k R\Gamma(X^\Syn, E)[\tfrac{1}{v_1}]\cong R\Gamma(X^\Syn_\red, E/v_{1, X}\{k(p-1)\})
\end{equation*}
vanishes for all $k\geq 1$, where we have again used (\ref{eq:syntomicetale-grsyn}). Replacing $E$ by $E/v_{1, X}\{k(p-1)\}$, we thus have to show that,} for $E\in\Perf(X^\Syn_\red)$ with Hodge--Tate weights all at most $-p-d-1$, the cohomology $R\Gamma(X^\Syn_\red, E)$ vanishes. As before, we identify $\pi_{X^\Syn, *} E$ with the data $(V, \Fil^\bullet V, \Fil_\bullet V)$ with $\gr^\bullet V=\gr_\bullet V$ equipped with operators $D$ and $\Theta$. From \cref{prop:syntomicetale-cohomologyhod}, we see that $\pi_{X^\Syn, *} E$ has Hodge--Tate weights all at most $-p-1$. Thus, we conclude
\begin{equation*}
F_\Hod(E)=\fib(\gr^0 V\xrightarrow{\Theta} \gr^{-p} V)=0\;.
\end{equation*}
Moreover, as the filtration $\Fil^\bullet V$ is again \EDIT{complete by \cref{lem:finiteness-pushforwardcomplete}}, we also have $\Fil^0 V=\Fil^{-p} V=0$ and hence also
\begin{equation*}
F_{\dR, +}(E)=\fib(\Fil^0 V\xrightarrow{\Theta} \Fil^{-p} V)=0\;.
\end{equation*}
Finally, we see that $\Fil_{-1} V=\Fil_0 V=V$ and thus
\begin{equation*}
F_{\HT, c}(E)=\fib(\Fil_{-1} V\xrightarrow{D}\Fil_0 V)=\fib(V\xrightarrow{\Theta} V)=F_\dR(E)\;.
\end{equation*}
Overall, using (\ref{eq:syntomification-cohomologyreduced}), we obtain
\begin{equation*}
R\Gamma(X^\Syn_\red, E)=\fib(F_{\dR, +}(E)\oplus F_{\HT, c}(E)\xrightarrow{a_E-b_E} F_\dR(E)\oplus F_\Hod(E))=0\;,
\end{equation*} 
as wanted.
\end{proof}

\comment{
Finally, we show how to obtain the claimed improvement upon the result from \cref{thm:syntomicetale-mainfine} in the case where $X$ admits a morphism to $\Spf\O_C$:

\begin{proof}[Proof of \cref{thm:syntomicetale-mainfineoc}]
Let $d$ be the relative dimension of $X$ over $\Spf\O_C$. As in the proof of \cref{thm:syntomicetale-mainfine}, \EDIT{we may first assume that $E\in\Vect((X^\Syn)_{p=0})$ and are then reduced to showing that
\begin{equation*}
\gr^\Syn_k R\Gamma(X^\Syn, E)[\tfrac{1}{v_1}]\cong R\Gamma(X^\Syn_\red, E/v_{1, X}\{k(p-1)\})
\end{equation*}
is concentrated in cohomological degrees at least $i+1$ for all $k\geq 1$. Replacing $E$ by $E/v_{1, X}\{k(p-1)\}$, our assumption thus becomes that $E$ is a vector bundle on $X^\Syn_\red$ with Hodge--Tate weights all at most $-i-1$ and we have to show that the complex $R\Gamma(X^\Syn_\red, E)$ is concentrated in cohomological degrees at least $i+1$. Denoting the map $X^\Syn\rightarrow\O_C^\Syn$ by $\pi'_{X^\Syn}$ and} recalling that 
\begin{equation*}
\O_C^\N\cong \Spf (A_\inf(\O_C)\langle u, t\rangle/(ut-\phi^{-1}(\xi)))/\G_m
\end{equation*}
from \cref{ex:filprism-perfd} and $v_{1, \O_C}=u^pt\in A_\inf(\O_C)[u, t]/(ut-\phi^{-1}(\xi), p)$, we may identify the pushforward \EDIT{$\pi'_{X^\Syn, *}E$} of $E$ to $\O_{C, \red}^\Syn$ with a diagram
\begin{equation*}
\begin{tikzcd}
\dots \ar[r,shift left=.5ex,"t"]
  & \ar[l,shift left=.5ex, "u"] M^{i+1} \ar[r,shift left=.5ex,"t"] & \ar[l,shift left=.5ex, "u"] M^i \ar[r,shift left=.5ex,"t"] & \ar[l,shift left=.5ex, "u"] M^{i-1} \ar[r,shift left=.5ex,"t"] & \ar[l,shift left=.5ex, "u"] \dots
\end{tikzcd}
\end{equation*}
of $\xi$-complete $A_\inf(\O_C)/p$-complexes such that $ut=tu=\phi^{-1}(\xi)$ and $u^pt=0$ together with an identification $\tau: \phi^*M^\infty\cong M^{-\infty}$ similarly to \cref{ex:syntomification-fgaugesperfd}. Consider the filtration $\Fil^\bullet V\coloneqq M^\bullet/M^{\bullet-1}$ induced by the $t$-maps with associated graded $\gr^\bullet V$ \EDIT{and note that the filtered complex $\Fil^\bullet V$ identifies with the pullback of $\pi'_{X^\Syn, *}E$ to} 
\begin{equation*}
\EDIT{
(\O_{C, \red}^\Syn)_{u=0}=\O_{C, \dR, +}^\N\cong \Spec(\O_C/p)[t]/\G_m\;.
}
\end{equation*}
From the analogue of \cref{prop:syntomicetale-cohomologyhod} over $\Spf\O_C$, we may conclude that $\gr^\bullet V=0$ for $\bullet\geq -i+d$ and that, moreover, for $0\leq k\leq d$, the complex $\gr^{-i+d-k-1} V$ is concentrated in degrees at least $d-k$. In particular, as the filtration $\Fil^\bullet V$ is again \EDIT{complete by \cref{lem:finiteness-pushforwardcomplete}}, we conclude that $\Fil^\bullet V=0$ for $\bullet\geq -i+d$ and that $\Fil^{-i+d-k-1} V$ is concentrated in degrees at least $d-k$ for $0\leq k\leq d$. \EDIT{In other words,} the map $u: M^\bullet\rightarrow M^{\bullet+1}$ is an isomorphism for $\bullet\geq -i+d-1$ and, for $0\leq k\leq d$, the map $u: M^{-i+d-k-2}\rightarrow M^{-i+d-k-1}$ induces an isomorphism in cohomological degrees at most $d-k-1$ and an injection in degree $d-k$ for $0\leq k\leq d$. In particular, we conclude that the map $u^\infty: M^0\rightarrow M^\infty$ induces an isomorphism in cohomological degrees at most $i+1$ and an injection in degree $i+2$. Moreover, the map $u^p: M^{-1}\rightarrow M^{p-1}$ induces an isomorphism in cohomological degrees at most $i$ and an injection in degree $i+1$, so, due to $u^pt=0$, we conclude that the map $t: M^0\rightarrow M^{-1}$ induces the zero map in cohomological degrees at most $i+1$ and hence the same is true for $t^\infty: M^0\rightarrow M^{-\infty}$. By virtue of
\begin{equation*}
R\Gamma(\O_{C, \red}^\Syn, E)=\fib(M^0\xrightarrow{t^\infty-\tau u^\infty} M^{-\infty})\;,
\end{equation*} 
see \cref{ex:syntomification-fgaugesperfd}, we conclude that $R\Gamma(\O_{C, \red}^\Syn, E)$ is concentrated in degrees at least $i+1$ and this finishes the proof.
\end{proof}

\begin{proof}[Proof of \cref{thm:syntomicetale-maincoarseoc}]
We use the notations from the \EDIT{proof of \cref{thm:syntomicetale-mainfineoc} and can again reduce to showing that
\begin{equation*}
\gr^\Syn_k R\Gamma(X^\Syn, E)[\tfrac{1}{v_1}]\cong R\Gamma(X^\Syn_\red, E/v_{1, X}\{k(p-1)\})
\end{equation*}
vanishes in the case $E\in\Vect((X^\Syn)_{p=0})$. As before, we replace $E$ by $E/v_{1, X}\{k(p-1)\}$; then our assumption becomes that $E$ is a vector bundle} on $X^\Syn_\red$ with Hodge--Tate weights all at most $-d-1$ \EDIT{and we have to prove that} the complex $R\Gamma(X^\Syn_\red, E)$ vanishes. Again, from \cref{prop:syntomicetale-cohomologyhod}, we see that $\gr^\bullet V=0$ for $\bullet\geq 0$. As before, we now successively conclude that $\Fil^\bullet V=0$ for $\bullet\geq 0$ and hence the maps $u: M^\bullet\rightarrow M^{\bullet+1}$ are isomorphisms for $\bullet\geq -1$. In particular, the maps $u^\infty: M^0\rightarrow M^\infty$ and $u^p: M^{-1}\rightarrow M^{p-1}$ are isomorphisms and thus $t: M^0\rightarrow M^{-1}$ must be zero due to $u^pt=0$. By virtue of
\begin{equation*}
R\Gamma(\O_{C, \red}^\Syn, E)=\fib(M^0\xrightarrow{t^\infty-\tau u^\infty} M^{-\infty})\;,
\end{equation*} 
see \cref{ex:syntomification-fgaugesperfd}, we obtain $R\Gamma(\O_{C, \red}^\Syn, E)=0$, as required.
\end{proof}

\begin{rem}
\EDIT{
Note that the proofs of \cref{thm:syntomicetale-mainfineoc} and \cref{thm:syntomicetale-maincoarseoc} only make use of the fact that $\O_C$ is perfectoid, but do not rely on $C$ being algebraically closed.
}
\end{rem}
}

\newpage

\section{A comparison of crystalline cohomology and étale cohomology}
\label{sect:cryset}

In this final section, we establish a generalisation of a comparison theorem of Colmez--Niziol between rational crystalline cohomology and étale cohomology, see \cite[Cor. 1.4]{ColmezNiziol}, which allows for coefficients. 

\begin{thm}
\label{thm:cryset-main}
Let $X$ be a $p$-adic formal scheme \EEDIT{which is smooth and proper over $\Spf\Z_p$}. For any crystalline local system $L$ on $X_\eta$ with Hodge--Tate weights all at most $-i-1$ for some $i\geq 0$, let $\cal{E}$ be its associated $F$-isocrystal. Then there is a natural morphism
\begin{equation*}
R\Gamma_\crys(X_{p=0}, \cal{E})[\tfrac{1}{p}][-1]\rightarrow R\Gamma_\proet(X_\eta, L)[\tfrac{1}{p}]\;,
\end{equation*}
which induces an isomorphism
\begin{equation*}
\tau^{\leq i} R\Gamma_\crys(X_{p=0}, \cal{E})[\tfrac{1}{p}][-1]\cong\tau^{\leq i} R\Gamma_\proet(X_\eta, L)[\tfrac{1}{p}]
\end{equation*}
and an injection on $H^{i+1}$.
\end{thm}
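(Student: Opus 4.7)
The plan is to combine \cref{thm:intro-locsysfgauges} with the proper case of \cref{thm:beilfibsq-fmrat} (the rational Fontaine--Messing-type fibre sequence with coefficients) and \cref{thm:intro-syntomicetale} (the syntomic--étale comparison with coefficients) to reduce the desired statement to a short formal manipulation.

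First I would use \cref{thm:intro-locsysfgauges} to attach to $L$ an $F$-gauge $E$ on $X^\Syn$ satisfying $T_\et(E) \cong L$ and such that $T_\dR(E)$ identifies with the $F$-isocrystal $\cal{E}$. It will also be part of that construction that $E$ is in fact a vector bundle $F$-gauge and that its Hodge--Tate weights in the sense of \cref{defi:nygaardhodge-htweights} agree with the classical Hodge--Tate weights of $L$; in particular they are all at most $-i-1 < 0$. Since the Hodge filtration on $T_{\dR, +}(E)$ is complete and, as $E$ is perfect, eventually zero in sufficiently positive filtration degrees, the fact that its graded pieces are concentrated in strictly negative filtration degrees then forces
\[
\Fil^0_\Hod R\Gamma_\dR(X, T_{\dR, +}(E))[\tfrac{1}{p}] = 0.
\]

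Applying the proper case of \cref{thm:beilfibsq-fmrat} next produces a fibre sequence
\[
R\Gamma_\Syn(X, E)[\tfrac{1}{p}] \to \Fil^0_\Hod R\Gamma_\dR(X, T_{\dR, +}(E))[\tfrac{1}{p}] \xrightarrow{1-\tau} R\Gamma_\dR(X, T_\dR(E))[\tfrac{1}{p}],
\]
which collapses by the previous vanishing to a natural isomorphism $R\Gamma_\Syn(X, E)[\tfrac{1}{p}] \cong R\Gamma_\dR(X, T_\dR(E))[\tfrac{1}{p}][-1]$. Invoking the stacky crystalline comparison $\phi^*\F_p^\prism \cong \Z_p^\dR$ together with the fact that the $F$-isocrystal structure on $\cal{E}$ makes Frobenius act invertibly on $R\Gamma_\crys(X_{p=0}, \cal{E})[\tfrac{1}{p}]$, I would then identify this with $R\Gamma_\crys(X_{p=0}, \cal{E})[\tfrac{1}{p}][-1]$. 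Composing with the natural map from syntomic to pro-étale cohomology furnished by \cref{thm:intro-syntomicetale} applied to $E$ gives the map of the statement, and the fact that it induces an isomorphism on $\tau^{\leq i}$ and an injection on $H^{i+1}$ is then a direct consequence of \cref{thm:intro-syntomicetale}, since the Hodge--Tate weights of $E$ are all at most $-i-1$.

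The main obstacle is contained in the first step: establishing \cref{thm:intro-locsysfgauges} in the precise form needed, including the facts that the associated $F$-gauge $E$ is actually a vector bundle $F$-gauge, that its Hodge--Tate weights in the stacky sense match those of $L$ in the classical sense, and that its de Rham realisation recovers the $F$-isocrystal $\cal{E}$ together with its Frobenius structure. These compatibilities lean heavily on the work of Guo--Reinecke and Guo--Li and will require a careful treatment in \cref{sect:cryset}.
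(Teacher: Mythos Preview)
Your overall strategy matches the paper's, but there are two genuine gaps.

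First, the associated $F$-gauge $E$ is \emph{not} in general a vector bundle on $X^\Syn$; the embedding of \cref{thm:cryset-locsysfgauges} only lands in the category of \emph{reflexive} $F$-gauges (perfect complexes obtained via the Guo--Reinecke/Guo--Li pushforward-and-saturation construction). So the plan ``prove $E$ is a vector bundle and then apply \cref{thm:syntomicetale-mainfine}'' does not work as stated. What the paper actually does is prove a substitute (\cref{lem:cryset-degreezero}): the pullback of $E$ to $X^\N_\Hod$ is concentrated in cohomological degree zero. This is exactly the property of a vector bundle that is used in the proofs of \cref{thm:syntomicetale-mainfine} and \cref{prop:cryset-fine}, so once it is established those proofs go through verbatim for reflexive $E$. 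Proving this lemma requires a direct computation with the saturated Nygaardian filtration on quasiregular semiperfectoid rings and is not a formality.

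Second, your vanishing claim
\[
\Fil^0_\Hod R\Gamma_\dR(X, T_{\dR, +}(E))[\tfrac{1}{p}] = 0
\]
is false in general. The Hodge--Tate weights of $E$ control the graded pieces of $E$ on $X\times B\G_m$, but pushing forward along $X^{\dR,+}\to \A^1/\G_m$ mixes in differential forms: by \cref{prop:syntomicetale-cohomologyhod}, the degree-$0$ graded piece of the pushforward involves $V_0 \to V_{-1}\otimes\Omega^1_{Y/\F_p} \to V_{-2}\otimes\Omega^2_{Y/\F_p}\to\cdots$, so even if $V_k=0$ for $k\geq -i$ the term $V_{-i-1}\otimes\Omega^{i+1}_{Y/\F_p}$ can contribute. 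The correct statement, which the paper proves in \cref{prop:cryset-fine}, is that $\Fil^0_\Hod R\Gamma_\dR(X, T_{\dR, +}(E))$ is concentrated in cohomological degrees $\geq i+1$. That is still enough: combined with the fibre sequence from \cref{thm:beilfibsq-fmrat} it shows $R\Gamma_\dR(X,T_\dR(E))[\tfrac{1}{p}][-1]\to R\Gamma_\Syn(X,E)[\tfrac{1}{p}]$ is an isomorphism on $\tau^{\leq i}$ and injective on $H^{i+1}$, and then \cref{thm:syntomicetale-mainfine} (extended to reflexive $E$ via \cref{lem:cryset-degreezero}) finishes the job. Your argument would only give the full vanishing when the Hodge--Tate weights are at most $-d-1$; this is essentially the coarser \cref{prop:cryset-coarse}.
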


\EDIT{In order to clear up all the terms appearing in the statement, we will first recall the basic definitions surrounding crystalline local systems. Afterwards, we will go on to proving \cref{thm:cryset-main}, for which we will roughly proceed as follows: First, we show how to associate to any crystalline local system $L$ on $X_\eta$ an $F$-gauge $E$ on $X$ with the property that $T_\et(E)\cong L$ and $T_\dR(E)\cong \cal{E}$ under the identification $X^\dR\cong (X_{p=0})^\prism$ from \cite[Constr. 3.1.1]{FGauges}. Then we prove that
\begin{equation*}
\tau^{\leq i} R\Gamma_\dR(X, T_\dR(E))[\tfrac{1}{p}][-1]\cong\tau^{\leq i} R\Gamma_\proet(X_\eta, T_\et(E))[\tfrac{1}{p}]
\end{equation*}
for this $F$-gauge $E$ using the results from Section \ref{sect:beilfibsq} and Section \ref{sect:syntomicetale}.}

\subsection{Recollections on crystalline local systems}
\label{subsect:review-cryslocsys}

\EDIT{We briefly recap the theory of crystalline local systems along the lines of \cite[Sect. 2]{GuoReinecke} and start by recalling the definition of an $F$-isocrystal. For this, we let $Y$ be an $\F_p$-scheme.

\begin{defi}
Let $Y$ be an $\F_p$-scheme and consider the standard divided power structure on $\Z_p$ (i.e.\ the one given by $\gamma_n(p)=\tfrac{p^n}{n!}$ for $n\geq 0$). A \emph{crystal} (in coherent sheaves) on $Y$ is a sheaf $\cal{E}$ of $\O_{Y/\Z_p}$-modules on the big crystalline site $(Y/\Z_p)_\crys$ of $Y$ satisfying the following two properties:
\begin{enumerate}[label=(\roman*)]
\item For each object $(U, T, \gamma)$ of $(Y/\Z_p)_\crys$, the restriction $\cal{E}|_T$ of $\cal{E}$ to the Zariski-site of $T$ is a coherent sheaf.

\item For each map $\alpha: (U', T', \gamma')\rightarrow (U, T, \gamma)$ in $(Y/\Z_p)_\crys$, the induced map $\alpha^*(\cal{E}|_{T'})\rightarrow \cal{E}|_T$ is an $\O_T$-linear isomorphism.
\end{enumerate}
We denote the category of crystals on $Y$ by $\Crys(Y/\Z_p)$ and call the isogeny category
\begin{equation*}
\Isoc(Y/\Z_p)\coloneqq \Crys(Y/\Z_p)[\tfrac{1}{p}]
\end{equation*}
the category of \emph{isocrystals} on $Y$.
\end{defi}

The absolute Frobenius on $Y$ and the Witt vector Frobenius on $\Z_p$ (which is the identity in our case) are compatible and hence we obtain a morphism of sites
\begin{equation*}
F: (Y/\Z_p)_\crys\rightarrow (Y/\Z_p)_\crys
\end{equation*}
inducing a morphism of the corresponding topoi.

\begin{defi}
Let $Y$ be an $\F_p$-scheme. An \emph{$F$-isocrystal} on $Y$ is an isocrystal $\cal{E}$ on $Y$ equipped with an isomorphism $\phi: F^*\cal{E}\rightarrow\cal{E}$. We use $\Isoc^\phi(Y/\Z_p)$ to denote the category of $F$-isocrystals on $Y$. 
\end{defi}

To proceed further, we need to introduce certain period rings on the pro-étale site of any locally noetherian adic space $W$ over $\Spa\Q_p$. As affinoid perfectoid objects of $W_\proet$ form a basis of the topology by \cite[Lem. 4.6, Prop. 4.8]{PAdicHodgeTheory}, \EEDIT{a sheaf on $W_\proet$ can be defined by giving its values on affinoid perfectoid objects; since $\Q_p\rightarrow C$ is pro-étale, we may furthermore restrict to affinoid perfectoids over $\Spa C$, where we recall that $C$ denotes a fixed completed algebraic closure of $\Q_p$.}

\begin{defi}
Let $W$ be a locally noetherian adic space \EEDIT{over $\Spa\Q_p$}. The sheaves $\mathbb{A}_\crys, \mathbb{B}_\crys^+$ and $\mathbb{B}_\crys$ on $W_\proet$ are defined as follows: For any affinoid perfectoid $\Spa(R, R^+)\in W_\proet$ \EEDIT{over $\Spa C$}, we define
\begin{enumerate}[label=(\roman*)]
\item $\mathbb{A}_\crys(R, R^+)\coloneqq A_\crys(R^+)$, where we recall that the right-hand side was defined in \cref{ex:drstack-perfd},
\item $\mathbb{B}_\crys^+(R, R^+)\coloneqq \mathbb{A}_\crys(R, R^+)[\tfrac{1}{p}]$,
\item $\mathbb{B}_\crys(R, R^+)\coloneqq \mathbb{B}_\crys^+(R, R^+)[\tfrac{1}{\mu}]$, where $\mu\coloneqq [\epsilon^{1/p}]-1\in A_\inf(\O_C)$ is defined as in the proof of \cref{lem:beilfibsq-etalephimod}.
\end{enumerate}
\end{defi}

Now assume that $W=X_\eta$ is the generic fibre of a smooth $p$-adic formal scheme $X$. In this case, we can define a variant of the sheaf $\mathbb{B}_\crys$ with coefficients in any $F$-isocrystal on the special fibre $X_{p=0}$:

\begin{defi}
Let $X$ be a smooth $p$-adic formal scheme and $\cal{E}\in\Isoc^\phi(X_{p=0}/\Z_p)$ an $F$-isocrystal on the special fibre of $X$. Then we define sheaves $\mathbb{B}_\crys^+(\cal{E})$ and $\mathbb{B}_\crys(\cal{E})$ on $X_{\eta, \proet}$ as follows: For an affinoid perfectoid $\Spa(R, R^+)\in X_{\eta, \proet}$ \EEDIT{over $\Spa C$}, we define
\begin{enumerate}[label=(\roman*)]
\item $\mathbb{B}_\crys^+(\cal{E})(R, R^+)\coloneqq \cal{E}(A_\crys(R^+))[\tfrac{1}{p}]$, where
\begin{equation*}
\cal{E}(A_\crys(R^+))\coloneqq \lim_n \cal{E}(A_\crys(R^+)/p, A_\crys(R^+)/p^n, \gamma)
\end{equation*}
and $\gamma$ denotes the canonical divided power structure,

\item $\mathbb{B}_\crys(\cal{E})(R, R^+)\coloneqq \mathbb{B}_\crys^+(\cal{E})(R, R^+)[\tfrac{1}{\mu}]$.
\end{enumerate}
\end{defi}

Note that the Frobenius on $A_\crys(R^+)$ for any affinoid perfectoid $\Spa(R, R^+)$ over $\Spa C$ induces a Frobenius $F: \mathbb{B}_\crys\rightarrow\mathbb{B}_\crys$ on the sheaf $\mathbb{B}_\crys$ and we have an isomorphism $F^*\mathbb{B}_\crys(\cal{E})\cong\mathbb{B}_\crys(F^*\cal{E})$ for any $F$-isocrystal $\cal{E}$ on $X_{p=0}$. Thus, we obtain a Frobenius
\begin{equation*}
\phi: F^*\mathbb{B}_\crys(\cal{E})\cong\mathbb{B}_\crys(F^*\cal{E})\xrightarrow{\cong}\mathbb{B}_\crys(\cal{E})
\end{equation*}
on $\mathbb{B}_\crys(\cal{E})$ by functoriality of the assignment $\cal{E}\mapsto \mathbb{B}_\crys(\cal{E})$.

By \cite[Prop. 2.38]{GuoReinecke}, we may now define a crystalline local system as follows:

\begin{defi}
Let $X$ be a smooth $p$-adic formal scheme. A $\widehat{\Z}_p$-local system $L$ on $X_{\eta, \proet}$ is called \emph{crystalline} if there exists an $F$-isocrystal $\cal{E}$ on $X_{p=0}$ and an isomorphism
\begin{equation*}
\mathbb{B}_\crys(\cal{E})\cong \mathbb{B}_\crys\tensor_{\widehat{\Z}_p} L
\end{equation*}
compatible with the Frobenius on both sides. In this case, we call $\cal{E}$ the \emph{associated $F$-isocrystal} of $L$. Here, $\widehat{\Z}_p$ denotes the sheaf $\lim_n \Z/p^n\Z$ on $X_{\eta, \proet}$, as usual.
\end{defi}

\begin{rem}
Intuitively, one should think of a crystalline local system on $X_\eta$ as a continuous family of crystalline representations. Indeed, in yet unpublished work, Guo--Yang show that a local system $L$ on $X_\eta$ is crystalline if and only if its restrictions to all closed points of $X_\eta$ are crystalline representations, see \cite[Rem. 2.32]{GuoReinecke}. In particular, this also implies that the notion of a local system on $X_\eta$ being crystalline is independent of the choice of the smooth integral model $X$.
\end{rem}

Finally, we need to establish the notion of \emph{Hodge--Tate weights} of a crystalline local system $L$ on $X_\eta$. For this, we first need to define even more period sheaves.

\begin{defi}
\EEDIT{Let $W$ be a locally noetherian adic space over $\Spa\Q_p$. The sheaves $\mathbb{B}_\dR^+$ and $\mathbb{B}_\dR$ on $W_\proet$ are defined as follows: For any affinoid perfectoid $\Spa(R, R^+)\in W_\proet$ over $\Spa C$, we define}
\begin{enumerate}[label=(\roman*)]
\item \EEDIT{$\mathbb{B}_\dR^+(R, R^+)\coloneqq A_\inf(R^+)[\tfrac{1}{p}]^\wedge_{\Ker\theta}$ equipped with the $(\Ker\theta)$-adic filtration, where $\theta: A_\inf(R^+)[\tfrac{1}{p}]\rightarrow R$ is induced by the usual Fontaine map,}
\item \EEDIT{$\mathbb{B}_\dR(R, R^+)\coloneqq \mathbb{B}_\dR^+(R, R^+)[\tfrac{1}{t}]$ with the filtration
\begin{equation*}
\Fil^n \mathbb{B}_\dR\coloneqq \sum_{m\geq -n} t^{-m}\Fil^{m+n}\mathbb{B}_\dR^+(R, R^+)\;,
\end{equation*}
where $t\coloneqq \log [\epsilon]\in A_\crys(R, R^+)$ is defined as in (\ref{eq:beilfibsq-t}).}
\end{enumerate}
\end{defi}

\begin{defi}
Let $W$ be a locally noetherian adic space \EEDIT{over $\Spa\Q_p$}. The sheaf $\O\mathbb{B}_\dR^+$ on $W_\proet$ is defined as follows: For any affinoid perfectoid $\Spa(R, R^+)=\lim_i\Spa(R_i, R_i^+)\in W_\proet$ \EEDIT{over $\Spa C$}, we define
\begin{equation*}
\EEDIT{
\O\mathbb{B}_\dR^+(R, R^+)\coloneqq \colim_i (R_i^+\widehat{\tensor}_{\Z_p} A_\inf(R^+))[\tfrac{1}{p}]^\wedge_{\Ker\theta}\;,
}
\end{equation*}
where we abuse notation and denote by $\theta: (R_i^+\widehat{\tensor}_{\Z_p} A_\inf(R^+))[\tfrac{1}{p}]\rightarrow R$ the map induced by the two maps $R_i\rightarrow R$ and $A_\inf(R^+)\rightarrow R$. We equip $\O\mathbb{B}_\dR^+(R, R^+)$ with the $(\Ker\theta)$-adic filtration and hence obtain a decreasing filtration on $\O\mathbb{B}_\dR^+(R, R^+)[\tfrac{1}{t}]$ given by
\begin{equation*}
\Fil^n \O\mathbb{B}_\dR^+(R, R^+)[\tfrac{1}{t}]\coloneqq \sum_{m\geq -n} t^{-m}\Fil^{m+n}\O\mathbb{B}_\dR^+(R, R^+)\;.
\end{equation*}
This defines a filtered sheaf on \EEDIT{$W_\proet$} whose completion with respect to the filtration we denote by $\O\mathbb{B}_\dR$ and call the \emph{structural de Rham sheaf}. Its associated graded is the \emph{structural Hodge--Tate sheaf} denoted by $\O\mathbb{B}_\HT$.
\end{defi}

Now recall that being crystalline implies being de Rham in the sense of \cite[Def. 8.3]{PAdicHodgeTheory} by \cite[Cor. 2.37]{GuoReinecke} and thus, \emph{a fortiori}, also being Hodge--Tate in the sense of \cite[Def. 5.6]{HTWeights}, see \cite[Cor. 3.12]{LiuZhu}. In particular, this means that the sheaves
\begin{equation}
\label{eq:cryset-ddr}
D_\dR(L)\coloneqq \nu_*(L\tensor_{\widehat{\Z}_p} \O\mathbb{B}_\dR)
\end{equation}
and 
\begin{equation}
\label{eq:cryset-dht}
D_\HT(L)\coloneqq \nu_*(L\tensor_{\widehat{\Z}_p} \O\mathbb{B}_\HT)
\end{equation}
are vector bundles on $X_{\eta, \et}$. Here, the map $\nu: X_{\eta, \proet}\rightarrow X_{\eta, \et}$ denotes the canonical projection; however, we warn the reader that, contrary to our usual convention that all pushforwards are automatically derived, the pushforward $\nu_*$ appearing in (\ref{eq:cryset-dht}) and (\ref{eq:cryset-ddr}) is \emph{underived}.}

\begin{defi}
\EDIT{
Let $X$ be a smooth $p$-adic formal scheme and $L$ a crystalline local system on the generic fibre $X_\eta$. The \emph{Hodge--Tate weights} of $L$ are those integers $i\in\Z$ such that the graded vector bundle $D_\HT(L)$ is non-trivial in grading degree $i$.
}
\end{defi}

\begin{rem}
\label{rem:cryset-htweightsddr}
\EDIT{
Note that, by definition, the sheaf $D_\HT(L)$ is the associated graded of the filtered vector bundle $D_\dR(L)$. Thus, one may alternatively define the Hodge--Tate weights of $L$ as the integers $i\in\Z$ for which the filtration on $D_\dR(L)$ jumps.
}
\end{rem}

\begin{rem}
\EDIT{
If $X_\eta$ is geometrically connected, by \cite[Thm. 1.1]{HTWeights}, we may alternatively define the Hodge--Tate weights of $L$ as the Hodge--Tate weights of the $p$-adic Galois representation $L_{\ol{x}}$ of $k(x)$, where $x$ denotes any classical point of $X_\eta$ (i.e.\ any point of $X_\eta$ seen as a rigid-analytic variety) and $k(x)$ is its residue field.
}
\end{rem}

\subsection{\EEDIT{$F$-gauges associated to crystalline local systems}}

We first show how to associate an $F$-gauge to any crystalline local system on the generic fibre of a smooth formal scheme $X$ over $\Z_p$. To this end, recall that, \EDIT{in independent work, both} Guo--Reinecke \EDIT{and Du--Liu--Moon--Shimizu} have recently given a description of crystalline local systems on $X_\eta$ in terms of the prismatic site of $X$, which we review here briefly, \EDIT{see \cite{GuoReinecke} and \cite{DuLiuMoonShimizu}.}

\begin{defi}
For any prism $(A, I)$, let $\Vect^\phi(\Spec(A)\setminus V(p, I))$ denote the category of vector bundles $M$ on $\Spec(A)\setminus V(p, I)$ together with an $A$-linear isomorphism $\phi^*M[1/I]\cong M[1/I]$, where $\phi$ is the Frobenius of $A$. Then the category of \emph{analytic prismatic $F$-crystals} on $X$ is defined as
\begin{equation*}
\Vect^{\an, \phi}(X_\prism)\coloneqq \lim_{(A, I)\in X_\prism} \Vect^\phi(\Spec(A)\setminus V(p, I))\;.
\end{equation*}
\end{defi}

In our setting, it turns out that crystalline local systems and analytic prismatic $F$-crystals are the same:

\begin{prop}
\label{prop:cryset-locsysvectan}
Let $X$ be a smooth $p$-adic formal scheme. Then there is a natural equivalence of categories
\begin{equation*}
\Vect^{\an, \phi}(X_\prism)\cong \Loc^\crys_{\Z_p}(X_\eta)\;.
\end{equation*}
\end{prop}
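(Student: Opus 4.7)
The plan is to construct the equivalence by passing through the Bhatt--Scholze correspondence between Laurent prismatic $F$-crystals and pro-étale $\Z_p$-local systems on $X_\eta$ already used via \cite[Cor. 3.7]{FCrystals} in Section \ref{subsect:syntomification}. Given an analytic prismatic $F$-crystal $\mathcal{M}$, for each $(A,I) \in X_\prism$ one restricts the vector bundle on $\Spec(A)\setminus V(p,I)$ to the open subscheme $\Spec(A[1/I])_{(p)}^\wedge$ where $I$ is invertible, thereby obtaining a Laurent prismatic $F$-crystal; composing with \cite[Cor. 3.7]{FCrystals} produces a $\widehat{\Z}_p$-local system $L$ on $X_\eta$. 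The first nontrivial task is then to show that $L$ is crystalline, and subsequently to invert this procedure.

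To verify the crystalline condition, I would work quasisyntomic-locally on $X$ and reduce to the case where $\Spf R = X$ admits a perfectoid cover $\Spf R_\infty \to \Spf R$ with associated perfect prism $(A_\inf(R_\infty), \xi)$. Here, the analytic $F$-crystal datum extends the Laurent datum across the locus $V(\xi)\setminus V(p)$, and Beauville--Laszlo gluing along the canonical morphism $A_\inf(R_\infty) \to A_\crys(R_\infty)$ together with the Frobenius structure produces an $F$-isocrystal $\mathcal{E}$ on $\Spf(R_\infty)_{p=0}$ and an identification $L \otimes \mathbb{B}_\crys \cong \mathbb{B}_\crys(\mathcal{E})$ on affinoid perfectoids; Čech descent along the cover $R \to R_\infty$ then yields the corresponding data on $X_{\eta, \proet}$ itself, exhibiting $L$ as crystalline. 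Full-faithfulness can be checked by a similar local computation, using the explicit description of $\Hom$-groups in $\Vect^\phi(\Spec(A_\inf(R_\infty))\setminus V(p,\xi))$ via Beauville--Laszlo and matching them with Galois-equivariant $\Hom$-groups computed through the period sheaves from Section \ref{subsect:review-cryslocsys}.

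The hard part is essential surjectivity: given a crystalline local system $L$ with associated $F$-isocrystal $\mathcal{E}$ and filtered vector bundle $D_\dR(L)$, one must construct an analytic prismatic $F$-crystal that maps back to $L$. My strategy would be to reverse the gluing above: on the étale locus $\Spec(A[1/I])_{(p)}^\wedge$ one uses the Laurent $F$-crystal attached to $L$, while over the divided-power locus $\Spec(A_\crys)\setminus V(p)$ one uses $\mathcal{E}$; the transition datum on the overlap is then pinned down by the filtered datum $D_\dR(L)$ via the $\mathbb{B}_\dR$-period identification, and Beauville--Laszlo furnishes a genuine vector bundle on $\Spec(A)\setminus V(p,I)$.

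The main obstacle is precisely the simultaneous descent and coherence in this last step: one has to show that the locally constructed analytic $F$-crystal is functorial in $(A,I)\in X_\prism$, independent of the chosen perfectoid cover, and stable under the Čech groupoid, so as to assemble into an actual object of $\Vect^{\an,\phi}(X_\prism)$ rather than mere local data. Checking this coherence essentially amounts to showing that the period identifications are compatible with all the structural maps between prisms, which reduces, via the analysis of the filtered $F$-isocrystal structure, to known properties of $\mathbb{B}_\crys$- and $\mathbb{B}_\dR$-cohomology. This is exactly the content carried out by \cite{GuoReinecke} and \cite{DuLiuMoonShimizu}, whose arguments I would ultimately follow.
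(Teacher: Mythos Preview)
Your proposal is essentially correct and aligns with the paper's approach: the paper's proof is simply a citation to \cite[Thm.~A]{GuoReinecke} or \cite[Thm.~1.3, Rem.~1.5]{DuLiuMoonShimizu}, and you arrive at the same endpoint after sketching what you expect the strategy in those references to be. Since this proposition is used as a black box in the paper, your elaboration is not needed for the paper's purposes, though it is a reasonable outline of the ideas in those works.
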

\begin{proof}
See \cite[Thm. A]{GuoReinecke} \EDIT{or \cite[Thm. 1.3, Rem. 1.5]{DuLiuMoonShimizu}.}
\end{proof}

Next, given an analytic prismatic $F$-crystal on $X$, we extend it to a prismatic $F$-crystal in perfect complexes on $X$ by (underived) pushforward along the open immersion 
\begin{equation*}
\EEDIT{
\Spec(A)\setminus V(p, I)\rightarrow\Spec A
}
\end{equation*}
for all $(A, I)\in X_\prism$.

\begin{prop}
\label{prop:cryset-vectanpushforward}
Let $X$ be a smooth $p$-adic formal scheme. Then \EEDIT{(underived)} pushforward along the open immersions $\Spec(A)\setminus V(p, I)\rightarrow \Spec A$ for all $(A, I)\in X_\prism$ induces a fully faithful \EDIT{embedding}
\begin{equation*}
\Vect^{\an, \phi}(X_\prism)\hookrightarrow \Perf^\phi(X_\prism)
\end{equation*}
from the category of analytic prismatic $F$-crystals to the category of prismatic $F$-crystals in perfect complexes.
\end{prop}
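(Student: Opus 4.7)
The plan is to verify three properties in order: compatibility of the construction with morphisms in the prismatic site (so that it produces a well-defined object of the limit $\Perf^\phi(X_\prism)$), perfectness of each individual pushforward together with the Frobenius structure, and finally full faithfulness of the resulting functor.

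First I would verify functoriality across the prismatic site. Given a morphism $(A, I) \to (B, J)$ in $X_\prism$, the induced map $f: \Spec B \to \Spec A$ sends $V(p, J)$ into $V(p, I)$, and its restriction $f_U: U_B \to U_A$ between the open complements is flat, since prism maps are $(p, I)$-completely flat. Underived pushforward commutes with flat base change, yielding canonical isomorphisms $f^*(j_{A, *} M) \cong j_{B, *}(f_U^* M)$ compatible with the Frobenius structures after inverting $I$ (respectively $J$). This produces a well-defined assignment valued in $\Perf^\phi(X_\prism)$, once we know each $j_{A, *} M$ is perfect and inherits an $F$-structure.

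Next I would show that $j_{A, *} M$ is a perfect $A$-complex. Working Zariski-locally on $\Spec A$, fix a distinguished generator $\xi$ of $I$, so that $U_A = D(p) \cup D(\xi)$ and $D(p) \cap D(\xi) = D(p\xi)$. The two-cover \v Cech computation yields a fibre sequence
\begin{equation*}
Rj_{A, *} M \to M[\tfrac{1}{p}] \oplus M[\tfrac{1}{\xi}] \to M[\tfrac{1}{p\xi}],
\end{equation*}
whence $Rj_{A, *} M$ is concentrated in cohomological degrees $0$ and $1$. Since $(p, \xi)$ is a regular sequence on $A$, the closed subset $V(p, I)$ has codimension two, and $R^1 j_{A, *} M$ is controlled by the local cohomology $H^2_{(p, \xi)}$; a depth argument using that $M$ extends locally to a vector bundle on a smaller open forces this local cohomology to vanish, giving $Rj_{A, *} M = j_{A, *} M$ concentrated in degree zero. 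Perfectness then follows because the resulting reflexive extension is, Zariski-locally on $\Spec A$, finite locally free. The $F$-structure is obtained via the chain of natural identifications
\begin{equation*}
\phi^*(j_{A, *} M)[\tfrac{1}{I}] \cong j_{A, *}(\phi^* M[\tfrac{1}{I}]) \cong j_{A, *}(M[\tfrac{1}{I}]) = j_{A, *} M[\tfrac{1}{I}],
\end{equation*}
using that on $D(\xi) \subseteq U_A$ pushforward and localisation commute.

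Finally, for full faithfulness it suffices to show that, for vector bundles $M_1, M_2$ on $U_A$ with Frobenius structures, restriction along $j_A$ induces a bijection between morphism sets. Surjectivity is immediate: a map $g: M_1 \to M_2$ of $F$-bundles produces $j_{A, *} g: j_{A, *} M_1 \to j_{A, *} M_2$ by functoriality. Injectivity is a depth argument: any morphism $f: j_{A, *} M_1 \to j_{A, *} M_2$ restricting to zero on $U_A$ has image supported on $V(p, I)$, but $j_{A, *} M_2$ is reflexive and hence has no nonzero local sections supported in codimension at least two, forcing $f = 0$. Compatibility with the Frobenius structures is automatic since the $F$-structure is defined by the Frobenius after inverting $I$. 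The main obstacle I expect is the vanishing of $R^1 j_{A, *} M$ and the perfectness of $j_{A, *} M$ in the absence of Noetherian hypotheses on $A$, which likely requires Noetherian approximation or quasisyntomic descent to reduce to prisms where the local cohomology vanishing is classical.
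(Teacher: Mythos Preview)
The paper does not give its own proof: it simply cites \cite[Thm.~5.10]{GuoReinecke}. So the comparison is between your sketch and the argument in Guo--Reinecke, which is substantially more delicate than what you outline.

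There are two genuine gaps in your proposal. First, morphisms in the absolute prismatic site $X_\prism$ are \emph{not} $(p,I)$-completely flat in general; there is no flatness hypothesis in the definition. So your base-change argument for the crystal property of $j_{A,*}M$ does not go through as stated. This compatibility is one of the real difficulties in Guo--Reinecke's proof, and they handle it by reducing to a cofinal family of well-behaved (Breuil--Kisin type) prisms rather than arguing for arbitrary morphisms.

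Second, the vanishing of $R^1 j_{A,*}M$ is false, already for $M=\O_{U_A}$: with $(p,\xi)$ a regular sequence one has $R^1 j_{A,*}\O_{U_A}\cong H^2_{(p,\xi)}(A)$, which is nonzero. Hence the derived and underived pushforwards disagree, and your depth argument cannot force $H^2_{(p,\xi)}$ to vanish. The content of the theorem is precisely that the \emph{underived} pushforward $j_{A,*}M$ is nonetheless a perfect $A$-complex (in fact a finitely presented module) and that this assignment satisfies the crystal property; Guo--Reinecke establish this via Beauville--Laszlo gluing and explicit control at Breuil--Kisin prisms, not via a codimension-two extension argument. Your concluding remark correctly flags perfectness as the hard point, but the obstacles are structural rather than merely a lack of noetherian hypotheses.
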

\begin{proof}
See \cite[Thm. 5.10]{GuoReinecke}.
\end{proof}

To finish the construction, we have to show how to extend a prismatic $F$-crystal in perfect complexes $M$, i.e.\ a perfect complex on $X^\prism$ together with a Frobenius structure, to a perfect complex on $X^\N$. The Frobenius structure will ensure that the result descends to $X^\Syn$, see \cite[Ex. 6.1.7]{FGauges}. By quasisyntomic descent, see \cref{lem:filprism-quasisyntomiccover}, it suffices to describe the procedure for $X=\Spf R$ with a quasiregular semiperfectoid ring $R$. Here, we have
\begin{equation*}
R^\N\cong \Spf\Rees(\Fil^\bullet_\N \Prism_R)/\G_m
\end{equation*}
by \cite[Cor. 5.5.11]{FGauges} and thus we need to equip $M$ with a decreasing filtration $\Fil^\bullet M$ compatible with the Nygaard filtration on $\Prism_R$. This construction is supplied by the following definition from \cite[Def. 6.6.6]{FGauges}:

\begin{defi}
Let $R$ be a quasiregular semiperfectoid ring and $(M, \tau)\in\Mod^\phi(\Prism_R)$ be a prismatic $F$-crystal in finitely presented modules on $\Spf R$. A \emph{Nygaardian filtration} on $M$ is a filtration $\Fil^\bullet M$ of $M$ in $(p, I)$-complete $\Prism_R$-modules such that the map
\begin{equation*}
M\xrightarrow{\can} \phi^*M\hookrightarrow \phi^*M[1/I]\overset{\tau}{\cong} M[1/I]
\end{equation*}
carries $\Fil^i M$ into $I^i M$ for all $i\in\Z$; here, $I$ is the effective Cartier divisor on $\Prism_R$ defining the initial prism of $R_\prism$. A Nygaardian filtration is called \emph{saturated} if it is maximal, i.e.\ if $\Fil^\bullet M$ is the preimage of $I^\bullet M$ under the map above. 
\end{defi}

Indeed, this construction gives what we want provided we impose a small restriction on the class of prismatic $F$-crystals in perfect complexes we are considering.

\begin{defi}
Let $X$ be a \EDIT{$p$-quasisyntomic} $p$-adic formal scheme. We define the category $\Perf^\phi_{I-\mathrm{tf}}(X_\prism)$ to be the full subcategory of $\Perf^\phi(X_\prism)$ consisting of prismatic $F$-crystals $\cal{E}$ satisfying the following property: For any quasiregular semiperfectoid ring $R$ such that $\Spf R\rightarrow X$ is \EDIT{$p$-quasisyntomic}, the complex $\cal{E}(\Prism_R)$ is concentrated in cohomological degree zero and has no $I$-torsion, where again $(\Prism_R, I)$ is the inital object of $R_\prism$.
\end{defi}

\begin{prop}
\label{prop:cryset-fcrysfgauge}
Let $X$ be a smooth $p$-adic formal scheme. Then there is a fully faithful embedding
\begin{equation*}
\Perf^\phi_{I-\mathrm{tf}}(X_\prism)\hookrightarrow \Perf(X^\Syn)
\end{equation*}
characterised by the requirement that its composition with pullback along $R^\Syn\rightarrow X^\Syn$ for any quasiregular semiperfectoid ring $R$ such that $\Spf R\rightarrow X$ is $p$-completely flat sends a prismatic $F$-crystal $\cal{E}$ to the module $\cal{E}(\Prism_R)$ equipped with its saturated Nygaardian filtration.
\end{prop}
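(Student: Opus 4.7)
My plan is to proceed by quasisyntomic descent and reduce to the case $X=\Spf R$ with $R$ quasiregular semiperfectoid and $\Spf R\rightarrow X$ being $p$-completely flat. Both sides of the embedding satisfy descent along such covers: the source via the crystal description of \cref{prop:prismatisation-crystals}, and the target by \cref{lem:filprism-quasisyntomiccover} applied to the presheaf $X\mapsto\Perf(X^\Syn)$ combined with flat descent for perfect complexes. In the local situation, \cref{thm:filprism-qrsp} identifies $R^\N$ with $\Spf\Rees(\Fil^\bullet_\N\Prism_R)/\G_m$, so via the Rees equivalence $\Perf(R^\N)$ becomes the category of $(p,I)$-complete perfect filtered $\Prism_R$-modules whose filtration is compatible with the Nygaard filtration on $\Prism_R$. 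For $\cal{E}\in\Perf^\phi_{I-\mathrm{tf}}(X_\prism)$, the assumptions ensure that $M\coloneqq\cal{E}(\Prism_R)$ is a $(p,I)$-complete, $I$-torsionfree perfect $\Prism_R$-module sitting in degree zero and equipped with its Frobenius structure $\tau:\phi^*M[1/I]\cong M[1/I]$; by construction, the saturated Nygaardian filtration on $M$ is then compatible with $\Fil^\bullet_\N\Prism_R$ in the required sense.

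To descend from $R^\N$ to $R^\Syn$, recall that $R^\Syn$ glues $R^\N$ to itself along the two open immersions $j_\HT,j_\dR:R^\prism\rightrightarrows R^\N$, so one has to supply an isomorphism $j_\HT^*E\cong j_\dR^*E$ in $\Perf(R^\prism)$. Using the explicit description of $j_\HT$ and $j_\dR$ in \cref{rem:filprism-jdrjhtperfd} and the case of perfectoid $R$ analysed in \cref{ex:syntomification-fgaugesperfd}, this descent datum is exactly provided by the Frobenius $\tau$, up to the Frobenius twist by which $j_\HT$ differs from $j_\dR$. The construction of the resulting object in $\Perf(R^\Syn)$ is functorial in $R$ among qrsp rings $p$-completely flat over $X$, so the descent properties above let us glue to produce the desired functor $\Perf^\phi_{I-\mathrm{tf}}(X_\prism)\rightarrow\Perf(X^\Syn)$ together with the claimed characterisation on pullback to $R^\Syn$.

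For full faithfulness, I would reduce once more to the qrsp case and exploit the equaliser description (\ref{eq:syntomification-equaliser}) for $\D(R^\Syn)$: a morphism between the constructed $F$-gauges $E_1\rightarrow E_2$ amounts to a $\Prism_R$-linear morphism $\Fil^\bullet M_1\rightarrow\Fil^\bullet M_2$ of filtered modules whose underlying map on $R^\prism$ is compatible with the Frobenius isomorphisms $\tau_i$. Because the saturated Nygaardian filtration is the \emph{maximal} Nygaardian filtration, any $\Prism_R$-linear map $M_1\rightarrow M_2$ commuting with $\tau$ automatically preserves it; this identifies such morphisms with morphisms of underlying $\Prism_R$-modules compatible with the Frobenius, i.e., with morphisms in $\Perf^\phi_{I-\mathrm{tf}}(X_\prism)$. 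The main obstacle I anticipate is verifying that the saturated Nygaardian filtration genuinely produces a \emph{perfect} object of $\Perf(R^\N)$, i.e., that its associated Rees module is perfect as a $(p,I)$-complete $\Rees(\Fil^\bullet_\N\Prism_R)$-module; this should follow from perfection of $M$ combined with the $I$-torsionfreeness hypothesis, which exactly controls the interaction between $\tau$ and the filtration.
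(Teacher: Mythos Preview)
The paper does not give its own proof of this proposition: it simply cites \cite[Thm.~2.31]{GuoLi}. So there is nothing to compare your argument against in the paper itself; your sketch is essentially an outline of how that external result is proved.

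That said, your outline has two soft spots worth flagging. First, the point you yourself identify --- that the saturated Nygaardian filtration on $M$ yields a \emph{perfect} graded $\Rees(\Fil^\bullet_\N\Prism_R)$-module --- is the genuine technical core and is not something that follows formally from $M$ being perfect over $\Prism_R$ plus $I$-torsionfreeness; one needs an actual argument (bounding the filtration and controlling the graded pieces), and this is where most of the work in \cite{GuoLi} goes. Second, your full faithfulness argument is carried out at the level of ordinary $\Hom$-sets (``a morphism $E_1\to E_2$ amounts to \ldots''), but both sides are stable $\infty$-categories and you need to match mapping \emph{spaces}, i.e.\ $\RHom$'s. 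The maximality-of-saturated-filtration argument you give handles $\pi_0$ of the mapping space, but you still have to explain why the higher homotopy groups agree; in practice this comes down to showing that the functor you build has a left inverse (restriction to $X^\prism$ with its Frobenius structure) on the essential image, which then forces the map on mapping spaces to be an equivalence.
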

\begin{proof}
See \cite[Thm. 2.31]{GuoLi}.
\end{proof}

To be able to apply the above proposition in our situation, we only need to check that the essential image of the functor from \cref{prop:cryset-vectanpushforward} is contained in $\Perf^\phi_{I-\mathrm{tf}}(X_\prism)$. However, for any quasiregular semiperfectoid ring $R$, any object in the essential image of 
\begin{equation*}
\Vect^{\an, \phi}(R_\prism)\rightarrow \Perf^\phi(R_\prism)\cong \Perf^\phi(\Prism_R)
\end{equation*}
identifies with the global sections of a vector bundle on $\Spec(\Prism_R)\setminus V(p, I)$ and this immediately implies the claim as $I$ is an effective Cartier divisor on $\Prism_R$ and hence $\Prism_R$ has no $I$-torsion.

We summarise the results we have collected so far:

\begin{thm}
\label{thm:cryset-locsysfgauges}
Let $X$ be a smooth $p$-adic formal scheme. Then there is a fully faithful embedding
\begin{equation*}
\Loc^\crys_{\Z_p}(X_\eta)\hookrightarrow \Perf(X^\Syn)
\end{equation*}
which is a weak right inverse to the étale realisation. The image of a crystalline local system $L$ under this embedding is called the \emph{associated $F$-gauge} $E$ of $L$ and has the property that $T_\dR(E)$ identifies with the associated $F$-isocrystal of $L$.
\end{thm}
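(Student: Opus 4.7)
The plan is to construct the embedding as a composition of three functors already established in the excerpt, and then to verify the two additional assertions concerning the étale and de Rham realisations. First, \cref{prop:cryset-locsysvectan} provides a natural equivalence $\Loc^\crys_{\Z_p}(X_\eta) \cong \Vect^{\an, \phi}(X_\prism)$. Second, \cref{prop:cryset-vectanpushforward} gives a fully faithful embedding $\Vect^{\an, \phi}(X_\prism) \hookrightarrow \Perf^\phi(X_\prism)$ via pushforward along $\Spec(A) \setminus V(p, I) \hookrightarrow \Spec A$; this factors through $\Perf^\phi_{I-\mathrm{tf}}(X_\prism)$ because, for $R$ quasiregular semiperfectoid, the evaluation of such a pushforward on $(\Prism_R, I)$ is the global sections of a vector bundle on $\Spec(\Prism_R) \setminus V(p, I)$, which is concentrated in degree zero and $I$-torsionfree since $I$ is an effective Cartier divisor on the $(p, I)$-complete ring $\Prism_R$. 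Finally, \cref{prop:cryset-fcrysfgauge} supplies a fully faithful embedding $\Perf^\phi_{I-\mathrm{tf}}(X_\prism) \hookrightarrow \Perf(X^\Syn)$ via the saturated Nygaardian filtration. The composition of these three functors is then fully faithful and defines the desired embedding.

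To verify the weak-right-inverse property, recall from \cref{rem:syntomification-etalerealisationglobal} that $T_\et$ is computed by first pulling back an $F$-gauge to $\Perf^\phi(X_\prism)$ along either of the maps $j_\N \circ j_\HT$ or $j_\N \circ j_\dR$, then inverting the Hodge--Tate ideal and $p$-completing, and finally applying the equivalence of \cite[Cor. 3.7]{FCrystals}. The first of these steps recovers precisely the prismatic $F$-crystal we started with, by construction of the last embedding in \cref{prop:cryset-fcrysfgauge}. Moreover, inverting $\cal{I}$ and $p$-completing is insensitive to whether one starts from the analytic prismatic $F$-crystal $\cal{E}_L$ or its pushforward to all of $\Spec A$, since the locus $\{\cal{I} \text{ invertible}\}$ is already contained in $\Spec(A) \setminus V(p, I)$. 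Since the equivalence of \cref{prop:cryset-locsysvectan} is set up so that this Laurent $F$-crystal corresponds to $L$ under the equivalence of \cite[Ex. 3.5]{FCrystals}, we obtain $T_\et(E) \cong L$ as required.

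For the identification of $T_\dR(E)$ with the associated $F$-isocrystal of $L$, one uses the stacky crystalline comparison $X^\dR \cong (X_{p=0})^\prism$ from \cite[Constr. 3.1.1]{FGauges} to rewrite the pullback along $X^\dR \to X^\N \to X^\Syn$ as evaluation of the underlying prismatic $F$-crystal on the crystalline site of $X_{p=0}$, up to a Frobenius twist and after inverting $p$. Using the comparison between prismatic $F$-crystals and $F$-isocrystals on the special fibre after isogeny (obtained e.g.\ by evaluating on the crystalline prisms $(A_\crys(R^+), (p))$ attached to affinoid perfectoids $\Spa(R, R^+) \in X_{\eta, \proet}$), this matches the $F$-isocrystal defined by $\cal{E}(A_\crys(R^+))[\tfrac{1}{p}]$ appearing in the definition of the crystalline period sheaf $\mathbb{B}_\crys(\cal{E})$. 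The hard part will be to check the compatibility of all these realisation functors with one another at the level of the chain of equivalences, ensuring that the $F$-isocrystal extracted from the $F$-gauge really agrees with the one postulated by the isomorphism $\mathbb{B}_\crys(\cal{E}) \cong \mathbb{B}_\crys \otimes_{\widehat{\Z}_p} L$ characterising the crystallinity of $L$; this amounts essentially to unwinding the construction of the equivalence in \cref{prop:cryset-locsysvectan} in \cite{GuoReinecke}.
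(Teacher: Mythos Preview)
Your proposal is correct and follows essentially the same route as the paper: the embedding is assembled from \cref{prop:cryset-locsysvectan}, \cref{prop:cryset-vectanpushforward}, and \cref{prop:cryset-fcrysfgauge}, with the check that the middle functor lands in $\Perf^\phi_{I\text{-}\mathrm{tf}}(X_\prism)$ handled exactly as you do; the étale compatibility is argued via \cref{rem:syntomification-etalerealisationglobal} in both cases.

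The only noticeable difference is in the de Rham part. The paper dispatches this cleanly by citing the commutative triangle
\[
\begin{tikzcd}[row sep=tiny]
X^\dR\ar[dr, "i_\dR"]\ar[dd, "\cong", swap] & \\
& X^\prism \\
(X_{p=0})^\prism\ar[ur] &
\end{tikzcd}
\]
together with the specific fact from \cite[Constr.~3.9.(ii), Thm.~3.12]{GuoReinecke} that the associated $F$-isocrystal of $L$ is obtained from the corresponding analytic prismatic $F$-crystal via restriction along $(X_{p=0})_\prism \to X_\prism$ and the equivalence $\Vect^{\an,\phi}((X_{p=0})_\prism)\cong\Isoc^\phi(X_{p=0}/\Z_p)$. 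Your argument is more roundabout---going via evaluation on crystalline prisms $(A_\crys(R^+),(p))$---and you yourself flag that the ``hard part'' is checking compatibility, which ultimately reduces to the same unwinding of the Guo--Reinecke construction. Both approaches are valid; the paper's is simply more direct because it invokes the precise reference.
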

\begin{proof}
The existence of the embedding follows by combining \cref{prop:cryset-locsysvectan}, \cref{prop:cryset-vectanpushforward} and \cref{prop:cryset-fcrysfgauge}; moreover, \EDIT{by the description of the étale realisation from \cref{rem:syntomification-etalerealisationglobal},} the relation with the étale realisation is clear from the construction: \EDIT{indeed, the functor from \cref{prop:cryset-fcrysfgauge} is a weak right inverse to the functor $\Perf(X^\Syn)\rightarrow\Perf^\phi(X_\prism)$ from (\ref{eq:syntomification-etalerealisationglobal}) and this implies the claim.} Finally, the last \EDIT{statement about compatibility with the de Rham realisation} follows from the commutativity of the diagram
\begin{equation*}
\begin{tikzcd}[row sep=tiny]
X^\dR\ar[dr, "i_\dR"]\ar[dd, "\cong", swap] \\
& X^\prism \\
(X_{p=0})^\prism\ar[ur]
\end{tikzcd}
\end{equation*}
and the fact that the associated $F$-isocrystal of $L$ can be obtained from the associated analytic prismatic $F$-crystal via \EDIT{the composite functor
\begin{equation*}
\Vect^{\an, \phi}(X_\prism)\rightarrow\Vect^{\an, \phi}((X_{p=0})_\prism)\cong \Isoc^\phi(X_{p=0}/\Z_p)
\end{equation*}
from} \cite[Constr. 3.9.(ii)]{GuoReinecke}, see the proof of \cite[Thm. 3.12]{GuoReinecke}. 
\end{proof}

\begin{defi}
The essential image of the embedding 
\begin{equation*}
\Loc^\crys_{\Z_p}(X_\eta)\hookrightarrow \Perf(X^\Syn)
\end{equation*}
is called the category of \emph{reflexive $F$-gauges} on $X$ and denoted by $\Coh^\refl(X^\Syn)$.
\end{defi}

\begin{rem}
We shortly discuss why the notions of Hodge--Tate weights of a crystalline local system $L$ and its associated $F$-gauge $E$ are compatible. By \cref{rem:cryset-htweightsddr}, this would follow from the fact that $D_\dR(L)$ can be identified with (the pullback to $X_\eta$ of) $T_{\dR, +}(E)$. Using \cite[Cor. 6.19]{PAdicHodgeTheory}, it then suffices to prove that there is a filtered isomorphism
\begin{equation*}
L\tensor_{\widehat{\Z}_p} \O\mathbb{B}_\dR\cong T_{\dR, +}(E)\tensor_{\O_{X_\eta}} \O\mathbb{B}_\dR\;.
\end{equation*}
\EEDIT{However, the unfiltered isomorphism exists by \cite[Prop. 2.36]{GuoReinecke} and the fact that $T_\dR(E)$ identifies with the associated $F$-isocrystal of $L$. To check that this isomorphism respects the filtrations, we may now proceed locally on $X_{\eta, \proet}$ and hence assume $\O\mathbb{B}_\dR^+\cong \mathbb{B}_\dR^+[[X_1, \dots, X_n]]$ by \cite[Prop. 6.10]{PAdicHodgeTheory}. Then we are reduced to showing that 
\begin{equation*}
L\tensor_{\widehat{\Z}_p} \mathbb{B}_\dR\cong T_{\dR, +}(E)\tensor_{\O_{X_\eta}} \mathbb{B}_\dR
\end{equation*}
compatibly with the filtrations on either side, which may be checked on affinoid perfectoid objects $\Spa(R, R^+)\in X_{\eta, \proet}$ over $\Spa C$ such that $L|_{\Spa(R, R^+)}$ is trivial since such objects form a basis of $X_{\eta, \proet}$ -- however, in this case, the claim follows by virtually the same proof as \cref{lem:beilfibsq-etalefiltered} with the use of \cite[Lem. 4.26]{BMS} replaced by \cite[Lem. 3.13]{GuoReinecke}.}
\end{rem}

\subsection{Proof of the main theorem}

We now move on to prove the necessary comparison results for reflexive $F$-gauges. However, for simplicity, we start with the case of vector bundles:

\begin{prop}
\label{prop:cryset-fine}
Let $X$ be a $p$-adic formal scheme \EEDIT{which is smooth and proper over $\Spf\Z_p$}. For any vector bundle $F$-gauge $E\in\Vect(X^\Syn)$ with Hodge--Tate weights all at most $-i-1$ for some $i\geq 0$, there is a natural morphism
\begin{equation*}
R\Gamma_\dR(X, T_\dR(E))[\tfrac{1}{p}][-1]\rightarrow R\Gamma_\proet(X_\eta, T_\et(E))[\tfrac{1}{p}]\;,
\end{equation*}
which induces an isomorphism
\begin{equation*}
\tau^{\leq i} R\Gamma_\dR(X, T_\dR(E))[\tfrac{1}{p}][-1]\cong\tau^{\leq i} R\Gamma_\proet(X_\eta, T_\et(E))[\tfrac{1}{p}]
\end{equation*}
and an injection on $H^{i+1}$.
\end{prop}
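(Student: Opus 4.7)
The plan is to combine the Beilinson fibre square from \cref{cor:beilfibsq-coeffs} with the syntomic--étale comparison from \cref{thm:syntomicetale-mainfine}. First, mimicking the derivation of \cref{thm:beilfibsq-fmrat} in the proper case, I would obtain from \cref{cor:beilfibsq-coeffs} applied to the perfect $F$-gauge $E$ a natural fibre sequence
\[
R\Gamma_\Syn(X, E)[\tfrac{1}{p}]\rightarrow \Fil^0_\Hod R\Gamma_\dR(X, T_{\dR, +}(E))[\tfrac{1}{p}]\xrightarrow{1-\tau} R\Gamma_\dR(X, T_\dR(E))[\tfrac{1}{p}]\,,
\]
with $\tau$ the crystalline Frobenius from \cref{rem:beilfibsq-crysfrob}. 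Rotating produces a natural morphism
\[
R\Gamma_\dR(X, T_\dR(E))[\tfrac{1}{p}][-1]\rightarrow R\Gamma_\Syn(X, E)[\tfrac{1}{p}]
\]
whose cofibre is $\Fil^0_\Hod R\Gamma_\dR(X, T_{\dR, +}(E))[\tfrac{1}{p}]$; composing with the morphism $R\Gamma_\Syn(X, E)[\tfrac{1}{p}]\to R\Gamma_\proet(X_\eta, T_\et(E))[\tfrac{1}{p}]$ provided by \cref{thm:syntomicetale-mainfine} furnishes the desired natural morphism.

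The remainder of the argument will rest on the connectivity estimate $\Fil^0_\Hod R\Gamma_\dR(X, T_{\dR, +}(E))[\tfrac{1}{p}] \in \D^{\geq i+1}$. Granting this, the long exact cohomology sequence of the fibre sequence above immediately shows that $R\Gamma_\dR(X, T_\dR(E))[\tfrac{1}{p}][-1]\to R\Gamma_\Syn(X, E)[\tfrac{1}{p}]$ is an isomorphism on $\tau^{\leq i}$ and an injection on $H^{i+1}$. Since the same conclusions hold for $R\Gamma_\Syn(X, E)[\tfrac{1}{p}]\to R\Gamma_\proet(X_\eta, T_\et(E))[\tfrac{1}{p}]$ by \cref{thm:syntomicetale-mainfine}, composing will yield the proposition.

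To establish the connectivity, the Hodge--Tate weight hypothesis on $E$ translates, via the relationship between $X^\Hod$ and the Rees description of $X^{\dR, +}$, into the fact that the graded pieces $\gr^j_\Hod T_{\dR, +}(E)[\tfrac{1}{p}]$ at filtration degrees $j\geq 0$ are, roughly speaking, built from Hodge cohomologies $R\Gamma(X, \widehat{\Omega}^k_{X/\Z_p}\otimes F)[-k]$ with $k\geq i+1$ and $F$ a vector bundle on $X$ (as in \cref{prop:syntomicetale-cohomologyhod}). For $X$ smooth and proper, these lie in cohomological degrees $\geq i+1$ by the usual Hodge vanishing, and completeness of the Hodge filtration (which follows from \cref{lem:finiteness-pushforwardcomplete}) will then yield the required connectivity for $\Fil^0_\Hod R\Gamma_\dR(X, T_{\dR, +}(E))[\tfrac{1}{p}]$.

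The main obstacle in this approach is this connectivity claim. For a Breuil--Kisin twist $E=\O\{m\}$ with $m\geq i+1$, the claim is essentially immediate, as $\Fil^0_\Hod R\Gamma_\dR(X, T_{\dR, +}(\O\{m\}))[\tfrac{1}{p}]$ identifies with $\Fil^m_\Hod R\Gamma_\dR(X)[\tfrac{1}{p}]$, whose cohomological connectivity is provided by the usual Hodge vanishing. In the general case, however, one has to carefully track the relationship between the Hodge--Tate weights of $E$ (defined via $X^\Hod$) and the Hodge graded pieces of $T_{\dR, +}(E)$, essentially paralleling the connectivity analysis carried out in the proof of \cref{thm:syntomicetale-mainfine}.
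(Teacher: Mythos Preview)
Your proposal is correct and follows essentially the same approach as the paper: obtain the fibre sequence $R\Gamma_\dR(X, T_\dR(E))[\tfrac{1}{p}][-1]\to R\Gamma_\Syn(X, E)[\tfrac{1}{p}]\to \Fil^0_\Hod R\Gamma_\dR(X, T_{\dR, +}(E))[\tfrac{1}{p}]$ from the Beilinson fibre square in its Fontaine--Messing form (the paper invokes \cref{thm:beilfibsq-fmrat} together with \cref{prop:finiteness-main} directly, rather than going via \cref{cor:beilfibsq-coeffs}, but this amounts to the same thing), reduce via \cref{thm:syntomicetale-mainfine} to showing $\Fil^0_\Hod R\Gamma_\dR(X, T_{\dR, +}(E))\in\D^{\geq i+1}$, and establish this connectivity using \cref{prop:syntomicetale-cohomologyhod} for the graded pieces and \cref{lem:finiteness-pushforwardcomplete} for completeness. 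The paper makes the connectivity computation slightly more explicit (tracking that $\gr^\bullet V=0$ for $\bullet\geq d-i$ and that $\gr^{d-i-1-k}V$ lies in degrees $\geq d-k$), but your sketch captures the same mechanism.
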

\begin{proof}
Combining \cref{thm:beilfibsq-fmrat} and \cref{prop:finiteness-main}, we obtain a fibre sequence
\begin{equation*}
R\Gamma_\dR(X, T_\dR(E))[\tfrac{1}{p}][-1]\rightarrow R\Gamma_\Syn(X, E)[\tfrac{1}{p}]\rightarrow \EDIT{\Fil^0_\Hod R\Gamma_\dR(X, T_{\dR, +}(E))[\tfrac{1}{p}]}\;.
\end{equation*}
In view of \cref{thm:syntomicetale-mainfine}, it thus suffices to show that \EDIT{$\Fil^0_\Hod R\Gamma_\dR(X, T_{\dR, +}(E))$} is concentrated in degrees at least $i+1$. To this end, identify $T_{\dR, +}(\pi_{X^\Syn, *}E)$ with a filtered object $\Fil^\bullet V$ \EEDIT{and let $d$ be the relative dimension of $X$ over $\Spf\Z_p$}. Then \cref{prop:syntomicetale-cohomologyhod} implies that $\gr^\bullet V=0$ for $\bullet\geq d-i$ and that, moreover, $\gr^{d-i-1-k} V$ is concentrated in degrees at least $d-k$ for $0\leq k\leq d$. As the filtration $\Fil^\bullet V$ is \EDIT{complete by \cref{lem:finiteness-pushforwardcomplete}}, we may \EDIT{thus} conclude that $\Fil^0 V$ is concentrated in degrees at least $i+1$, which yields the claim.
\end{proof}

We can also formulate an analogous result for arbitrary perfect $F$-gauges, albeit in a slightly weaker range:

\begin{prop}
\label{prop:cryset-coarse}
Let $X$ be a $p$-adic formal scheme \EEDIT{which is proper and smooth} of relative dimension $d$ over $\Spf\Z_p$. For any perfect $F$-gauge $E\in\Perf(X^\Syn)$ with Hodge--Tate weights all at most $-d-2$ for some $i\geq 0$, there is a natural isomorphism
\begin{equation*}
R\Gamma_\dR(X, T_\dR(E))[\tfrac{1}{p}][-1]\cong R\Gamma_\proet(X_\eta, T_\et(E))[\tfrac{1}{p}]\;.
\end{equation*}
\end{prop}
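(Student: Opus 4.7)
The plan is to follow the same strategy as the proof of \cref{prop:cryset-fine}, but using the coarser syntomic--étale comparison \cref{thm:syntomicetale-maincoarse} in place of \cref{thm:syntomicetale-mainfine} and showing that, in the relevant range of Hodge--Tate weights, the Hodge-filtered de Rham term in the Beilinson fibre square vanishes entirely after inverting $p$. This will collapse the fibre sequence into the desired isomorphism.

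More precisely, by combining \cref{thm:beilfibsq-fmrat} with \cref{prop:finiteness-main}, we obtain a fibre sequence
\begin{equation*}
R\Gamma_\dR(X, T_\dR(E))[\tfrac{1}{p}][-1]\rightarrow R\Gamma_\Syn(X, E)[\tfrac{1}{p}]\rightarrow \Fil^0_\Hod R\Gamma_\dR(X, T_{\dR, +}(E))[\tfrac{1}{p}]\;.
\end{equation*}
The main task is to verify that the rightmost term is zero. To do this, identify $T_{\dR, +}(\pi_{X^\Syn, *} E)$ with a filtered object $\Fil^\bullet V$ via the Rees equivalence, whose associated graded can be computed from \cref{prop:syntomicetale-cohomologyhod}. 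As in the proof of \cref{prop:cryset-fine}, the Higgs-type computation shows that the Hodge--Tate weights of $\pi_{X^\Syn, *} E$ can increase by at most the relative dimension $d$; hence the assumption that $E$ has Hodge--Tate weights at most $-d-2$ implies that $\gr^\bullet V = 0$ for $\bullet \geq -1$. Since the filtration $\Fil^\bullet V$ is complete by \cref{lem:finiteness-pushforwardcomplete}, it follows that $\Fil^{-1} V = 0$ and a fortiori $\Fil^0 V = 0$, giving the desired vanishing.

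With this in hand, the fibre sequence above collapses to an isomorphism $R\Gamma_\dR(X, T_\dR(E))[\tfrac{1}{p}][-1] \cong R\Gamma_\Syn(X, E)[\tfrac{1}{p}]$, while \cref{thm:syntomicetale-maincoarse} (whose hypotheses are satisfied since the Hodge--Tate weights of $E$ are at most $-d-2$) identifies $R\Gamma_\Syn(X, E)[\tfrac{1}{p}]$ with $R\Gamma_\proet(X_\eta, T_\et(E))[\tfrac{1}{p}]$. Composing the two yields the required isomorphism. There is no serious obstacle here; the proof is a straightforward combination of \cref{thm:beilfibsq-fmrat}, \cref{thm:syntomicetale-maincoarse}, and the graded analysis from Section \ref{subsect:syntomicetale-compred}, the only subtlety being to push the Hodge--Tate weight bound one unit further than in \cref{prop:cryset-fine} (from $-d-1$ to $-d-2$) to ensure that even $\gr^{-1} V$ vanishes, which is precisely what allows us to conclude $\Fil^0 V = 0$ rather than merely its concentration in large cohomological degrees.
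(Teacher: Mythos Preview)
Your proof is correct and follows essentially the same approach as the paper's own argument: both use the fibre sequence from \cref{thm:beilfibsq-fmrat} and \cref{prop:finiteness-main}, invoke \cref{thm:syntomicetale-maincoarse} for the syntomic--\'etale comparison, and reduce to showing that $\Fil^0 V=0$ via \cref{prop:syntomicetale-cohomologyhod} (pushforward shifts Hodge--Tate weights by at most $d$, so they are all at most $-2$) together with completeness from \cref{lem:finiteness-pushforwardcomplete}. Your exposition is slightly more explicit about the graded pieces, but the argument is the same.
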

\begin{proof}
As in the proof of \cref{prop:cryset-fine}, now using \cref{thm:syntomicetale-maincoarse} in place of \cref{thm:syntomicetale-mainfine}, we are reduced to showing that \EDIT{$\Fil^0_\Hod R\Gamma_\dR(X, T_{\dR, +}(E))=0$}. However, \EEDIT{by \cref{lem:finiteness-pushforwardcomplete}}, the filtered object $T_{\dR, +}(\pi_{X^\Syn, *}E)\in\D(\A^1/\G_m)$ is \EDIT{complete} and \EEDIT{\cref{prop:syntomicetale-cohomologyhod} shows that} $\pi_{X^\Syn, *}E$ has Hodge--Tate weights all at most $-2$, hence $T_{\dR, +}(\pi_{X^\Syn, *}E)$ is zero in filtration degree zero and this implies the claim.
\end{proof}

Now observe that, by \cref{thm:cryset-locsysfgauges}, having the conclusion of \cref{prop:cryset-fine} for reflexive $F$-gauges instead of just vector bundle $F$-gauges would immediately imply \cref{thm:cryset-main}. \EDIT{This will be a consequence of the following lemma:}

\begin{lem}
\label{lem:cryset-degreezero}
Let $X$ be a smooth $p$-adic formal scheme and $E\in\Perf(X^\Syn)$ a reflexive $F$-gauge. Then the pullback of $E$ to $X^\N_\Hod$ is concentrated in cohomological degree zero. 
\end{lem}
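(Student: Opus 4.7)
By the cartesian square \eqref{eq:syntomicetale-coverhod}, the map $(X^\Hod)_{p=0}\to X^\N_\Hod$ is an fpqc cover, so it suffices to show that the pullback of $E$ to $(X^\Hod)_{p=0}$ is concentrated in cohomological degree zero. Observing that the composition $(X^\Hod)_{p=0}\to X^\N_\Hod\hookrightarrow X^\N$ factors as
\begin{equation*}
(X^\Hod)_{p=0}\to X^\Hod\hookrightarrow X^{\dR,+}\xrightarrow{i_{\dR,+}} X^\N\;,
\end{equation*}
this pullback identifies with the restriction of the Hodge-filtered de Rham realisation $T_{\dR,+}(E)\in\D(X^{\dR,+})$ to $(X^\Hod)_{p=0}$. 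Hence the problem reduces to showing that $T_{\dR,+}(E)|_{(X^\Hod)_{p=0}}$ lies in cohomological degree zero.

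Next, I would reduce to the case $X=\Spf R$ for a quasiregular semiperfectoid ring $R$ by quasisyntomic descent: base changing \cref{lem:filprism-quasisyntomiccover} along $i_{\dR,+}$ yields flat covers of $X^{\dR,+}$, along which cohomological concentration descends. In this affine setting, \cref{prop:cryset-fcrysfgauge} identifies $E|_{R^\N}$ under the Rees equivalence with $\cal{E}(\Prism_R)$ equipped with its saturated Nygaardian filtration, where $\cal{E}\in\Perf^\phi_{I-\mathrm{tf}}(X_\prism)$ is the prismatic $F$-crystal associated to $E$. Since $\cal{E}(\Prism_R)$ is concentrated in cohomological degree zero and is $I$-torsion free, the saturated Nygaardian filtration consists of honest submodules of $\cal{E}(\Prism_R)$, so all its graded pieces also lie in cohomological degree zero. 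By the quasiregular semiperfectoid analog of \cref{lem:beilfibsq-etalefiltered}---obtained by base changing $\cal{E}$ along $\Prism_R\to A_\crys(R)$---the Hodge-filtered de Rham realisation $T_{\dR,+}(E)|_{R^{\dR,+}}$ is then a filtered module in cohomological degree zero whose filtration is again by submodules. Restriction along $R^\Hod\hookrightarrow R^{\dR,+}$ corresponds to passage to the associated graded, which inherits these properties and remains in cohomological degree zero after reducing modulo $p$.

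The main obstacle I anticipate is rigorously establishing this quasiregular semiperfectoid analog of the explicit description of $T_{\dR,+}(E)$ in terms of the prismatic $F$-crystal $\cal{E}$: the perfectoid case is handled in \cref{lem:beilfibsq-etalefiltered} via the concrete presentation of $R^{\dR,+}$ from \cref{ex:fildrstack-perfd}, but for a general quasiregular semiperfectoid $R$ no such explicit model is available, so the argument would likely have to proceed either through a careful fpqc descent from a perfectoid cover or by a direct cohomological analysis of the Nygaardian filtration on the $A_\crys(R)$-module $\cal{E}(\Prism_R)\tensor_{\Prism_R} A_\crys(R)$.
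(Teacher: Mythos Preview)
Your approach differs from the paper's in both the reduction step and the final computation. The paper does not pass to the cover $(X^\Hod)_{p=0}$ at all; instead, after quasisyntomic descent to a $p$-torsionfree quasiregular semiperfectoid $R$, it works directly on $R^\N_\Hod$ as a closed substack of $R^\N\cong\Spf\Rees(\Fil^\bullet_\N\Prism_R)/\G_m$. Choosing a perfectoid $R_0\to R$ (so that $I=\xi\Prism_R$ is principal), one identifies $R^\N_{\dR,+}$ with the locus $\{p=\xi t^{-p}=0\}$ inside $R^\N$, hence $R^\N_\Hod=R^\N_{\HT,c}\cap R^\N_{\dR,+}$. Writing $M=\cal{E}(\Prism_R)$ with its saturated Nygaardian filtration $\Fil^\bullet M$, the paper establishes in order: $M$ is $p$-torsionfree (so $\Fil^\bullet M/p$ is in degree zero); $M/I$ is $p$-torsionfree via the exact sequence $0\to M\to M[1/I]\oplus M[1/p]\to M[1/pI]$ coming from the vector-bundle-on-punctured-spectrum description; this forces each transition map $\Fil^i M/p\to\Fil^{i-1}M/p$ to be injective, so $\gr^\bullet M/p$ (the pullback to $R^\N_{\HT,c}$) is in degree zero; and finally the further pullback to $R^\N_\Hod$ is the cokernel of a map
\[
(\gr^{\bullet-p}M/p)\tensor_{\Prism_R/p}\xi\Prism_R/p\longrightarrow\gr^\bullet M/p
\]
that is injective because multiplication by $\xi$ corresponds, after applying $\tau$, to multiplication by $\xi^p$.

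Your route through $T_{\dR,+}(E)$ has a genuine gap beyond the obstacle you acknowledge. Even granting a quasiregular semiperfectoid analogue of \cref{lem:beilfibsq-etalefiltered} and knowing that $T_{\dR,+}(E)|_{R^{\dR,+}}$ is a module filtered by honest submodules, the final clause ``remains in cohomological degree zero after reducing modulo $p$'' does not follow: the associated graded of the Hodge filtration could carry $p$-torsion, in which case its derived mod-$p$ reduction sits in degrees $-1$ and $0$. Ruling this out is exactly the substance of the paper's argument (the $p$-torsionfreeness of $M/I$, propagated to $\gr^\bullet M/p$ and its further quotient by $\xi$), and it genuinely uses that $M$ arises as global sections of a vector bundle on $\Spec(\Prism_R)\setminus V(p,I)$. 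You would therefore need the same input in the end, and the paper's direct computation on $R^\N_\Hod$ via the Rees presentation avoids the detour through $R^{\dR,+}$ and $A_\crys(R)$ entirely.
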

\begin{proof}
By quasisyntomic descent, see \cref{lem:filprism-quasisyntomiccover}, we may check this in the case where $X=\Spf R$ for a $p$-torsionfree quasiregular semiperfectoid ring $R$. \EEDIT{For this, choose a perfectoid ring $R_0$ mapping to $R$; then also the initial object $(\Prism_{R_0}, (\xi))$ of the absolute prismatic site of $R_0$ maps to the initial object $(\Prism_R, I)$ of the absolute prismatic site of $R$ and, in particular, we conclude that $I=\xi\Prism_R$ is principal by rigidity. Now recall that $R^\N\cong \Spf\Rees(\Fil^\bullet_\N\Prism_R)/\G_m$ by \cref{thm:filprism-qrsp} and that} the $F$-gauge $E$ arises by endowing the module $M$ of global sections of a vector bundle on $\Spec(\Prism_R)\setminus V(p, I)$ equipped with a Frobenius structure $\tau: \phi^*M[1/I]\cong M[1/I]$ with its saturated Nygaardian filtration $\Fil^\bullet M$ \EEDIT{by \cref{prop:cryset-fcrysfgauge}.} We begin by noting that $M$ is $p$-torsionfree since $\Prism_R$ is and therefore the pullback $\Fil^\bullet M/p$ of $E$ to $(R^\N)_{p=0}$ is concentrated in degree zero. Now further observe that $M/I$ is $p$-torsionfree by virtue of the exact sequence
\begin{equation*}
\begin{tikzcd}
0\ar[r] & M\ar[r] & M[1/I]\oplus M[1/p]\ar[r] & M[1/pI]\nospacepunct{\;;}
\end{tikzcd}
\end{equation*}
\EEDIT{indeed, if $pm=am'$ for some $m, m'\in M$ and $a\in I$, then we would have $m/a=m'/p$, i.e.\ $m/a\in M$, which would in turn imply that $m$ is already zero in $M/I$. This now} implies that, \EEDIT{for all $i\in\Z$, the map} $\Fil^i M/p\rightarrow\Fil^{i-1} M/p$ is injective: If $m=pm'$ for some $m\in\Fil^i M$ and $m'\in\Fil^{i-1} M$, then $p\tau(m')=\tau(m)\in I^i M$; \EEDIT{as $\tau(m')\in I^{i-1}$ and $I$ is generated by $\xi$, we obtain the equation $p\tfrac{\tau(m')}{\xi^{i-1}}=\tfrac{\tau(m)}{\xi^{i-1}}\in IM$ and and hence $\tfrac{\tau(m')}{\xi^{i-1}}\in IM$, i.e.\ $\tau(m')\in I^iM$,} as $M/I$ is $p$-torsionfree -- however, this implies $m'\in\Fil^i M$ by definition of the saturated Nygaardian filtration and hence already $m=0\in\Fil^i M/p$. Therefore, also the pullback $\gr^\bullet M/p=(\Fil^\bullet M/p)/(\Fil^{\bullet+1} M/p)$ of $E$ to $(R^\N)_{p=t=0}=R^\N_{\HT, c}$ is concentrated in degree zero. 

\EEDIT{Finally, to compute the further pullback of $E$ to $R^\N_\Hod$, observe that there is a commutative diagram
\begin{equation*}
\begin{tikzcd}[ampersand replacement=\&]
R^\N_{\dR, +}\ar[r]\ar[d] \& R^\N\ar[d] \\
R_{0, \dR, +}^\N\ar[r]\ar[d] \& R_0^\N\ar[d] \\
\Z_{p, \dR, +}^\N\ar[r] \& \Z_p^\N\nospacepunct{\;,}
\end{tikzcd}
\end{equation*}
in which the large rectangle and the bottom square are pullbacks by definition. Thus, also the top square is a pullback and recalling that $R_{0, \dR, +}=(R_0^\N)_{p=u^p=0}$ from \cref{ex:syntomicetale-perfddr+}, where we use the isomorphism
\begin{equation*}
R_0^\N\cong \Spf(A_\inf(R_0)\langle u, t\rangle/(ut-\phi^{-1}(\xi)))/\G_m
\end{equation*}
from \cref{ex:filprism-perfd}, we learn that $R^\N_{\dR, +}$ identifies with the locus 
\begin{equation*}
\{p=\phi^{-1}(\xi)^pt^{-p}=0\}=\{p=\xi t^{-p}=0\}\subseteq R^\N\;.
\end{equation*}
Thus, we conclude that the pullback of $\gr^\bullet M/p$ from $R^\N_{\HT, c}$ to $R^\N_\Hod=R^\N_{\HT, c}\cap R^\N_{\dR, +}$ identifies with the graded object given by} the cofibre of the map
\begin{equation*}
(\Fil^{\bullet-p} M/p)/(\Fil^{\bullet-p+1} M/p)\tensor_{\Prism_R/p} \xi\Prism_R/p\rightarrow (\Fil^\bullet M/p)/(\Fil^{\bullet+1} M/p)\;.
\end{equation*}
However, \EEDIT{recalling that $I=\xi\Prism_R$,} by definition of the saturated Nygaardian filtration, this is clearly an injection since multiplication by $\xi$ corresponds to multiplication by $\xi^p$ after applying $\tau$ by $\phi$-linearity \EEDIT{and $\xi$ is a non-zerodivisor.}
\end{proof}

\begin{proof}[Proof of \cref{thm:cryset-main}]
\EDIT{As observed previously, \cref{thm:cryset-locsysfgauges} reduces us to having the conclusion of \cref{prop:cryset-fine} for reflexive $F$-gauges instead of just vector bundle $F$-gauges. However, note} that all we really need \EDIT{in order} to copy the proofs of \cref{prop:cryset-fine} and \cref{thm:syntomicetale-mainfine} verbatim for a reflexive $F$-gauge $E$ is that the pullback of $E$ to $(X^\Hod)_{p=0}$ is concentrated in cohomological degree zero. \EDIT{As this follows from \cref{lem:cryset-degreezero} and the flatness of the cover $(X^\Hod)_{p=0}\rightarrow X^\N_\Hod$ from (\ref{eq:syntomicetale-coverhod}), we are done.}
\end{proof}

\newpage
\appendix

\section{Some base change results}
\label{sect:basechange}

Throughout this thesis, we often use base change results for cartesian squares of formal stacks like
\begin{equation*}
\begin{tikzcd}
X^\prism\ar[r, "j_\dR"]\ar[d] & X^\N\ar[d] \\
\Z_p^\prism\ar[r, "j_\dR"] & \Z_p^\N\nospacepunct{\;.}
\end{tikzcd}
\end{equation*}
However, as we are working with formal stacks instead of algebraic (or derived) stacks, such base change compatibilities are usually not automatic. Thus, in this appendix, we indicate how to prove these statements -- in the main body of the thesis, we will then often use them without further mention.

\begin{defi}
We say that a cartesian diagram
\begin{equation*}
\begin{tikzcd}
\cal{X}'\ar[r, "g'"]\ar[d, "f'"] & \cal{X}\ar[d, "f"] \\
\cal{Y}'\ar[r, "g"] & \cal{Y}
\end{tikzcd}
\end{equation*}
of stacks satisfies \emph{base change for bounded below complexes} if, for any \EEDIT{$F\in\D^+(X)$, see \cref{defi:beilfibsq-tstructbounded},} there is an isomorphism
\begin{equation*}
g^*f_*F\cong f'_*{g'}^*F
\end{equation*}
given by the natural map.
\end{defi}

As it turns out, \EEDIT{many} of the base change results we need are an application of the following general statement:

\begin{prop}
\label{prop:basechange-locclosed}
Let $f: \cal{X}\rightarrow\cal{Y}$ be a morphism of stacks over $\Spf\Z_p$ such that $\cal{X}$ can be written as the colimit of a simplicial flat affine formal $\cal{Y}$-scheme $U^\bullet$ and assume that $\cal{Y}$ admits a flat cover from a regular affine formal scheme. \EEDIT{Then,} for any locally closed immersion $i: \cal{Z}\rightarrow\cal{Y}$, the cartesian diagram
\begin{equation*}
\begin{tikzcd}
\cal{X}_{\cal{Z}}\ar[r, "i'"]\ar[d, "f'"] & \cal{X}\ar[d, "f"] \\
\cal{Z}\ar[r, "i"] & \cal{Y}
\end{tikzcd}
\end{equation*}
satisfies base change for bounded below complexes. 
\end{prop}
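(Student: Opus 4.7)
The plan is to proceed in three steps. First, I would use that pullback along the flat cover $\Spf A \to \cal{Y}$ is conservative to reduce to the case where $\cal{Y} = \Spf A$ for a regular ring $A$. Since this cover is flat, both sides of the alleged base change isomorphism are compatible with further base change along $\Spf A \to \cal{Y}$ (pushforward along the flat affine $U^n \to \cal{Y}$ commutes with flat pullback on the base, and the flat pullback also commutes with $i^*$ as it is flat), so no information is lost.

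Second, since any locally closed immersion factors as an open immersion followed by a closed immersion, I would handle these two cases separately. The open immersion case is immediate: restriction along an open immersion is exact and commutes with arbitrary limits of quasi-coherent complexes, and the cartesian square reduces to a Zariski-local statement on $\cal{X}$. It thus remains to establish base change for a closed immersion $\bar\imath\colon \Spf A/I \hookrightarrow \Spf A$.

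For the closed case, I would use the simplicial presentation $\cal{X} = \lvert U^\bullet \rvert$ with each $U^n = \Spf B_n$ flat and affine over $\Spf A$. For $F \in \D^+(\cal{X})$, one has
\begin{equation*}
f_* F \cong \lim_{[n] \in \Delta} p^n_*(F|_{U^n}),
\end{equation*}
where $p^n \colon U^n \to \Spf A$, and similarly for $f'_* {g'}^* F$ via the base-changed simplicial resolution $U^\bullet_{\cal{Z}}$. Since each $p^n$ is affine and flat, termwise flat base change yields a natural isomorphism $\bar\imath^* p^n_*(F|_{U^n}) \cong p'^n_*({g'}^* F|_{U^n_{\cal{Z}}})$ for every $[n] \in \Delta$, so the problem reduces to commuting $\bar\imath^* = - \otimes_A^{\mathbb{L}} A/I$ past the cosimplicial totalisation.

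I expect this last commutation to be the main obstacle. My plan is to combine the bounded below hypothesis on $F$, which ensures that the cosimplicial object $p^\bullet_*(F|_{U^\bullet})$ is uniformly bounded below and hence that its $\lim_{[n]\in\Delta}$ is computed at each cohomological degree by a finite Postnikov stage, with the finite cohomological amplitude of $\bar\imath^*$ on bounded below complexes. In the situations of interest, the latter follows from the regularity of $A$ via a bounded Koszul-type resolution of $A/I$. These two finite-amplitude statements together allow $\bar\imath^*$ to pass through the totalisation, yielding the desired base change isomorphism.
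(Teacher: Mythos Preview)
Your overall approach---reducing via the simplicial presentation to a commutation of cosimplicial totalisation with $i^*$, and then exploiting the bounded-below hypothesis together with regularity---is essentially the same as the paper's. However, your second step contains a genuine error: restriction along an open immersion of affine (formal) schemes is \emph{localisation}, i.e.\ a filtered colimit, and this does \emph{not} commute with arbitrary limits of quasi-coherent complexes. For instance, $\bigl(\prod_{n\geq 1} k[x]/(x^n)\bigr)[1/x] \neq 0$ while $\prod_{n\geq 1} \bigl(k[x]/(x^n)\bigr)[1/x] = 0$. So the open case is not ``immediate'' for the reason you give; it requires precisely the same finite-Postnikov-stage argument you outline in your third step (filtered colimits commute with totalisation of uniformly bounded-below cosimplicial diagrams).

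The paper avoids this split by treating the open and closed cases uniformly: pullback along $i$ is tensoring with $i_*\O_{\cal{Z}}$, which by regularity of $\cal{Y}$ is a perfect complex over some localisation of $\cal{Y}$. Hence $i^*$ factors as a filtered colimit (the localisation) followed by tensoring with a perfect complex. The paper then invokes a separate lemma (\cref{lem:syntomicetale-colimtot}) showing that totalisation of bounded-below cosimplicial objects commutes with filtered colimits---the content of which is exactly your Postnikov-truncation argument---together with the fact that totalisation commutes with tensoring by perfect complexes (which are dualisable, hence tensoring with them preserves all limits). Your proof can be repaired simply by applying your third-step reasoning to the open factor as well.
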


\EDIT{
Before we can prove the above proposition, we need to establish the following technical statement about the commutation of filtered colimits with cosimplicial totalisations:}

\begin{lem}
\EDIT{
\label{lem:syntomicetale-colimtot}
Let $\cal{X}$ be a stack admitting a flat cover from an affine scheme or a noetherian affine formal scheme. Assume we are given a filtered diagram of cosimplicial objects 
\begin{equation*}
F: I\times\Delta^\op\rightarrow \D^{\geq 0}(\cal{X})\subseteq \D(\cal{X})
\end{equation*}
in the category $\D(\cal{X})$ whose terms are all cosimplicial objects of the full subcategory $\D^{\geq 0}(\cal{X})$, i.e.\ $I$ is a filtered category and $\Delta$ denotes the simplex category, as usual. Then taking colimits over $I$ in $\D(\cal{X})$ commutes with cosimplicial totalisation, i.e.\ there is a natural isomorphism}
\begin{equation*}
\EDIT{
\colim_{i\in I} \Tot(F(i, \bullet))\cong \Tot(\colim_{i\in I} F(i, \bullet))
}
\end{equation*}
\EDIT{in the category $\D(\cal{X})$.}
\end{lem}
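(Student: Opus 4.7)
The strategy is to reduce the question to the affine case via the flat cover and then exploit the connectivity hypothesis to replace the infinite cosimplicial totalisation by finite ones, where the stable category axioms take over.

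First, I would choose a flat cover $\pi\colon U\to\cal{X}$ with $U=\Spec R$ or $U=\Spf R$ for $R$ noetherian. Since $\pi^*$ is conservative on $\D(\cal{X})$, it suffices to check that the natural map
\begin{equation*}
\colim_{i\in I}\Tot(F(i,\bullet))\to \Tot\bigl(\colim_{i\in I} F(i,\bullet)\bigr)
\end{equation*}
becomes an isomorphism after applying $\pi^*$. The functor $\pi^*$, being a left adjoint, commutes with the filtered colimit; it is moreover $t$-exact by flatness, so it preserves the property of being in $\D^{\geq 0}$. That it also commutes with the cosimplicial totalisation of a $\D^{\geq 0}$-valued cosimplicial object will itself be a consequence of the finite-totalisation reduction below (applied with $I$ trivial). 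Hence we may assume $\cal{X}=U$ is either an affine scheme or a noetherian affine formal scheme.

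Next, for any cosimplicial object $G^\bullet\in\D^{\geq 0}(\cal{X})$, the partial totalisations $\Tot^{\leq m}(G^\bullet)$ are finite limits in the stable $\infty$-category $\D(\cal{X})$, and the standard Bousfield--Kan connectivity estimate shows that the fibre of $\Tot(G^\bullet)\to \Tot^{\leq m}(G^\bullet)$ lies in $\D^{\geq m+1}(\cal{X})$. Consequently, for each $n\in\Z$ and each $m\geq n$, the natural map
\begin{equation*}
H^n(\Tot(G^\bullet))\xrightarrow{\cong} H^n(\Tot^{\leq m}(G^\bullet))
\end{equation*}
is an isomorphism. Applied both to $G^\bullet=F(i,\bullet)$ for each $i\in I$ and to $G^\bullet=\colim_i F(i,\bullet)$ — which still lies in $\D^{\geq 0}(\cal{X})$ because the $t$-structure on $\cal{D}(\cal{X})$ is compatible with filtered colimits in the affine setting — this reduces the problem to showing
\begin{equation*}
\colim_{i\in I} \Tot^{\leq m}(F(i,\bullet))\cong \Tot^{\leq m}\bigl(\colim_{i\in I} F(i,\bullet)\bigr)
\end{equation*}
for each fixed $m$.

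Finally, this last isomorphism is immediate: $\Tot^{\leq m}$ is a finite limit and hence, in a stable $\infty$-category, a finite colimit, which commutes with arbitrary colimits — in particular with our filtered colimit. Combining this with the previous step and using that $H^n$ commutes with filtered colimits, one concludes that the natural map induces an isomorphism on $H^n$ for every $n$, whence it is an isomorphism in $\cal{D}(\cal{X})$ by non-degeneracy of the $t$-structure in the affine case.

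The main technical point I expect to require care is the formal-scheme case $\cal{X}=\Spf R$: here filtered colimits in $\widehat{\D}(R)$ differ from those in $\D(R)$ by derived completion, so one has to verify that, for a noetherian $R$, the filtered colimit of objects in $\widehat{\D}^{\geq 0}(R)$ is automatically derived complete (so that no extra completion is needed) and that the $t$-structure is compatible with such colimits. Both hold under our noetherian hypothesis, thanks to the well-behaved nature of derived completion for finitely generated modules over noetherian rings.
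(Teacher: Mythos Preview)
Your overall strategy — reduce to partial totalisations via the Bousfield--Kan connectivity estimate, then use that finite limits commute with filtered colimits in a stable category — is the same as the paper's. The affine scheme case is fine as written. However, your handling of the formal scheme case is wrong, and this is exactly where the paper has to work harder.

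Both claims in your final paragraph are false. Take $R=\Z_p$ and the filtered system $\Z/p\hookrightarrow\Z/p^2\hookrightarrow\cdots$ in $\widehat{\D}^{\geq 0}(\Z_p)$. The colimit in $\D(\Z_p)$ is $\Q_p/\Z_p$, which is \emph{not} derived $p$-complete; its derived $p$-completion is $\Z_p[1]$, which lies in cohomological degree $-1$. So: (A) the ordinary colimit of complete objects need not be complete, and (B) the filtered colimit in $\widehat{\D}(R)$ need not stay in $\widehat{\D}^{\geq 0}(R)$, i.e.\ the $t$-structure is \emph{not} compatible with filtered colimits. The same example shows that $H^n$ does not commute with filtered colimits in $\widehat{\D}(R)$, which you also use. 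Consequently your argument does not go through as stated: you cannot assert that $\colim_i F(i,\bullet)\in\D^{\geq 0}$, so you cannot truncate the cosimplicial totalisation on the right-hand side at level $n$.

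The paper repairs this by replacing ``$t$-exactness of filtered colimits'' with the weaker statement that filtered colimits in $\D(\cal{X})$ have \emph{finite cohomological dimension} $r$: derived completion along a finitely generated ideal has cohomological amplitude bounded by the number of generators. This guarantees only $\colim_i F(i,\bullet)\in\D^{\geq -r}$, so one must truncate at level $n+r+1$ rather than $n$, and then invoke the finite cohomological dimension twice more to pass back and forth between $\colim_i\tau^{\leq n+r}(-)$ and $\tau^{\leq n}\colim_i(-)$. Once you insert this $r$-shift bookkeeping, your argument becomes the paper's.
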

\begin{proof}
\EDIT{
We first claim that filtered colimits in $\D(\cal{X})$ have finite cohomological dimension, i.e.\ that there is some $r\geq 0$ with the property that, if $G, G': K\rightarrow \D(\cal{X})$ are two filtered diagrams of the same shape together with a natural transformation $G\rightarrow G'$ with the property that $\tau^{\leq -1}G(k)\cong \tau^{\leq -1}G'(k)$ for all $k\in K$, then there is a natural isomorphism
\begin{equation}
\label{eq:syntomicetale-colimfindim}
\tau^{\leq -r-1}\colim_{k\in K} G(k)\cong\tau^{\leq -r-1}\colim_{k\in K} G'(k)\;.
\end{equation} 
Indeed, as colimits commute with pullbacks, this may be checked on a flat cover. Now there are two cases to consider: If we are given a flat cover $\Spec A\rightarrow\cal{X}$, the claim is clear as filtered colimits of $A$-modules are exact. If, however, we are given a flat cover $\Spf A\rightarrow\cal{X}$ with $A$ noetherian, then colimits in $\D(\Spf A)\cong \widehat{\D}(A)$ are computed by first taking the colimit in $\D(A)$ and then applying derived completion, so in this case the claim follows from the fact that derived completion has finite cohomological dimension in our setting due to the ideal of definition of $A$ being finitely generated, see \cite[Tag 0AAJ]{Stacks}. Thus, the claim is proved. In particular, note that, by picking $G'(k)=0$ for all $k$, we have also shown that, if $G(k)\in\D^{\geq 0}(\cal{X})$ for all $k\in K$, then
\begin{equation*}
\colim_{k\in K} G(k)\in \D^{\geq -r}(\cal{X})\;.
\end{equation*}

We now prove the claim from the statement. To check that the natural map 
\begin{equation*}
\colim_{i\in I} \Tot(F(i, \bullet))\rightarrow \Tot(\colim_{i\in I} F(i, \bullet))
\end{equation*}
is an isomorphism, it suffices to check that it induces an isomorphism on $\tau^{\leq n}$ for all $n$. As $\colim_{i\in I} F(i, \bullet)\in\D^{\geq -r}(\cal{X})$ by the first paragraph, the totalisation on the right-hand side may be replaced by a partial totalisation if we are only interested in the result after $n$-truncation; more precisely, we have
\begin{equation*}
\tau^{\leq n} \Tot(\colim_{i\in I} F(i, \bullet))\cong \tau^{\leq n}\lim_{j\in\Delta_{\leq n+r+1}} \colim_{i\in I} F(i, j)\;,
\end{equation*}
where $\Delta_{\leq n+r+1}$ denotes the full subcategory of $\Delta$ spanned by the objects $j$ with $j\leq n+r+1$. Now using that finite limits commute with filtered colimits and the finite cohomological dimension of filtered colimits, we obtain
\begin{equation*}
\begin{split}
\tau^{\leq n}\lim_{j\in\Delta_{\leq n+r+1}} \colim_{i\in I} F(i, j)&\cong \tau^{\leq n}\colim_{i\in I} \lim_{j\in\Delta_{\leq n+r+1}} F(i, j) \\
&\cong \tau^{\leq n} \colim_{i\in I} \tau^{\leq n+r}\lim_{j\in\Delta_{\leq n+r+1}} F(i, j)\;.
\end{split}
\end{equation*}
Finally, we use that $F(i, j)\in\D^{\geq 0}(\cal{X})$ to conclude that the $(n+r)$-truncation of the partial totalisation above actually agrees with the $(n+r)$-truncation of the full totalisation and make use of the finite cohomological dimension of filtered colimits once again to conclude
\begin{equation*}
\begin{split}
\tau^{\leq n} \colim_{i\in I} \tau^{\leq n+r}\lim_{j\in\Delta_{\leq n+r+1}} F(i, j)&\cong \tau^{\leq n} \colim_{i\in I} \tau^{\leq n+r}\Tot(F(i, \bullet)) \\
&\cong \tau^{\leq n} \colim_{i\in I} \Tot(F(i, \bullet))\;,
\end{split}
\end{equation*}
so we are done.
}
\end{proof}

\begin{proof}[Proof of \cref{prop:basechange-locclosed}]
We follow the argument given in the proof of \cite[Thm. 3.3.5]{FGauges}. Namely, using the presentations $U^\bullet\rightarrow\cal{X}$ and $U^\bullet_{\cal{Z}}\rightarrow\cal{X}_{\cal{Z}}$, we are reduced to checking that the formation of cosimplicial totalisations in $\D^{\geq n}(\cal{Y})$ commutes with pullback along $i$. However, as $i$ is a locally closed immersion, pullback along $i$ is merely tensoring with $i_*\O_{\cal{Z}}$ and this is a perfect complex over some localisation of $\cal{Y}$ by regularity of $\cal{Y}$. Thus, the claim now follows from the fact that cosimplicial totalisations in $\D^{\geq n}(\cal{Y})$ commute with filtered colimits, \EEDIT{see \cref{lem:syntomicetale-colimtot},} as well as tensoring with perfect complexes.
\end{proof}

By way of example, we will now indicate how to use \cref{prop:basechange-locclosed} to prove many of the base change results we need.

\begin{cor}
\label{cor:basechange-nygaardlocclosed}
Let $X$ be a bounded $p$-adic formal scheme which is $p$-quasisyntomic and qcqs. Then the cartesian diagram
\begin{equation*}
\begin{tikzcd}
X^\prism\ar[r, "j_\dR"]\ar[d] & X^\N\ar[d] \\
\Z_p^\prism\ar[r, "j_\dR"] & \Z_p^\N
\end{tikzcd}
\end{equation*}
satisfies base change for bounded below complexes. 
\end{cor}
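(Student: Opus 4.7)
The plan is to apply \cref{prop:basechange-locclosed} to the morphism $f\colon X^\N\to\Z_p^\N$ and the locally closed immersion $j_\dR$. Two of the three hypotheses are immediate: the map $j_\dR\colon \Z_p^\prism\to\Z_p^\N$ is an open immersion (hence locally closed) by \cref{rem:filprism-jdr}, and $\Z_p^\N$ admits a flat cover from the regular noetherian affine formal scheme $\Spf\Z_p\langle u, t\rangle$ by \cref{rem:filprism-regular}. What remains is to exhibit $X^\N$ as the geometric realisation of a simplicial flat affine formal $\Z_p^\N$-scheme.

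For this, we use the $p$-quasisyntomicity of $X$. Since $X$ is qcqs and $p$-quasisyntomic, we can choose a quasisyntomic cover $Y^0=\Spf R^0\to X$ with $R^0$ a (finite product of) quasiregular semiperfectoid ring(s). Taking the Čech nerve $Y^\bullet\to X$ yields a simplicial resolution whose terms are again of the form $Y^n=\Spf R^n$ with $R^n$ quasiregular semiperfectoid, since the relevant $p$-completed tensor products preserve qrsp-ness within the quasisyntomic site. By \cref{lem:filprism-quasisyntomiccover}, the induced simplicial object $Y^{\bullet,\N}\to X^\N$ is a simplicial flat cover, and by \cref{thm:filprism-qrsp} each $Y^{n,\N}$ is the $\G_m$-quotient of the affine formal scheme $V^n\coloneqq \Spf(\Rees(\Fil^\bullet_\N\Prism_{R^n}))$, which is itself flat affine over the cover $\Spf\Z_p\langle u, t\rangle\to\Z_p^\N$.

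The only obstacle is that the terms $Y^{n,\N}$ are $\G_m$-quotients of affines rather than genuine affines, so they do not \emph{a priori} satisfy the hypotheses of \cref{prop:basechange-locclosed}. The cleanest fix is to reduce by fpqc descent to the situation over the flat cover $\Spf\Z_p\langle u, t\rangle\to\Z_p^\N$: after this base change the $\G_m$-torsors $V^n\to Y^{n,\N}$ trivialise, providing a genuinely simplicial flat affine cover $V^\bullet$ of $X^\N\times_{\Z_p^\N}\Spf\Z_p\langle u, t\rangle$, and the pulled-back $j_\dR$ becomes the open immersion $\Spf\Z_p\langle u,t,t^{-1}\rangle\hookrightarrow\Spf\Z_p\langle u,t\rangle$. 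The remainder of the argument of \cref{prop:basechange-locclosed} — reducing the claim to commutation of cosimplicial totalisations in $\D^{\geq n}$ with tensoring against the perfect complex $j_{\dR,*}\O_{\Z_p^\prism}$ via \cref{lem:syntomicetale-colimtot} — then applies verbatim over this new regular noetherian base.
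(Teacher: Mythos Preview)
Your proof is correct and follows the same strategy as the paper: verify the hypotheses of \cref{prop:basechange-locclosed} using quasisyntomic descent to quasiregular semiperfectoid rings together with the explicit description from \cref{thm:filprism-qrsp}. The paper is terser about the $\G_m$-quotient issue you flag (it simply writes ``the claim is clear''); a slightly more direct fix than your base-change manoeuvre is to further resolve each $Y^{n,\N}=V^n/\G_m$ by the \v{C}ech nerve $V^n\times\G_m^\bullet$ of the $\G_m$-torsor and pass to the diagonal of the resulting bisimplicial object, which yields a simplicial flat affine presentation of $X^\N$ over $\Z_p^\N$ to which \cref{prop:basechange-locclosed} applies directly without changing the base.
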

\begin{proof}
To apply \cref{prop:basechange-locclosed}, first \EDIT{recall} that $j_\dR: \Z_p^\prism\rightarrow\Z_p^\N$ is an open immersion and that \EDIT{there is a flat cover} $\Spf\Z_p\langle u, t\rangle\rightarrow\Z_p^\N$, \EDIT{see \cref{rem:filprism-regular}}. To provide the necessary simplicial presentation of $X^\N$, first observe that, by quasisyntomic descent, see \cref{lem:filprism-quasisyntomiccover}, we may assume that $X=\Spf R$ for a quasiregular semiperfectoid ring $R$. However, then the claim is clear by the explicit description of $R^\N$ from \cite[Cor. 5.5.11]{FGauges}.
\end{proof}

\begin{cor}
\label{cor:basechange-dr}
Let $X$ be a bounded $p$-adic formal scheme which is smooth and qcqs. Then the cartesian diagram
\begin{equation*}
\begin{tikzcd}
X^\Hod\ar[r]\ar[d] & X^{\dR, +}\ar[d] \\
\Z_p^\Hod\ar[r] & \Z_p^{\dR, +}
\end{tikzcd}
\end{equation*}
satisfies base change for bounded below complexes.
\end{cor}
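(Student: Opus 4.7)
The plan is to apply \cref{prop:basechange-locclosed} to the map $f: X^{\dR, +}\rightarrow\Z_p^{\dR, +}$ together with the inclusion $i: \Z_p^\Hod\hookrightarrow\Z_p^{\dR, +}$, so the task reduces to verifying the three hypotheses: that $i$ is a locally closed immersion, that $\Z_p^{\dR, +}$ admits a flat cover by a regular affine formal scheme, and that $X^{\dR, +}$ can be realised as the colimit of a simplicial flat affine formal $\Z_p^{\dR, +}$-scheme.

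The first two hypotheses are immediate. Since any map from a $p$-nilpotent ring to $\Spf\Z_p$ is unique, we have $\Z_p^{\dR, +}\cong \A^1/\G_m$ as a formal stack over $\Spf\Z_p$, and under this identification $\Z_p^\Hod$ corresponds to the closed substack $B\G_m=\{0\}/\G_m\hookrightarrow\A^1/\G_m$, which is certainly a locally closed immersion. A regular flat cover of $\Z_p^{\dR, +}$ is then provided by the quotient presentation $\Spf\Z_p[t]_{(p)}^\wedge\rightarrow\A^1/\G_m$, since $\Z_p[t]_{(p)}^\wedge$ is regular.

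The main work therefore lies in producing the required simplicial presentation of $X^{\dR, +}$. For this I would first use Zariski descent on $X$ to reduce to the case $X=\Spf R$ affine (using that the assignment $X\mapsto X^{\dR, +}$ is Zariski local). Next, following the strategy of the proof of \cref{cor:basechange-nygaardlocclosed}, I would reduce further to the case where $R$ is quasiregular semiperfectoid by quasisyntomic descent. One then needs to appeal to an explicit description of $R^{\dR, +}$ analogous to \cref{thm:filprism-qrsp}, which realises $R^{\dR, +}$ as $\Spf A/\G_m$ for some explicit classically $p$-complete $A$ (this is readily obtained by specialising the perfectoid description of \cref{ex:fildrstack-perfd} along a perfectoid cover of $R$ and using quasisyntomic descent), so that the Čech nerve of the cover $\Spf A\rightarrow R^{\dR, +}$ furnishes the desired simplicial flat affine presentation.

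The main obstacle in this plan will be establishing quasisyntomic descent for $X\mapsto X^{\dR, +}$, which does not appear explicitly in the thesis but can be deduced from the corresponding statement \cref{lem:filprism-quasisyntomiccover} for the Nygaard-filtered prismatisation by base change along the Hodge-filtered de Rham map $i_{\dR, +}: X^{\dR, +}\rightarrow X^\N$, together with an explicit computation of $R^{\dR, +}$ for $R$ quasiregular semiperfectoid.
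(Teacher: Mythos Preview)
Your verification of the first two hypotheses of \cref{prop:basechange-locclosed} is correct and matches the paper. Your approach to the third hypothesis, however, differs from the paper's and contains a gap.

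The paper does not pass through quasiregular semiperfectoid rings at all. Instead, it exploits smoothness directly: since $X$ Zariski-locally admits an affine étale map $X\to\A^n$, and since the functor $X\mapsto X^{\dR,+}$ preserves affine étale maps (by the proof of \cite[Thm.~2.3.6]{FGauges}), one may assume there is an affine étale map $X^{\dR,+}\to(\A^n)^{\dR,+}=(\G_a/\V(\O(1))^\sharp)^n$ over $\A^1/\G_m$. Pulling back the ${\V(\O(1))^\sharp}^n$-torsor $\G_a^n\to(\G_a/\V(\O(1))^\sharp)^n$ along this map gives a flat affine formal scheme $\widetilde{X}$ over $\A^1/\G_m$ with $X^{\dR,+}\cong\widetilde{X}/{\V(\O(1))^\sharp}^n$, and the bar construction for this torsor furnishes the required simplicial presentation immediately.

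Your route via quasisyntomic descent, while modeled on the paper's treatment of $X^\N$ in \cref{cor:basechange-nygaardlocclosed}, requires an explicit affine description of $R^{\dR,+}$ for $R$ quasiregular semiperfectoid which is not available in the thesis and is not ``readily obtained'' in the way you suggest. The phrase ``specialising the perfectoid description of \cref{ex:fildrstack-perfd} along a perfectoid cover of $R$'' does not make sense as written: for qrsp $R$ one typically has a surjection $R_0\twoheadrightarrow R$ from a perfectoid $R_0$, so $\Spf R\hookrightarrow\Spf R_0$ is a closed immersion rather than a cover, and no descent is available in that direction. One would instead need a direct computation of $R^{\dR,+}$ analogous to the proof of \cref{thm:filprism-qrsp} for $R^\N$, together with a verification of flatness over $\A^1/\G_m$. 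This is plausible but genuinely additional work; the paper's étale-coordinate argument bypasses it entirely by using the smoothness hypothesis in an essential way.
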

\begin{proof}
As $\Z_p^{\dR, +}=\A^1/\G_m$ \EDIT{admits a flat cover from the regular affine formal scheme $\A^1$} and $\Z_p^\Hod=B\G_m\rightarrow\A^1/\G_m=\Z_p^{\dR, +}$ is a closed immersion, we again want to apply \cref{prop:basechange-locclosed}. To obtain the necessary simplicial presentation of $X^{\dR, +}$, first observe that the functor $X\mapsto X^{\dR, +}$ preserves (affine) étale maps by the proof of \cite[Thm. 2.3.6]{FGauges}. As $X$ Zariski-locally admits an affine étale map $X\rightarrow\A^n$, we may thus assume that we are given an affine étale map \EEDIT{$X^{\dR, +}\rightarrow (\A^n)^{\dR, +}=(\G_a/\V(\O(1))^\sharp)^n$} of stacks over $\A^1/\G_m$. \EEDIT{Then the pullback $\widetilde{X}$ of the ${\V(\O(1))^\sharp}^n$-torsor $\G_a^n\rightarrow (\G_a/\V(\O(1))^\sharp)^n$ to $X^{\dR, +}$ is a flat affine formal scheme over $\A^1/\G_m$ and we have $X^{\dR, +}\cong \widetilde{X}/{\V(\O(1))^\sharp}^n$, which yields the desired presentation.}
\end{proof}

The second strategy we can employ to prove base change results is to explicitly identify descent data. Let us again explain what we mean by this using an example:

\begin{lem}
\label{lem:basechange-nygaardsyn}
Let $X$ be a bounded $p$-adic formal scheme which is $p$-quasisyntomic and qcqs. Then the cartesian diagram
\begin{equation*}
\begin{tikzcd}
X^\N\ar[r, "j_\N"]\ar[d, "\pi_{X^\N}"] & X^\Syn\ar[d, "\pi_{X^\Syn}"] \\
\Z_p^\N\ar[r, "j_\N"] & \Z_p^\Syn
\end{tikzcd}
\end{equation*}
satisfies base change for bounded below complexes. 
\end{lem}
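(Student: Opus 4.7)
The plan is to identify the descent datum of $\pi_{X^\Syn, *} F$ directly, using the equaliser description (\ref{eq:syntomification-equaliser}) of $\D(-^\Syn)$. Given $F\in\D^+(X^\Syn)$, applying $\pi_{X^\N, *}$ to $j_\N^* F\in\D^+(X^\N)$ and invoking the base change results for $j_\HT$ and $j_\dR$ (namely \cref{cor:basechange-nygaardlocclosed} and its entirely analogous version for $j_\HT$, provable by the same argument), we obtain a canonical descent isomorphism
\begin{equation*}
j_\HT^*\pi_{X^\N, *}j_\N^* F \cong \pi_{X^\prism, *}j_\prism^* F \cong j_\dR^*\pi_{X^\N, *}j_\N^* F
\end{equation*}
in $\D(\Z_p^\prism)$, using the factorisations $j_\N\circ j_\HT = j_\prism = j_\N\circ j_\dR$. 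By the equaliser (\ref{eq:syntomification-equaliser}) applied on the $\Z_p$-side, this descent datum defines a unique object $G\in\D^+(\Z_p^\Syn)$ whose pullback to $\Z_p^\N$ is $\pi_{X^\N, *}j_\N^* F$, and it remains to identify $G$ with $\pi_{X^\Syn, *} F$.

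For this identification, the natural base change maps $j_\N^* \pi_{X^\Syn, *} F\to \pi_{X^\N, *}j_\N^* F$ and $j_\prism^* \pi_{X^\Syn, *} F\to \pi_{X^\prism, *}j_\prism^* F$ assemble into a morphism $\pi_{X^\Syn, *}F\to G$ in $\D(\Z_p^\Syn)$. To show that it is an isomorphism, the plan is to push forward the relative Mayer--Vietoris fibre sequence on $X^\Syn$
\begin{equation*}
F\to j_{\N, *}j_\N^* F\to j_{\prism, *}j_\prism^* F
\end{equation*}
(a sheafy upgrade of (\ref{eq:syntomification-fibreseq})) along $\pi_{X^\Syn}$, producing a fibre sequence in $\D(\Z_p^\Syn)$ whose middle and right terms agree by construction with the analogous ones arising from $G$, thanks to the cartesianness of the inner squares together with base change for $j_\HT$ and $j_\dR$. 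Comparing the two fibre sequences then forces $\pi_{X^\Syn, *}F\to G$ to be an isomorphism, from which the desired base change $j_\N^* \pi_{X^\Syn, *} F \cong \pi_{X^\N, *} j_\N^* F$ follows directly.

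The main obstacle lies in rigorously constructing the relative Mayer--Vietoris fibre sequence on $X^\Syn$ coming from the pushout (\cref{defi:syntomification-def}) and in verifying that pushforward along $\pi_{X^\Syn}$ respects this coequaliser structure; equivalently, in showing that $\pi_{X^\Syn, *}$ is genuinely compatible with the equaliser description of $\D(-^\Syn)$. Once this compatibility is settled, the remainder of the argument is a formal consequence of the base change for $j_\HT$ and $j_\dR$ supplied by \cref{cor:basechange-nygaardlocclosed}.
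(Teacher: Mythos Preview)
Your first paragraph is exactly the paper's proof: use base change for $j_\HT$ and $j_\dR$ (via \cref{cor:basechange-nygaardlocclosed} and its $j_\HT$-analogue) to equip $\pi_{X^\N,*}j_\N^*F$ with a canonical descent datum to $\Z_p^\Syn$, and then observe that this descent datum \emph{is} $\pi_{X^\Syn,*}F$. The paper stops there.

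Where you diverge is in treating the identification $G\cong\pi_{X^\Syn,*}F$ as ``the main obstacle'' and proposing a Mayer--Vietoris argument to settle it. This is correct but over-engineered: the identification is formal once you have the equaliser description (\ref{eq:syntomification-equaliser}) and the base change for $j_\HT,j_\dR$. Indeed, since $\D(\Z_p^\Syn)$ is a \emph{limit} of $\infty$-categories, mapping spaces in it are computed as the corresponding equalisers. For any $E'\in\D(\Z_p^\Syn)$ one then has
\[
\Map_{\D(\Z_p^\Syn)}(E',G)
\;\cong\;\operatorname{eq}\bigl(\Map_{\D(\Z_p^\N)}(j_\N^*E',\pi_{X^\N,*}j_\N^*F)\rightrightarrows\Map_{\D(\Z_p^\prism)}(j_\prism^*E',\pi_{X^\prism,*}j_\prism^*F)\bigr),
\]
and by adjunction together with $\pi_{X^\N}^*j_\N^*\cong j_\N^*\pi_{X^\Syn}^*$ (and likewise for $j_\prism$) this equaliser is $\Map_{\D(X^\Syn)}(\pi_{X^\Syn}^*E',F)\cong\Map_{\D(\Z_p^\Syn)}(E',\pi_{X^\Syn,*}F)$. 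Yoneda finishes. The only genuine input is that the descent datum on $G$ really comes from the one on $j_\N^*F$, and that is precisely what the base change isomorphisms for $j_\HT,j_\dR$ provide. So there is no gap; you simply do not need the relative Mayer--Vietoris sequence.
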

\begin{proof}
\EDIT{Given any $F\in\D^+(X^\Syn)$,} by base change for the cartesian diagrams
\begin{equation*}
\begin{tikzcd}
X^\prism\ar[r, "j_\dR"]\ar[d] & X^\N\ar[d] \\
\Z_p^\prism\ar[r, "j_\dR"] & \Z_p^\N
\end{tikzcd}\;,\hspace{0.5cm}
\begin{tikzcd}
X^\prism\ar[r, "j_\HT"]\ar[d] & X^\N\ar[d] \\
\Z_p^\prism\ar[r, "j_\HT"] & \Z_p^\N
\end{tikzcd}\;,
\end{equation*}
see \cref{cor:basechange-nygaardlocclosed}, \EDIT{we have
\begin{equation*}
j_\dR^*\pi_{X^\N, *}j_\N^*F\cong \pi_{X^\prism, *}j_\dR^*j_\N^*F\cong \pi_{X^\prism, *}j_\HT^*j_\N^*F\cong j_\HT^*\pi_{X^\N, *}j_\N^*F\;,
\end{equation*}
where $j_\dR^*j_\N^*F\cong j_\HT^*j_\N^*F$ is due to the fact that $j_\N^*F$ is pulled back from $X^\Syn$. Thus,} we see that $\pi_{X^\N, *}j_\N^* F$ is equipped with a canonical descent datum to $\Z_p^\Syn$ and this descent datum corresponds to the complex $\pi_{X^\Syn, *} F$.
\end{proof}

The only base change claim which cannot be readily proved with either of the two techniques above is the following:

\begin{lem}
\label{lem:basechange-nygaarddr}
Let $X$ be a bounded $p$-adic formal scheme which is smooth and qcqs. Then the cartesian diagram
\begin{equation*}
\begin{tikzcd}
X^{\dR, +}\ar[r, "i_{\dR, +}"]\ar[d, "\pi_{X^{\dR, +}}"] & X^\N\ar[d, "\pi_{X^\N}"] \\
\Z_p^{\dR, +}\ar[r, "i_{\dR, +}"] & \Z_p^\N
\end{tikzcd}
\end{equation*}
satisfies base change for bounded below complexes.
\end{lem}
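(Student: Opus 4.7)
The main difficulty, as foreshadowed at the start of the appendix, is that $i_{\dR,+} : \A^1/\G_m \to \Z_p^\N$ is neither an open nor a closed immersion (nor locally closed), so \cref{prop:basechange-locclosed} does not apply directly. The descent-data trick used in \cref{lem:basechange-nygaardsyn} also seems unavailable, since $i_{\dR,+}$ is not obtained by gluing open pieces of $\Z_p^\N$. My plan is to combine a simplicial presentation of $X^\N$, analogous to the presentation of $X^{\dR,+}$ constructed in the proof of \cref{cor:basechange-dr}, with a finite cohomological dimension argument for $i_{\dR,+}^*$.

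First I would reduce to the affine case $X = \Spf R$ with $R$ smooth over $\Z_p$ via Zariski descent, which is permissible for bounded below complexes thanks to \cref{lem:syntomicetale-colimtot} (and the fact that the base change map commutes with Zariski localisation in $X$, since $i_{\dR,+}^*$ and the two pushforwards do). Next, for such $R$, I would construct a simplicial flat affine formal scheme $U^\bullet \to \Z_p^\N$ with $R^\N \cong |U^\bullet|$ by picking an étale map $R \to \Z_p\langle x_1, \ldots, x_n\rangle$ and pulling back the natural torsor presentation of $\Z_p\langle x_1, \ldots, x_n\rangle^\N$, in direct analogy with the construction of $\widetilde{X}$ in the proof of \cref{cor:basechange-dr}. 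By the compatibility encoded in diagram (\ref{eq:filprism-maps}), the base change $i_{\dR,+}^* U^\bullet$ would then give a compatible simplicial presentation of $R^{\dR,+}$.

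With such compatible simplicial presentations in hand, the base change statement reduces to showing that $i_{\dR,+}^*$ commutes with the corresponding cosimplicial totalisation in $\D^+(\Z_p^\N)$. Following the argument in \cref{prop:basechange-locclosed}, this reduces to verifying that $i_{\dR,+}^* : \D^+(\Z_p^\N) \to \D^+(\A^1/\G_m)$ has finite cohomological dimension. For this I would pull back to the flat cover $\Spf \Z_p\langle u, t\rangle \to \Z_p^\N$ from \cref{rem:filprism-regular}; on this cover, the pullback of $i_{\dR,+}$ admits an explicit description via \cref{ex:fildrstack-perfd}, and should be seen to be of finite Tor-dimension, which, together with the finite cohomological dimension of derived $p$-completion over the regular noetherian base $\A^1/\G_m$, gives the desired bound.

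The main technical obstacle I anticipate is the second step: constructing the simplicial presentation of $R^\N$ compatibly with that of $R^{\dR,+}$. In essence, this asks that the torsor-quotient analysis which produces $R^{\dR,+} = \widetilde{R}/\V(\O(1))^{\sharp\,n}$ in the proof of \cref{cor:basechange-dr} extend to the Nygaard-filtered setting. This should follow from the compatibility of $R \mapsto R^\N$ with étale maps (which can in turn be checked after quasisyntomic descent via the explicit description in \cref{thm:filprism-qrsp}), but will require careful bookkeeping of how the Nygaard filtration interacts with the chosen cover.
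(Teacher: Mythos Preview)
Your approach differs substantially from the paper's, and the second step contains a genuine gap. You posit a ``natural torsor presentation of $\Z_p\langle x_1,\ldots,x_n\rangle^\N$'' in analogy with $(\A^n)^{\dR,+} \cong (\G_a/\V(\O(1))^\sharp)^n$, but no such presentation is available: the ring stack $\G_a^\N$ is not a quotient of $\G_a$ by an affine group scheme over $\Z_p^\N$. At best, by the same deformation-theoretic reasoning as in the proof of \cref{lem:fildhod-comparisonfilteredsmooth}, the map $X^\N \to X \times \Z_p^\N$ for smooth $X$ is a \emph{gerbe}, not a torsor, and gerbes do not directly yield simplicial flat affine presentations. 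You could instead obtain the simplicial presentation via quasisyntomic descent to quasiregular semiperfectoid rings (as in \cref{cor:basechange-nygaardlocclosed}), but then the finite Tor-dimension check for $i_{\dR,+}^*$ becomes nontrivial: comparing \cref{ex:fildrstack-perfd} and \cref{ex:filprism-perfd}, the map involves both a Frobenius twist (the relations are $ut=\xi$ versus $ut=\phi^{-1}(\xi)$) and a passage to divided powers on $u$, so bounding the Tor-dimension integrally would require real work.

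The paper's argument sidesteps all of this by first reducing mod $p$ via $p$-completeness. After also invoking base change for the square over $(\A^1/\G_m)_{p=0} \hookrightarrow \A^1/\G_m$ (which follows from the proof of \cref{cor:basechange-dr}), one is reduced to the square with bottom row $(\A^1/\G_m)_{p=0} \to \Z_p^\N$. The key observation is that this map now \emph{factors through a closed immersion}: it is the composite of the $\cal{G}$-torsor $\A^1/\G_m \to \Z_{p,\dR,+}^\N$ with the closed immersion $\Z_{p,\dR,+}^\N \hookrightarrow \Z_p^\N$ (see the description of the reduced locus in Section~\ref{subsect:syntomification}). The closed immersion portion is then handled by \cref{prop:basechange-locclosed}, and the torsor portion by identifying descent data as in \cref{lem:basechange-nygaardsyn}.
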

\begin{proof}
To check that the base change map $i_{\dR, +}^*\pi_{X^\N, *} F\rightarrow \pi_{X^{\dR, +}, *}i_{\dR, +}^* F$ is an isomorphism for any $F\in\D^+(X^\N)$, we may reduce mod $p$ by $p$-completeness, i.e.\ we may pull back to the stack $(\A^1/\G_m)_{p=0}$. Moreover, as the square
\begin{equation*}
\begin{tikzcd}
(X^{\dR, +})_{p=0}\ar[r]\ar[d] & X^{\dR, +}\ar[d] \\
(\A^1/\G_m)_{p=0}\ar[r] & \A^1/\G_m
\end{tikzcd}
\end{equation*}
satisfies base change for bounded below complexes by the proof of \cref{cor:basechange-dr}, we are reduced to proving that the square
\begin{equation*}
\begin{tikzcd}
(X^{\dR, +})_{p=0}\ar[r]\ar[d] & X^\N\ar[d] \\
\A^1/\G_m\ar[r] & \Z_p^\N
\end{tikzcd}
\end{equation*}
satisfies base change for bounded below complexes; note that we have now simply written $\A^1/\G_m$ instead of $(\A^1/\G_m)_{p=0}$ as any instance of the stack $\A^1/\G_m$ will be over $\F_p$ from now on. However, note that the above square admits a factorisation as 
\begin{equation*}
\begin{tikzcd}
(X^{\dR, +})_{p=0}\ar[r]\ar[d] & X^\N_{\dR, +}\ar[r]\ar[d] & X^\N\ar[d] \\
\A^1/\G_m\ar[r] & \Z_{p, \dR, +}^\N\ar[r] & \Z_p^\N\nospacepunct{\;,}
\end{tikzcd}
\end{equation*}
so it suffices to show that both the left and the right square satisfy base change for bounded below complexes. For the right square, as $\Z_{p, \dR, +}\rightarrow \Z_p^\N$ is a closed immersion \EDIT{by the description of the reduced locus}, this follows from an application of \cref{prop:basechange-locclosed}; for the left square, we recall that $\A^1/\G_m\rightarrow\Z_{p, \dR, +}$ and hence also $(X^{\dR, +})_{p=0}\rightarrow X^\N_{\dR, +}$ is a $\cal{G}$-torsor, where $\cal{G}$ is the group scheme described in Section \ref{subsect:syntomification}, so here the claim follows by identifying descent data.
\end{proof}

\newpage

\section{Finiteness of pushforwards along $X^\Syn\rightarrow\Z_p^\Syn$}
\label{sect:finiteness}

In this appendix, we will prove a finiteness result for pushforwards of perfect complexes along $X^\Syn\rightarrow\Z_p^\Syn$. Namely, the result we want to \EEDIT{establish} goes as follows:

\begin{prop}
\label{prop:finiteness-main}
Let $X$ be a $p$-adic formal scheme which is smooth and proper \EEDIT{over $\Spf\Z_p$}. For any perfect $F$-gauge $E\in\Perf(X^\Syn)$, the pushforward $\pi_{X^\Syn, *} E$ is perfect.
\end{prop}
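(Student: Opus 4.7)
The first reduction is to perfectness on $\Z_p^\N$. Since $j_\N\colon \Z_p^\N \to \Z_p^\Syn$ is an étale cover and perfectness satisfies fpqc descent, an object of $\D(\Z_p^\Syn)$ is perfect iff its pullback along $j_\N$ is perfect on $\Z_p^\N$. Combining this with base change for the cartesian square of \cref{lem:basechange-nygaardsyn}, we see that $j_\N^* \pi_{X^\Syn,*} E \cong \pi_{X^\N,*}(j_\N^* E)$, and since $j_\N^* E \in \Perf(X^\N)$, it is enough to prove: for any $F \in \Perf(X^\N)$, the pushforward $\pi_{X^\N,*} F$ lies in $\Perf(\Z_p^\N)$.

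Next, I would use the flat cover $q\colon \Spf\Z_p\langle u,t\rangle \to \Z_p^\N$ of \cref{rem:filprism-regular} together with \cref{rem:filprism-cohheartperf}: perfectness of a complex on $\Z_p^\N$ is equivalent to perfectness of its $q^*$-pullback as a $\Z_p\langle u,t\rangle$-complex. After establishing base change for the resulting cartesian square (following the template of Appendix \ref{sect:basechange}, using the explicit perfectoid presentations of \cref{ex:filprism-perfd} and quasisyntomic descent via \cref{lem:filprism-quasisyntomiccover}), one is reduced to showing that pushforward along $\tilde{\pi}\colon Y \to \Spf\Z_p\langle u,t\rangle$ preserves perfect complexes, where $Y := X^\N \times_{\Z_p^\N} \Spf\Z_p\langle u,t\rangle$.

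The key point is that $Y \to \Spf\Z_p\langle u,t\rangle$ inherits properness from $X \to \Spf\Z_p$. Indeed, $Y$ is described as a mapping stack parametrising, for each $\Spec S \to \Spf\Z_p\langle u,t\rangle$, morphisms $\Spec \G_a^\N(S) \to X$ (where the $\G_a^\N$ here is computed relative to the filtered Cartier--Witt divisor classifying $q$). Using the explicit shape of this animated ring stack, and the fact that $X^\N$ is the composition of a $p$-nilpotent thickening of $X$ with a stack over the base, $Y$ should be presentable as a (relative) formal algebraic space admitting a proper flat presentation pulled back from $X/\Spf\Z_p$. Then the standard finiteness of coherent pushforward along proper morphisms of noetherian formal schemes (applied over $\Spf\Z_p\langle u,t\rangle$, which is noetherian) yields perfectness of $\tilde{\pi}_*$.

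The hard part will be making the geometry of $Y$ and the properness of $\tilde{\pi}$ precise: tracing through the mapping-stack construction of $X^\N$ to identify $Y$ explicitly and to verify both derived base change and derived properness requires care. A more robust alternative, which I would pursue if the direct approach stalls, is to use the quasisyntomic cover of $X$ by affines $\Spf R$ with $R$ quasiregular semiperfectoid and reduce the problem via descent to the explicit presentation of $R^\N$ of \cref{thm:filprism-qrsp}; then perfectness of each term in the Čech nerve can be deduced from the finite generation of Nygaard-filtered prismatic cohomology of $X$ with perfect coefficients (available for smooth and proper $X$), and the global statement is recovered by finite Čech descent from the properness hypothesis on $X$.
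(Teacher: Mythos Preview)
Your first reduction (from $\Z_p^\Syn$ to $\Z_p^\N$ via \cref{lem:basechange-nygaardsyn} and \'etale descent for perfectness) is fine. The genuine gap lies in the next step: the fibre product $Y=X^\N\times_{\Z_p^\N}\Spf\Z_p\langle u,t\rangle$ is \emph{not} a formal algebraic space. Already the Hodge--Tate locus $X^\HT$ of a smooth $X$ is a $\G_m^\sharp$-gerbe over $X$, and the Hodge-filtered de Rham component is a $\V(\O(1))^{\sharp\,n}$-torsor quotient (see the proof of \cref{lem:finiteness-pushforwardcomplete} and \cref{lem:syntomicetale-identifyhod}); these group schemes are not finite, so $Y$ is an honest stack and the classical ``proper pushforward preserves perfect complexes'' over a noetherian base does not apply. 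Your sentence ``$Y$ should be presentable as a (relative) formal algebraic space admitting a proper flat presentation'' is precisely where the argument breaks. The alternative via quasisyntomic descent has the same problem in a different guise: a quasisyntomic cover of $X$ by quasiregular semiperfectoid rings is never finite (such rings are highly non-noetherian), and the ``finite generation of Nygaard-filtered prismatic cohomology with perfect coefficients'' you invoke is essentially the statement being proved.

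The paper's proof takes a completely different route that avoids ever asserting properness of a stacky morphism. One first reduces modulo $(p,v_1)$ by completeness (using topological nilpotence of $v_1$), landing on the reduced locus $\Z_{p,\red}^\N$. This stack is assembled from the components $\Z_{p,\dR,+}^\N$ and $\Z_{p,\HT,c}^\N$, each covered by $\A^1/\G_m$, and perfectness on $\A^1/\G_m$ has the explicit characterisation of \cref{lem:finiteness-perfa1gm}: $t$-completeness plus perfectness of the associated graded on $B\G_m$. For each component, the pushforward is computed \emph{explicitly} as a finite-length Koszul-type complex built from $V\otimes\Omega^i_{Y/\F_p}$ (\cref{prop:syntomicetale-cohomologyhod}, \cref{lem:finiteness-pushforwardcomplete}), and properness of $X$ enters only at the very end, as classical finiteness of coherent cohomology on the scheme $X_{p=0}$. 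In short: the paper trades the inaccessible ``stacky properness'' for a concrete description of the fibres of $X^\N\to\Z_p^\N$ in terms of Higgs fields and connections, reducing everything to ordinary proper pushforward on $X_{p=0}$.
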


\EDIT{
Before we can proceed to the proof, we first need to establish the following two lemmas:
}

\begin{lem}
\label{lem:finiteness-perfa1gm}
\EEDIT{We work over $\F_p$.}
\begin{enumerate}[label=(\roman*)]
\EEDIT{
\item A complex $E\in\D(B\G_m)$ is perfect if and only if it corresponds to a graded complex consisting of only finitely many nonzero graded pieces, which are all bounded complexes of finite-dimensional $\F_p$-vector spaces.
\item For a complex $E\in\D(\A^1/\G_m)$, the following are equivalent:}
\begin{enumerate}[label=(\arabic*)]
\EEDIT{
\item $E$ is perfect.
\item $E$ is $t$-complete and its pullback to $B\G_m$ is perfect (recall that $t$ denotes the coordinate on $\A^1$).
\item Under the Rees equivalence, $E$ corresponds to a finite filtration $\Fil^\bullet V$ such that all graded pieces $\gr^\bullet V$ are bounded complexes of finite-dimensional $\F_p$-vector spaces; here, the filtration $\Fil^\bullet V$ being finite means that $\Fil^\bullet V=0$ for $\bullet\gg 0$ and that the transition maps $\Fil^\bullet V\rightarrow \Fil^{\bullet-1} V$ are isomorphisms for $\bullet\ll 0$.
}
\end{enumerate}
\end{enumerate}
\end{lem}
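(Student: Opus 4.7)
The plan is to translate both parts of the lemma into assertions about perfect graded or filtered $\F_p$-complexes via the Rees equivalence and its graded analogue (\cref{prop:rees-main}), and then exploit the flat covers $\Spec\F_p\to B\G_m$ and $\A^1\to\A^1/\G_m$ together with the fact that perfectness is fpqc-local on the target.

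For part (i), I would use that the trivial $\G_m$-torsor $\pi\colon \Spec\F_p\to B\G_m$ is a flat cover along which pullback corresponds to the forgetful functor $\D_{\gr}(\F_p)\to\D(\F_p)$, $V^\bullet\mapsto\bigoplus_i V^i$. By fpqc descent of perfectness, $E$ is perfect in $\D(B\G_m)$ if and only if $\bigoplus_i V^i$ is perfect as an $\F_p$-complex, i.e.\ a bounded complex of finite-dimensional $\F_p$-vector spaces; this forces only finitely many $V^i$ to be nonzero and each to be bounded and finite-dimensional. Conversely, a graded complex of this form realises as the finite direct sum $\bigoplus_i V^i \tensor \O_{B\G_m}(-i)$ of perfect objects.

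For part (ii), I would first establish $(1)\Leftrightarrow(3)$ by identifying $\Perf(\A^1/\G_m)$ under the Rees equivalence with perfect graded $\F_p[t]$-complexes: since $\F_p[t]$ is a graded regular Noetherian ring of dimension one, these are precisely the bounded complexes with finitely generated graded cohomology, and the classification of finitely generated graded $\F_p[t]$-modules (each being a finite direct sum of shifts of $\F_p[t]$ and $\F_p[t]/t^n$) translates exactly into the finite filtrations with bounded finite-dimensional graded pieces described in $(3)$. For $(3)\Rightarrow(2)$, a finite filtration is automatically complete (hence $t$-complete under Rees, using \cref{prop:rees-main}(5)), and its pullback to $B\G_m$ is perfect by part (i). Finally, for $(2)\Rightarrow(3)$, let $E$ correspond to $\Fil^\bullet V$; perfectness of the pullback to $B\G_m$ combined with part (i) forces $\gr^\bullet V$ to have only finitely many nonzero pieces in some range $[a,b]$, each bounded and finite-dimensional. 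For $i>b$, the vanishing of $\gr^i V$ yields $\Fil^{i+1}V\xrightarrow{\sim}\Fil^i V$, so the $\Fil^i V$ for $i>b$ stabilise to a single object, which must vanish by $t$-completeness ($\lim_i\Fil^i V=0$); for $i<a$, the vanishing of $\gr^i V$ makes the transition maps isomorphisms, so the filtration stabilises downward, as required.

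The main technical obstacle is the identification in $(1)\Leftrightarrow(3)$ of perfect complexes on $\A^1/\G_m$ with bounded complexes of finitely generated graded projective $\F_p[t]$-modules, together with the graded structure theorem for the latter; once this has been set up, the remaining implications are direct bookkeeping under the Rees dictionary.
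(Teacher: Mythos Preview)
Your proof is correct and close in spirit to the paper's, but the logical route through part (ii) differs. The paper argues cyclically $(1)\Rightarrow(2)\Rightarrow(3)\Rightarrow(1)$: for $(1)\Rightarrow(2)$ it simply notes that pullback preserves perfectness and that perfect complexes on $\A^1/\G_m$ are automatically $t$-complete (citing \cite[Rem.~2.2.7]{FGauges}); the step $(2)\Rightarrow(3)$ is your argument verbatim; and $(3)\Rightarrow(1)$ is checked after pullback to $\A^1$, where a finite filtration with bounded finite-dimensional graded pieces visibly gives a bounded complex of finitely generated $\F_p[t]$-modules, hence perfect by regularity of $\F_p[t]$.

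You instead prove $(1)\Leftrightarrow(3)$ directly via the classification of finitely generated graded $\F_p[t]$-modules, and handle $(2)$ separately. This is fine, but it is exactly the ``main technical obstacle'' you flag, and the paper's route sidesteps it entirely: once one has $(1)\Rightarrow(2)$ for free from $t$-completeness of perfect objects, there is no need to invoke any structure theorem, since your own $(2)\Rightarrow(3)$ then gives $(1)\Rightarrow(3)$, and $(3)\Rightarrow(1)$ is a one-line check on the cover $\A^1$. So the paper's ordering buys economy; your ordering buys a more explicit description of $\Perf(\A^1/\G_m)$ but at the cost of an unnecessary classification step.
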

\begin{proof}
\EEDIT{
Part (i) is clear as perfectness may be checked after pullback along $\Spec\F_p\rightarrow B\G_m$. For part (ii), first observe that, if $E\in\D(\A^1/\G_m)$ is perfect, then its pullback to $B\G_m$ must be perfect as well and \cite[Rem. 2.2.7]{FGauges} shows that $E$ is $t$-complete; hence, (1) $\Rightarrow$ (2). The implication (2) $\Rightarrow$ (3) is clear by (i) as the Rees equivalence matches $t$-completeness with completeness as a filtered object, see \cref{prop:rees-main}. Finally, to prove (3) $\Rightarrow$ (1), note that (3) ensures that the pullback of $E$ to $\A^1$ is a bounded complex of finitely generated $\F_p[t]$-modules -- however, as $\F_p[t]$ is regular, this is the same as being perfect.
}
\end{proof}

\begin{lem}
\label{lem:finiteness-pushforwardcomplete}
\EDIT{
Let $X$ be a \EEDIT{smooth qcqs} $p$-adic formal scheme. Then for any perfect complex $E\in\Perf((X^{\dR, +})_{p=0})$ on $(X^{\dR, +})_{p=0}$, the pushforward $\pi_{X^{\dR, +}, *}E\in\D(\A^1/\G_m)$ is $t$-complete (where we recall that $t$ denotes the coordinate on $\A^1$).
}
\end{lem}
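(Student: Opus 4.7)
The strategy is to mimic the gerbe argument at the end of the proof of Lemma \ref{lem:fildhod-comparisonfilteredsmooth}, where an analogous $t$-completeness statement is established for the structure sheaf. First, since $t$-completeness can be checked Zariski-locally on $X$ (the coordinate $t$ being pulled back from $\A^1/\G_m$), we reduce to the case $X = \Spf R$ affine and, after shrinking further, to the case where there is an étale map $X \to \A^n_{\Z_p}$.

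Next, factor the pushforward as
\begin{equation*}
\pi_{X^{\dR,+}}: (X^{\dR,+})_{p=0} \xrightarrow{\rho} X_{p=0} \times \A^1/\G_m \xrightarrow{\mathrm{pr}} \A^1/\G_m,
\end{equation*}
where $\rho$ is induced by the natural projection $\G_a^{\dR,+} \to \G_a$ of 1-truncated animated $\G_a$-algebra stacks. The pushforward $\mathrm{pr}_*$ along the product projection preserves $t$-completeness because $t$ is pulled back from the $\A^1/\G_m$-factor, so it suffices to show that $\rho_* E$ is $t$-complete in $\D(X_{p=0} \times \A^1/\G_m)$.

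Arguing as in the proof of Lemma \ref{lem:fildhod-comparisonfilteredsmooth}, one verifies that $\rho$ is a $\V(\T_{X/\Z_p}(1))^\sharp$-gerbe over $X_{p=0} \times \A^1/\G_m$. Indeed, the kernel of $\G_a^{\dR,+} \to \G_a$ is $B\V(\O(1))^\sharp$ by construction, and smoothness of $X$ together with derived deformation theory identifies the gerbe band. Applying the $\G_m$-equivariant version of Lemma \ref{lem:syntomicetale-vesharpreps} fibrewise, the pushforward $\rho_* E$ is computed by a finite total complex whose terms are twists $E' \otimes \widehat{\Omega}^k_{X/\Z_p}(-k)$ for $0 \leq k \leq d$, where $d$ is the relative dimension of $X$ over $\Spf\Z_p$ and $E'$ is an underlying complex on $X_{p=0} \times \A^1/\G_m$ arising from the gerbe description of $E$.

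The $\O(-k)$-twists amount to shifting the Rees filtration by $k$, and since there are only finitely many such twists (as $d < \infty$), the Rees filtration on $\rho_* E$ is bounded above by a shift of the filtration on $E'$; a finite total complex of objects each carrying a complete filtration is itself complete. By the Rees equivalence, this is exactly the statement of $t$-completeness. The main technical obstacle will be promoting the $\O$-coefficient calculation of Lemma \ref{lem:fildhod-comparisonfilteredsmooth} to arbitrary perfect $E$; however, since both sides of Lemma \ref{lem:syntomicetale-vesharpreps} are compatible with tensoring against quasi-coherent complexes pulled back from the base, this should amount to a routine upgrade of the $\O$-case.
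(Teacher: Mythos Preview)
Your factorization through $X_{p=0}\times\A^1/\G_m$ does not exist: there is no ring-stack projection $\G_a^{\dR,+}\to\G_a$. Recall that $\G_a^{\dR,+}=\Cone(\V(\O(1))^\sharp\xrightarrow{t^\sharp}\G_a)$, so on the open locus where $t$ is invertible this specialises to $\G_a^\dR=\Cone(\G_a^\sharp\to\G_a)$, whose $\pi_0$ is $\G_a$ modulo the image of $\G_a^\sharp$, not $\G_a$ itself. The analogy with \cref{lem:fildhod-comparisonfilteredsmooth} breaks down precisely here: for $\G_a^{\dHod,c}$ one has $\pi_0\cong\G_a$ because the relevant filtered Cartier--Witt divisor lives over $\{t=0\}$ and hence has $t(d)=0$, so the map to $\G_a$ vanishes. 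That is not the case for $\G_a^{\dR,+}$. Consequently your gerbe $\rho$ is not defined, and the application of \cref{lem:syntomicetale-vesharpreps} to it has no meaning.

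The paper fixes this by factoring differently. After reducing to $X=\Spf R$ with an \'etale map $\Spf R\to\A^n$, it uses the cartesian square
\[
\begin{tikzcd}
\Spf R\times\A^1/\G_m\ar[r]\ar[d] & R^{\dR,+}\ar[d]\\
\A^n\times\A^1/\G_m\ar[r] & (\A^n)^{\dR,+}
\end{tikzcd}
\]
to identify $(R^{\dR,+})_{p=0}$ as the quotient $(\Spec R/p\times\A^1/\G_m)/\V(\O(1))^{\sharp n}$, and factors $\pi_{R^{\dR,+}}$ through $B_{\A^1/\G_m}\V(\O(1))^{\sharp n}$ rather than through $X_{p=0}\times\A^1/\G_m$. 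This gives the concrete description of complexes on $(R^{\dR,+})_{p=0}$ as filtered complexes with a Griffiths-transversal connection (as in \cref{rem:fildrstack-vect}), and the pushforward is the filtered de~Rham-type total complex. Since $E$ perfect forces the filtration $\Fil^\bullet V$ to be finite by \cref{lem:finiteness-perfa1gm}, the pushforward filtration is finite and hence $t$-complete. Your final worry about ``promoting the $\O$-case'' is moot once the correct factorization is in place: \cref{lem:syntomicetale-vesharpreps} already computes the pushforward for arbitrary coefficients.
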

\begin{proof}
\EDIT{We employ a similar strategy to the proof of \cite[Thm. 2.3.6]{FGauges}: One first observes that if $X$ is written as a colimit of a finite diagram $U^\bullet$ of affine open subschemes, then $X^{\dR, +}$ is the colimit of the diagram $U^{\bullet, \dR, +}$ and thus, by smoothness and the qcqs assumption, we may assume that \EEDIT{$X=\Spf R$ is affine and that} there is an étale map \EEDIT{$\Spf R\rightarrow \A^n$}. Then the diagram
\begin{equation*}
\EEDIT{
\begin{tikzcd}[ampersand replacement=\&]
\Spf R\times\A^1/\G_m\ar[r]\ar[d] \& R^{\dR, +}\ar[d] \\
\A^n\times\A^1/\G_m\ar[r] \& (\A^n)^{\dR, +}
\end{tikzcd}
}
\end{equation*}
is cartesian and the bottom map is a torsor for the group scheme \EEDIT{${\V(\O(1))^\sharp}^n$} over $\A^1/\G_m$, hence the same is true for the top row. This means that the map \EEDIT{$(R^{\dR, +})_{p=0}\rightarrow\A^1/\G_m$} admits a factorisation
\begin{equation}
\label{eq:finiteness-factorpidr+}
\EEDIT{
(R^{\dR, +})_{p=0}\cong (\Spec R/p\times\A^1/\G_m)/{\V(\O(1))^\sharp}^n\rightarrow B_{\A^1/\G_m}{\V(\O(1))^\sharp}^n\rightarrow \A^1/\G_m
}
\end{equation}
and hence the pushforward of a quasi-coherent complex along this map admits a similar description to the one in \cref{lem:syntomicetale-vesharpreps}: Namely, \EEDIT{we can identify} a quasi-coherent complex $E$ on \EEDIT{$(R^{\dR, +})_{p=0}$} with a complex $V$ \EEDIT{of $R/p$-modules} equipped with a decreasing filtration $\Fil^\bullet V$ and a flat connection \EEDIT{$\nabla: V\rightarrow V\tensor\Omega_{(R/p)/\F_p}^1$} having locally nilpotent $p$-curvature \EEDIT{such that} the restriction of $\nabla$ to $\Fil^i V$ comes with a factorisation through \EEDIT{$\Fil^{i-1} V\tensor\Omega_{(R/p)/\F_p}^1$} along the lines of \cref{rem:fildrstack-vect}. \EEDIT{Now observing that $\Omega_{(R/p)/\F_p}^1\cong (R/p)^n$ since $\Spf R$ is étale over $\A^n$ and using the $\G_m$-equivariant version of \cref{lem:syntomicetale-vesharpreps}, we find that $\pi_{R^{\dR, +}, *}E$} identifies with the filtered complex whose degree $i$ term is given by
\begin{equation}
\label{eq:finiteness-filcohdr+}
\Tot(\Fil^i V\xrightarrow{\nabla} \Fil^{i-1} V\tensor\Omega_{(R/p)/\F_p}^1\xrightarrow{\nabla} \Fil^{i-2} V\tensor\Omega_{(R/p)/\F_p}^2\xrightarrow{\nabla}\dots)
\end{equation}
\EEDIT{by virtue of the factorisation (\ref{eq:finiteness-factorpidr+}).}
However, if $E$ is perfect, then the filtration $\Fil^\bullet V$ is \EEDIT{finite by \cref{lem:finiteness-perfa1gm}} and hence also the filtration given by (\ref{eq:finiteness-filcohdr+}) is \EEDIT{finite and, in particular, complete}, so we are done as the Rees equivalence matches completeness as a filtered object with $t$-completeness, see \cref{prop:rees-main}.
}
\end{proof}

\begin{proof}[Proof of \cref{prop:finiteness-main}]
\EDIT{Recalling from the proof of \cref{prop:syntomicetale-filtrationetale} that $v_1$ is topologically nilpotent on $\Z_p^\Syn$}, we may check perfectness after pulling back to the reduced locus $\Z_{p, \red}^\Syn$ by $(p, v_1)$-completeness; as the natural map $\Z_{p, \red}^\N\rightarrow\Z_{p, \red}^\Syn$ is a cover, we may even pull back to $\Z_{p, \red}^\N$. By the gluing description of $\Z_{p, \red}^\N$, we first observe that the reasoning of \cite[Fn. 44]{FGauges} shows that it suffices to check perfectness after pullback to $\Z_{p, \HT, c}^\N$ and $\Z_{p, \dR, +}^\N$. Finally, recall that there are covers $\A^1/\G_m\rightarrow\Z_{p, \dR, +}$ and $\A^1/\G_m\rightarrow\Z_{p, \HT, c}$, so we may check perfectness after pullback along those.
\EEDIT{Using \cref{lem:finiteness-perfa1gm} and} some base change results analogous to the ones from Appendix \ref{sect:basechange}, we are thus reduced to showing the following assertions:
\begin{enumerate}[label=(\alph*)]
\item Pushforward along $(X^\Hod)_{p=0}\rightarrow B\G_m$ preserves perfect complexes;
\item pushforward along $(X^{\dHod, c})_{p=0}\rightarrow \A^1/\G_m$ carries perfect complexes to $t$-complete objects;
\item pushforward along $(X^{\dR, +})_{p=0}\rightarrow \A^1/\G_m$ carries perfect complexes to $t$-complete objects.
\end{enumerate}
For (a), we observe that this follows immediately from \EEDIT{\cref{lem:finiteness-perfa1gm}} together with \cref{prop:syntomicetale-cohomologyhod}; here, we make use of the properness assumption in order to conclude that cohomology on $X_{p=0}$ preserves perfect complexes. Moreover, (b) follows immediately by the same reasoning as in the proof of \cref{lem:fildhod-comparisonfilteredsmooth}. Finally, (c) is exactly \cref{lem:finiteness-pushforwardcomplete}.
\end{proof}

\newpage
\bibliographystyle{alpha}
\bibliography{References_arXivVersion}
\end{document}